\renewcommand{\epsilon}{\varepsilon}
\newcommand{\B}{\mathcal B}
\newcommand{\F}{\mathcal{F}}
\newcommand{\V}{\mathcal{V}}
\newcommand{\C}{\mathbb C}
\newcommand{\p}{\mathbb P}
\renewcommand{\P}{\mathbb P}
\newcommand{\A}{\mathbb A}
\newcommand{\Z}{\mathbb{Z}}
\newcommand{\R}{\mathbb{R}}
\newcommand{\bL}{\mathbb{L}}
\newcommand{\bH}{\mathbb H}
\newcommand{\bU}{\mathbb{U}}
\renewcommand{\l}{\ell_{\infty}}
\DeclareMathOperator{\Id}{Id}
\DeclareMathOperator{\Rep}{Rep}
\DeclareMathOperator{\GL}{GL}
\DeclareMathOperator{\PGL}{PGL}
\DeclareMathOperator{\Res}{Res}
\DeclareMathOperator{\Spec}{Spec}
\DeclareMathOperator{\Proj}{Proj}
\DeclareMathOperator{\Fun}{Fun}
\DeclareMathOperator{\rk}{rk}
\DeclareMathOperator{\Hom}{Hom}
\DeclareMathOperator{\End}{End}
\DeclareMathOperator{\Aut}{Aut}
\DeclareMathOperator{\Set}{Set}
\DeclareMathOperator{\Top}{Top}
\DeclareMathOperator{\Var}{Var}
\DeclareMathOperator{\Sch}{Sch}
\DeclareMathOperator{\Gpd}{Gpd}
\DeclareMathOperator{\Gr}{Gr}
\DeclareMathOperator{\Tot}{Tot}
\DeclareMathOperator{\Mat}{Mat}
\DeclareMathOperator{\Fl}{Fl}
\DeclareMathOperator{\Nat}{Nat}
\DeclareMathOperator{\modl}{mod}
\DeclareMathOperator{\gr}{gr}
\DeclareMathOperator\HOM{\mathcal{H}\mathit{om}}
\DeclareMathOperator{\tr}{tr}
\DeclareMathOperator{\Ext}{Ext}
\newcommand{\gl}{\mathfrak{gl}}
\renewcommand{\ltrans}[1]{\leftidx{^\mathrm{t}}{#1}{}}
\newcommand{\cA}{\mathcal A}
\newcommand{\cC}{\mathcal C}
\newcommand{\cD}{\mathcal D}
\newcommand{\cE}{\mathcal E}
\newcommand{\cF}{\mathcal F}
\newcommand{\cG}{\mathcal G}
\newcommand{\cI}{\mathcal I}
\newcommand{\cL}{\mathcal L}
\newcommand{\cM}{\mathcal M}
\newcommand{\cO}{\mathcal O}
\newcommand{\cQ}{\mathcal Q}
\newcommand{\cR}{\mathcal R}
\newcommand{\cT}{\mathcal T}
\newcommand{\cU}{\mathcal{U}}
\newcommand{\cX}{\mathcal X}
\newcommand\opcat[1]{{#1}^{\mathrm{op}}}
\newcommand\inv[1]{{#1}^{\text{-}1}}
\newcommand{\Bun}{\mathcal{B}un}
\newcommand{\fg}{\mathfrak g}
\newcommand{\fgl}{\mathfrak{gl}}
\newcommand{\fp}{\mathfrak{p}}
\newcommand{\fm}{\mathfrak{m}}
\newcommand{\fM}{\mathfrak{M}}
\renewcommand{\Im}{\mathop{\mathrm{im}}}
\newcommand{\coker}{\mathop{\mathrm{coker}}}
\begin{document}

\date{}
\title[Quiver Representations and Moduli Problems]{Lecture Notes on Quiver Representations and Moduli Problems in Algebraic Geometry}
\author{Alexander Soibelman}
\address{Alexander Soibelman,  Institut des Hautes \'{E}tudes Scientifiques, Bures-sur-Yvette, France}
\email{asoibel@ihes.fr}
\maketitle
\tableofcontents

\newtheorem{thm}{Theorem}[subsection]
\newtheorem{defn}[thm]{Definition}
\newtheorem{lmm}[thm]{Lemma}
\newtheorem{prp}[thm]{Proposition}
\newtheorem{conj}[thm]{Conjecture}
\newtheorem{cor}[thm]{Corollary}
\newtheorem{que}[thm]{Question}
\newtheorem{ack}[thm]{Acknowledgments}
\newtheorem{clm}[thm]{Claim}
\newtheorem*{Kac}{Kac's Theorem}
\newtheorem*{DS}{The Deligne-Simpson Problem}
\newtheorem*{aDS}{The Additive Deligne-Simpson Problem}
\newtheorem*{mDS}{The Multiplicative Deligne-Simpson Problem}

\theoremstyle{definition}
\newtheorem{rmk}[thm]{Remark}
\newtheorem{exa}[thm]{Example}

\section{Introduction}

These notes are based on my Spring 2019 graduate course at the Center for the Quantum Geometry of Moduli Spaces at Aarhus University. There are two main sources of motivation for this course: the representation theory of quivers and moduli problems in algebraic geometry.  
\subsection{Quiver representations}
 A quiver $Q$ is simply a directed graph where loops and multiple arrows between edges are allowed.  A representation of $Q$ over the field $k$ is an assignment of a $k$-vector space to each vertex of $Q$ and a linear mapping between the corresponding vector spaces to each arrow.  Quiver representations of $Q$ over $k$ form a $k$-linear abelian category $\Rep(Q,k)$, with morphisms being linear transformations between vector spaces assigned to the same vertex that commute with the arrows. 

\begin{equation*}
\begin{tikzpicture}[scale=1.0, every node/.style={scale=1.0}]
%\draw[help lines,step=1] (0,0) grid (10,2);

\draw[fill] (1,1) circle [radius=0.1];
\node [above] (jor) at (1,.95) {};
\draw [thick, <-] (jor) to [out=130,in=45,looseness=10] node[above] {$$}(jor);
\node[] at (1,0.5) {Jordan quiver};

\draw[fill] (3,1) circle [radius=0.1];
\node [right] (a21) at (3.01,1) {};

\draw[fill] (5,1) circle [radius=0.1];
\node [left] (a22) at (4.99,1) {};

\node[] at (4,0.5) {$A_2$ quiver};

\draw [thick, <-] (a21)  -- node [above] {$$} (a22);

\draw[fill] (7,1) circle [radius=0.1];
\node [above] (k211) at (7.1,1) {};
\node [below] (k212) at (7.1,1) {};

\draw[fill] (9,1) circle [radius=0.1];
\node [above] (k221) at (8.9,1) {};
\node [below] (k222) at (8.9,1) {};

\node[] at (8,0.5) {Kronecker quiver};

\draw [thick, <-] (k211)  -- node [above] {$$} (k221);
\draw [thick, <-] (k212) -- node [below] {$$}(k222);
\end{tikzpicture}
\end{equation*}

Quiver representations are an effective combinatorial tool for organizing linear algebraic data.  Not only are they naturally related to many algebraic objects such as quantum groups, Kac-Moody algebras, and cluster algebras, but they have also been studied from the geometric point of view, often serving to bridge the gap between representation theory and algebraic geometry.  In this course we will focus on the geometric aspects of quivers, including moduli spaces, quiver varieties, and the relationships between quiver representations with different moduli spaces in algebraic geometry.

\subsection{Moduli problems}

Given a collection of algebra-geometric objects it is natural to try to classify these objects up to equivalence.  Naively, given a collection $\cA$ of such objects and an equivalence relation $\sim$ on $\cA$, one may ask whether there exists an algebraic variety $X$ whose points (over the base field $k$) correspond to equivalence classes in $\cA / \sim$.  This approach is flawed, since there may be many such varieties and some are better than others at retaining the relationships between the objects being classified.  If, for example, we are interested in classifying all lines through the origin in the complex plane $\C^2$ up to equality, we can view the corresponding equivalence classes as points of the complex projective line $\P^1$, but we can also see them as points in the disjoint union $\A_{\C}^1 \sqcup \{pt\}$ of a line and a point.  Ideally, the points of the variety solving a moduli problem should be configured to reflect the relationship between the geometric objects they parametrize. 

Thus, in more nuanced approach we try to describe equivalence classes of families of objects of $\cA$, rather than just of the objects themselves.  That is, we look at pairs $\cF, T$, consisting of a variety $T$ and a family $\pi: \cF \rightarrow X$ of objects of $\cA$ (the fibers $\inv{\pi}(t)$ are objects of $\cA$) subject to some additional conditions (e.g. that $\pi$ is flat).  Moreover, if the collection of families over a variety $T$ is denoted by $\cA_T$, then for any morphism $f: S \rightarrow T$, there should be a pullback operation assigning to any family $\cF \in \cA_T$ a family $f^*\cF \in \cA_S$.  Now, we can extend our moduli problem by introducing an equivalence relation $\sim_T$ on $\cA_T$ that is compatible with pullback and give us the starting equivalence relation $\sim$ when $T$ is $\Spec k$.      

The solution to such an extended moduli problem, called a \emph{fine moduli space}, consists of a variety $X$ whose $k$-points classify equivalence classes in $\cA/\sim$, together with a \emph{universal family} $\cU \in \cA_X$, which describes how these equivalence classes relate to each other.  More specifically, any family $\cF \in \cA_T$ over a variety $T$ is equivalent to the pullback $f^*\cU$ along a unique morphism $f: T \rightarrow X$.  In the special case that $T = \Spec k$, we obtain that the $k$-points of $X$ are in bijection with equivalence classes in $\cA$.  Furthermore, it turns out that the fine moduli space is unique up to isomorphism.

Considering once again the example of lines through the origin in $\C^2$, we see that a family of such lines over a variety $T$ may be thought of as a line subbundle $\cL \subset \C^2 \times T \rightarrow T$ of the trivial rank $2$ vector bundle over $T$.  The equivalence relation becomes an isomorphism of line subbundles of $\C^2 \times T$.  The solution to the corresponding moduli problem consists of the complex projective line $\P_{\C}^1$ together with the tautological line bundle $\cO_{\P_{\C}^1}(-1)$.  Indeed, if $\cL \subset \C^2 \times T$ is a subbundle, then its dual $\cL^{\vee}$ is generated by global sections (specifically, the images of the standard global sections with respect to the surjection $(\C^2)^{\vee} \times T \twoheadrightarrow \cL^{\vee}$).  This defines a unique morphism $f: T \rightarrow \P_{\C}^1$ such that $\cL \simeq f^*\cO(-1)$.

Unfortunately, it is often the case that, for a given class of objects $\cA$, a fine moduli space either does not exist or requires us to place restrictions on the kinds of objects in $\cA$ we wish to classify.  In order to avoid this, we can forget the universal family and look for a nice enough variety with points in bijection with equivalence classes in $\cA/\sim$.  Alternatively, we can allow for the solution of our moduli problem to no longer be a variety (or even a scheme).  The result is a more complicated object called a \emph{stack}.  

In these notes we will look at several different examples of moduli spaces and stacks of objects arising from quiver representations.

\subsection{Organization}

These notes consist of ten sections, an introduction, and a conclusion.  The sections are meant to be read sequentially, although Section 2 is independent of the rest, while Sections 3-5 and Section 6-11 form self-contained blocks within the larger text.

Section 2 concerns itself with the categorical framework used to define moduli functors, fine moduli spaces, coarse moduli spaces, and moduli stacks.  It is based on Section 2 of \cite{Hos2015} and the beginning of Chapter 2 of \cite{Vi2005}.

Section 3 deals with the construction of the Beilinson spectral sequence.  It contains some introductory material on spectral sequences as well as a proof of Beilinson's theorem.  The main references for this section are Chapter 5 of \cite{We1994} and Section II.3 of \cite{OSS1980}. 

Sections 4 and 5 consist of an application of the Beilinson spectral sequence to give description of vector bundles on $\P^1$ through representations of the Kronecker quiver and framed torsion-free sheaves on $\P^2$ in terms of representations of the ADHM quiver.  The former is a very concrete version of Beilinson's results from \cite{Be1978}, and may be found in \cite{ASo2014}.  The latter follows Chapter 2 of \cite{Na1999}, with additional details provided where deemed necessary.

Section 6 is a basic introduction to actions of algebraic groups on varieties and quotients of varieties by such actions.  It follows Section 3 of \cite{Hos2015}.  

Section 7 serves as a primer on geometric invariant theory.  It covers constructions of the affine GIT quotient and the GIT ``twisted'' by a group character.  The main references for this section are \cite{New1978} and \cite{Muk2003}.

Section 8 applies geometric invariant theory to quiver representations.  The main references for this section are \cite{Bri2012} for introductory material on quivers, \cite{DW2017} for descriptions of GIT quotients for quivers representations, and \cite{King1994} for construction of moduli spaces of stable and semistable quiver representations

Sections 9-11 constitute background material necessary to define Nakajima quiver varieties.  These sections deal with framed quiver representations, symplectic geometry for quiver representations, and quiver varieties themselves, respectively.  The material in these sections is based on Sections 3-5 of \cite{Gi2012}.

\subsection{Preliminaries}
These lecture notes assume some knowledge of algebraic geometry and $C^{\infty}$ symplectic manifolds.  Terminology from the basic theory of algebraic varieties and well-known results concerning sheaf cohomology will be referenced without citation.  All necessary definitions and theorems may be found in Chapters I-III of \cite{Ha1977}, in \cite{Lee2003}, and in \cite{McDS2017}.

Throughout the text the base field will be $\C$.  While in most situations it will be sufficient to assume the field is algebraically closed and/or has characteristic $0$, we will be working over the complex numbers in order to simplify exposition.  We use $\Sch$ to denote the category of schemes over $\C$ and $\Var$ to denote the category of varieties over $\C$ (i.e. reduced, separated schemes of finite type over $\C$).  

While we will be dealing mostly with varieties, we borrow certain notation from the theory of schemes (in fact, many of the results mentioned for varieties will have meaning in the broader context of schemes).  Namely, the one point variety will be denoted by $\Spec \C$ and the sheaf of regular functions on a variety $X$ by $\cO_X$ (omitting the subscript where no confusion arises).  Additionally, we will often use the notions of and notation for (algebraic) vector bundle and locally free sheaf interchangeably.  There is a correspondence between the two (see for example Proposition 1.8.1 in \cite{LP1997}) that defines an equivalence of categories.

\section{Moduli problems and moduli functors}

\subsection{Representable functors}
 We begin by recalling some basic terminology from category theory in order to establish notation and terminology.  A \emph{category} $\cC$ consists of a class of objects denoted by $Ob(\cC)$ and a set of morphisms $\Hom_{\cC}(X,Y)$ for any pair of objects $X,Y \in Ob(\cC)$ (we will drop the subscript $\cC$ and write $X,Y \in \cC$ if there is no ambiguity).  Morphisms come equipped with an associative composition operation for which there exists an identity element $\Id_X \in \Hom(X,X)$ for each $X \in \cC$.  We denote by $\opcat{\cC}$ the category where the objects are the same as for $\cC$, but $\Hom_{\opcat{\cC}}(X,Y) = \Hom_{\cC}(Y,X)$.

Given two categories $\cC$ and $\cD$, a \emph{covariant functor} $F: \cC \rightarrow \cD$ assigns to each object $X \in \cC$ an object $\F(X) \in \cD$ and to each morphism $f \in \Hom(X,Y)$ a morphism $F(f) \in \Hom(F(X), F(Y))$.  A \emph{contravariant functor} $G: \cC \rightarrow \cD$ is defined analogously on the objects, but to each morphism $f \in \Hom(X,Y)$, it assigns a morphism $G(f) \in \Hom(G(Y), G(X))$.  Note that a contravariant functor from $\cC$ to $\cD$ is a covariant functor from $\opcat{\cC}$ to $\cD$.  We will, therefore, refer to ``covariant functors'' just as ``functors''.

If $F,G: \cC \rightarrow \cD$ are two covariant functors, then a \emph{natural transformation} $\eta: F \rightarrow G$ associates to each object $X \in \cC$ a mapping of sets 
$\eta_X: F(X) \rightarrow G(X)$ such that for all $f \in \Hom(X,Y)$ the following diagram commutes:

\begin{equation*}
\begin{tikzcd}
F(X) \arrow[r, "\eta_X"] \arrow[d, swap, "F(f)"] & G(X) \arrow[d, "G(f)"]\\
F(Y) \arrow[r, swap, "\eta_Y"] & G(Y)
\end{tikzcd}
\end{equation*}

The definition is similar for contravariant functors, but the vertical arrows are reversed.  If $\eta_X$ is an isomorphism for all $X \in \cC$, then $\eta$ is called a natural isomorphism between $F$ and $G$.

Denote by $\Set$ the category of sets.  Consider the functor $h_X: \opcat{\cC} \rightarrow \Set$ defined by
\begin{align*}
& h_X  = \Hom(Y,X)\\
& h_X(f): \Hom(Y,X) \rightarrow \Hom(Z,X) \text{ for } Z \rightarrow Y\\
& h_X(f)(g) = g \circ f. 
\end{align*}
Since we will mainly be using $h_X$ in the context of algebraic geometry, we will refer to it as the \emph{functor of points}.

\begin{defn}
Let $\cC$ be a category.  A functor $F: \opcat{\cC} \rightarrow \Set$ is called \textbf{representable} (by $X \in \cC$) if it is naturally isomorphic to $h_X$.
\end{defn}

We will see that the right way of thinking about the solution to a given moduli problem is as a representable functor from the category of schemes to the category of sets.  

\begin{exa}
\label{repex}
Here are a few examples of representable functors. 
\begin{enumerate}
\item 
\begin{enumerate}
\item Let $P: \opcat{\Set} \rightarrow \Set$ be the functor that assigns to each set $S \in \Set$ the power set $P(S)$ (the set of all subsets of $S$). If $f \in \Hom(S,T)$, then $P(f)(T') = \inv{f}(T')$ for all $T' \subset T$.  Consider the two element set $X = \{0,1\}$.  We have that $h_X \cong P$.  Indeed, let the natural transformation $\eta: P \rightarrow h_X$ given by $\eta_S: P(S) \rightarrow \Hom(S,X)$, where $\eta_S(S') = \chi_{S'}$ is the characteristic function for $S'$ in $S$.  Since $\chi_{\inv{f}(T')} = \chi_{T} \circ F$, it follows that $\eta$ is well-defined.  It is easy to see that each $\eta_S$ is bijective, so $\eta$ is a natural isomorphism. 

\item Here is the same example but with added topology.  Let $\Top$ be be category of topological spaces, where the morphisms are continuous functions.  Consider the functor $F: \opcat{\Top} \rightarrow \Set$, which sends each topological space $S$ to $F(S)$, the set of all open subsets of $S$, with $F(f)$ as in the previous example.  Let $X = \{0,1\}$, as before, but impose on it the topology where the open sets are $\varnothing, \{1\}, \{0,1\}$ ($X$ together with this topology is called the Sierpinski space).  It follows that $g: S \rightarrow X$ is a mapping of topological spaces if and only if $\inv{g}(\{1\})$ is open.  Thus, the same argument as in the previous example shows that $F \cong h_X$.
\end{enumerate}
\item Here are a few examples more closely related to algebraic geometry.  In both of these, $\Var$ denotes the category of algebraic varieties over the field $\C$.
\begin{enumerate}
\item Let $F: \opcat{\Var} \rightarrow \Set$ be defined by associating to the variety $T$, the set $F(T)$ of regular functions $T \rightarrow \C$ (i.e. the global sections of the sheaf of functions on $T$).  If $f: S \rightarrow T$ is a morphism, then define $F(f)(g) := g \circ f$.  Interpreting the elements of $F(T)$ as regular mappings $T \rightarrow \A^1$ to the affine line, it is easy to see that $F \cong h_{\A^1}$.
\item  Let $F^{\times}: \opcat{\Var} \rightarrow \Set$ be defined as in the previous part, but $F^{\times}(T)$ consists only of invertible regular functions $T \rightarrow \C^{\times}$.  In this case, $F^{\times}$ can be represented by the punctured affine line $\A^1 - \{0\}$.
\end{enumerate}
\item Here is the example we saw earlier.  Let $F: \opcat{\Var} \rightarrow \Set$ be defined by $F(T) = \{\text{line subbundles } \cL \subset T \times \C^2 \}/\text{iso. line subbundles}$ and $F(f)(\cL) = f^*\cL$ for $f: S \rightarrow T$.  As mentioned in the introduction, dualizing the inclusion of $\cL$ into the trivial bundle gets us the surjection $q:  T \times (\C^2)^{\vee} \twoheadrightarrow \cL^{\vee}$.  If $\epsilon_0, \epsilon_1$ are the global sections of $T \times (\C^2)^{\vee} \rightarrow T$ corresponding to the standard basis vectors, then $t \mapsto [q(\epsilon_0(t)):q(\epsilon_1(t))]$ defines a morphism $T \rightarrow \P^1$.  This morphism is unique with respect to the property that $f^*\cO(-1) \cong \cL$ (this is a well-known fact concerning morphisms into projective space).  The functor $F$ is therefore represented by the complex projective line $\P^1$.
\end{enumerate}
\end{exa}

\subsection{The Yoneda lemma}
We would like to define moduli problems categorically, formulating them in terms of contravariant functors into $\Set$.  Under this description, the moduli problems that can be solved correspond to representable functors.  Indeed, representable functors give us the two important properties we have already seen in the introduction.  Namely:
\begin{itemize}
\item  A moduli space should have a universal family.
\item A moduli space should be unique.
\end{itemize}

Both of these properties are consequences of the following important categorical result.

\begin{lmm}[The Yoneda Lemma]
Let $F: \opcat{\cC} \rightarrow \Set$ be a functor and let $X \in \cC$.  The mapping $\phi$ between the set $\Nat(h_X,F)$ of natural transformations from $h_X$ to $F$ and $F(X)$ given by $\phi: \eta \rightarrow \eta_X(\Id_X)$ is a bijection.
\end{lmm}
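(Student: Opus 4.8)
The plan is to prove that $\phi\colon \Nat(h_X, F) \to F(X)$, $\phi(\eta) = \eta_X(\Id_X)$, is a bijection by constructing an explicit inverse. Given an element $\xi \in F(X)$, I would define a candidate natural transformation $\psi(\xi) = \eta^{\xi}$ whose components are
\begin{equation*}
\eta^{\xi}_Y \colon h_X(Y) = \Hom(Y,X) \longrightarrow F(Y), \qquad \eta^{\xi}_Y(g) = F(g)(\xi).
\end{equation*}
This formula is essentially forced: naturality applied to a morphism $g\colon Y \to X$, viewed as an element of $h_X(Y)$, together with the requirement $\eta_X(\Id_X) = \xi$, leaves no other possibility, since $g = h_X(g)(\Id_X)$ and naturality must send this to $F(g)(\eta_X(\Id_X)) = F(g)(\xi)$. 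So the first step is to write down $\eta^\xi$ and observe that it is the only thing that could work.

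The second step is to verify that $\eta^{\xi}$ is genuinely a natural transformation. For a morphism $f\colon Z \to Y$ in $\cC$ I would check that the square relating $\eta^{\xi}_Y$ and $\eta^{\xi}_Z$ commutes, i.e. that $\eta^{\xi}_Z \circ h_X(f) = F(f) \circ \eta^{\xi}_Y$. Chasing an element $g \in \Hom(Y,X)$ through both composites, the left side gives $F(g \circ f)(\xi)$ and the right side gives $F(f)\big(F(g)(\xi)\big)$; these agree precisely because $F$ is a (contravariant) functor, so $F(g \circ f) = F(f) \circ F(g)$. This is the one place where the functoriality of $F$ is used, and it is routine once the composition order is tracked carefully.

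The final step is to confirm that $\phi$ and $\psi$ are mutually inverse. In one direction, $\phi(\psi(\xi)) = \eta^{\xi}_X(\Id_X) = F(\Id_X)(\xi) = \xi$, using that $F$ preserves identities. In the other direction, given $\eta \in \Nat(h_X, F)$ with $\eta_X(\Id_X) = \xi$, I must show $\eta = \eta^{\xi}$, i.e. $\eta_Y(g) = F(g)(\xi)$ for every object $Y$ and every $g \in \Hom(Y,X)$; this is exactly the naturality computation already sketched, applied to the square for the morphism $g \colon Y \to X$. I do not anticipate a serious obstacle here: the content of the lemma is entirely in the correct bookkeeping of variance, and the only mild subtlety is keeping the contravariance of $h_X$ and $F$ straight so that compositions reverse in the right places. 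Once the inverse $\psi$ is written down, every verification reduces to the functor axioms.
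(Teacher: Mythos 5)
Your proof is correct and follows essentially the same route as the paper's: the key computation in both is that naturality forces $\eta_Y(g) = F(g)(\eta_X(\Id_X))$, which gives uniqueness, while the formula $g \mapsto F(g)(\xi)$ supplies the inverse. The only difference is that you explicitly verify that the candidate inverse $\eta^{\xi}$ is a natural transformation, a check the paper's proof leaves implicit -- a welcome bit of extra care, but not a different argument.
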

\begin{proof}
Let $f \in \Hom(Y,X)$ be a morphism.  The proof follows from the following commutative diagram. 

\begin{equation*}
\begin{tikzcd}[row sep=20pt, column sep=10pt, every label/.append style={font=\small}]
\Hom(X,X) \arrow[rrr,"\eta_X"] \arrow[ddd,swap,"h_X(f)"] & & &
F(X) \arrow[ddd,"F(f)"]\\
& g \arrow[r,mapsto] \arrow[d,swap, mapsto] & \eta_X(g) \arrow[d,mapsto] &\\
& g \circ f \arrow[r,mapsto] &  \eta_Y(g \circ f) = F(f)(\eta_X(g)) &\\
\Hom(Y,X) \arrow[rrr, swap, "\eta_Y"] & & & F(Y) 
\end{tikzcd}
\end{equation*}

Specifically, if  $g = \Id_X$, we get that $\eta_Y(f) = F(f)(\eta_X(\Id_X))$, so $\eta_Y$ is uniquely determined by $\eta_X(\Id_X)$, making $\phi$ injective.  For any $x \in F(X)$ define $\eta_Y(f) = F(f)(x)$.  It is easy to see that $\eta_X(\Id_X) = x$, so $\phi$ is also surjective. 
\end{proof}

Here are several important consequences of the Yoneda Lemma.  

\subsubsection{Yoneda embedding}
Let $\Fun(\opcat{\cC}, \Set)$ be the category of contravariant functors from $\cC$ to $\Set$.  The objects of this category are contravariant functors and the morphisms are natural transformations.  We can define the following functor:
\begin{align*}
& h: \cC \rightarrow \Fun(\opcat{\cC}, \Set)\\
& h(X) = h_X\\
& (h(f)_Z)(g) = f \circ g \text{ for } f \in \Hom(X,Y), Z \in \cC, \text{ and } g \in \Hom(Z,X). 
\end{align*}

The Yoneda Lemma implies the following:
\begin{cor}[The Yoneda embedding]
The functor $h: \cC \rightarrow \Fun(\opcat{\cC}, \Set)$ is bijective on the corresponding sets of morphisms.  That is, $h$ is fully faithful.
\end{cor}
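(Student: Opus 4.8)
The plan is to deduce the corollary directly from the Yoneda Lemma by specializing the target functor to a representable one. Fix two objects $X, Y \in \cC$ and apply the Yoneda Lemma with $F = h_Y$. This immediately produces a bijection $\phi \colon \Nat(h_X, h_Y) \to h_Y(X) = \Hom(X,Y)$ given by $\eta \mapsto \eta_X(\Id_X)$. The content of the corollary is that the map induced by the functor $h$ on morphisms, namely $h_{X,Y} \colon \Hom(X,Y) \to \Nat(h_X, h_Y)$ sending $f \mapsto h(f)$, is itself a bijection. Since $\phi$ is already known to be a bijection, it suffices to compare $h_{X,Y}$ with $\phi$ and show they are mutually inverse.

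The key step is to compute the composite $\phi \circ h_{X,Y}$ and verify it is the identity on $\Hom(X,Y)$. Given $f \in \Hom(X,Y)$, the natural transformation $h(f)$ has components determined by $(h(f)_Z)(g) = f \circ g$ for every $Z \in \cC$ and every $g \in \Hom(Z,X)$, by the definition of $h$ on morphisms. Evaluating the relevant component at $Z = X$ and $g = \Id_X$ gives $\phi(h(f)) = (h(f)_X)(\Id_X) = f \circ \Id_X = f$. Hence $\phi \circ h_{X,Y} = \Id_{\Hom(X,Y)}$.

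Finally, because $\phi$ is a bijection by the Yoneda Lemma and $\phi \circ h_{X,Y} = \Id_{\Hom(X,Y)}$, we may compose on the left with $\phi^{-1}$ to conclude $h_{X,Y} = \phi^{-1}$, so $h_{X,Y}$ is a bijection. As $X$ and $Y$ were arbitrary, $h$ is bijective on all Hom-sets, i.e.\ fully faithful. I do not expect a genuine obstacle here: the only point requiring care is bookkeeping, namely matching the direction of the Yoneda bijection $\phi$ with the stated definition of $h$ on morphisms so that the evaluation at $\Id_X$ recovers $f$ on the nose.
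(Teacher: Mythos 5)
Your proof is correct and takes essentially the same approach as the paper: both specialize the Yoneda Lemma to $F = h_Y$, using $h_Y(X) = \Hom(X,Y)$. The only difference is that you explicitly verify that $f \mapsto h(f)$ is inverse to the Yoneda bijection via the computation $(h(f)_X)(\Id_X) = f \circ \Id_X = f$, a bookkeeping step the paper leaves implicit.
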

\begin{proof}
To show $h: \Hom(X,Y) \rightarrow \Hom(h_X, h_Y)$ is a bijection, let $F = h_Y$ in the Yoneda Lemma, noting that $h_Y(X) = \Hom(X,Y)$. 
\end{proof}
The Yoneda embedding allows us to think of objects in a category $\cC$ as functors.

\subsubsection{Uniqueness of objects}

It is not hard to see that if $F: \cC \rightarrow \cD$ is a functor, then for $X,Y \in \cC$ we have $F(X) \cong F(Y)$ if $X \cong Y$.  For fully faithful functors, the converse is also true.

\begin{prp}
Let $F: \cC \rightarrow \cD$ be a fully faithful functor.  For any $X, Y \in \cC$, if $F(X) \cong F(Y)$, then $X \cong Y$.
 \end{prp}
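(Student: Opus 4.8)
The plan is to unwind the definition of fully faithful directly. Recall that $F$ being fully faithful means that for every pair of objects the induced map $F \colon \Hom_{\cC}(A,B) \to \Hom_{\cD}(F(A),F(B))$ is a bijection. So I would start by fixing an isomorphism $\alpha \colon F(X) \to F(Y)$ in $\cD$ together with its two-sided inverse $\beta \colon F(Y) \to F(X)$, and aim to produce an isomorphism $X \to Y$ in $\cC$ whose image under $F$ is $\alpha$.

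First I would use fullness (surjectivity on $\Hom$-sets) to lift $\alpha$ and $\beta$: there exist morphisms $f \in \Hom_{\cC}(X,Y)$ with $F(f) = \alpha$ and $g \in \Hom_{\cC}(Y,X)$ with $F(g) = \beta$. The candidate inverse pair is $(f,g)$, and the goal reduces to checking $g \circ f = \Id_X$ and $f \circ g = \Id_Y$ in $\cC$. Second, I would exploit functoriality together with the fact that $\alpha,\beta$ are inverse to each other, computing
\begin{equation*}
F(g \circ f) = F(g) \circ F(f) = \beta \circ \alpha = \Id_{F(X)} = F(\Id_X),
\end{equation*}
and symmetrically $F(f \circ g) = \alpha \circ \beta = \Id_{F(Y)} = F(\Id_Y)$. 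Third, I would invoke faithfulness (injectivity on $\Hom$-sets): since $F(g \circ f) = F(\Id_X)$ and both $g \circ f$ and $\Id_X$ lie in $\Hom_{\cC}(X,X)$, injectivity forces $g \circ f = \Id_X$, and likewise $f \circ g = \Id_Y$. Hence $f$ is an isomorphism with inverse $g$, so $X \cong Y$.

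There is no real obstacle here; the only subtlety worth flagging is that one must apply injectivity of $F$ on the correct $\Hom$-set, i.e. compare $g \circ f$ with $\Id_X$ as two elements of $\Hom_{\cC}(X,X)$ rather than conflating identities at different objects. The key conceptual point the argument isolates is that fullness supplies the lift $f$ and faithfulness certifies that the lifted composites are genuine identities; neither property alone suffices. As a sanity check, this shows the converse direction complements the trivial observation, already noted in the text, that any functor preserves isomorphisms, so that a fully faithful functor reflects isomorphisms as well.
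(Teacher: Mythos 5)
Your proof is correct and follows essentially the same route as the paper's: lift the isomorphism and its inverse via fullness, use functoriality to show the composites map to identities, and conclude by faithfulness. Your version is slightly more careful in spelling out exactly where fullness versus faithfulness is invoked, but the argument is the same.
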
  
\begin{proof}
Let $h \in \Hom(F(X), F(Y))$ be an isomorphism.  We have $F(f) = h$ for some $f \in \Hom(X, Y)$. Similarly there is a $g \in \Hom(Y,X)$ such that $F(g) = \inv{h}$.  It follows that $F(f \circ g) = F(f) \circ F(g) = \Id_{F(Y)}$.  Since $F(\Id_Y) = \Id_{F(Y)}$, then $f \circ g = \Id_Y$.  Similarly $g \circ f = \Id_X$.  Consequently, $f$ is an isomorphism between $X$ and $Y$.
\end{proof}

Combining this proposition with the Yoneda embedding results in the following corollary.
\begin{cor}
\label{yonedacor2}
Let $X,Y \in \cC$.  If $F: \opcat{\cC} \rightarrow \Set$ is a functor represented by $X$ and also represented by $Y$, then $X \cong Y$.
\end{cor}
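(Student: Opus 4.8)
The plan is to combine the two results that immediately precede this corollary: the Yoneda embedding (which asserts that $h \colon \cC \to \Fun(\opcat{\cC}, \Set)$ is fully faithful) and the preceding proposition (which says a fully faithful functor reflects isomorphisms). The statement that $F$ is represented by both $X$ and $Y$ means, by definition, that there are natural isomorphisms $F \cong h_X$ and $F \cong h_Y$ in the functor category $\Fun(\opcat{\cC}, \Set)$. The goal is to conclude $X \cong Y$ in $\cC$.

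First I would observe that natural isomorphisms compose and invert, so from $h_X \cong F$ and $F \cong h_Y$ we obtain a natural isomorphism $h_X \cong h_Y$; that is, $h(X) \cong h(Y)$ as objects of the functor category. Next I would invoke the Yoneda embedding corollary to note that $h$ is fully faithful. At this point the hypotheses of the preceding proposition are met: we have a fully faithful functor $h$, together with objects $X, Y \in \cC$ satisfying $h(X) \cong h(Y)$. Applying that proposition directly yields $X \cong Y$, which is exactly the claim.

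The argument is essentially a one-line assembly of two lemmas already in hand, so there is no genuine obstacle. The only point requiring a moment of care is the very first step, namely unwinding the definition of ``representable by $X$'' (naturally isomorphic to $h_X$) and recognizing that a natural isomorphism is precisely an isomorphism \emph{in the functor category} $\Fun(\opcat{\cC}, \Set)$, which is what lets the proposition apply with $F$ there taken to be $h$. I would also want to make explicit that natural isomorphisms are closed under composition and inversion, since that is what produces the isomorphism $h_X \cong h_Y$ from the two given ones; this is routine but worth stating so the invocation of the proposition is fully justified.
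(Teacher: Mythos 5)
Your proof is correct and follows exactly the paper's intended argument: the corollary is stated there as the combination of the Yoneda embedding (fully faithfulness of $h$) with the preceding proposition that fully faithful functors reflect isomorphisms, which is precisely your assembly of $h_X \cong F \cong h_Y$ followed by an application of that proposition. No gaps; your explicit remarks about composing and inverting natural isomorphisms simply make precise what the paper leaves implicit.
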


Note it is similarly true that if $F,G: \opcat{\cC} \rightarrow \Set$ are two functors represented by the same object $X \in \cC$, then $F \cong G$.

\subsubsection{Universal objects}

We start off by formalizing the concept of a universal family, appearing in the introduction, for arbitrary contravariant functors.

\begin{defn}
Let $F \in \Fun(\opcat{\cC}, \Set)$.  A \textbf{universal object} for $F$ is a pair $(X,u)$ where $X \in \cC$ and $u \in F(X)$ such that for all $Y \in \cC$ and $y \in F(Y)$ there is a unique morphism $f \in \Hom(Y,X)$ such that $F(f)(u) = y$.
\end{defn}

A useful consequence of the Yoneda Lemma is that universal objects exist exactly when the corresponding functors are representable.

\begin{cor}
\label{yonedacor3}
A functor $F \in \Fun(\opcat{\cC, \Set})$ is representable if and only if it has a universal object $(X,u)$.  In this case, $F$ is represented by $X$.
\end{cor}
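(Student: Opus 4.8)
The plan is to deduce everything from the Yoneda Lemma by observing that a representation of $F$ and a universal object for $F$ are two encodings of the same datum. Recall that the Yoneda Lemma provides a bijection $\phi\colon \Nat(h_X, F) \to F(X)$, $\eta \mapsto \eta_X(\Id_X)$, and that under this bijection the component of $\eta = \inv{\phi}(u)$ at an object $Y$ is given explicitly by $\eta_Y(f) = F(f)(u)$ for $f \in \Hom(Y,X)$; this last formula is exactly the content of the commutative square appearing in the proof of the Yoneda Lemma, upon taking $g = \Id_X$. So I would first record this explicit description of $\eta_Y$, since it is the bridge between the two notions.

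For the backward direction, suppose $(X,u)$ is a universal object and set $\eta := \inv{\phi}(u)$, so that $\eta_Y(f) = F(f)(u)$. I claim each $\eta_Y$ is a bijection. The universal property says that for every $y \in F(Y)$ there is a unique $f \in \Hom(Y,X)$ with $F(f)(u) = y$, i.e.\ with $\eta_Y(f) = y$; the existence of such an $f$ is the surjectivity of $\eta_Y$ and its uniqueness is the injectivity. Hence $\eta$ is a natural isomorphism and $F$ is represented by $X$.

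For the forward direction, suppose $F$ is representable, so there is a natural isomorphism $\eta\colon h_X \to F$. Put $u := \phi(\eta) = \eta_X(\Id_X)$. Given $Y \in \cC$ and $y \in F(Y)$, the bijectivity of $\eta_Y\colon \Hom(Y,X) \to F(Y)$ yields a unique $f$ with $\eta_Y(f) = y$, and the explicit formula $\eta_Y(f) = F(f)(u)$ rewrites this as $F(f)(u) = y$ with $f$ unique. Thus $(X,u)$ is a universal object, which also proves the final clause that $F$ is represented by $X$.

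I do not expect a serious obstacle here, since the statement is essentially a repackaging of the Yoneda bijection together with the observation that ``$\eta$ is a natural isomorphism'' is equivalent to ``every component $\eta_Y$ is bijective.'' The one point requiring care is to verify the explicit formula $\eta_Y(f) = F(f)(u)$ rather than merely invoking the bijection $\phi$ abstractly, as it is precisely this formula that converts the abstract Yoneda correspondence into the concrete universal property; this is where I would be most attentive.
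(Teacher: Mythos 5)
Your proof is correct and takes essentially the same approach as the paper: both directions are deduced from the Yoneda Lemma via the explicit formula $\eta_Y(f) = F(f)(u)$ for the transformation corresponding to $u$. Your verification that each $\eta_Y$ is bijective (existence in the universal property giving surjectivity, uniqueness giving injectivity) is just a streamlined rephrasing of the paper's construction of the explicit inverse $\nu_Y(y) = f$ together with the check that both composites are identities.
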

\begin{proof}
If $h_X \cong F$ for some object $X \in \cC$, then there exists a natural isomorphism $\eta: h_X \rightarrow F$.  Let $u = \eta_X(\Id_X)$.  By the proof of the Yoneda Lemma, any $y \in F(Y)$ can be written as $y = F(f)(u)$ where $f$ is chosen so that $y = \eta_Y(f)$ (possible because $\eta_Y$ is an isomorphism).  Thus $(X,u)$ is a universal object for $F$.

Conversely, if $(X,u)$ is a universal object of $F$, it suffices to show $h_X \cong F$.  Let $\eta: h_X \rightarrow F$ be the natural transformation corresponding to $u$ in the Yoneda Lemma.  This means $u = \eta_X(\Id_X)$.  Let  $\nu: F \rightarrow h_X$ be the natural transformation given by  $\nu_Y(y) = f$, where $y = F(f)(u)$.  This mapping is well-defined because $f$ is unique.  We have $(\eta_Y \circ \nu_Y)(y) = \nu_Y(f) = F(f)(u) = y$.  Similarly, $(\nu_Y \circ \eta_Y)(f) = \nu_Y(F(f)(u)) = f$. 
\end{proof}

Let us look at what the universal objects are for the representable functors we have considered in Example \ref{repex}.

\begin{enumerate}
\item In both parts of this example the pair consisting of $X  = \{0,1\}$ and $u = \{1\}$ is the universal object.  Indeed, if $S'$ is in $P(S)$ (resp. in $F(S)$), then the characteristic function $\chi_{S'}$ defines a mapping $S \rightarrow X$ such that the pullback of the subset (resp. open subset) $\{1\}$ is $S'$.

\item In part (a), it is not hard to check that the universal object $(\A^1, \Id_{\A^1})$, where the identity function $\Id_{\A^1}$ is the regular function that sends each point of $\A^1$ to the corresponding point of the base field $k$.  Similarly, the universal object in part (b) is $(\A^1, \Id_{\A^1 - \{0\}})$. 

\item As we have already seen, the universal object is $(\P^1, \cO(-1))$.
\end{enumerate}

Let us end this section by giving an example of a functor that is not representable.  We will produce several more examples and discuss concrete problems with representability after we introduce moduli functors in the next section.

\begin{exa}
Let $F: \opcat{\Var} \rightarrow \Set$ be the functor that assigns to each $T \in \Var$ the set $F(T)$ of all open subsets (in the Zariski topology) of $T$.  On morphisms the functor is defined as in Example \ref{repex} (b).  However, unlike in that example, the $F$ is not representable.  Indeed, suppose this were true.  According to Corollary \ref{yonedacor3},  there is a universal object $(X,U)$ for this functor.  Note that the open subset $U \subset X$ is itself a variety over $\C$.  Given two morphisms of varieties,  $f,g: T \rightarrow U$, we can extend these to $f', g': T \rightarrow X$ by composing with the inclusion.  These morphisms both have the property that $\inv{(f')}(U) = \inv{(g')}(U) = T \in F(T)$.  By uniqueness, $f' = g'$ and therefore $f = g$.  Thus, there is only one morphism in $\Hom(T,U)$, making it naturally isomorphic to the functor of points for a variety consisting of a single point.  By Corollary \ref{yonedacor2} the variety $U$ is isomorphic to $\Spec \C$.  This makes $U \subset X$ a $\C$-rational point of $X$, and therefore closed.  Since $(X,U)$ is a universal object, any open subset $V \subset T$ of any variety $T$ is the inverse image of $U$ under some morphism $T \rightarrow X$.  Consequently, every open subset of every variety is simultaneously open and closed.  This is untrue in the Zariski topology (for example $\A^1 - \{0\} \subset \A^1$ is open but not closed). 
\end{exa}

\subsection{Fine moduli spaces}

We are finally ready to put all of the pieces together and define moduli functors.  Let $\cA$  be a set of algebra-geometric objects with an equivalence relation $\sim$, and let $\cC$ be the category $\Var$ of algebraic varieties or $\Sch$, the category of schemes.
Recall from the introduction that for any $T \in \cC$ we denote by $\cA_T$ be the set of (flat) families $\pi: \cF \rightarrow T$ over $T$ of objects in $\cA$.  Furthermore, suppose there exists a pullback operation such that for every $f: S \rightarrow T$ and family $\cF \in \cA_T$, there exists a family $f^*\cF \in \cA_S$ and an equivalence relation $\sim_T$ on $\cA_T$ which satisfy the following conditions:
\begin{itemize}
\item $\Id_T^*\cF  = \cF$ for any family $\cF \in \cA_T$,
\item given a morphism $f: S \rightarrow T$ and families $\cF, \cG \in \cA_T$ such that $\cF \sim_T \cG$, we have $f^*\cF \sim_S f^*\cG$,
\item given a pair of morphisms $f: S \rightarrow T$ and $g: R \rightarrow S$ and a family $\cF \in \cA_T$, we have $(f \circ g)^*\cF \sim_R f^*g^*\cF$,
\item if $T$ is the variety consisting of a single point, then $\cA_T = \cA$ and $\sim_T = \sim$.
\end{itemize} 

\begin{defn}
The \textbf{moduli functor} corresponding to $(\cA, \sim)$ is the functor $\cM: \opcat{\cC} \rightarrow \Set$ defined by $\cM(T) = \cA_T/\sim_T$ and $\cM(f) = f^*$.  If this functor $\cM$ is representable with universal object $(X, \cU)$, then $X$ is called the \textbf{fine moduli space} of objects in $\cA$, and $\cU$ is called a \textbf{universal family} of objects in $\cA$.
\end{defn}

Note that the definition of moduli functor depends not only on $\cA$ and $\sim$, but also on the kinds of families we consider and the way we extend the equivalence relation $\sim$ to $\sim_T$.  Let us look at some examples of moduli functors, fine moduli spaces, and universal families.

\begin{exa}[Projective space]
\label{projfunct}
Projective space can be interpreted as a fine moduli space for the following two moduli functors:
\begin{enumerate}
\item Let $\cM: \opcat{\Var} \rightarrow \Set$ be the functor defined by 
\begin{align*}
& \cM(T) = \{\textrm{line subbundles } \cL \subset \C^{n+1} \times T \} / \textrm{isomorphism of subbundles}\\
& \cM(f)(\cL) = f^*\cL.
\end{align*} 
Note this is just a generalization of the functor from Example \ref{repex}.  As in that example, it is not hard to see that $\cM$ is represented by $\P^n$, and the corresponding universal family is just the tautological line bundle $\cO_{\P^n}(-1)$. 
\item  Here is a different perspective on the same example.  Rather than considering families of lines $\ell$ through the origin in $\C^{n+1}$, we look instead at the dual concept of surjections $(\C^{\vee})^{n+1} \twoheadrightarrow \ell^{\vee}$.  This allows us to define following moduli functor:
\begin{align*}
& \cM: \opcat{\Var} \rightarrow \Set \\
& \cM(T) = \left\{ 
     \begin{tabular}{lll} $\cL$ is a line bundle generated by global sections over T\\ $s_0, \dots, s_n \in H^0(T, \cL)$ is a basis of global sections \end{tabular}
   \right\}  \bigg/  \textrm{iso.}\\
& \cM(f)(\cL, s_0, \dots, s_n) = (f^*\cL, f^*s_0, \dots, f^*s_n),
\end{align*}
where isomorphisms are just isomorphisms of line bundles that send the corresponding $(n+1)$-tuples of global sections to each other.
Note an element in the set $\cM(T)$ is the equivalent to an isomorphism class of the quotient $\C^{n+1} \times T \rightarrow \cL$.
An argument dual to the one in the previous part shows that $\cM$ is still represented by $\P^n$, but the universal family is the hyperplane bundle $\cO_{\P^n}(1)$. 
\end{enumerate}
\end{exa}

\begin{exa}[Quadruples of points in $\P^1$]
Consider the classification problem where $\cA = \{p = (p_1, p_2, p_3, p_4)| p_i \in \P^1 \textrm{ and } p_i \neq p_j \textrm{ for } i \neq j\}$ and $p \sim p'$ if there is an element of $\Aut{\P^1} = \PGL(2,\C)$ mapping $p$ to $p'$.  

Recall that for each $p \in \cA$ there is a unique $\varphi_p \in \Aut(\P^1)$ such that 
\[
\varphi_p(p_1) =0, \varphi_p(p_2) = 1, \varphi_p(p_3) = \infty. 
\]
The image $\varphi_p(p_4)$ is called the \emph{cross-ratio} of $p$, and it is denoted by $\lambda(p) \in \P^1$. If the four points are $p_1 = [x_1:1], p_2 = [x_2:1], p_3 = [x_3:1], p_4 = [x_4:1]$ (i.e. all distinct from $[1:0]$), then the cross-ratio is given by the formula:
\[
\lambda(p) = \frac{(x_2-x_3)(x_4-x_1)}{(x_2-x_1)(x_4-x_3)}.
\]
  It follows that $p \sim (0,1,\infty, \lambda(p_4))$.  Now let 
\[
\cA_T = \{T \times \P^1 \rightarrow T \textrm{ trivial bundle}, \sigma_1, \sigma_2, \sigma_3, \sigma_4 \textrm{ disjoint sections}\},
\]
where ''disjoint sections`` means $\sigma_i(t) \neq \sigma_j(t)$ for all $t \in T$ and $i \neq j$.  Let the equivalence relation be $\sim_T$ defined by $(T \times \P^1, \sigma_1, \sigma_2, \sigma_3, \sigma_4) \sim_T (T \times \P^1, \sigma_1', \sigma_2', \sigma_3', \sigma_4')$ given by isomorphisms $f: T \times \P^1 \rightarrow T \times \P^1$ such that $f \circ \sigma_i = \sigma_i'$.  We can see that over a point $\sim_S$ becomes the equivalence relation $\sim$.

We should technically consider families over $T$ to be $\cF \xrightarrow{\pi} T$ where $\cF$ is a variety, $\pi$ is a flat, proper morphism such that $\inv{\pi}(t) \cong \P^1$ for $ t \in T$.  However, if $\cF$ admits $3$ or more disjoint sections, then $\cF \cong T \times \P^1$ as families over $T$.  This means we may interpret $\cA_T$ are the set of families of elements of $\cA$ over $T$. 

Let $\cM_{0,4}: \opcat{Var} \rightarrow \Set$ be defined on objects by $\cM_{0,4}(T) = \cA_T/\sim_T$.  For $f: S \rightarrow T$ the morphism $\cM_{0,4}(f)$ is given by pullback.  Now, denote $M_{0,4} = \P^1 - \{0,1,\infty\}$ and $\cU  = (M_{0,4} \times \P^1, \tau_1, \tau_2, \tau_3, \tau_4)$, where the four sections of $M_{0,4} \times \P^1$ are $\tau_1(x) = (x,0), \tau_2(x) = (x,1), \tau_3(x) = (x,\infty), \tau_4(x) = (x,x)$.

We can check that the pair $(M_{0,4}, \cU)$ is a universal object for the moduli functor $\cM_{0,4}$.  Indeed, let $\cF  = (T \times \P^1, \sigma_1, \dots, \sigma_4) \in \cA_T$.  Define $f: T \rightarrow M_{0,4}$ by $f = \lambda \circ \sigma$, where $\sigma = (\sigma_1, \cdots, \sigma_4)$ and $\lambda$ sends a quadruple of points to their cross-ratio.  Let $\varphi: T \times \P^1 \rightarrow T \times \P^1$ be the mapping defined by $\varphi(t,x) = (t,\varphi_{\sigma(t)})$.  It is not hard to check that $\varphi$ is a well-defined isomorphism of varieties that gives us $f^*\cU \sim_T \cF$.  Since $f$ is defined in terms of the cross-ratio it is the unique morphism $T \rightarrow M_{0,4}$ with this property. 
\end{exa}

\begin{exa}[The Grassmannian]
\label{grassmannfunct}
Let us consider the following generalization of Example \ref{projfunct}:  
\begin{align*}
& \cG r(d,n): \opcat{\Var} \rightarrow \Set \\
& \cG r(d,n)(T) = \{E \textrm{ is a rank $d$ vector subbundle of } T \times \C^n\} /  \textrm{iso. of subbundles}\\
& \cG r(d,n)(f)(E) = f^*E.  
\end{align*}
This functor is represented by the Grassmannian $\Gr(d,n)$, which parametrizes $d$-dimensional subspaces of the vector space $\C^n$.  The universal family is the tautological bundle $\cT = \{(V,x)| x \in V \} \subset \Gr(d,n) \times \C^n$ where the fiber over each point of $\Gr(d,n)$ is the corresponding subspace in $\C^n$.
 
To show that $(\Gr(d,n))$ is a universal object we need to construct for a given rank $d$ subbundle $E \subset T \times \C^n$ a unique morphism $f: T \rightarrow \Gr(d,n)$ such that $E \simeq  f^*\cT$ as subbundles of $T \times \C^n$.  Let $\{ U_i \rightarrow T\}$  be a covering of $T$ that trivializes $E$ as a subbundle.  That is, $E|_{U_i} \cong U_i \times \C^d \subset U_i \times \C^n = T \times \C^n|_{U_i}$.  This defines an $n \times d$ matrix with entries in $\cO_T(U_i)$.  Therefore, we obtain a regular mapping  $\varphi_i: U_i \rightarrow \Mat_d(n \times d, \C)$ into rank $d$ matrices of size $n \times d$.  

Let $g: \Mat_d(n \times d, \C) \rightarrow \Gr(d,n)$ be the morphism defined by sending each matrix to the point corresponding to its column space.  Let $f_i = g_i \circ \varphi_i$.  The transition functions for $E$ act on each $\phi_i|_{U_i \cap U_j}$ by change of basis.  Therefore, $f_i|_{U_i \cap U_j} = f_j|_{U_i \cap U_j}$.  This means the morphisms $f_i$ glue together to define a morphism $f: T \rightarrow \Gr(d,n)$.  Note that by definition $f(t)$ is just the point of $\Gr(d,n)$ corresponding to the fiber $E_t$ over $t \in T$ of $E$.  This means $f^*\cT = \{(t,x)| x \in E_t \} \subset T \times \C^n$, so $f^*\cT = E$ as a subbundle of $T \times \C^n$.  It is not hard to show that $f$ is unique with this property.  Note that as with the moduli functor represented by $\P^n$, there is a dual construction of $\cG r(d,n)$ that involves quotient bundles rather than subbundles.
\end{exa}

\begin{rmk}
The moduli functor in Example \ref{grassmannfunct} can be generalized even further as follows:
\begin{align*}
& \cF l (d_0, \dots, d_l): \opcat{\Var} \rightarrow \Set \\
& \cF l(d_0, \dots, d_l)(T) = \left\{ 
     \begin{tabular}{lll} a filtration by subbundles\\ $0 = E_{l+1} \subset \cdots \subset E_0 = T \times \C^n$\\ $E_i$ is a subbundle of rank $d_i$ \end{tabular}
   \right\}  \Bigg/  \textrm{iso. of subbundles}\\\\
& \cF l(d_0, \dots, d_l)(f)(E) = f^*E \textrm{ for } f: S \rightarrow T.  
\end{align*}
This functor is represented by the flag variety $\Fl(d_0, \dots, d_l)$ parametrizing flags of subspaces $0  = V_l \subset \cdots \subset V_0 = \C^n$.  The universal bundle is the tautological bundle that assigns to every point of $\Fl(d_0, \dots, d_l)$ the corresponding flag in $\C^n$.
\end{rmk}

\subsection{Problems with representability}
Unfortunately, the circumstances in the previous several examples are uncommon, and, in general, fine moduli spaces do not exist.  Let us consider several examples that demonstrate problems with defining representable functors.

\begin{exa}
\label{vspacenonrep}
\leavevmode
\begin{enumerate}
\item Let us consider the classification problem where $\cA = \{\C\textrm{-vector spaces}\}$ and $\sim$ is vector space isomorphisms.  This can be naturally extended to a moduli functor (see below).  However, even before considering families, we can already see that the equivalence classes in $\cA/\sim$ do not correspond to the points of an algebraic variety.  Indeed, for any $n \in \Z_{\ge 0}$, there is a unique equivalence class in $\cA/\sim$ of vector spaces of dimension $n$. Therefore, any variety classifying $\cA/\sim$ would have to have an open affine subvariety $X$ whose points form a countably infinite subset of $\Z_{\ge 0}$.  By the Noether normalization lemma, this implies there is a finite surjection from $X$ to $\A^d$ for some $d \ge 0$.  Since this morphism is finite, then $d \ge 1$.  However, this is impossible since in this case $\A^d$ is uncountable.

\item Even if we consider a modified version of the classification problem above, and let $\cA$ consist only of $n$-dimensional vector spaces, we will have trouble constructing a fine moduli space.  Indeed, consider the following moduli functor:
\begin{align*}
& \cM(T) = \{E \textrm{ vector bundle over } T \textrm{ of rank } n\}/\textrm{iso.}\\
& \cM(f)(E) = f^*E \textrm{ for } f: S \rightarrow T.
\end{align*}
Since there is only one $n$-dimensional vector space up to isomorphism, the fine moduli space representing this functor should consist of a single point.  Therefore, the universal family should just be the trivial rank $n$ vector bundle over a point.  However, this implies any rank $n$ vector bundle on any variety has to be isomorphic to the trivial one, which is clearly untrue.
\end{enumerate}
\end{exa}

The problem illustrated in the second part of this example can be described as follows.  For any family $\cF \in \cA_T$, the fiber $\cF_t$ is just the pullback along the morphism $g_t: pt \rightarrow T$ from the one point variety that sends the point to $t \in T$.  A family of objects $\cF \in \cA_T$ is called \emph{isotrivial} if any two fibers $\cF_{t_1} \sim \cF_{t_2}$ are equivalent.  

If a moduli functor corresponding to a classification problem is representable, then any isotrivial families in $\cA_T$ must be equivalent to a trivial family (i.e. the pullback of the universal family by a constant morphism) under $\sim_T$.  

Indeed, let $X$ be the fine moduli space, $\cU$ the universal family, and $\cF \in \cA_T$ an isotrivial family.  This means $\cF \sim_T f^*\cU$ for some $f: T \rightarrow X$.  Let $g_t: \Spec \C \rightarrow T$ be the morphism sending the single point of $\Spec \C$ to $t \in T$.  We have $(f\circ g_t)^* \cU \sim \cU_{f(t)}$. Furthermore, $(f \circ g_t)^*\cU \sim g_t^*f^*\cU \sim g_t^*\cF$ for all $t \in T$.  Since $\cF$ is isotrivial, this implies $\cU_{f(t)} \sim \cU_{f(t')}$ for any $t,t'\in T$.  This is only possible if $f$ is constant.

The existence of a nontrivial isotrivial family of objects in $\cA_T$ for some variety $T$ means the moduli functor is not representable.  This is often the case when the objects of $\cA$ being classified have nontrivial automorphisms.  For vector spaces of fixed dimension seen Example \ref{vspacenonrep}, nontrivial automorphisms are responsible for the existence of nontrivial isotrivial families, namely: nontrivial vector bundles.    

The next example demonstrates an even more serious problem with representability.
\begin{exa}
\label{jorquivex}
Let us consider the following moduli problem to classify $n$-dimen\-sion\-al Jordan quiver (see Section 1.1) representations over $\C$.  That is, we are interested in the following moduli functor:
\begin{align*}
& \cM: \opcat{\Var} \rightarrow \Set \\
& \cM(T) = \left\{ 
     \begin{tabular}{ll} rank $n$ vector bundles $E$ on $T$\\ endomorphisms $\varphi: E \rightarrow E$ \end{tabular}
   \right\}  \bigg/ 
     \begin{tabular}{ll}vector bundle iso. commu-\\ting with endomorphisms\end{tabular}\\
& \cM(f)(E) = f^*E \textrm{ for } f: S \rightarrow T.  
\end{align*}
This functor is not representable even in the case when $n = 2$.  Indeed, consider the family $\cF = \left(\A^1 \times \C^2, \begin{pmatrix}1 & z\\ 0 & 1 \end{pmatrix}\right)$, where $z$ is the coordinate on $\A^1$.  Note that for $z \neq 0$ all the fibers of this family are isomorphic, since all of the corresponding endomorphisms have the same Jordan normal form.  However, if $z =  0$, then we get the identity endomorphism on the fiber.  This means all of the fibers of $\cF$ are equivalent except for the fiber over $z = 0$.

Now, supposed $\cM$ is representable with universal object $(X, \cU)$.  Let $f: \A^1 \rightarrow X$ such that $\cF \sim_{\A^1} f^*\cU$.  For morphisms $g: \Spec \C \rightarrow \A^1$ such that $g(\Spec \C) \in \A^1 - \{0\}$ the pullbacks $(g \circ f)^* \cU$ are all equivalent.  As we have already seen in the discussion of isotrivial families, this means $f|_{\A^1 - \{0\}}$ is constant.  Let $f(\A^1 - \{0\}) = x \in X$.  We see that $\inv{f}(x)$ is closed in $\A^1$ and also contains $\A^1 - \{0\}$.  Therefore, we have $\inv{f}(x) = \A^1$, making $f$ constant.  This means $f^* \cU$ is the trivial family, but this is impossible since $\cF_0$ is not equivalent to the other fibers.
\end{exa}

The problem demonstrated in the above example may be summarized as follows.  We say a moduli problem exhibits the \emph{jump phenomenon} if there exists a family $\cF$ over $\A^1$ such that $\cF_{z_1} \sim \cF_{z_2}$ for all $z_1, z_2 \in \A^1 - \{0\}$, but $\cF_0$ is not equivalent to the other fibers.  We can see that the jump phenomenon prevents the existence of a fine moduli space of vector bundles of fixed rank and degree in the following example.

\begin{exa}
Let us consider the moduli problem to classify vector bundles of rank $2$ and degree $0$ on $\P^1$.  Namely, consider the moduli functor
\begin{align*}
& \cM: \opcat{\Var} \rightarrow \Set \\
& \cM(T) = \left\{ 
     \begin{tabular}{lll} vector bundles $E$ on $T \times \P^1$\\ $\rk E = 2$, $\deg E|_{\{t\} \times \P^1} = 0$ \end{tabular}
   \right\}  \bigg/  \textrm{vector bundle iso.}\\
& \cM(f)(E) = f^*E \textrm{ for } f: S \rightarrow T.  
\end{align*}
Recall that the elements of $\Ext^1(\cO_{\P^1}(1), \cO_{\P^1}(-1))$ are in one-to-one correspondence with equivalence classes of extensions.  We can compute 
\[
\Ext^1(\cO_{\P^1}(1), \cO_{\P^1}(-1)) \cong \Hom(\cO_{\P^1}(-1), \cO_{\P^1}(-1))^{\vee} \cong \End(\cO_{\P^1}(-1))^{\vee} \cong \C
\]
using Serre duality. We can also see this by looking at the transition functions of these extensions, which all look like $\begin{pmatrix}\Id & c \\ 0 & \Id \end{pmatrix}$ for some $c \in \C$. 

Identifying $\C$ with $\A^1$ we can obtain a rank $2$ bundle $E$ on $\A^1 \times \P^1$ with $E \in \cM(\A^1)$ such that $E|_{\{z\} \times \P^1}$ is the extension in $\Ext^1(\cO_{\P^1}(1), \cO_{\P^1}(-1))$ corresponding to $z$.  This bundle $E$ is just the universal extensions obtained by pulling back the bundles $\cO_{\P^1}(1)$ and $\cO_{\P^1}(-1)$ to $\A^1 \times \P^1$ and considering their extension corresponding to the transition function $\begin{pmatrix}\Id & z\\ 0 & \Id \end{pmatrix}$.

Note that for any $z \in \A^1 - \{0\}$, the fiber $E_z$ of $E$ considered as a bundle over $\A^1$ is isomorphic to $\cO_{\P^1} \oplus \cO_{\P^1}$.  However, for $z = 0$, the fiber $E_0$ is $\cO_{\P^1}(-1)\oplus \cO_{\P^1}(1)$.  Thus the moduli problem exhibits the jump phenomenon, so no fine moduli space exists. 
\end{exa}
 
\subsection{Solving problems with representability}
Unfortunately, the pathologies seen in the previous section are all too common, and the existence of a fine moduli space representing a given moduli functor is the exception rather than the rule.  There are several potential fixes we can apply to get some kind of solution to a moduli problem in the case that the corresponding moduli functor is not representable.  

\subsubsection{Rigidification}

We have seen previously that fine moduli spaces can fail to exist because the objects being classified have automorphisms.  To remedy this we instead attempt to classify these objects together with some additional structure.  We call this additional structure \emph{rigidity}.  The specific procedure used to obtain a representable moduli functor heavily depends on the starting moduli problem.  Here is an example based on classifying vector spaces (see Example \ref{vspacenonrep}).

\begin{exa}
Recall there is no fine moduli space classifying families of $n$-dimensional vector spaces up to isomorphism.  We can modify the corresponding moduli functor as follows:
\begin{align*}
& \cM: \opcat{\Var} \rightarrow \Set\\
& \cM(T) = \left\{ 
     \begin{tabular}{lll} vector bundles $E$ on $T$ of rank $n$,\\ isomorphism $r: T \times \C \rightarrow E$ \end{tabular}
   \right\} \bigg/  
     \begin{tabular}{ll}vector bundle iso.\\ commuting with $r$\end{tabular}\\
& \cM(f)(E) = f^*E \quad \textrm{ such that } f: S \rightarrow T. 		
\end{align*}
\end{exa}
Let $X  = \Spec \C$.  The universal object for the moduli functor $\cM$ is $(X, \Id: X \times \C^n \rightarrow X \times \C^n)$.  Indeed, let $(E,r)$ be a pair such that $E$ is a rank $n$ vector bundle and $r: T \times \C^n \rightarrow E$ is a vector bundle isomorphism.  If $f: T \rightarrow X$ is the only possible (constant) morphism, we see that $(X \times \C^n, \Id)$ pulls back to the trivial bundle $(T \times \C^n, \Id)$ along $f$.  The isomorphism $r$ defines an equivalence between $(T \times \C^n, \Id)$ and $(E, r)$.

\subsubsection{Coarse Moduli Spaces}

In order for a moduli functor to be representable we need to have not only a fine moduli space parametrizing equivalence classes of objects but also a universal family classifying equivalence classes of families.  Even if a universal family does not exist, we can still try and look for a variety $X$ such that the points of $X$ are in bijection with equivalence classes of objects.  This leads to the following definition.

\begin{defn}
Let $\cM: \opcat{Var} \rightarrow \Set$ be a moduli functor.  A \textbf{coarse moduli space} for $\cM$ is a pair $(X, \eta)$ such that $X$ is a variety and $\eta: \cM \rightarrow h_X$ is a natural transformation satisfying
\begin{enumerate}
\item $\eta_T: \cM(T) \rightarrow h_X(T)$ is a bijection for $T  = \Spec \C$
\item For any $Y \in \Var$ and natural transformation $\nu: \cM \rightarrow h_Y$ there exists a unique morphism $f: X \rightarrow Y$ such that $\nu = h(f) \circ \eta$ ($h$ is the Yoneda embedding).
\end{enumerate}
\end{defn}

Unpacking this definition, we see the first property is exactly the requirement that the points of $X$ are in bijection with equivalence classes of objects.  The second property is the universal property.  It implies that $X$ is unique up to unique isomorphism.

\begin{exa}
\label{coarsemodex}
Going back to the moduli functor $\cM$ for $n$-dimensional vector spaces up to isomorphism from Example \ref{vspacenonrep}, we can see that even though $\cM$ is not representable, the pair $(X, \eta)$, where $X = \Spec \C$ and $\eta_T: \cM(T) \rightarrow \Hom(T,X)$ is the constant map, is a coarse moduli space for $\cM$.

Indeed, it is clear that $\eta_X: \cM(X) \rightarrow \Hom(X,X)$ is a bijection, since there is only one $n$-dimensional vector space up to isomorphism.  Now, let $Y$ be a variety and $\nu: \cM \rightarrow h_Y$ a natural transformation.  For all $g: X \rightarrow T$ and $E \in \cM(T)$ we get 
\[
\nu_X(\cM(g)(E)) = \nu_X(X \times \C^n) = \nu_T(E) \circ g.
\]
Let $f = \nu_X(X \times \C^n)$.  The above relation gives us $\nu_T(E) = \nu_X(X \times \C^n) \circ \eta_T(E)$.  Consequently, we have the universal property.
\end{exa} 

Unfortunately, in certain situations the moduli functor behaves so badly that not even a course moduli space exists.  This is case when the jump phenomenon occurs (a slightly modified version of the argument in Example \ref{jorquivex} proves this).

\subsubsection{Stacks}

The last approach to fixing moduli functors we will be discussing is by far the most complicated.  Therefore, we will only give a brief informal description of stacks, and a more rigorous treatment can be found in \cite{Vi2005} or \cite{Ol2016}. 

In all of the examples we have considered so far, moduli spaces have existed (or did not exist) in the category of varieties.  If we relax the condition that a moduli space has to be a variety (or even that it has to be a scheme) we can try to find another type of object to represent our moduli functor.  One way of doing this in a meaningful manner is based on the following analogy.

Let $X$ be a topological space, and let $\cC_X$ be the category where the objects are open subsets of $X$ and morphisms are inclusions of open sets.  A presheaf of sets on $X$ may be viewed as a functor $F: \opcat{\cC}_X \rightarrow \Set$.  The presheaf $F$ is a sheaf if the following two conditions hold.
\begin{enumerate}
\item For any cover $\{U_i\}$ be an open cover of $U \subset X$ is an open subset and any $s,t \in F(U)$ such that $s|_{U_i \cap U_j} = t|_{U_i \cap U_j}$ we have $s = t$.  Here $s|_{V} = F(f)(s) \in F(V)$ where $f: V \rightarrow U$.
 
\item For any open cover $\{U_i\}$ of $U \subset X$ and $s_i \in F(U_i)$ such that $s|_{U_i \cap U_j} = s|_{U_i \cap U_j}$ there exists a $s \in F(U)$ such that $s|_{U_i} = s_i$ for all $i$.
\end{enumerate}

Another way of restating the sheaf conditions is that the inclusions $U_i \rightarrow U$ induce a bijective mapping between $\{s \in F(U)\}$ and $\{(s_i \in F(U_i))| s|_{U_i \cap U_j} = s|_{U_i \cap U_j}\}$.

We can apply the above interpretation to representable functors.  Namely, working in the category of schemes, a presheaf in the Zariski topology is a functor $F: \opcat{\Sch} \rightarrow \Set$.  An open cover of a scheme $U$ is just a covering of $U$  by a collection $\{U_i\}$ of open subschemes.  We say that $F$ is a \emph{sheaf in the Zariski topology} if $F$ satisfies the sheaf conditions listed above for any open cover of any scheme $U$.  It is not hard to see that the ability to glue morphisms in the Zariski topology makes the functor of points $h_X$ a sheaf.  This implies that every representable functor is a sheaf in the Zariski topology.

Note that the converse to this is not true.  It is not enough for $F$ to be a sheaf in the Zariski topology for it to be representable.  An additional condition that amounts to $F$ being ``covered'' by subfunctors represented by affine schemes (see Section 25.15 in \cite{Sta2019}) is necessary for $F$ to be representable.

This idea that representable functors are just sheaves with values in $\Set$ can be generalized by replacing the Zariski topology with a more exotic one.  The key idea is to think of an open cover of $U$ not as a collection of subsets $\{U_i\}$ but instead as a collection of morphisms $\{U_i \rightarrow U\}$ satisfying certain properties. Formalizing this notion leads to the definition of a \emph{Grothendieck topology} on a category (see \cite{Vi2005}).

In this interpretation the Zariski topology may be viewed as a Grothendieck topology on $\Sch$ which assigns to each scheme $U \in \Sch$ an open cover $\{U_i \rightarrow U\}$, where the morphisms are open embeddings (of schemes).

Other important Grothendieck topologies comes from replacing open embeddings with various types of flat morphisms.  For example, the \emph{\'{e}tale topology} considers coverings by \'{e}tale morphisms, the \emph{fppf topology} uses coverings by morphisms that are faithfully flat and locally of finite presentation, and the \emph{fpqc topology} works with coverings by morphisms that are faithfully flat and quasicompact (open quasicompact subsets are images of open quasicompact subsets).

By replacing the Zariski topology on $\Sch$ with one of these (finer) topologies, we can get potentially more interesting sheaves.  For example a sheaf with respect to the \'{e}tale topology on $\Sch$
\[
F: \opcat{\Sch}_{\textrm{ \'{e}tale}} \rightarrow \Set
\] 
together with a ``covering'' condition similar to the one for schemes is called an \emph{algebraic space} (see Chapter 5 in \cite{Ol2016}).

Unfortunately, simply changing the topology on $\Sch$ is often not sufficient if we are trying to solve a moduli problem where the objects have nontrivial automorphisms.  To account for these, we replace the target category $\Sch$ with the category of groupoids $\Gpd$.  The objects of $\Gpd$ are categories in which all of the morphisms are isomorphisms.  A functor
\[
F: \opcat{\Sch}_{\textrm{ fppf}} \rightarrow \Gpd\\
\]
is called a \emph{stack} if it is a sheaf in the fppf-topology.  The gluing procedure necessary for $F$ to be a sheaf is called \emph{decent}.  An additional ``covering'' condition can be used to obtain an \emph{algebraic stack}.  Note that as $\Gpd$ is a category of categories, then $F$ is actually a \emph{pseudofunctor} (also called a \emph{weak $2$-functor}).

It is possible to think of a stack $\cX$ as a \emph{category fibered in groupoids} rather than as a pseudofunctor (\cite{Vi2005}).  In the analogy with sheaves on a topological space this is akin to taking the ``total space''.  The corresponding functor $F$ acts like a section of this space.  

If we are interested in stacks that solve a given moduli problem, we define a moduli functor as follows:
\begin{align*}
& \cM: \opcat{\Sch}_{\textrm{ fppf}} \rightarrow \Gpd \\
& \cM(T) = \langle \textrm{families over $T$} \rangle,
\end{align*}
where the angled brackets $\langle \quad \rangle$ denote a groupoid.  Note that since the groupoid structure accounts for isomorphisms between families, we do not quotient out by the equivalence relation.  

\begin{exa}
\label{stackex}
\leavevmode
\begin{enumerate}
\item Let $X$ be a smooth, projective curve.  The moduli stack of rank $r$ and degree $d$ vector bundles on $X$ is defined by the functor
\begin{align*}
& \Bun_{r,d}(X): \opcat{\Sch}_{\textrm{ fppf}} \rightarrow \Gpd\\\ 
& \Bun_{r,d}(X)(T) = \left \langle \begin{tabular}{lll} vector bundles $E \rightarrow T \times X$,\\ $\rk E = r$ and $\deg E|_{\{t\} \times X} = d$ for all $t \in T$ \end{tabular} \right\rangle
\end{align*}
\item Let $G$ be an affine algebraic group acting on an algebraic variety $X$ (see Section 6 for details).  The \emph{quotient stack} corresponding to this action is defined as follows
\begin{align*}
& [X/G]: \opcat{\Sch}_{\textrm{ fppf}} \rightarrow \Gpd\\\ 
& [X/G](T) = \left \langle \begin{tabular}{llll} $\pi: E \rightarrow T$ principal $G$-bundles on $T$,\\ equivariant morphisms $p: E \rightarrow X$\end{tabular} \right\rangle
\end{align*}
The intuition is that the ``points'' of $[X/G]$ are orbits with respect to the $G$-action and the automorphism groups of these points are just the stabilizers.

To see this is plausible, let us assume that the quotient $X/G$ exists as a variety (in the sense that $X/G$ has an algebraic variety structure and $X \rightarrow X/G$ is a principal $G$-bundle).  In this case, the points of $X/G$ over $T$ can be seen as morphisms $\Hom(T, X/G)$.  Given a morphism $f: T \rightarrow X/G$, the pullback of the $G$-bundle $f^*X$ yields an object in $[X/G](T)$.  Conversely, given an object in $[X/G](T)$ we can construct $f: T \rightarrow X/G$ via gluing using a local trivialization (in the \'{e}tale topology) of the principal $G$-bundle $E$.

In the case that $X = \Spec \C$, the mapping $p$ in the definition of $[X/G](T)$ is irrelevant, and we are left with just $G$-bundles over $T$.  If $G = \GL(n, \C)$, we get
\[
[X/G](T) = \langle \textrm{vector bundles } E \rightarrow T \textrm{ of rank } n \rangle.
\]
This can be seen as the stacky version of the moduli functor for dimension $n$ vector spaces that we have already discussed in Example \ref{vspacenonrep}.  From Remark \ref{stackyquotient} it follows this stack has a single point corresponding to the unique isomorphism class of $n$-dimensional vector spaces. 
\end{enumerate}
\end{exa}

\section{The Beilinson spectral sequence}
\subsection{Spectral sequences}

Before proceeding with further examples of moduli spaces, we would like to prove an important computational result that will be used in the next two sections to identify coherent sheaves with quiver representations.  In order to do this, we begin with a short introduction to spectral sequences, focusing on the spectral sequences used to compute right hyper-derived functors. 

Throughout this section let $\cC$ be an abelian category with enough injectives.  Let $F: \cC \rightarrow \cC$ be a left exact functor.  Recall the construction of the right derived functor of $F$.  Namely, for any $\cE \in \cC$ consider the injective resolution 
\[
0 \rightarrow \cE \rightarrow \cI^0 \rightarrow \cI^1 \rightarrow \cdots.
\]
Applying $F$ we get the chain complex 
\[
0 \rightarrow F(\cI^0) \xrightarrow{d_0} F(\cI^1) \xrightarrow{d_1} \cdots.
\]
The \emph{right derived functor} $R^iF$ is defined by $R^iF(\cE) = \frac{\ker d_i}{\Im d_{i-1}}$.  Note that if $G: \opcat{\cC} \rightarrow \cC$ is a left exact functor, we can compute $R^iG(\cE)$ as the cohomology of the complex obtained from a projective resolution of $\cE$.
\begin{exa}
\leavevmode
\begin{enumerate}
\item  If $f: X \rightarrow Y$ is a morphism of ringed spaces and $F = f_*$ is the direct image functor for $\cO_X$-module, then $R^if_*$ is the higher direct image functor.  If $Y$ consists of a single point, then $R^if_* = H^i$ is the cohomology functor.
\item If $X$ is a ringed space, and $F = \Hom(\cE, -)$ is the $\Hom$ functor for some $\cO_X$-module $\cE$, then $R^iF(\cG) = \Ext^i(\cE, \cG)$ for any $\cO_X$-moduli $\cG$.  Similarly, if $G = \Hom(-, \cE)$, then $R^i(\cG) = \Ext^i(\cG,\cE)$. 
\end{enumerate}
\end{exa}

We would like to be able to obtain an analogous definition of a right derived functor for complexes of objects in $\cC$.  To do this, we must first be able to construct injective resolutions of complexes.  This can be done with the help of the following result (see Section III.7 of \cite{GeMa2003} for proof):

\begin{thm}[Cartan-Eilenberg resolution]
Let $\cC$ be an abelian category with enough injectives, and let $\cE^{\bullet}$ be complex of objects in $\cC$.  There exists a double complex of objects of $\cC$

\begin{equation*}
\begin{tikzcd}
& \vdots & \vdots & \vdots & \\
\cdots \arrow[r] & \cI^{p, 0} \arrow[u, "d^{p, 1}"] \arrow[r, "d^{p, 0}"] & \cI^{p+1, 0} \arrow[u, "d^{p+1, 1}"] \arrow[r, "d^{p+1, 0}"] & \cI^{p+2, 0} \arrow[u, "d^{p+2, 1}"] \arrow[r, "d^{p+2, 0}"] & \cdots\\
\cdots \arrow[r] & \cE^p \arrow[u] \arrow[r] & \cE^{p+1} \arrow[u] \arrow[r] & \cE^{p+2} \arrow[u] \arrow[r] & \cdots
\end{tikzcd}
\end{equation*}

$I^{\bullet \bullet}$ such that $I^{pq} = 0$ for $q <0$ and the entries in the column over any $\cE^i = 0$ are equal to $0$, and furthermore we have
\begin{enumerate}
\item the complex $\ker(d^{p\bullet})$ forms an injective resolution of $\ker(\cE^p \rightarrow \cE^{p+1})$,
\item the complex $\Im(d^{p-1\bullet})$ forms an injective resolution of $\Im(\cE^{p-1} \rightarrow \cE^{p})$.
\item the complex $H^p(\cI^{p \bullet})$ forms an injective resolution of $H^p(\cE^{\bullet})$.
\end{enumerate}
\end{thm}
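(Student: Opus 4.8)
The plan is to construct the double complex one column at a time, applying the Horseshoe Lemma for injective resolutions twice per column. Recall that the Horseshoe Lemma takes a short exact sequence $0 \to A' \to A \to A'' \to 0$ together with injective resolutions of $A'$ and $A''$ and produces an injective resolution of $A$ which is term-by-term the direct sum of the two given resolutions and which fits into a short exact sequence of complexes. For each $p$ set $Z^p = \ker(\cE^p \to \cE^{p+1})$ and $B^p = \Im(\cE^{p-1} \to \cE^p)$, so that $H^p(\cE^{\bullet}) = Z^p/B^p$, and note the two canonical families of short exact sequences
\[
0 \to B^p \to Z^p \to H^p(\cE^{\bullet}) \to 0, \qquad 0 \to Z^p \to \cE^p \to B^{p+1} \to 0.
\]

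First I would invoke the hypothesis that $\cC$ has enough injectives to choose, for every $p$, injective resolutions of the building-block objects $B^p$ and $H^p(\cE^{\bullet})$, taking the zero resolution whenever the object vanishes; this last convention forces the columns over any $\cE^i = 0$ to vanish, as required. Applying the Horseshoe Lemma to the first sequence above gives an injective resolution of $Z^p$, termwise the direct sum of those of $B^p$ and $H^p(\cE^{\bullet})$. Feeding this resolution of $Z^p$ and the chosen resolution of $B^{p+1}$ into the Horseshoe Lemma for the second sequence produces the column $\cI^{p\bullet}$, an injective resolution of $\cE^p$ sitting in a short exact sequence of complexes whose sub is the resolution of $Z^p$ and whose quotient is the resolution of $B^{p+1}$. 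The vertical differentials are the differentials of these injective resolutions.

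I would then define the horizontal differential $d^{p\bullet}\colon \cI^{p\bullet} \to \cI^{p+1\bullet}$ as the composite
\[
\cI^{p\bullet} \twoheadrightarrow (\text{resolution of } B^{p+1}) \hookrightarrow (\text{resolution of } Z^{p+1}) \hookrightarrow \cI^{p+1\bullet},
\]
where the surjection is the quotient map from the second Horseshoe sequence at column $p$, the first inclusion comes from the first Horseshoe sequence at index $p+1$, and the second inclusion from the second Horseshoe sequence at column $p+1$. Each factor is a chain map, so $d^{p\bullet}$ commutes with the vertical differentials; and since the resolution of $Z^{p+1}$ is by construction the kernel of the surjection $\cI^{p+1\bullet} \twoheadrightarrow (\text{resolution of } B^{p+2})$, the composite $d^{p+1\bullet} \circ d^{p\bullet}$ vanishes. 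Hence $\cI^{\bullet\bullet}$ is a genuine double complex augmenting $\cE^{\bullet}$.

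Finally the three properties can be read off the factorization of $d^{p\bullet}$. As a surjection onto the resolution of $B^{p+1}$ followed by injections, $d^{p\bullet}$ has kernel exactly the resolution of $Z^p$, giving (1), and $\Im(d^{p-1\bullet})$ is exactly the copy of the resolution of $B^p$ inside $\cI^{p\bullet}$, giving (2); the horizontal cohomology $H^p(\cI^{p\bullet}) = \ker(d^{p\bullet})/\Im(d^{p-1\bullet})$ is therefore the quotient of the resolution of $Z^p$ by that of $B^p$, which by the first Horseshoe sequence is the chosen resolution of $H^p(\cE^{\bullet})$, giving (3). The work is not in any single step but in the bookkeeping: I expect the main obstacle to be keeping the two nested Horseshoe constructions mutually consistent across all $p$ so that the various inclusions and surjections are simultaneously chain maps, and checking that the termwise direct-sum (upper-triangular) structure of the Horseshoe differentials makes the quotient in (3) equal to the chosen resolution of $H^p(\cE^{\bullet})$ on the nose, rather than merely some injective resolution of it.
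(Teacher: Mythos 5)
Your proof is correct and is the standard argument: the paper itself does not prove this theorem but defers to Section III.7 of \cite{GeMa2003}, and the double application of the Horseshoe Lemma you describe (first to $0 \to B^p \to Z^p \to H^p(\cE^\bullet) \to 0$, then to $0 \to Z^p \to \cE^p \to B^{p+1} \to 0$, with the horizontal differential defined as the composite through the resolution of $B^{p+1}$) is precisely the proof found there and, dualized, in Chapter 5 of \cite{We1994}. The bookkeeping you flag at the end does go through: the Horseshoe short exact sequences make all the relevant inclusions and surjections chain maps by construction, so the augmentation squares commute, $\ker(d^{p\bullet})$ and $\Im(d^{p-1\bullet})$ are identified with the resolutions of $Z^p$ and $B^p$ because monomorphisms do not change kernels and epimorphisms do not change images, and the quotient in (3) is the chosen resolution of $H^p(\cE^{\bullet})$ on the nose rather than merely isomorphic to one.
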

The double complex $I^{\bullet \bullet}$ is called an \emph{injective Cartan-Eilenberg resolution} of $\cE^{\bullet}$.  If $F: \cC \rightarrow \cC$ is a left exact functor, then the \emph{right hyper-derived functor} of $F$ is defined as
\[
\R^iF(\cE^{\bullet}) := H^i(\Tot F(\cI^{\bullet \bullet})),
\] 
the $i$-th cohomology of the total complex associated to the double complex $\cI^{\bullet \bullet}$.  If $f: X \rightarrow Y$ is a morphism of ringed spaces, then analogous to the higher direct image functor $R^if_*$, we may define $\R^if_*$.  In the case when $Y$ is a point, $\R^if_* = \bH^i$ is called the \emph{hypercohomology} functor.

\begin{rmk}
There is a dual construction of the left hyper-derived functor $\bL^iF$ associated with a right exact functor $F$, which involves a projective Cartan-Eilenberg resolution.  It is entirely analogous to the construction of the left derived functor of $F$ and can been seen in full detail in Chapter 5 of \cite{We1994}.
\end{rmk}

We will need to compute $\R^if_*$ in the category of sheaves.  In order to do this, we will be using  
spectral sequences, which we will briefly introduce below.

\begin{defn}
Let $\cC$ be an abelian category.  A \textbf{cohomology spectral sequence} consists of the data
\begin{enumerate}
\item A family of objects $\{E^{p, q}_r\}$ for $p,q,r \in Z$ and $r \ge 0$.
\item Differentials $d^{p, q}_r: E^{p, q}_r \rightarrow E^{p+r, q-r+1}$ satisfying 
\begin{enumerate}
\item $d^{p+r, q-r+1}_r \circ d^{p, q}_r = 0$
\item $E^{p, q}_{r+1} \cong \ker(d^{p, q}_r)/\Im(d^{p-r, q+r-1}_r)$.
\end{enumerate}
\end{enumerate}
\end{defn}

Cohomological spectral sequences can be visualized as a sequence of ``sheets'' $E_r$ of objects arranged at the vertices of the lattice $\Z^2$.  The first few sheets appear as follows.

\begin{equation*}
\begin{tikzpicture}[scale=0.8, every node/.style={scale=0.8}]
	\matrix (m) [matrix of math nodes,
    nodes in empty cells,nodes={minimum width=7ex,
    minimum height=7ex,outer sep=-5pt},
    column sep=1ex,row sep=1ex]{
          q     &      &     &     &    & \\
               & E_0^{03}  & E_0^{13} &  E_0^{23}  & E_0^{33}&\\
               &  E_0^{02}  & E_0^{12} &  E_0^{22}  & E_0^{32}&\\   
               &  E_0^{01}  & E_0^{11} &  E_0^{21}  & E_0^{31}&\\
               &  E_0^{00}  & E_0^{10} &  E_0^{20}  & E_0^{30}&\\
    \quad\strut &    &    &    &  & p\strut \\};
    \draw[-stealth] (m-5-2.north) -- (m-4-2.south);
		\draw[-stealth] (m-5-3.north) -- (m-4-3.south);
		\draw[-stealth] (m-5-4.north) -- (m-4-4.south);
		\draw[-stealth] (m-5-5.north) -- (m-4-5.south);
		
		\draw[-stealth] (m-4-2.north) -- (m-3-2.south);
		\draw[-stealth] (m-4-3.north) -- (m-3-3.south);
		\draw[-stealth] (m-4-4.north) -- (m-3-4.south);
		\draw[-stealth] (m-4-5.north) -- (m-3-5.south);
		
		\draw[-stealth] (m-3-2.north) -- (m-2-2.south);
		\draw[-stealth] (m-3-3.north) -- (m-2-3.south);
		\draw[-stealth] (m-3-4.north) -- (m-2-4.south);
		\draw[-stealth] (m-3-5.north) -- (m-2-5.south);
		
		\draw[-stealth] (m-2-2.north) -- (m-1-2.south);
		\draw[-stealth] (m-2-3.north) -- (m-1-3.south);
		\draw[-stealth] (m-2-4.north) -- (m-1-4.south);
		\draw[-stealth] (m-2-5.north) -- (m-1-5.south);
\draw[thick] (m-1-1.east) -- (m-6-1.east) ;
\draw[thick] (m-6-1.north) -- (m-6-6.north) ;

\node [above=5pt, align=flush left,text width=1cm] at (m-1-1.north west)
        {
            $E_0$
        };
\end{tikzpicture}
\end{equation*}

\vspace{40pt}

\begin{equation*}
\begin{tikzpicture}[scale=0.8, every node/.style={scale=0.8}]
	\matrix (m) [matrix of math nodes,
    nodes in empty cells,nodes={minimum width=7ex,
    minimum height=7ex,outer sep=-5pt},
    column sep=1ex,row sep=1ex]{
          q     &      &     &     &    & \\
               & E_1^{03}  & E_1^{13} &  E_1^{23}  & E_1^{33}&\\
               &  E_1^{02}  & E_1^{12} &  E_1^{22}  & E_1^{32}&\\   
               &  E_1^{01}  & E_1^{11} &  E_1^{21}  & E_1^{31}&\\
               &  E_1^{00}  & E_1^{10} &  E_1^{20}  & E_1^{30}&\\
    \quad\strut &    &    &    &  & p\strut \\};
    \draw[-stealth] (m-5-2.east) -- (m-5-3.west);
		\draw[-stealth] (m-5-3.east) -- (m-5-4.west);
		\draw[-stealth] (m-5-4.east) -- (m-5-5.west);
		\draw[-stealth] (m-5-5.east) -- (m-5-6.west |- m-5-5.east);
		
		 \draw[-stealth] (m-4-2.east) -- (m-4-3.west);
		\draw[-stealth] (m-4-3.east) -- (m-4-4.west);
		\draw[-stealth] (m-4-4.east) -- (m-4-5.west);
		\draw[-stealth] (m-4-5.east) -- (m-4-6.west |- m-4-5.east);
		
		\draw[-stealth] (m-3-2.east) -- (m-3-3.west);
		\draw[-stealth] (m-3-3.east) -- (m-3-4.west);
		\draw[-stealth] (m-3-4.east) -- (m-3-5.west);
		\draw[-stealth] (m-3-5.east) -- (m-3-6.west |- m-3-5.east);
		
		\draw[-stealth] (m-2-2.east) -- (m-2-3.west);
		\draw[-stealth] (m-2-3.east) -- (m-2-4.west);
		\draw[-stealth] (m-2-4.east) -- (m-2-5.west);
		\draw[-stealth] (m-2-5.east) -- (m-2-6.west |- m-2-5.east);
		
\draw[thick] (m-1-1.east) -- (m-6-1.east) ;
\draw[thick] (m-6-1.north) -- (m-6-6.north) ;

\node [above=5pt, align=flush left,text width=1cm] at (m-1-1.north west)
        {
            $E_1$
        };
\end{tikzpicture}
\end{equation*}

\begin{equation*}
\begin{tikzpicture}[scale=0.8, every node/.style={scale=0.8}]
	\matrix (m) [matrix of math nodes,
    nodes in empty cells,nodes={minimum width=7ex,
    minimum height=7ex,outer sep=-6pt},
    column sep=1ex,row sep=1ex]{
          q     &      &     &     &    & \\
               & E_2^{03}  & E_2^{13} &  E_2^{23}  & E_2^{33}&\\
               &  E_2^{02}  & E_2^{12} &  E_2^{22}  & E_2^{32}&\\   
               &  E_2^{01}  & E_2^{11} &  E_2^{21}  & E_2^{31}&\\
               &  E_2^{00}  & E_2^{10} &  E_2^{20}  & E_2^{30}&\\
    \quad\strut &    &    &    &  & p\strut \\};
 
		\draw[-stealth] (m-4-1.east) -- (m-5-3.north west);
		 \draw[-stealth] (m-4-2.east) -- (m-5-4.north west);
		\draw[-stealth] (m-4-3.east) -- (m-5-5.north west);
		\draw[-stealth] (m-4-4.east) -- (m-5-6.north west);
		
		\draw[-stealth] (m-3-1.east) -- (m-4-3.north west);
		\draw[-stealth] (m-3-2.east) -- (m-4-4.north west);
		\draw[-stealth] (m-3-3.east) -- (m-4-5.north west);
		\draw[-stealth] (m-3-4.east) -- (m-4-6.north west);
		
		\draw[-stealth] (m-2-1.east) -- (m-3-3.north west);
		\draw[-stealth] (m-2-2.east) -- (m-3-4.north west);
		\draw[-stealth] (m-2-3.east) -- (m-3-5.north west);
		\draw[-stealth] (m-2-4.east) -- (m-3-6.north west);
		
\draw[thick] (m-1-1.east) -- (m-6-1.east) ;
\draw[thick] (m-6-1.north) -- (m-6-6.north) ;

\node [above=5pt, align=flush left,text width=1cm] at (m-1-1.north west)
        {
            $E_2$
        };
\end{tikzpicture}
\end{equation*}

A spectral sequence is called \emph{bounded} if for  each $(p, q) \in \Z^2$ there exists a $r_0$ such that $d^{pq}_r = d^{p-r, q+r-1}_r = 0$ for $r \ge r_0$.  This means $E^{p q}_r \cong E^{p q}_{r_0}$ for $r \ge r_0$.  We write $E^{pg}_{\infty}$ for this object.

A bounded spectral sequence \emph{converges} to $H^\bullet$ if for each $H^n$ there exist a decreasing finite filtration
\[
  H^n = F^{M}H^n \supset \cdots \supset F^{p}H^n \supset F^{p+1}H^n \supset \cdots \supset F^{m}H^n =0
\]
such that $E^{pq}_r \cong F^pH^{p+q}/F^{p+1}H^{p+q}$.  We write $E^{pq}_r \Rightarrow H^{p+q}$.

A spectral sequence \emph{collapses} at $E_r$ if there is exactly one nonzero row or column in $\{E^{p q}_r\}$.  If such a spectral sequence converges to $H^{\bullet}$, then we can obtain $H^n$ as the unique nonzero $E^{pq}_r$ with $p+q = n$.

A \emph{filtration} of a cochain complex $C^{\bullet}$ is an ordered family $F^\bullet C^\bullet$ of chain subcomplexes
\[
 C^{\bullet} \supset \cdots \supset F^{l-1} C^{\bullet} \supset F^l C^{\bullet} \supset \cdots
\]
Such a filtration is called \emph{bounded} if for each $n$ there are $m < M$ such that $F^mC^n = 0$ and $F^MC^n = C^n$.  We will need the following convergence theorem for spectral sequence (see Theorem 5.5.1 in \cite{We1994} for proof).

\begin{thm}
A filtration $F^\bullet C^\bullet$ of a cochain complex $C^\bullet$ naturally determines a spectral sequence $\{E^{pq}_r\}$ such that $E^{pq}_0 = F^pC^{p+q}/F^{p+1}C^{p+q}$ and $E^{pq}_1 = H^{p+q}(E^{p\bullet}_0)$.  If this filtration is bounded, then the spectral sequence is bounded and converges to  the cohomology $H^{\bullet}(C^\bullet)$.  That is, $E^{pq}_r \Rightarrow H^{p+q}(C^\bullet)$.
\end{thm}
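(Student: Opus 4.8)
The plan is to construct the spectral sequence explicitly from the filtration by the classical \emph{method of approximate cycles}, and then to deduce convergence from boundedness. Write $d\colon C^n \to C^{n+1}$ for the differential, which by hypothesis respects the filtration, $d(F^pC^n)\subseteq F^pC^{n+1}$. For integers $p,q,r$ I would set
\[
Z^{pq}_r = F^pC^{p+q}\cap d^{-1}\!\left(F^{p+r}C^{p+q+1}\right), \qquad B^{pq}_r = F^pC^{p+q}\cap d\!\left(F^{p-r}C^{p+q-1}\right),
\]
the \emph{approximate cocycles} (elements of filtration level $p$ whose differential drops at least $r$ further levels) and the \emph{approximate coboundaries}, and then define the pages by the subquotient
\[
E^{pq}_r = \frac{Z^{pq}_r}{Z^{p+1,q-1}_{r-1}+B^{pq}_{r-1}},
\]
with $d^{pq}_r\colon E^{pq}_r \to E^{p+r,q-r+1}_r$ the map induced by $d$.

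The first block of checks is that this really is a cohomology spectral sequence in the sense of the definition given above. I would verify that $d$ carries $Z^{pq}_r$ into $Z^{p+r,q-r+1}_r$ and carries the denominator of $E^{pq}_r$ into that of the target, so that $d^{pq}_r$ is well defined; axiom (a), $d_r\circ d_r=0$, is then immediate from $d^2=0$. Next I would read off the low pages directly: since $d(F^p)\subseteq F^p$ one has $Z^{pq}_0=F^pC^{p+q}$ and the denominator collapses to $F^{p+1}C^{p+q}$, giving $E^{pq}_0=F^pC^{p+q}/F^{p+1}C^{p+q}$ with $d_0$ the differential induced on the associated graded; taking cohomology in the $q$-direction then yields $E^{pq}_1 = H^{p+q}(E^{p\bullet}_0)$, exactly as asserted.

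The heart of the argument — and the step I expect to be the main obstacle — is axiom (b), the recursion $E^{pq}_{r+1}\cong \Ker(d^{pq}_r)/\Im(d^{p-r,q+r-1}_r)$. This is a bookkeeping-intensive diagram chase: one shows that the preimage in $Z^{pq}_r$ of $\Ker d^{pq}_r$ is precisely $Z^{pq}_{r+1}+Z^{p+1,q-1}_{r-1}$, that the image of the incoming differential lifts to $B^{pq}_r$ modulo the denominator, and then applies the Noether isomorphism theorems to collapse the resulting nested subquotient to $Z^{pq}_{r+1}/(Z^{p+1,q-1}_r+B^{pq}_r)=E^{pq}_{r+1}$. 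Keeping the four indices $p,q,r$ aligned through these identifications, and correctly tracking the ``$+Z^{p+1,q-1}$'' correction terms, is the genuinely delicate part of the proof.

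Finally I would establish convergence from boundedness. Fix $(p,q)$. Boundedness supplies $M$ with $F^MC^{p+q+1}=0$ and $m$ with $F^mC^{p+q-1}=C^{p+q-1}$; hence for $r$ large, $Z^{pq}_r = F^pC^{p+q}\cap\Ker d$ and $B^{pq}_r=F^pC^{p+q}\cap\Im d$ both stabilize, so every $d_r$ into or out of $E^{pq}_r$ vanishes and $E^{pq}_r\cong E^{pq}_\infty$. It then remains to identify $E^{pq}_\infty$ with $F^pH^{p+q}(C^\bullet)/F^{p+1}H^{p+q}(C^\bullet)$, where the induced filtration is $F^pH^n(C^\bullet):=\Im\!\big(H^n(F^pC^\bullet)\to H^n(C^\bullet)\big)$. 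This follows because the stabilized $Z^{pq}_\infty=F^pC^{p+q}\cap\Ker d$ surjects onto $F^pH^{p+q}$, with kernel (after quotienting by $F^{p+1}H^{p+q}$) exactly the stabilized denominator; boundedness guarantees the filtration on each $H^n$ is finite, matching the decreasing finite filtration in the statement, and so $E^{pq}_r\Rightarrow H^{p+q}(C^\bullet)$.
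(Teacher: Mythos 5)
Your proposal is correct, and it takes precisely the route the paper itself relies on: the paper gives no argument for this theorem, deferring to Theorem 5.5.1 of Weibel, whose proof is exactly the classical approximate-cycle construction $Z^{pq}_r = F^pC^{p+q}\cap d^{-1}(F^{p+r}C^{p+q+1})$, $E^{pq}_r = Z^{pq}_r/(Z^{p+1,q-1}_{r-1}+B^{pq}_{r-1})$ that you describe. Your key intermediate claims all check out: the preimage of $\Ker d^{pq}_r$ in $Z^{pq}_r$ is $Z^{pq}_{r+1}+Z^{p+1,q-1}_{r-1}$, the incoming image lifts to $B^{pq}_r$, the modular law collapses the resulting subquotient to $Z^{pq}_{r+1}/(Z^{p+1,q-1}_r+B^{pq}_r)$, and boundedness stabilizes $Z^{pq}_r$ and $B^{pq}_r$ at $F^pC^{p+q}\cap\Ker d$ and $F^pC^{p+q}\cap\Im d$ respectively, giving $E^{pq}_\infty\cong F^pH^{p+q}/F^{p+1}H^{p+q}$ for the induced (finite) filtration on cohomology.
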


The two spectral sequences we are interested in are associated to the filtrations of a double complex.

\begin{exa}[Filtrations of a double complex]
If $C^{\bullet \bullet}$ is a double complex, then it has two natural filtrations: one by rows and one by columns.  More precisely, there is a filtration on the total complex $\Tot(C^{\bullet \bullet})$ of $C^{\bullet \bullet}$ where $F^n\Tot(C^{\bullet \bullet})$ is given by $\Tot(D^{\bullet \bullet}_n)$ , where 
\[
D^{pq}_n = \begin{cases} C^{pq} & \textrm{ for } p \ge n\\ 0 & \textrm{ for } p < n. \end{cases}
\]
This results in an $E_0$ that looks as follows (the connecting maps are just the vertical differentials of $C^{\bullet \bullet}$).

\begin{equation*}
\begin{tikzpicture}[scale=0.8, every node/.style={scale=0.8}]
	\matrix (m) [matrix of math nodes,
    nodes in empty cells,nodes={minimum width=7ex,
    minimum height=7ex,outer sep=-5pt},
    column sep=1ex,row sep=1ex]{
          q     &      &     &     &    & \\
               & C^{03}  & C^{13} &  C^{23}  & C^{33}&\\
               &  C^{02}  & C^{12} &  C^{22}  & C^{32}&\\   
               &  C^{01}  & C^{11} &  C^{21}  & C^{31}&\\
               &  C^{00}  & C^{10} &  C^{20}  & C^{30}&\\
    \quad\strut &    &    &    &  & p\strut \\};
    \draw[-stealth] (m-5-2.north) -- (m-4-2.south);
		\draw[-stealth] (m-5-3.north) -- (m-4-3.south);
		\draw[-stealth] (m-5-4.north) -- (m-4-4.south);
		\draw[-stealth] (m-5-5.north) -- (m-4-5.south);
		
		\draw[-stealth] (m-4-2.north) -- (m-3-2.south);
		\draw[-stealth] (m-4-3.north) -- (m-3-3.south);
		\draw[-stealth] (m-4-4.north) -- (m-3-4.south);
		\draw[-stealth] (m-4-5.north) -- (m-3-5.south);
		
		\draw[-stealth] (m-3-2.north) -- (m-2-2.south);
		\draw[-stealth] (m-3-3.north) -- (m-2-3.south);
		\draw[-stealth] (m-3-4.north) -- (m-2-4.south);
		\draw[-stealth] (m-3-5.north) -- (m-2-5.south);
		
		\draw[-stealth] (m-2-2.north) -- (m-1-2.south);
		\draw[-stealth] (m-2-3.north) -- (m-1-3.south);
		\draw[-stealth] (m-2-4.north) -- (m-1-4.south);
		\draw[-stealth] (m-2-5.north) -- (m-1-5.south);
\draw[thick] (m-1-1.east) -- (m-6-1.east) ;
\draw[thick] (m-6-1.north) -- (m-6-6.north) ;

\node [above=5pt, align=flush left,text width=1cm] at (m-1-1.north west)
        {
            $E_0$
        };
\end{tikzpicture}
\end{equation*}

The corresponding $E_1$ comes from maps induced on cohomology by the horizontal differentials.

\begin{equation*}
\begin{tikzpicture}[scale=0.8, every node/.style={scale=0.8}]
	\matrix (m) [matrix of math nodes,
    nodes in empty cells,nodes={minimum width=9ex,
    minimum height=9ex,outer sep=-2pt},
    column sep=1ex,row sep=1ex]{
          q     &      &     &     &    & \\
               & H^3(C^{0\bullet})  & H^3(C^{1\bullet}) &  H^3(C^{2\bullet})  & H^3(C^{3\bullet}) &\\
               &  H^2(C^{0\bullet})  & H^2(C^{1\bullet}) &  H^2(C^{2\bullet})  & H^2(C^{3\bullet}) &\\   
               &  H^1(C^{0\bullet})  & H^1(C^{1\bullet}) &  H^1(C^{2\bullet})  & H^1(C^{3\bullet}) &\\
               &  H^0(C^{0\bullet})  & H^0(C^{1\bullet}) &  H^0(C^{2\bullet})  & H^0(C^{3\bullet}) &\\
    \quad\strut &    &    &    &  & p\strut \\};
    \draw[-stealth] (m-5-2.east) -- (m-5-3.west);
		\draw[-stealth] (m-5-3.east) -- (m-5-4.west);
		\draw[-stealth] (m-5-4.east) -- (m-5-5.west);
		\draw[-stealth] (m-5-5.east) -- (m-5-6.west |- m-5-5.east);
		
		 \draw[-stealth] (m-4-2.east) -- (m-4-3.west);
		\draw[-stealth] (m-4-3.east) -- (m-4-4.west);
		\draw[-stealth] (m-4-4.east) -- (m-4-5.west);
		\draw[-stealth] (m-4-5.east) -- (m-4-6.west |- m-4-5.east);
		
		\draw[-stealth] (m-3-2.east) -- (m-3-3.west);
		\draw[-stealth] (m-3-3.east) -- (m-3-4.west);
		\draw[-stealth] (m-3-4.east) -- (m-3-5.west);
		\draw[-stealth] (m-3-5.east) -- (m-3-6.west |- m-3-5.east);
		
		\draw[-stealth] (m-2-2.east) -- (m-2-3.west);
		\draw[-stealth] (m-2-3.east) -- (m-2-4.west);
		\draw[-stealth] (m-2-4.east) -- (m-2-5.west);
		\draw[-stealth] (m-2-5.east) -- (m-2-6.west |- m-2-5.east);
		
\draw[thick] (m-1-1.east) -- (m-6-1.east) ;
\draw[thick] (m-6-1.north) -- (m-6-6.north) ;

\node [above=5pt, align=flush left,text width=1cm] at (m-1-1.north west)
        {
            $E_1$
        };
\end{tikzpicture}
\end{equation*}

If the original filtration was bounded, then we get 
\[
E^{pq}_2 = H_h^pH^q_v(C^{\bullet \bullet}) \Rightarrow H^{p+q}(\Tot C^{\bullet \bullet}),
\]
where cohomology is computed using the vertical differentials first, and then using the horizontal differentials.

Similarly, we can define a filtration on the double complex by rows.  Namely, $F^n\Tot(C^{\bullet \bullet})$ is given by $\Tot(D^{\bullet \bullet}_n)$, where
\[
D^{pq}_n = \begin{cases} C^{pq} & \textrm{ for } q \ge n\\ 0 & \textrm{ for } q < n. \end{cases}
\]
This gives us an $E_0$ that looks as follows (the connecting maps are the horizontal differentials).

\begin{equation*}
\begin{tikzpicture}[scale=0.8, every node/.style={scale=0.8}]
	\matrix (m) [matrix of math nodes,
    nodes in empty cells,nodes={minimum width=7ex,
    minimum height=7ex,outer sep=-5pt},
    column sep=1ex,row sep=1ex]{
          q     &      &     &     &    & \\
               & C^{30}  & C^{31} &  C^{32}  & C^{33}&\\
               &  C^{20}  & C^{21} &  C^{22}  & C^{23}&\\   
               &  C^{10}  & C^{11} &  C^{12}  & C^{13}&\\
               &  C^{00}  & C^{01} &  C^{02}  & C^{03}&\\
    \quad\strut &    &    &    &  & p\strut \\};
    \draw[-stealth] (m-5-2.north) -- (m-4-2.south);
		\draw[-stealth] (m-5-3.north) -- (m-4-3.south);
		\draw[-stealth] (m-5-4.north) -- (m-4-4.south);
		\draw[-stealth] (m-5-5.north) -- (m-4-5.south);
		
		\draw[-stealth] (m-4-2.north) -- (m-3-2.south);
		\draw[-stealth] (m-4-3.north) -- (m-3-3.south);
		\draw[-stealth] (m-4-4.north) -- (m-3-4.south);
		\draw[-stealth] (m-4-5.north) -- (m-3-5.south);
		
		\draw[-stealth] (m-3-2.north) -- (m-2-2.south);
		\draw[-stealth] (m-3-3.north) -- (m-2-3.south);
		\draw[-stealth] (m-3-4.north) -- (m-2-4.south);
		\draw[-stealth] (m-3-5.north) -- (m-2-5.south);
		
		\draw[-stealth] (m-2-2.north) -- (m-1-2.south);
		\draw[-stealth] (m-2-3.north) -- (m-1-3.south);
		\draw[-stealth] (m-2-4.north) -- (m-1-4.south);
		\draw[-stealth] (m-2-5.north) -- (m-1-5.south);
\draw[thick] (m-1-1.east) -- (m-6-1.east) ;
\draw[thick] (m-6-1.north) -- (m-6-6.north) ;

\node [above=5pt, align=flush left,text width=1cm] at (m-1-1.north west)
        {
            $E_0$
        };
\end{tikzpicture}
\end{equation*}

The corresponding $E_1$ comes from maps induced on cohomology by the vertical differentials.

\begin{equation*}
\begin{tikzpicture}[scale=0.8, every node/.style={scale=0.8}]
	\matrix (m) [matrix of math nodes,
    nodes in empty cells,nodes={minimum width=9ex,
    minimum height=9ex,outer sep=-2pt},
    column sep=1ex,row sep=1ex]{
          q     &      &     &     &    & \\
               & H^3(C^{\bullet 0})  & H^3(C^{\bullet 1}) &  H^3(C^{\bullet 2})  & H^3(C^{\bullet 3}) &\\
               &  H^2(C^{0\bullet})  & H^2(C^{1\bullet}) &  H^2(C^{2\bullet})  & H^2(C^{3\bullet}) &\\   
               &  H^1(C^{\bullet 0})  & H^1(C^{\bullet 1}) &  H^1(C^{\bullet 2})  & H^1(C^{\bullet 3}) &\\
               &  H^0(C^{\bullet 0})  & H^0(C^{\bullet 1}) &  H^0(C^{\bullet 2})  & H^0(C^{\bullet 3}) &\\
    \quad\strut &    &    &    &  & p\strut \\};
    \draw[-stealth] (m-5-2.east) -- (m-5-3.west);
		\draw[-stealth] (m-5-3.east) -- (m-5-4.west);
		\draw[-stealth] (m-5-4.east) -- (m-5-5.west);
		\draw[-stealth] (m-5-5.east) -- (m-5-6.west |- m-5-5.east);
		
		 \draw[-stealth] (m-4-2.east) -- (m-4-3.west);
		\draw[-stealth] (m-4-3.east) -- (m-4-4.west);
		\draw[-stealth] (m-4-4.east) -- (m-4-5.west);
		\draw[-stealth] (m-4-5.east) -- (m-4-6.west |- m-4-5.east);
		
		\draw[-stealth] (m-3-2.east) -- (m-3-3.west);
		\draw[-stealth] (m-3-3.east) -- (m-3-4.west);
		\draw[-stealth] (m-3-4.east) -- (m-3-5.west);
		\draw[-stealth] (m-3-5.east) -- (m-3-6.west |- m-3-5.east);
		
		\draw[-stealth] (m-2-2.east) -- (m-2-3.west);
		\draw[-stealth] (m-2-3.east) -- (m-2-4.west);
		\draw[-stealth] (m-2-4.east) -- (m-2-5.west);
		\draw[-stealth] (m-2-5.east) -- (m-2-6.west |- m-2-5.east);
		
\draw[thick] (m-1-1.east) -- (m-6-1.east) ;
\draw[thick] (m-6-1.north) -- (m-6-6.north) ;

\node [above=5pt, align=flush left,text width=1cm] at (m-1-1.north west)
        {
            $E_1$
        };
\end{tikzpicture}
\end{equation*}

If the original filtration was bounded, then we get 
\[
E^{pq}_2 = H_v^pH^q_h(C^{\bullet \bullet}) \Rightarrow H^{p+q}(\Tot C^{\bullet \bullet}),
\]
\end{exa}
where cohomology is computed using the horizontal differentials first, and then using the vertical differentials.

We can apply this example to the double complex $F(I^{\bullet \bullet})$ coming from the Cartan-Eilenberg resolution $I^{\bullet \bullet}$ that is used to compute $\R F^i\cE^\bullet$.

This yields an important result we will be using in the next section (see Section 5.7.9 in \cite{We1994}):
\begin{thm}
\label{rightsequence}
If the corresponding filtrations defined on the double complexes are bounded, then there are two convergent spectral sequences
\begin{align*}
& H^pR^qF(\cE^{\bullet}) \Rightarrow \R^{p+q}F(\cE^\bullet)\\
& R^pFH^q(\cE^{\bullet}) \Rightarrow \R^{p+q}F(\cE^\bullet).
\end{align*}
\end{thm}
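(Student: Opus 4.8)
The plan is to feed the double complex $F(\cI^{\bullet\bullet})$, where $\cI^{\bullet\bullet}$ is an injective Cartan--Eilenberg resolution of $\cE^\bullet$, into the two spectral sequences of a double complex described in the preceding example. First I would fix such a resolution and recall that, by definition, $\R^{p+q}F(\cE^\bullet) = H^{p+q}(\Tot F(\cI^{\bullet\bullet}))$. Thus both the column filtration and the row filtration of $\Tot F(\cI^{\bullet\bullet})$ abut to the same graded object $\R^\bullet F(\cE^\bullet)$. The boundedness hypothesis on these filtrations is exactly what is needed to invoke the convergence statement from the example, so convergence of each spectral sequence to $\R^{p+q}F(\cE^\bullet)$ is immediate; the work lies entirely in identifying the two $E_2$-pages.

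For the first spectral sequence I would use the column filtration, for which the example computes cohomology along the vertical (resolution) direction first. The key input is that, in a Cartan--Eilenberg resolution, each column $\cI^{p\bullet}$ is itself an injective resolution of $\cE^p$. This follows from the horseshoe lemma applied to the short exact sequences $0 \to \ker(\cE^p \to \cE^{p+1}) \to \cE^p \to \Im(\cE^p \to \cE^{p+1}) \to 0$, using the injective resolutions of the kernel and image supplied by properties (1) and (2). Consequently $H^q_v(F(\cI^{p\bullet})) = R^qF(\cE^p)$, and taking horizontal cohomology afterwards yields $E_2^{pq} = H^p_h\big(R^qF(\cE^\bullet)\big) = H^pR^qF(\cE^\bullet)$, as desired.

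For the second spectral sequence I would use the row filtration, for which the example computes cohomology along the horizontal direction first. Here the crucial observation is that $F$ commutes with horizontal cohomology: within each row, the objects $\ker(d^{p\bullet})$ and $\Im(d^{p-1\bullet})$ are injective, so the short exact sequences defining the horizontal kernels, images, and homology split degreewise and are therefore preserved by the additive functor $F$. Hence $H^q_h(F(\cI^{\bullet\bullet})) = F\big(H^q_h(\cI^{\bullet\bullet})\big)$, and by property (3) the complex $H^q_h(\cI^{\bullet\bullet})$ is an injective resolution of $H^q(\cE^\bullet)$. Computing vertical cohomology next then gives $E_2^{pq} = H^p_v H^q_h\big(F(\cI^{\bullet\bullet})\big) = R^pF\big(H^q(\cE^\bullet)\big)$.

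I expect the main obstacle to be verifying the two compatibility properties of Cartan--Eilenberg resolutions invoked above: that every column is a genuine injective resolution of $\cE^p$, and that $F$ may be pulled through horizontal cohomology. Both ultimately rest on the degreewise splitting of the short exact sequences of kernels, images, and homology built into the resolution (properties (1)--(3)), together with the horseshoe lemma; the injectivity of the relevant subobjects is what forces these sequences to split and hence to be respected by an arbitrary additive $F$. Once these splittings are established, the remainder is a direct application of the double-complex example and the convergence theorem.
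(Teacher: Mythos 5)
Your proposal is correct and follows essentially the same route as the paper, which simply applies the two filtration spectral sequences of the double complex to $F(\cI^{\bullet\bullet})$ and defers the details to Weibel (Section 5.7.9). You additionally supply the two key $E_2$-identifications the paper leaves implicit — that each column of a Cartan--Eilenberg resolution is an injective resolution of $\cE^p$, and that $F$ commutes with horizontal cohomology because the kernel/image/homology sequences split degreewise by injectivity — and both of these verifications are accurate.
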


\subsection{The Beilinson spectral sequence}
The next two examples of moduli spaces arise from applying a result of Beilinson (\cite{Be1978}).  We would like to give a proof of the following theorem:

\begin{thm}[Beilinson]
\label{beilinson}
For any coherent sheaf $\cF$ on $\P^n$ there exist two convergent spectral sequences:
\begin{align*}
& E^{pq}_1 = H^q(\P^n, \cF(p))\otimes \Omega^{-p}(-p) \Rightarrow H^{p+q} = \begin{cases}
      \cF & \text{for } p+q = 0 \\
      0 & \text{for } p+q \neq 0
    \end{cases}\\
& E^{pq}_1 = H^q(\P^n, \cF \otimes \Omega^{-p}(-p))\otimes \cO_{\P^n}(p) \Rightarrow H^{p+q} = \begin{cases}
      \cF & \text{for } p+q = 0 \\
      0 & \text{for } p+q \neq 0
    \end{cases}
\end{align*}
where $\Omega = \Omega^1_{\P^n}$ is the sheaf of differential $1$-forms on $\P^n$.  Note that in the above spectral sequences $p \le 0$, and the corresponding entries for $p > 0$ are equal to $0$.		
\end{thm}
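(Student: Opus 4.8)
The plan is to deduce Beilinson's theorem from a locally free resolution of the structure sheaf of the diagonal $\Delta\subset\P^n\times\P^n$, fed into the two hyper-cohomology spectral sequences of Theorem \ref{rightsequence} for the pushforward along a projection. Write $\P^n=\P(V)$ with $V=\C^{n+1}$, and let $\pi_1,\pi_2\colon\P^n\times\P^n\to\P^n$ be the two projections.

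First I would construct the \emph{resolution of the diagonal}. Pulling the tautological inclusion $\cO(-1)\hookrightarrow V\otimes\cO$ back along $\pi_1$ and composing with the Euler quotient $V\otimes\cO\twoheadrightarrow T(-1)$ pulled back along $\pi_2$ gives a map
\[
\pi_1^*\cO(-1)\hookrightarrow V\otimes\cO\twoheadrightarrow\pi_2^*\bigl(T(-1)\bigr),
\]
i.e.\ a section $s$ of the rank-$n$ bundle $\cO(1)\boxtimes T(-1)$ whose zero locus is exactly $\Delta$ and which is cut out transversally. The Koszul complex of $s$ is then a locally free resolution
\[
0\to\cO(-n)\boxtimes\Omega^n(n)\to\cdots\to\cO(-1)\boxtimes\Omega^1(1)\to\cO\boxtimes\cO\to\cO_\Delta\to 0,
\]
using $(T(-1))^\vee=\Omega^1(1)$ and $\wedge^p\bigl(\cO(-1)\boxtimes\Omega^1(1)\bigr)=\cO(-p)\boxtimes\Omega^p(p)$ (the line-bundle factor merely contributes $\cO(-p)$). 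Call this complex $\cC^\bullet$; it sits in degrees $-n,\dots,0$ with $\cC^{p}=\cO(p)\boxtimes\Omega^{-p}(-p)$, which already explains why only $p\le 0$ occurs in the theorem.

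Next I would tensor $\cC^\bullet$ with $\pi_1^*\cF$ and apply the hyper-derived pushforward $\R\pi_{2*}$. Because the $\cC^p$ are locally free and the local equations of $\Delta$ form a regular sequence on $\pi_1^*\cF$, the complex $\cE^\bullet:=\cC^\bullet\otimes\pi_1^*\cF$ still has cohomology concentrated in degree $0$, equal to $\Delta_*\cF$; this Tor-vanishing is the one genuinely technical point to verify. Theorem \ref{rightsequence} then supplies two spectral sequences converging to $\R^{p+q}\pi_{2*}(\cE^\bullet)$. The second one, with $E_2$-term $R^p\pi_{2*}\,H^q(\cE^\bullet)$, collapses: since $H^q(\cE^\bullet)=\Delta_*\cF$ for $q=0$ and vanishes otherwise, while $\pi_2\circ\Delta=\mathrm{id}$, it yields $\R^m\pi_{2*}(\cE^\bullet)=\cF$ for $m=0$ and $0$ for $m\neq 0$. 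This pins down the abutment $H^{p+q}$ exactly as asserted.

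Finally I would read off the $E_1$-page from the \emph{first} spectral sequence of Theorem \ref{rightsequence}, whose $E_1$-term is $R^q\pi_{2*}(\cE^p)$. Since $\cE^p=\pi_1^*(\cF(p))\otimes\pi_2^*\Omega^{-p}(-p)$, the projection formula together with flat base change for the product projection $\pi_2$ gives
\[
R^q\pi_{2*}\bigl(\pi_1^*(\cF(p))\otimes\pi_2^*\Omega^{-p}(-p)\bigr)\cong H^q\bigl(\P^n,\cF(p)\bigr)\otimes\Omega^{-p}(-p),
\]
which is precisely the first asserted spectral sequence. The second one comes from the symmetric choice: tensor $\cC^\bullet$ with $\pi_2^*\cF$ and push forward along $\pi_1$, so that $\cO(p)$ survives as the coefficient sheaf while $\Omega^{-p}(-p)\otimes\cF$ is absorbed into the cohomology, producing $H^q(\P^n,\cF\otimes\Omega^{-p}(-p))\otimes\cO(p)$. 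The main obstacle is the geometric first step: building the Koszul resolution of $\cO_\Delta$ with the correct identification of its terms as $\cO(-p)\boxtimes\Omega^p(p)$, and checking that tensoring by $\pi_i^*\cF$ preserves exactness. Once that input is secured, both spectral sequences follow formally from Theorem \ref{rightsequence}, the projection formula, and base change.
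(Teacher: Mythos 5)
Your proposal is correct and follows essentially the same route as the paper's proof: the Koszul resolution of $\cO_{\Delta}$ arising from the canonical section of $\cO(1)\boxtimes\cQ$ (your composite of the tautological inclusion with the Euler quotient is exactly the paper's identity-endomorphism section), tensored with the pullback of $\cF$ and fed into the two hyper-derived pushforward spectral sequences of Theorem \ref{rightsequence}, with the projection formula and base change identifying the $E_1$-terms and the collapse of the other spectral sequence pinning down the abutment. The only differences are cosmetic: your coordinate-free description of the section $s$, and your explicit flagging (with the correct regular-sequence justification) of the Tor-vanishing that the paper merely asserts.
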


A key part of proving the existence of the Beilinson spectral sequence involves constructing a resolution of the structure sheaf $\cO_{\Delta}$ of the diagonal $\Delta \hookrightarrow \P^n \times \P^n$.  To do this, we will require the following lemma:
\begin{lmm}
\label{koszul}
Let $\cE$ be a locally free sheaf of rank $r$ on a smooth, algebraic variety $X$.  Let $s \in H^0(X, \cE)$ be a global section such that the zero locus $Z := Z(s)$ of $s$ is a codimension $r$ subvariety in $X$.  The structure sheaf $\cO_Z$ has a locally free resolution defined by the Koszul complex:
\[
0 \rightarrow \wedge^r \cE^{\vee} \rightarrow \cdots \rightarrow \wedge^2 \cE^{\vee} \rightarrow \cE^{\vee} \rightarrow \cO_X \rightarrow \cO_Z \rightarrow 0,
\] 
where the connecting maps are given by contraction with $s$.
\end{lmm}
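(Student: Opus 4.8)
The plan is to construct the Koszul differential explicitly, verify that it forms a complex, and then check exactness locally by reducing to the classical algebraic statement about regular sequences. First I would build the maps: a global section $s \in H^0(X, \cE) = \Hom(\cO_X, \cE)$ dualizes to a contraction operator $\iota_s \colon \cE^{\vee} \to \cO_X$, and more generally $\iota_s \colon \wedge^k \cE^{\vee} \to \wedge^{k-1}\cE^{\vee}$ sending $\omega_1 \wedge \cdots \wedge \omega_k$ to $\sum_i (-1)^{i-1}\langle s, \omega_i\rangle\, \omega_1 \wedge \cdots \widehat{\omega_i} \cdots \wedge \omega_k$. A direct computation using the anticommutativity of contraction shows $\iota_s \circ \iota_s = 0$, so these assemble into a complex; the final map $\cO_X \to \cO_Z$ is the quotient by the image of $\iota_s \colon \cE^{\vee} \to \cO_X$, and one checks this image is exactly the ideal sheaf $\cI_Z$, so that $\coker(\iota_s) = \cO_Z$.

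Since exactness of a complex of sheaves can be tested on stalks, I would fix a point $x \in X$ and choose a local trivialization $\cE \cong \cO_X^{\oplus r}$ on a neighborhood. Under this trivialization the section $s$ becomes a tuple $(f_1, \dots, f_r)$ of elements of the local ring $R := \cO_{X,x}$, the dual $\cE^{\vee}$ acquires its standard basis, and the contraction complex becomes precisely the classical Koszul complex $K_\bullet(f_1, \dots, f_r)$ over $R$. Its zeroth homology is $R/(f_1, \dots, f_r)$, which is the stalk of $\cO_Z$ at $x$, so it remains only to show that the higher Koszul homology vanishes.

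For that vanishing I would invoke the standard grade-sensitivity of the Koszul complex: for a Noetherian local ring $R$ and a proper ideal $I = (f_1, \dots, f_r)$, the largest index with nonvanishing Koszul homology equals $r - \operatorname{grade}(I)$, so $K_\bullet(f_1, \dots, f_r)$ resolves $R/I$ exactly when $\operatorname{grade}(I) = r$, i.e. when $(f_1, \dots, f_r)$ is a regular sequence. The hypothesis that $Z$ has codimension $r$ translates to $\operatorname{height}(I) = r$, which by Krull's height theorem is already the maximum possible for an ideal with $r$ generators. Because $X$ is smooth, $R$ is a regular local ring and in particular Cohen--Macaulay, whence $\operatorname{grade}(I) = \operatorname{height}(I) = r$; this gives the desired regularity and hence exactness. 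At points $x \notin Z$ some $f_i$ is a unit, so the local Koszul complex is contractible, consistent with $\cO_Z = 0$ there.

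The main obstacle is the passage from codimension to a regular sequence: the geometric hypothesis only controls the dimension of the zero locus, and converting this into algebraic regularity of the defining functions is exactly where smoothness enters, via the Cohen--Macaulay equality of grade and height. I would isolate this as the single nontrivial input and cite the grade-sensitivity result for the Koszul complex rather than reprove it, leaving the remaining steps (construction of $\iota_s$, the identity $\iota_s^2 = 0$, the identification of $\coker(\iota_s)$ with $\cO_Z$, and the local triviality of the Koszul complex off $Z$) as routine verifications.
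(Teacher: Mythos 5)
Your proof is correct, but note that the paper itself does not prove this lemma at all: it defers to Section IV.2 of \cite{FuLa1985}, remarking only that the argument ``relies on the fact that locally $s$ defines a regular sequence.'' What you have written is precisely the standard argument behind that citation, so in substance you follow the paper's intended route; the difference is that you have made it self-contained. Your decomposition is also the right one: the formal part (construction of $\iota_s$, the identity $\iota_s \circ \iota_s = 0$, identification of $\coker(\iota_s\colon \cE^{\vee} \to \cO_X)$ with $\cO_Z$, and split-exactness of the local Koszul complex off $Z$) is routine, and the genuine content is exactly the step you isolate --- converting ``$Z$ has codimension $r$'' into ``$(f_1,\dots,f_r)$ is a regular sequence in $\cO_{X,x}$,'' via Krull's height theorem together with the Cohen--Macaulay property of the regular local ring $\cO_{X,x}$, which gives $\operatorname{grade}(I) = \operatorname{height}(I) = r$, followed by depth-sensitivity of the Koszul complex. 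One small point worth a sentence in a polished write-up: the Koszul complex resolves $\cO_X/\Im(\iota_s)$, i.e.\ the structure sheaf of the zero \emph{scheme} of $s$, so your identification of the cokernel with $\cO_Z$ implicitly takes $Z$ with its natural scheme structure (equivalently, assumes the ideal $\Im(\iota_s)$ is radical if $Z$ is meant as a reduced subvariety). That is the intended reading of the lemma, and it holds in the paper's application, where $Z$ is the diagonal in $\P^n \times \P^n$, but it should be flagged rather than passed over silently.
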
 
For a proof of this lemma see Section IV.2 in \cite{FuLa1985}.  It relies on the fact that locally $s$ defines a regular sequence.  We can now proceed with the construction of the Beilinson spectral sequence.

\begin{proof}[Proof of Theorem \ref{beilinson}]
Let $p_1, p_2: \P^n \times \P^n \rightarrow \P^n$ be the two projections onto $\P^n$.  If $\cF, \cG$ are coherent sheaves over $\P^n$, denote by $\cF \boxtimes \cG := p_1^*\cF \otimes p_2^* \cG$ the exterior tensor product.  Consider the following version of the Euler short exact sequence on $\P^n$:
\[
0 \rightarrow \cO(-1) \rightarrow \cO^{n+1} \rightarrow \cQ \rightarrow 0, 
\]
where $\cQ = \cT(-1)$ is the tangent sheaf twisted by $\cO(-1)$.  Let $\cE := \cO(1) \boxtimes \cQ \cong \HOM(p_1^*\cO(-1),p_2^*\cQ)$.  The Euler sequence induces the long exact sequence:
\[
0 \rightarrow H^0(\P^n, \cO(-1)) \rightarrow H^0(\P^n, \cO^{n+1}) \rightarrow H^0(\P^n, \cQ) \rightarrow H^1(\P^n, \cO(-1)) \rightarrow \cdots .
\] 
Since $H^0(\P^n, \cO(-1)) = H^1(\P^n, \cO(-1)) =  0$ and $H^0(\P^n, \cO^{n+1}) = \C^{n+1}$, we get $H^0(\P^n, \cQ) \cong \C^{n+1}$.  Furthermore, we can compute
\begin{align*}
& H^0(\P^n \times \P^n, \cE) = H^0(\P^n \times \P^n, p_1^*\cO(1) \otimes p_2^* \cQ) \cong H^0(\P^n, \cO(1)) \otimes H^0(\P^n, \cQ)\\
& \cong (\C^{\vee})^{n+1} \otimes \C^{n+1} \cong \End(\C^{n+1}). 
\end{align*}
Let $s \in H^0(\P^n \times \P^n, \cE)$ be the section corresponding to the identity endomorphism in the above identification.  We wish to verify that the diagonal $\Delta \hookrightarrow \P^n \times \P^n$ is the zero locus of $s$.  Indeed, let $v, w \in \C^{n+1} - \{0\}$ and let $x_v,y_w \in \P^n$ correspond to the lines through the origin and $v, w$, respectively.  The fiber over $(x_v,y_w)$ of $\cE$ (as a vector bundle) is equal to
\begin{align*}
& \cE_{(x_v, y_w)} = (p_1^*\cO(1) \otimes p_2^* \cQ)_{(x_v, y_w)} = p_1^*\cO(1)_{(x_v, y_w)} \otimes p_2^* \cQ_{(x_v, y_w)}\\
& = \cO(1)_{x_v} \otimes \cQ_{y_w} \cong \Hom(\cO(-1)_{x_v}, \cQ_{y_w}).
\end{align*}
Therefore, after identifying $\cO(-1)_{x_v}$ with the line $\C v$ and $\cQ_{y_w}$ with the quotient $\C^{n+1}/\C w$ (via the Euler sequence), we may interpret $s(x_v, y_w) \in \cE_{(x_v, y_w)}$ as a morphism of vector spaces defined by:
\[
s(x_v, y_w)(a v) = [av] \in \C^{n+1}/\C w \textrm{ for any } a \in \C,
\]
where $[av]$ is equivalence class of $av$ under the quotient map.  This means that $s(x_v, y_w)(a v) = 0$ if and only if $x_v = y_w$.  From this is not hard to see that the zero locus $Z(s) = \Delta$ as a subvariety of $\P^n \times \P^n$.  Noting that $\cE$ is locally free of rank $n$, we get by Lemma \ref{koszul} a locally free resolution of $\cO_{\Delta}$:
\[
0 \rightarrow \wedge^n \cE^{\vee} \rightarrow \cdots \rightarrow \wedge^2 \cE^{\vee} \rightarrow \cE^{\vee} \rightarrow \cO_{\P^n \times \P^n} \rightarrow \cO_{\Delta} \rightarrow 0.
\] 
We can rewrite this as
\[
0 \rightarrow p_1^*\cO(-n) \otimes \wedge^n p_2^* \cQ^{\vee} \rightarrow \cdots \rightarrow p_1^*\cO(-1)\otimes p_2^*\cQ^{\vee} \rightarrow \cO_{\P^n \times \P^n} \rightarrow \cO_{\Delta} \rightarrow 0.
\]
Using $\cQ^{\vee} = \Omega^1(1)$, we get 
\[
0 \rightarrow \cO(-n) \boxtimes \Omega^{n}(n) \rightarrow \cdots \rightarrow \cO(-1) \boxtimes \Omega^1(1) \rightarrow \cO_{\P^n \times \P^n} \rightarrow \cO_{\Delta} \rightarrow 0.
\]
Tensoring by $p_1^*\cF$, we obtain the following sequence of sheaves:
\[
0 \rightarrow \cF(-n) \boxtimes \Omega^{n}(n) \rightarrow \cdots \rightarrow \cF(-1) \boxtimes \Omega^1(1) \rightarrow \cF \boxtimes \cO \rightarrow p_1^*\cF \otimes \cO_{\Delta} \rightarrow 0.
\]
Note that this sequence is exact because $\textrm{Tor}_i(\cO_{\Delta}, p_1^*\cF) = 0$ for all $i > 0$.  Let $C^{-i} = \cF(-i)\otimes \Omega^{i}(i)$ for $i = 0, 1, \dots, n$, and denote by $C^\bullet$ the complex
\[
0 \rightarrow C^{-n} \rightarrow \cdots \rightarrow C^0 \rightarrow 0.
\]
We wish to compute $\R^ip_{2*}(C^{\bullet})$.  Using the two spectral sequences of Theorem \ref{rightsequence} associated to the corresponding injective Cartan-Eilenberg resolution, we obtain
\begin{align*}
& \prescript{1}{}E^{pq}_2 = H^pR^qp_{2*}(C^{\bullet}) \Rightarrow \R^{p+q}p_{2*}(C^{\bullet})\\
& \prescript{2}{}E^{pq}_2 = R^p p_{2*}H^q(C^{\bullet}) \Rightarrow \R^{p+q}p_{2*}(C^{\bullet}).
\end{align*}
Note that the complex $C^{\bullet}$ is exact in all but one term, so we have
\begin{align*}
& H^q(C^{\bullet})  = \begin{cases} p_1^* \cF \otimes \cO_{\Delta} & q = 0\\ 0 & q \neq 0 \end{cases}\\
& \textrm{ which implies }\\
& R^p p_{2*}H^q(C^{\bullet})  = \begin{cases} R^p p_{2*}(p_1^* \cF \otimes \cO_{\Delta}) & q = 0\\ 0 & q \neq 0. \end{cases}
\end{align*}
 Note that $\cO_{\Delta} = \Delta_*\cO$, where $\Delta: \P^n \hookrightarrow \P^n \times \P^n$ denotes the diagonal embedding by abuse of notation.  Therefore, by the projection formula, we get
\begin{align*}
& R^p p_{2*}(p_1^* \cF \otimes \Delta_*\cO) \cong R^p p_{2*}(\Delta_*(\Delta^* p_1^* \cF \otimes \cO)) \cong R^p p_{2*}(\Delta_*((p_1\Delta)^* \cF \otimes \cO))\\ & = (R^p p_{2*})\Delta_*(\cF) = \begin{cases} \cF & p = 0\\ 0 & p \neq 0. \end{cases}
\end{align*}
The fact that $(R^p p_{2*})\Delta_*(\cF) = 0$ for $p > 0$ can be shown by noting $(R^p p_{2*})\Delta_*(\cF)$ is the sheaf associated to the presheaf $U \mapsto H^p(U \times \P^n,\Delta_*(\cF)|_{U \times \P^n}) \cong H^p(U ,\cF|_{U})$, which vanishes for affine $U$ if $p > 0$.  Convergence of the spectral sequence $\prescript{1}{}E^{pq}$ implies that 
\[
\R^{p+q}p_{2*}(C^{\bullet}) = \begin{cases} \cF & p + q = 0\\ 0 & p+q \neq 0. \end{cases}
\]
The remaining spectral sequence gives us
\begin{align*}
& \prescript{1}{}E^{pq}_1 = R^q p_{2*}(C^p) = R^q p_{2*}(\cF(p) \boxtimes \Omega^{-p}(-p)) = R^q p_{2*}(p_1^* \cF(p) \otimes p_2^* \Omega^{-p}(-p))\\ & = R^q p_{2*}(\cF(p))\otimes \Omega^{-p}(-p) = H^q(\P^n, \cF(p))\otimes \Omega^{-p}(-p),
\end{align*}
where the last two equalities follow from the projection formula and the base change theorem, respectively.  Putting this together with the $\prescript{2}{}E^{pq}$ computation, we obtain
\[
\prescript{1}{}E^{pq}_1 = H^q(\P^n, \cF(p))\otimes \Omega^{-p}(-p) \Rightarrow \R^{p+q}p_{2*}(C^{\bullet}) = \begin{cases}
      \cF & \text{for } p+q = 0 \\
      0 & \text{for } p+q \neq 0,
    \end{cases}
\]
which proves the first part of the theorem.  The spectral sequence in the second part is obtained analogously after exchanging the roles of $p_1$ and $p_2$.
\end{proof}
\begin{rmk}
Note that the proof of the theorem implies somewhat more than the statement.  Namely, both spectral sequences are situated at the points of the $n \times n$ finite lattice in the second quadrant defined by $-n \le p \le 0$ and $0 \le q \le n$.
\end{rmk}
\section{Example: Vector Bundles on $\P^1$}
\subsection{Spectral sequence computation}

Our first application of Beilinson's spectral sequence is in the context of vector bundles on the projective line.  Applying the first spectral sequence of Theorem \ref{beilinson} to a vector bundle $E$ on $\P^1$ yields the following:

\begin{equation*}
\begin{tikzpicture}
	\matrix (m) [matrix of math nodes,
    nodes in empty cells,nodes={minimum width=4ex,
    minimum height=4ex,outer sep=-2pt},
    column sep=1ex,row sep=1ex]{
               &   &    & q\\  
               &  H^1(\P^1, E(-1)) \otimes \Omega^1(1)  & H^1(\P^1, E) \otimes \cO & 1\\
               &  H^0(\P^1, E(-1)) \otimes \Omega^1(1)  & H^0(\P^1, E) \otimes \cO & 0\\
    \quad\strut p & -1  & 0 & \strut \\};
 
		\draw[-stealth] (m-2-2.east) -- (m-2-3.west);
		\draw[-stealth] (m-3-2.east) -- (m-3-3.west);

\draw[thick] (m-1-4.west) -- (m-4-4.west) ;
\draw[thick] (m-4-1.north) -- (m-4-4.north) ;

\node [above=3pt, align=flush left,text width=1cm] at (m-1-1.north west)
        {
            $E_1$
        };
\end{tikzpicture}
\end{equation*}

Note that by a theorem of Grothendieck (see e.g. Section 10.5 in \cite{Kem1993}) any vector bundle $E$ on $\P^1$ is isomorphic to the direct sum of line bundles $\cO(d_1) \oplus \cdots \oplus \cO(d_r)$ where $d_1, \dots, d_r \in \Z$.  Therefore, if $E$ is generated by global sections then $d_i \ge 0$ for all $1 \le i \le r$.  Consequently, $H^0(\P^1, E^{\vee}) = 0$.  Since $\Omega_{\P^1}^1 = \cO(-2)$, by Serre duality we have 
\begin{align*}
& H^1(\P^1, E) \cong H^0(\P^1, E^{\vee}(-2))^{\vee}\\
& H^1(\P^1, E(-1)) \cong H^0(\P^1, E^{\vee}(-1))^{\vee}.
\end{align*}
Thus, for vector bundles $E$ generated by global sections, the spectral sequence collapses at $E_2$, making $\ker d = 0$ and giving us the following description of $E$:
\[
E \cong \coker(H^0(\P^1, E(-1))\otimes \cO(-1) \xrightarrow{d} H^0(\P^1, E) \otimes \cO)
\]

\subsection{Moduli space of vector bundles}

Now consider the classification problem where:
\[
 \cA = \left\{ 
     \begin{tabular}{lllll}
       vector bundle $E$ on $\P^1$ generated by global sections\\
       $\deg E = d$ and $\rk E = r$\\
			two bases of global sections\\
       $s_1, \dots , s_n \in H^0(\P^1, E)$\\
			$t_1, \dots , t_m \in H^0(\P^1, E(-1))$
     \end{tabular}
   \right\}
\]
and $\sim$ is given by vector bundle isomorphisms compatible with the chosen bases.  Note that $m, n, r, d$ are not independent, since by Grothendieck's theorem we have that $n = d + r$ and $m = d$ (this means $m$ can be equal to $0$, in which case the basis $\{t_1, \dots, t_m\}$ is empty).  This can be extended to the moduli functor $\cM: \opcat{Sch} \rightarrow \Set$ as follows:
\[
\cM(T) = \{(E,s,t)\} / \textrm{iso. compatible with $s$ and $t$},
\]
where $E$ is a rank $r$ vector bundle on  $T \times \P^1 \xrightarrow{p} T$, $\deg E|_{\{x\}\times \P^1} = d$ for all $x \in T$, and $s: p_*(E) \rightarrow \cO_T^n$, $t: p_*(E(-1)) \rightarrow \cO_T^m$ are isomorphisms (the direct images are vector bundles by Grauert's theorem).  As usual, $\cM$ sends each morphism to the pullback along that morphism.

\begin{thm}
\label{p1moduli}
The functor $\cM$ is represented by 
\[ 
X = \{(b_0,b_1)| \lambda_0b_0 + \lambda_1b_1 \textrm{ injective for all } [\lambda_0:\lambda_1] \in \P^1\},
\]
where $b_0,b_1 \in \Hom(\C^m, \C^n)$.
\end{thm}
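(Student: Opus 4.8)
The plan is to exhibit $(X,\cU)$ as a universal object for $\cM$ and then invoke Corollary \ref{yonedacor3}. The key observation is the one already visible in the spectral sequence computation above: a vector bundle $E$ on $\P^1$ generated by global sections is recovered as $E \cong \coker\big(H^0(\P^1,E(-1))\otimes\cO(-1)\xrightarrow{d}H^0(\P^1,E)\otimes\cO\big)$, and since $\Hom(\cO(-1),\cO)=H^0(\P^1,\cO(1))=\C\langle\lambda_0,\lambda_1\rangle$, the differential $d$ is nothing but an $n\times m$ matrix of linear forms, i.e. $\lambda_0 b_0+\lambda_1 b_1$ with $b_0,b_1\in\Hom(\C^m,\C^n)$. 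Choosing the bases $s,t$ amounts precisely to writing $d$ as such a pair of matrices, and local freeness of the cokernel of rank $r=n-m$ is exactly the pointwise injectivity of $\lambda_0 b_0+\lambda_1 b_1$ on $\P^1$, which is the defining condition of $X$.

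First I would construct the universal family. Over $X\times\P^1$ form the tautological morphism $D=\lambda_0 B_0+\lambda_1 B_1\colon \cO_X^m\boxtimes\cO(-1)\to\cO_X^n\boxtimes\cO$, where $B_0,B_1$ are the matrices of coordinate functions on $X\subset\Hom(\C^m,\C^n)^2$, and set $\cU=\coker D$. By the defining condition of $X$ the map $D$ is fiberwise injective, so $\cU$ is locally free of rank $r$, and the short exact sequence $0\to\cO_X^m\boxtimes\cO(-1)\to\cO_X^n\boxtimes\cO\to\cU\to0$ shows $\deg\cU|_{\{x\}\times\P^1}=m=d$. Pushing this sequence forward along $p\colon X\times\P^1\to X$ and using $H^0(\P^1,\cO(-1))=H^1(\P^1,\cO(-1))=0$ gives a canonical isomorphism $s_{\mathrm{univ}}\colon p_*\cU\xrightarrow{\sim}\cO_X^n$; twisting the sequence first by the pullback of $\cO_{\P^1}(-1)$ and using $H^1(\P^1,\cO(-2))\cong\C$ together with $H^\bullet(\P^1,\cO(-1))=0$ gives $t_{\mathrm{univ}}\colon p_*\cU(-1)\xrightarrow{\sim}\cO_X^m$. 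Thus $(\cU,s_{\mathrm{univ}},t_{\mathrm{univ}})\in\cM(X)$.

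It remains to verify the universal property: for every $(E,s,t)\in\cM(T)$ there is a unique $f\colon T\to X$ with $f^*(\cU,s_{\mathrm{univ}},t_{\mathrm{univ}})\sim_T(E,s,t)$. For this I would run the Beilinson spectral sequence of Theorem \ref{beilinson} relative to $T$: since each fiber $E|_{\{x\}\times\P^1}$ is generated by global sections, the argument of the previous subsection collapses the relative spectral sequence to a canonical two-term presentation $0\to p^*p_*E(-1)\otimes\cO(-1)\xrightarrow{d}p^*p_*E\to E\to0$ on $T\times\P^1$. Using the trivializations $s,t$ to identify $p_*E\cong\cO_T^n$ and $p_*E(-1)\cong\cO_T^m$, the differential $d$ becomes a morphism $\cO_T^m\boxtimes\cO(-1)\to\cO_T^n\boxtimes\cO$, i.e. a pair of matrices $B_0,B_1\in\Hom(\cO_T^m,\cO_T^n)$, hence a morphism $f\colon T\to\Hom(\C^m,\C^n)^2$. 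Because $E$ is a vector bundle the map $d$ is fiberwise injective, so $f$ factors through $X$; and since $d$ is canonically attached to $(E,s,t)$, pulling back the tautological $D$ along $f$ recovers $d$ (cokernels commute with pullback), giving $f^*(\cU,s_{\mathrm{univ}},t_{\mathrm{univ}})\sim_T(E,s,t)$. Conversely any such $f$ must send the coordinate functions to the entries of $B_0,B_1$, so $f$ is unique.

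The main obstacle is this relative Beilinson step: I must upgrade the single-bundle collapse of the previous subsection to families, which requires cohomology and base change (Grauert) to know that $p_*E$ and $p_*E(-1)$ are locally free of the constant ranks $n$ and $m$ and commute with base change, that $R^1p_*E=R^1p_*E(-1)=0$ so the relative spectral sequence again has only the one nonzero differential, and that this differential is functorial in $T$ so that the assignment $(E,s,t)\mapsto f$ is compatible with pullback. The fiberwise vanishing $H^1(\P^1,E)=H^1(\P^1,E(-1))=0$ for $E=\bigoplus\cO(d_i)$ with $d_i\ge0$ supplies exactly the hypotheses these base-change statements need, so once the relative resolution is in hand the remaining verifications are the routine diagram chases indicated above.
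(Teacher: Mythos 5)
Your proposal is correct in outline, and the interesting point is that it proves strictly more than the paper's own proof does. The paper explicitly declines to establish representability: it only constructs a bijection between $\cA/\sim$ and the closed points of $X$, running the Beilinson computation over a point in both directions (a bundle with chosen bases yields matrices $(b_0,b_1)$ via $d = b_0x_0 + b_1x_1$, and conversely $(b_0,b_1)$ yields $E = \coker(d)$ with induced bases), deferring the full functorial statement to \cite{ASo2014}. You use the same core mechanism --- the collapsed Beilinson monad $E \cong \coker\bigl(H^0(\P^1,E(-1))\otimes\cO(-1) \to H^0(\P^1,E)\otimes\cO\bigr)$, the identification of the differential with a pair of matrices through $H^0(\P^1,\cO(1))$, and the equivalence between fiberwise injectivity and local freeness of the cokernel --- but you globalize it: you build the universal family $\cU = \coker D$ on $X \times \P^1$, obtain its canonical trivializations by pushing forward the tautological sequence, and produce the classifying morphism by running Beilinson in families, then invoke Corollary \ref{yonedacor3}. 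What your route buys is the actual assertion of Theorem \ref{p1moduli}; what it costs is exactly the relative machinery you flag: cohomology and base change to know that $p_*E$ and $p_*E(-1)$ are locally free of ranks $n$ and $m$ and commute with pullback, and a relative version of Theorem \ref{beilinson}, which must be obtained by re-running the Koszul resolution of the diagonal on $T\times\P^1\times\P^1$ rather than by quoting the absolute statement. These inputs are indeed available, since each fiber is a sum of $\cO(d_i)$ with $d_i\ge 0$, giving the fiberwise vanishing $H^1(\P^1,E_x)=H^1(\P^1,E_x(-1))=0$ that the base-change theorems require. One step worth making explicit in your uniqueness argument: the differential $d$ is canonical because, under adjunction and the projection formula, it is the multiplication map $p_*(E(-1))\otimes H^0(\P^1,\cO(1)) \to p_*E$; this canonicity, together with its compatibility with base change, is what makes the assignment $(E,s,t)\mapsto f$ well defined on isomorphism classes and forces $f$ to be unique. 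Taking $T = \Spec\C$ in your argument recovers precisely the bijection that constitutes the paper's proof.
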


One may interpret points of $X$ as representations of the Kronecker quiver $K_2$ in standard coordinate spaces (i.e. the vector spaces assigned to the vertices are the coordinate spaces $\C^n$ and $\C^m$) subject to the given relations.  We will not fully prove this theorem, instead only establishing a bijection between the set $\cA/ \sim$ and the set of points of $X$ (similar to the proof of the equivalence of categories in Lemma 5.5 of \cite{CB2004}).  For a complete proof of the representability of the functor see \cite{ASo2014}).

\begin{proof}[Proof of Theorem \ref{p1moduli}]
Let $E$ be a vector bundle on $\P^1$ generated by global sections on $\P^1$.  The above computation using the Beilinson spectral sequence gives us that 
\[
E \cong \coker(H^0(\P^1, E(-1))\otimes \cO(-1) \xrightarrow{d} H^0(\P^1, E) \otimes \cO).
\]
Let $V =  H^0(\P^1, E)$ and $W = H^0(\P^1, E(-1))$. Since $d \in \Hom(W\otimes \cO(-1), V\otimes \cO) \cong \Hom(W, V) \otimes H^0(\P^1, \cO(1))$, we can write
$d = b_0x_0 + b_1x_1$, where $b_0,b_1 \in \Hom(W,V)$ and $x_0,x_1 \in H^0(\P^1, \cO(1))$ is the usual basis of global sections.  The trivializations of $V$ and $W$ identify $b_0, b_1$ with matrices in $\Hom(\C^m, \C^n)$.  Since $d$ is injective and the cokernel is a vector bundle, then it is an injective morphism of vector bundles, making it injective on the fibers.  That is, if $\lambda = [\lambda_0: \lambda_1] \in \P^1$, then $d$ restricted to the fiber over $\lambda$ gives us the injective linear transformation $d_{\lambda}  = \lambda_0b_0 + \lambda_1b_1$.

Conversely, given $(b_0, b_1) \in X$ we have the morphism $d: \C^m \otimes \cO(-1) \rightarrow \C^n \otimes \cO$ given by $d = b_0x_0 + b_1x_1$.  The condition that $\lambda_0b_0 + \lambda_1b_1$ is injective for all $[\lambda_0: \lambda_1] \in \P^1$ means that $d$ is an injective morphism of vector bundles.  Thus, $E := \coker(d)$ is a vector bundle.  This gives us the short exact sequence
\[
0 \rightarrow \C^m\otimes \cO(-1) \rightarrow \C^n \otimes \cO \rightarrow E \rightarrow 0.
\]  
The corresponding long exact sequence for cohomology results in:
\begin{align*}
& 0 \rightarrow H^0(\P^1, \C^m\otimes \cO(-1)) \rightarrow H^0(\P^1, \C^n\otimes \cO)\\ 
&\rightarrow H^0(\P^1, E) \rightarrow H^1(\P^1, \C^m\otimes \cO(-1)) \rightarrow \cdots.
\end{align*}
 Since $H^0(\P^1, \C^m\otimes \cO(-1)) = H^1(\P^1, \C^m\otimes \cO(-1)) = 0$, then the isomorphism $H^0(\P^1, \C^n\otimes \cO) \cong H^0(\P^1, E)$ determines a basis of global sections of $E$.  A similar computation for the short exact sequence twisted by $\cO(-1)$:
\[
0 \rightarrow \C^m\otimes \cO(-2) \rightarrow \C^n \otimes \cO(-1) \rightarrow E(-1) \rightarrow 0,
\]
yields an isomorphism $H^0(\P^1, E(-1)) \cong H^1(\P^1, \C^m\otimes\cO(-2))$.  Noting that by Serre duality $ H^1(\P^1, \C^m\otimes\cO(-2)) \cong H^0(\P^1, (\C^m)^{\vee}\otimes \cO)^{\vee}$, we get a basis for $H^0(\P^1, E(-1))$.  It is easy to check the two constructions are mutually inverse.  Therefore, we obtain a bijection between $\cA/\sim$ and the points of $X$.
\end{proof}

\begin{rmk}
If we start with the assumption that $E^{\vee}$ is generated by global sections, apply the Beilinson spectral sequence, and take the dual of the resulting complex of sheaves, we get that
\[
E \cong \ker(H^0(\P^1, E^{\vee})^{\vee}\otimes \cO \rightarrow H^0(\P^1, E^{\vee}(-1))^{\vee})\otimes \cO(1).
\] 
This leads to an alternative moduli space 
\[ 
X' = \{(b_0,b_1)| \lambda_0b_0 + \lambda_1b_1 \textrm{ surjective for all } [\lambda_0:\lambda_1] \in \P^1\},
\]
parametrizing vector bundles $E$ on $\P^1$ generated by global sections, together with bases chosen for $H^0(\P^1, E^{\vee})^{\vee}$ and $H^0(\P^1, E^{\vee}(-1))^{\vee}$.
\end{rmk}

\section{Example: Torsion-free sheaves on $\P^2$}
\label{sheafp2}
\subsection{Preliminaries}

The second application of Beilinson's spectral sequence allows us to describe the moduli space of torsion-free sheaves on $\P^2$ framed at infinity in terms of quiver representations.  

More concretely, let $\l = \{[0:\lambda_1: \lambda_2] \in \P^2 \} \subset \P^2$.  Let $\cF$ be a torsion-free sheaf on $\P^2$ such that for some positive integer $r$ we have that $\cF|_{\l} \cong \cO_{\l}^r$.  We say $\cF$ is \emph{trivial at infinity}.  Consider the following lemma:
\begin{lmm}
\label{cohvanish}
Let $\cF$ be a torsion-free sheaf on $\P^2$ trivial at infinity.  We have:
\begin{align*}
& H^q(\P^2, \cF(-p)) = 0 \textrm{ for } p = 1,2 \textrm{ and } q = 0,2,\\
& H^q(\P^2, \cF(-1)\otimes \Omega^1(1)) = 0 \textrm{ for } q = 0,2.
\end{align*}
\end{lmm}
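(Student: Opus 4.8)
The plan is to reduce each of the four vanishings to a single statement about $H^0$ of a torsion-free sheaf whose restriction to $\l$ is a sum of \emph{negative} line bundles, using two devices: the restriction-to-$\l$ exact sequences handle the $q=0$ (global sections) cases directly, while Serre duality on $\P^2$ (with $\omega_{\P^2}=\cO(-3)$) converts the $q=2$ cases into further $H^0$ statements.

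First I would record the preliminaries. Since $\cF$ is torsion-free and the linear form $x_0$ cutting out $\l$ is a non-zero-divisor, multiplication by $x_0$ is injective on $\cF$, so $\operatorname{Tor}_1(\cF,\cO_\l)=0$; hence tensoring the ideal sequence $0\to\cO(-1)\to\cO\to\cO_\l\to0$ by any twist gives an exact restriction sequence $0\to\cF(j-1)\to\cF(j)\to\cF(j)|_\l\to0$ with $\cF(j)|_\l\cong\cO_\l(j)^r$ by triviality at infinity. Moreover the standard local criterion for freeness (applied to the regular parameter $x_0$, using $\operatorname{Tor}_1(\cF,\cO_\l)=0$ and $\cF|_\l$ free) shows $\cF$ is locally free in a neighborhood of $\l$; consequently $\cF^\vee:=\HOM(\cF,\cO)$ is reflexive, hence locally free on the smooth surface $\P^2$, with $\cF^\vee|_\l\cong(\cF|_\l)^\vee\cong\cO_\l^r$.

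The core is one vanishing lemma: if $\cG$ is torsion-free, locally free near $\l$, and $H^0(\l,\cG|_\l)=0$, then $H^0(\P^2,\cG)=0$. I would prove it by running the restriction sequences downward: since $H^0(\l,\cG(-j)|_\l)=0$ for all $j\ge0$, each map $H^0(\cG(-j-1))\to H^0(\cG(-j))$ (multiplication by $x_0$) is an isomorphism, so $H^0(\cG)\cong H^0(\cG(-k))$ for all $k$; but $H^0(\cG(-k))=0$ for $k\gg0$ (a nonzero section would be infinitely divisible by $x_0$, impossible for a torsion-free coherent sheaf, or embed $\cG$ in its locally free double dual and apply Serre vanishing). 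With this lemma the four statements follow by choosing $\cG$ suitably. For $q=0$, $p=1,2$ take $\cG=\cF(-p)$, with $\cG|_\l=\cO_\l(-p)^r$. For $q=2$, $p=1,2$, Serre duality gives $H^2(\cF(-p))^\vee\cong\Hom(\cF(-p),\cO(-3))\cong H^0(\cF^\vee(p-3))$, so take $\cG=\cF^\vee(p-3)$, whose restriction $\cO_\l(p-3)^r$ has $p-3\le-1$. For the $\Omega$-term with $q=0$ take $\cG=\cF(-1)\otimes\Omega^1(1)$ and restrict the Euler sequence to get $\Omega^1(1)|_\l\cong\cO_\l\oplus\cO_\l(-1)$, so $\cG|_\l\cong\cO_\l(-1)^r\oplus\cO_\l(-2)^r$. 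For the $\Omega$-term with $q=2$, Serre duality gives $H^2(\cF(-1)\otimes\Omega^1(1))^\vee\cong H^0(\cF^\vee\otimes T(-3))$ (using $\Omega^1(1)^\vee=T(-1)$ for $T=T_{\P^2}$), and the normal bundle sequence yields $T(-3)|_\l\cong\cO_\l(-1)\oplus\cO_\l(-2)$, so again $\cG|_\l\cong\cO_\l(-1)^r\oplus\cO_\l(-2)^r$.

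I expect the main obstacle to be the $q=2$ cases: the restriction sequences propagate cohomological information in the wrong direction for top cohomology, forcing the passage through Serre duality, which is only useful once one knows $\cF^\vee$ is itself a well-behaved sheaf trivial at infinity. This is precisely why establishing local freeness of $\cF$ near $\l$ (hence $\cF^\vee|_\l\cong\cO_\l^r$), together with the splittings $\Omega^1(1)|_\l\cong\cO_\l\oplus\cO_\l(-1)$ and $T(-3)|_\l\cong\cO_\l(-1)\oplus\cO_\l(-2)$, is the technical heart of the argument; everything else is bookkeeping with long exact sequences.
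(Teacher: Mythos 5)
Your proof is correct, and its $q=0$ half coincides with the paper's argument: the same twisted restriction sequences to $\l$ give $H^0(\P^2,\cF(-1))\cong H^0(\P^2,\cF(-2))\cong\cdots\cong H^0(\P^2,\cF(-k))$, terminating in zero for $k\gg 0$. Where you genuinely diverge is the $q=2$ half. The paper never dualizes there: it runs the same restriction sequences in the opposite direction, using $H^1(\l,\cO_{\l}(-k)^r)=0$ for $k\le 1$ to obtain $H^2(\P^2,\cF(-2))\cong H^2(\P^2,\cF(-1))\cong H^2(\P^2,\cF)\cong\cdots\cong H^2(\P^2,\cF(k))$ for arbitrarily large $k$, which vanishes by Serre vanishing applied directly to the coherent sheaf $\cF$. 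You instead convert $H^2$ into $H^0$ via Serre duality in the form $H^2(\P^2,\cF(-p))^\vee\cong\Hom(\cF(-p),\cO(-3))\cong H^0(\P^2,\cF^\vee(p-3))$ (the $\Hom$-form, valid for coherent sheaves that need not be locally free) and feed the dual into your single $H^0$-vanishing lemma. Both routes work, and the trade-offs are real: yours is uniform (one lemma, four instantiations), but it is load-bearing on scaffolding the paper never needs, namely local freeness of $\cF$ near $\l$ (so that $\cF^\vee|_{\l}\cong\cO_{\l}^r$), local freeness of $\cF^\vee$ (reflexive on a smooth surface), Serre duality for non-locally-free sheaves, and the splitting $\cT_{\P^2}(-3)|_{\l}\cong\cO_{\l}(-1)\oplus\cO_{\l}(-2)$; the paper's upward $H^2$ chain requires none of this. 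Interestingly, the two proofs also swap where duality enters on the $H^0$ side: the paper kills $H^0(\P^2,\cF(-k))$ for $k\gg 0$ through the embedding $\cF\hookrightarrow\cF^{\vee\vee}$ plus Serre duality and Serre vanishing, whereas your Krull-intersection/infinite-divisibility argument achieves the same with no duality at all --- a small but genuine simplification that could be kept even if you adopted the paper's more economical $q=2$ argument.
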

\begin{proof}
We have the following short exact sequence of coherent sheaves:
\[
0 \rightarrow \cO_{\P^2}(-1) \xrightarrow{x_0} \cO_{\P^2} \rightarrow \cO_{\l} \rightarrow 0, 
\]
where $\cO_{\P^2}(-1) \xrightarrow{x_0} \cO_{\P^2}$ is defined by multiplication by the global section $x_0 \in H^0(\P^2, \cO_{\P^2}(1))$.  Let $k \in \Z$.  Since $\cF$ is trivial at infinity, then $\textrm{Tor}_1(\cO_{\l}, \cF(-k)) = 0$.  Consequently, tensoring the above sequence by $\cF(-k)$ gives us the short exact sequence:
\[
0 \rightarrow \cF(-k-1) \rightarrow \cF(-k) \rightarrow \cF(-k)|_{\l} \rightarrow 0.
\]
The corresponding long exact sequence is:
\begin{align*}
0 & \rightarrow  H^0(\P^2, \cF(-k-1)) \rightarrow  H^0(\P^2, \cF(-k)) \rightarrow  H^0(\l, \cF(-k)|_{\l})\\ &\rightarrow  H^1(\P^2, \cF(-k-1)) \rightarrow  H^1(\P^2, \cF(-k)) \rightarrow  H^1(\l, \cF(-k)|_{\l})\\ &\rightarrow  H^2(\P^2, \cF(-k-1)) \rightarrow  H^2(\P^2, \cF(-k)) \rightarrow  0.
\end{align*}
Since $\cF$ is trivial at infinity, it follows that
\[
\begin{cases}
H^0(\l, \cF(-k)|_{\l}) \cong H^0(\l, \cO_{\l}^r(-k)) = 0 & \textrm{for } k \ge 1\\
H^1(\l, \cF(-k)|_{\l}) \cong H^1(\l, \cO_{\l}^r(-k)) = 0 & \textrm{for } k \le 1,
\end{cases}
\]
so the long exact sequence gives us
\[
\begin{cases}
H^0(\P^2, \cF(-k-1)) \cong H^0(\P^2, \cF(-k)) & \textrm{for } k \ge 1\\
H^2(\P^2, \cF(-k-1)) \cong H^2(\P^2, \cF(-k)) & \textrm{for } k \le 1.
\end{cases}
\]
Since $\cF$ is torsion-free, then the double dual $\cF^{\vee \vee}$ is locally free (see e.g. Section 30.12 of \cite{Sta2019}), and there is a natural inclusion $\cF \hookrightarrow \cF^{\vee \vee}$.  Noting that $\Omega^2_{\P^2} = \cO(-3)$, by Serre duality we have $H^0(\P^2, \cF(-k)) \hookrightarrow H^0(\P^2, \cF^{\vee \vee}(-k)) \cong H^2(\P^2, (\cF^{\vee \vee})^{\vee}(k-3))^{\vee}$.  Now, let $k \in \Z$ be large enough so that by the Serre vanishing theorem $H^2(\P^2, (\cF^{\vee \vee})^{\vee}(k-3)) = H^2(\P^2, \cF(k)) = 0$.  Using the isomorphisms above gives us the following:
\begin{align*}
& H^0(\P^2, \cF(-1)) \cong H^0(\P^2, \cF(-2)) \cong \cdots  \cong H^0(\P^2, \cF(-k)) = 0\\
& H^2(\P^2, \cF(-2)) \cong H^2(\P^2, \cF(-1)) \cong \cdots  \cong H^2(\P^2, \cF(k)) = 0.
\end{align*}
This proves the first part of the lemma.  The proof of the second part is analogous, using the short exact sequence
\[
0 \rightarrow \cF(-k-1) \otimes \Omega^1(1) \rightarrow \cF(-k) \otimes \Omega^1(1) \rightarrow \cF(-k) \otimes \Omega^1(1)|_{\l} \rightarrow 0,
\]
and the relation $\Omega^1(1)|_{\l} \cong \cO_{\l} \oplus \cO_{\l}(-1)$.
\end{proof}

Applying the second spectral sequence of Theorem \ref{beilinson} to $\cF(-1)$ we obtain the following:

\begin{equation*}
\begin{tikzpicture}[scale=0.8, every node/.style={scale=0.8}]
	\matrix (m) [matrix of math nodes,
    nodes in empty cells,nodes={minimum width=5ex,
    minimum height=5ex,outer sep=-1pt},
    column sep=1ex,row sep=1ex]{
               &  &  &    & q\\  
							 & H^2(\P^2, \cF(-1) \otimes \Omega^2(2)) \otimes \cO(-2) &  H^2(\P^2, \cF(-1) \otimes \Omega^1(1)) \otimes \cO(-1)  & H^2(\P^2, \cF(-1)) \otimes \cO & 2\\
               & H^1(\P^2, \cF(-1) \otimes \Omega^2(2)) \otimes \cO(-2) &  H^1(\P^2, \cF(-1) \otimes \Omega^1(1)) \otimes \cO(-1)  & H^1(\P^2, \cF(-1)) \otimes \cO & 1\\
               & H^0(\P^2, \cF(-1) \otimes \Omega^2(2)) \otimes \cO(-2) & H^0(\P^2, \cF(-1) \otimes \Omega^1(1)) \otimes \cO(-1)  & H^0(\P^2, \cF(-1)) \otimes \cO & 0\\
    \quad\strut p & -2 & -1  & 0 & \strut \\};
 
		\draw[-stealth] (m-2-2.east) -- (m-2-3.west);
		\draw[-stealth] (m-3-2.east) -- (m-3-3.west);
		\draw[-stealth] (m-4-2.east) -- (m-4-3.west);
		
		\draw[-stealth] (m-2-3.east) -- (m-2-4.west);
		\draw[-stealth] (m-3-3.east) -- (m-3-4.west);
		\draw[-stealth] (m-4-3.east) -- (m-4-4.west);
		
\draw[thick] (m-1-5.west) -- (m-5-5.west) ;
\draw[thick] (m-5-1.north) -- (m-5-5.north) ;

\node [above=1pt, align=flush left,text width=2pt] at (m-1-1.north west)
        {
            $E_1$
        };
\end{tikzpicture}
\end{equation*}

Applying Lemma \ref{cohvanish} results in all of the entries in the first and third rows being equal to $0$.  This leaves the middle row, which can be written as:
\[
H^1(\P^2, \cF(-1)) \otimes \cO(-2) \xrightarrow{a'} H^1(\P^2, \cF(-1)\otimes \Omega^1(1)) \otimes \cO(-1) \xrightarrow{b'} H^1(\P^2, \cF(-1)) \otimes \cO.
\]
The convergence condition implies $\ker(a') = \coker(b') = 0$ and that the cohomology of the middle term is $\cF(-1)$.  Twisting this complex by $\cO(1)$ we obtain the complex 
\[
H^1(\P^2, \cF(-1)) \otimes \cO(-1) \xrightarrow{a} H^1(\P^2, \cF(-1)\otimes \Omega^1(1)) \otimes \cO \xrightarrow{b} H^1(\P^2, \cF(-1)) \otimes \cO(1).
\]
As before, we get that $\ker(a) = \coker(b) = 0$ and $\cF \cong \ker(b)/\Im(a)$.  This kind of three term complex where the first morphism is injective and the second is surjective is called a \emph{monad}.  We compute the dimensions of the vector spaces involved in constructing the above monad in following lemma we will make use of later.
\begin{lmm}
\label{cohdim}
Given a torsion-free sheaf $\cF$ on $\P^2$ that is trivial at infinity, we have:
\begin{align*}
&\dim H^1(\P^2, \cF(-2)) = \dim H^1(\P^2, \cF(-1)) = c_2(\cF) = n\\
&\dim H^1(\P^2, \cF(-1) \otimes \Omega^1(1)) = 2c_2(\cF) + \rk(\cF) = 2r + n.
\end{align*}
\end{lmm}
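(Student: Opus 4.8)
The plan is to reduce all three equalities to computations of Euler characteristics, and then to evaluate those via Hirzebruch--Riemann--Roch on $\P^2$. Lemma~\ref{cohvanish} already supplies $H^0 = H^2 = 0$ for each of the three sheaves $\cF(-1)$, $\cF(-2)$, and $\cF(-1)\otimes\Omega^1(1)$, so for each of them $\dim H^1 = -\chi$. It therefore suffices to compute $\chi(\cF(-1))$, $\chi(\cF(-2))$, and $\chi(\cF(-1)\otimes\Omega^1(1))$, all of which depend only on the Chern data of $\cF$.

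First I would record that triviality at infinity forces $c_1(\cF) = 0$: restriction $H^2(\P^2,\Z)\cong\Z \to H^2(\l,\Z)$ is an isomorphism, and $c_1(\cF)|_{\l} = c_1(\cF|_{\l}) = c_1(\cO_{\l}^r) = 0$. Writing $r = \rk(\cF)$ and $c_2 = c_2(\cF)$, Riemann--Roch on $\P^2$ (pairing $\mathrm{ch}(\cE)$ with $\mathrm{td}(\P^2) = 1 + \tfrac{3}{2}H + H^2$) gives, for any coherent $\cE$,
\[
\chi(\cE) = \rk(\cE) + \tfrac{1}{2}\deg(\cE)\bigl(\deg(\cE)+3\bigr) - c_2(\cE).
\]
Using $c_1(\cF)=0$, the twist formulas yield $\deg\cF(k) = kr$ and $c_2(\cF(k)) = c_2 + \binom{r}{2}k^2$. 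Substituting $k=-1$ and $k=-2$, the $r$-dependent terms cancel in each case and one finds $\chi(\cF(-1)) = \chi(\cF(-2)) = -c_2$. Hence $\dim H^1(\P^2,\cF(-1)) = \dim H^1(\P^2,\cF(-2)) = c_2(\cF) = n$.

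For the remaining term I would avoid computing the Chern classes of $\cF(-1)\otimes\Omega^1(1)$ directly. Instead, dualize the Euler sequence to obtain $0 \to \Omega^1(1) \to \cO^3 \to \cO(1) \to 0$, a short exact sequence of vector bundles, hence locally split; tensoring by $\cF(-1)$ therefore preserves exactness and produces
\[
0 \to \cF(-1)\otimes\Omega^1(1) \to \cF(-1)^{\oplus 3} \to \cF \to 0.
\]
Additivity of $\chi$ then gives $\chi(\cF(-1)\otimes\Omega^1(1)) = 3\chi(\cF(-1)) - \chi(\cF) = 3(-c_2) - (r - c_2) = -(2c_2 + r)$, where $\chi(\cF) = r - c_2$ comes from Riemann--Roch with $c_1(\cF)=0$. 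Therefore $\dim H^1(\P^2,\cF(-1)\otimes\Omega^1(1)) = 2c_2(\cF) + \rk(\cF)$, completing the lemma.

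The two steps I would treat most carefully are the following. First, Riemann--Roch and the Chern-class twist formulas are being applied to a possibly non--locally-free coherent sheaf; this is legitimate because Hirzebruch--Riemann--Roch holds for coherent sheaves on a smooth projective variety and because $\mathrm{ch}(\cF(k)) = \mathrm{ch}(\cF)\,e^{kH}$ is valid for arbitrary coherent $\cF$. Second, the exactness of the tensored Euler sequence must be justified by the local splitting of a sequence of bundles, not by right-exactness of $\otimes$, since $\cF(-1)$ is not assumed locally free. With these two points in place the remaining arithmetic is entirely routine, and the main obstacle is simply ensuring that $c_1(\cF)=0$ is correctly invoked in every substitution.
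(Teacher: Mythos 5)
Your proof is correct, and it follows the same overall strategy as the paper's: use Lemma~\ref{cohvanish} to identify each $\dim H^1$ with $-\chi$, then evaluate the Euler characteristics by Hirzebruch--Riemann--Roch with $\mathrm{td}(\cT_{\P^2}) = 1 + \tfrac{3}{2}H + H^2$ and $c_1(\cF)=0$. Two of your sub-steps, however, genuinely improve on the paper's treatment. First, the paper handles the fact that $\cF$ need not be locally free by passing to the monad resolution $0 \to H^1(\P^2,\cF(-1))\otimes\cO(-1) \to \ker(b) \to \cF \to 0$ and declaring that one ``may assume $\cF$ is locally free of rank $r$''; as stated this is awkward, since $\ker(b)$ has different rank and Chern classes, and the twisted Euler characteristics of $\cO(-1)$ do not vanish. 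Your route---invoking HRR directly for coherent sheaves on a smooth projective variety, together with $\mathrm{ch}(\cF(k)) = \mathrm{ch}(\cF)e^{kH}$---is cleaner and also avoids any appearance of circularity, since the monad exists only after the dimension counts are in place. Second, where the paper disposes of $\dim H^1(\P^2,\cF(-1)\otimes\Omega^1(1))$ with ``analogous computations'' (which would require the Chern character of the tensor product), you instead tensor the dualized Euler sequence $0 \to \Omega^1(1) \to \cO^3 \to \cO(1) \to 0$ by $\cF(-1)$, justify exactness via local splitting (equivalently, $\cO(1)$ locally free kills the relevant $\mathrm{Tor}_1$), and use additivity of $\chi$; this is the slicker calculation. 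Note also that your answer $2c_2(\cF) + \rk(\cF) = 2n + r$ is the correct one: it matches the lemma's displayed formula and its later use in the proof of Theorem~\ref{p2moduli}, where $\widetilde{W} \cong V \oplus V \oplus W$ has dimension $2n+r$; the ``$2r+n$'' appearing at the end of the lemma's statement is a transposition slip in the paper. Finally, your explicit derivation of $c_1(\cF)=0$ from triviality at infinity is a hypothesis the paper's proof uses silently and only records after the lemma, so making it explicit is a genuine gain in rigor.
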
 
\begin{proof}
Recall if $E$ is a rank $r$ locally free sheaf over a smooth projective scheme $X$, then the splitting principle lets us write the total Chern class of $E$ as $\textrm{c}(E) = \prod_{i=1}^r(1 + \alpha_i)$, where $\alpha_i$ are the Chern roots of $E$.  Furthermore, the Chern character can be computed as $\textrm{ch}(E) = \sum_i e^{\alpha_i}$ and Todd class is $\textrm{td}(E) = \prod_i \frac{\alpha_i}{1 - e^{-\alpha_i}}$.  Together these formulas imply:
\begin{align*}
& \textrm{ch}(E) = \rk(E) + c_1(E) + \frac{1}{2}(c_1(E)^2-2c_2(E)) + \cdots \\
& \textrm{td}(E) = 1 + \frac{1}{2}c_1(E) + \frac{1}{12}(c_1(E)^2 + c_2(E)) + \cdots
\end{align*}
If $\cL$ is a line bundle on $X$, then a further computation using Chern roots gives us that
\[
c(E \otimes \cL) = 1 + (c_1(E) + r c_1(\cL)) + \left(c_2(E) + (r-1)c_1(E)c_1(\cL) + \binom{r}{2}c_1(\cL)^2\right) + \cdots.
\]
The integral cohomology ring for the projective plane is $H^*(\P^2, \Z) = \Z[H]/H^3$, where $H$ is the class corresponding to the hyperplane $\{(0,x_1,x_2)| x_1, x_2 \in \C\} \subset \C^3$.  Therefore, the Euler sequence $0 \rightarrow \cO_{\P^2} \rightarrow \cO_{\P^2}(1)^{3} \rightarrow \cT_{\P^2} \rightarrow 0$ gives us that: 
\[
c(\cT_{\P^2}) = \frac{c(\cO_{\P^2}(1)^{3})}{c(\cO_{\P^2})} = (1+H)^3  = 1 + 3H + 3H^2.
\]  
Consequently, we have $\textrm{td}(\cT_{\P^2}) = 1 + \frac{3}{2}H + H^2$.  Consider the following short exact sequence of coherent sheaves:
\[
0 \rightarrow H^1(\P^2, \cF(-1)) \otimes \cO(-1) \rightarrow \ker(b) \rightarrow \cF \rightarrow 0,
\]
where $b$ is as in the definition of a monad.  Note this is a resolution of $\cF$ by locally free sheaves. We have that the Euler characteristic for the locally free sheaf $\cO(-1)$ is $\chi(\cO(-1)) = 0$.  This means $\chi(\cF) = \chi(\ker(b))$, and we may assume for the rest of the proof that $\cF$ is locally free of rank $r$.

By the Hirzebruch-Riemann-Roch theorem we have
\[
\chi(\cF(-2)) = \int_{\P^2} \textrm{ch}(\cF(-2)) \textrm{td}(\cT_{\P^2}),
\]
where the integral implies we are taking the component of the product belonging to the top cohomology group.  Using the formula above, $c(\cF(-2)) = 1 - 2r H + (n + 4 \binom{r}{2})H^2$.  Therefore, we have $\textrm{ch}(\cF(-2)) = r - 2r H + (2r - n)H^2$.  By Lemma \ref{cohvanish}, we get
\begin{align*}
& \chi(\cF(-2)) = - \dim H^1(\P^2, \cF(-2)) = \int_{\P^2} \textrm{ch}(\cF(-2)) \cdot \textrm{td}(\cT_{\P^2})\\ & = r - 3r + 2r - n = - n.
\end{align*} 
We can use analogous computations to prove the remaining two formulas.
\end{proof}

\subsection{Moduli space of torsion-free sheaves}
As in the case of $\P^1$ we consider the related classification problem described as follows:
\[
 \cA = \left\{ 
     \begin{tabular}{lll}
       $\cF$ is a torsion-free coherent sheaf on $\P^1$ trivial at infinity\\
       $\rk \cF = r$ and $c_2(\cF) = n$\\
			$\Phi: \cF|_{\l} \rightarrow \cO_{\l}^r$ is a trivialization on $\l$
     \end{tabular}
   \right\}
\]
and $\sim$ is given by sheaf isomorphisms that are compatible with the trivialization.  Note that the inclusion $\P^1 \hookrightarrow \P^2$ induces the natural quotient homomorphism $\Z[H]/H^3 \rightarrow \Z[H]/H^2$ on cohomology.  By the functoriality of Chern classes with respect to pullback and $c_1(\cF|_{\l}) = c_1(\cO_{\P^1}^r)= 0$ it follows that $c_1(\cF) = 0$.  The corresponding moduli functor may be written as:
\[
\cM(T) = \{(\cF,\Phi)\} / \textrm{iso. compatible with $\Phi$},
\]
where $\cF$ is a rank $r$ torsion-free coherent sheaf on  $T \times \P^2$, $c_2(\cF|_{\{x\}\times \P^2}) = n$ for all $x \in T$, and $\Phi: \cF|_{T \times \l} \rightarrow \cO_{T \times \l}^r$ is an isomorphism. 

\begin{thm}
\label{p2moduli}
The functor $\cM$ is represented by 
\[ 
X = \{(B_1,B_2,i,j) \}/\GL(n, \C),
\]
where $B_1,B_2 \in \Hom(\C^n, \C^n)$, $i \in \Hom(\C^r, \C^n)$, and $j \in \Hom(\C^n, \C^r)$ subject to the conditions:
\begin{enumerate}
\item $[B_1,B_2] + ij = 0$
\item there is no proper subspace $U \subset \C^n$ such that $B_1(U) \subset U$, $B_2(U) \subset U$, and $\Im(i) \subset U$,
\end{enumerate}
and the action of $\GL(n, \C)$ is given by change of basis for $\C^n$:
\[
g(B_1, B_2, i, j) = (gB_1\inv{g}, gB_2\inv{g}, gi, j\inv{g}).
\]
\end{thm}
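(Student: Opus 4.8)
The plan is to run the monad furnished by the second Beilinson spectral sequence (already assembled immediately before the theorem) and to read the quadruple $(B_1,B_2,i,j)$ off its two differentials, then match the two properties of a monad with relations (1)--(2). Throughout I set $V = H^1(\P^2,\cF(-1))$ and $W = \C^r$, so that $\dim V = n$ by Lemma \ref{cohdim}, and I work with
\[
V\otimes\cO(-1)\xrightarrow{\ a\ } H^1(\P^2,\cF(-1)\otimes\Omega^1(1))\otimes\cO \xrightarrow{\ b\ } V\otimes\cO(1),\qquad \cF\cong \ker b/\Im a,
\]
whose middle term has dimension $2n+r$ by Lemma \ref{cohdim}. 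The first genuine task is to produce a canonical isomorphism $H^1(\P^2,\cF(-1)\otimes\Omega^1(1))\cong V\oplus V\oplus W$. I would obtain this from the twisted Euler sequence $0\to\Omega^1(1)\to\cO^{\oplus 3}\to\cO(1)\to 0$ tensored by $\cF(-1)$, combined with restriction to $\l$ and the framing $\Phi\colon\cF|_{\l}\cong\cO_{\l}^r$, the latter being precisely what rigidifies the summand $W$. Under this identification each differential is an element of the appropriate $\Hom$-space tensored with $H^0(\P^2,\cO(1))=\langle z_0,z_1,z_2\rangle$; expanding $a$ and $b$ in $z_0,z_1,z_2$ and writing them in the block form dictated by $V\oplus V\oplus W$ yields $B_1,B_2\in\End(V)$, $i\in\Hom(W,V)$, and $j\in\Hom(V,W)$ as the $z_0$-coefficients.

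Next I would translate the two monad properties into (1) and (2). Since the complex comes from a spectral sequence it automatically satisfies $b\circ a=0$; a direct block computation shows that in the chart with $a=(z_1-z_0B_1,\,z_2-z_0B_2,\,z_0 j)^{\mathrm t}$ and $b=(-(z_2-z_0B_2),\,z_1-z_0B_1,\,z_0 i)$ the composite equals $([B_1,B_2]+ij)\,z_0^2$, so $b\circ a=0$ is exactly relation (1). For relation (2) I would analyze fibrewise surjectivity of $b$. On $\l=\{z_0=0\}$ the map $b$ is surjective for tautological reasons, and in the chart $z_0=1$ surjectivity fails at $[1:z:w]$ precisely when $B_1^\ast,B_2^\ast$ share an eigenvector in $(\Im i)^{\perp}$; excluding this at every point is equivalent to there being no proper $B_1,B_2$-invariant $U\supseteq\Im i$, i.e.\ to condition (2). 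It is worth emphasizing that no ``co-stability'' hypothesis on $j$ appears: the differential $a$ is injective as a map of sheaves (it is injective at the generic point) but is permitted to drop rank on a finite, hence codimension-two, locus, and this is exactly the freedom that allows $\cF$ to be torsion-free rather than locally free.

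For the converse I would start from $(B_1,B_2,i,j)$ obeying (1) and (2), write down the explicit matrices $a,b$ above, use (1) to get $b\circ a=0$, use (2) to get $b$ fibrewise surjective (so $\ker b$ is locally free), and check that $a$ is generically injective with torsion-free cokernel; then $\cF:=\ker b/\Im a$ is torsion-free of rank $r$ with $c_2=n$. Since $a$ is injective and $b$ surjective on all of $\l$, the monad restricts there to a short exact sequence, which computes $\cF|_{\l}\cong\cO_{\l}^r$ and supplies the trivialization $\Phi$, giving triviality at infinity. I would then verify, as in the proof of Theorem \ref{p1moduli}, that $\cF\mapsto(B_1,B_2,i,j)$ and $(B_1,B_2,i,j)\mapsto\cF$ are mutually inverse.

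Finally I would identify the residual ambiguity. The framing $\Phi$ rigidifies $W=\C^r$, so no $\GL(r)$ acts; the only remaining choice is that of a basis of $V=H^1(\P^2,\cF(-1))$, i.e.\ an identification $V\cong\C^n$, and changing it by $g\in\GL(n,\C)$ sends $(B_1,B_2,i,j)$ to $(gB_1g^{-1},gB_2g^{-1},gi,jg^{-1})$, which is the stated action. This yields the bijection between $\cA/\!\sim$ and the points of $X=\{(B_1,B_2,i,j)\}/\GL(n,\C)$. I expect the main obstacle to be the very first step: making the isomorphism $H^1(\P^2,\cF(-1)\otimes\Omega^1(1))\cong V\oplus V\oplus W$ genuinely canonical and compatible with the framing, so that the monad differentials become the ADHM matrices on the nose. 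Tracking the Euler and Koszul sequences and the cohomology multiplication maps through this identification is the delicate part, after which the equivalence of the monad conditions with (1)--(2) is a direct linear-algebra verification.
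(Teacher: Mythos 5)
Your overall strategy is the paper's: extract the monad from Theorem \ref{beilinson}, normalize its two differentials into block form so that $(B_1,B_2,i,j)$ appear as the $x_0$-coefficients, obtain relation (1) from $b\circ a=0$, obtain condition (2) from fibrewise surjectivity of $b$ via the common-eigenvector argument on transposes, and invert by writing down the explicit monad; your block matrices, the computation $b\circ a=([B_1,B_2]+ij)x_0^2$, the observation that no condition is imposed on $j$ (torsion-free rather than locally free), and the identification of the residual $\GL(n,\C)$ ambiguity all match the paper. However, there is a genuine gap at the step you yourself flag as the main obstacle, and the tool you propose for it would not close it.

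First, the two outer terms of the monad are not the same vector space: the spectral sequence places $H^1(\P^2,\cF(-2))\otimes\cO(-1)$ on the left and $H^1(\P^2,\cF(-1))\otimes\cO(1)$ on the right. Constructing an isomorphism between them is part of the content of the theorem. Writing $a=a_0x_0+a_1x_1+a_2x_2$ and $b=b_0x_0+b_1x_1+b_2x_2$, the paper produces it as the composite $b_1a_2=-b_2a_1\colon H^1(\cF(-2))\to H^1(\cF(-1))$, proved injective using the fibrewise fact that $\Im(a_p)\cap W=0$ for $p\in\l$, where $W:=\ker(b_1)\cap\ker(b_2)$, and hence bijective by the dimension count of Lemma \ref{cohdim}. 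Starting with the same $V$ at both ends assumes exactly what must be proved, and without this particular isomorphism you cannot simultaneously normalize $a_1,a_2,b_1,b_2$ to the constant block matrices that your formulas for $a$ and $b$ presuppose. Second, the twisted Euler sequence $0\to\Omega^1(1)\to\cO^{\oplus3}\to\cO(1)\to0$ tensored with $\cF(-1)$ yields, after applying Lemma \ref{cohvanish}, only the exact sequence $0\to H^0(\cF)\to\widetilde{W}\to H^1(\cF(-1))^{\oplus3}\to H^1(\cF)\to0$; this relates $\widetilde{W}$ to three copies of $H^1(\cF(-1))$ and does not produce a direct sum decomposition $\widetilde{W}\cong V\oplus V\oplus\C^r$, much less one adapted to the differentials. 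The paper's actual mechanism is different: restrict the monad to $\l$, run the cohomology long exact sequences of $0\to V\otimes\cO_{\l}(-1)\to\ker(b_{\l})\to\cF|_{\l}\to0$ and of $0\to\ker(b_{\l})\to\widetilde{W}\otimes\cO_{\l}\to V'\otimes\cO_{\l}(1)\to0$, identify $W\cong H^0(\l,\cF|_{\l})\cong\C^r$ (this is where $\Phi$ rigidifies the $\C^r$ summand), prove that $(a_1,a_2)$ is injective with $\Im(a_1)\cap\Im(a_2)=0$ and $\Im(a_2)\cap\ker(b_1)=0$, and conclude $\widetilde{W}=\Im(a_1)\oplus\Im(a_2)\oplus W$ by counting dimensions with Lemma \ref{cohdim}. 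This chain of lemmas is the heart of the forward direction, and your proposal defers it rather than supplying a substitute. Two smaller omissions in the converse direction: torsion-freeness of $\ker(b)/\Im(a)$ and $c_2(\cF)=n$ are asserted but not proved, whereas the paper verifies both, the latter via the Chern class identity $c(\cF)(1-H^2)^n=1$ coming from the two short exact sequences attached to the monad.
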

We can visualize the points of $X$ as quiver representations up to $\GL(n, \C)$ action:

\begin{equation*}
\begin{tikzpicture}[scale=1.0, every node/.style={scale=1.0}]
%\draw[help lines,step=1] (0,0) grid (10,2);

%\draw[fill] (3,1) circle [radius=0.1];
\node[] at (3,1) {$\C^n$};
\node [above] (adhm11) at (3.1,1.1) {};
\node [below] (adhm21) at (3.1,0.9) {};

\node [above] (adhm1) at (3,1) {};
\node [below] (adhm2) at (3,1) {};

%\draw[fill] (6,1) circle [radius=0.1];
\node[] at (5,1) {$\C^r$};
\node [above] (adhm12) at (4.9,1.1) {};
\node [below] (adhm22) at (4.9,0.9) {};

\draw [thick, <-] (adhm1) to [out=130,in=45,looseness=20] node[above] {$B_1$}(adhm1);
\draw [thick, <-] (adhm2) to [out=230,in=315,looseness=20] node[below] {$B_2$}(adhm2);

\draw [thick, <-] (adhm11)  -- node [above] {$i$} (adhm12);
\draw [thick, ->] (adhm21) -- node [below] {$j$}(adhm22);
\end{tikzpicture}
\end{equation*}

Once again, we will not fully prove this theorem, instead only establishing a bijection between equivalence classes of torsion-free sheaves (with trivializations) and the points of $X$.  
\begin{proof}[Proof of Theorem \ref{p2moduli}]
Let $\cF$ be a torsion-free sheaf on $\P^2$ with trivialization at infinity given by $\Phi$.  Let  $V = H^1(\P^2, \cF(-2))$, $V' = H^1(\P^2, \cF(-1))$, and $\widetilde{W} = H^1(\P^2, \cF(-1) \otimes \Omega^1(1))$.  Using the monad representation of $\cF$ we have that $\cF$ is isomorphic to the middle cohomology of the following monad:
\[
V\otimes \cO(-1) \xrightarrow{a} \widetilde{W} \otimes \cO \xrightarrow{b} V' \otimes \cO(1).
\]
Since $a \in \Hom(V\otimes \cO(-1), \widetilde{W} \otimes \cO) \cong \Hom(V, \widetilde{W}) \otimes \cO(1)$ and $b \in \Hom(\widetilde{W} \otimes \cO, V' \otimes \cO(1)) \cong \Hom(\widetilde{W}, V') \otimes \cO(1)$, we can write:
\begin{align*}
& a = a_0x_0 + a_1x_1 + a_2x_2\\
& b = b_0x_0 + b_1x_1 + b_2x_2,
\end{align*}
where $a_0, a_2, a_3 \in \Hom (V, \widetilde{W})$, $b_0, b_1, b_2 \in \Hom(\widetilde{W}, V')$, and $x_0, x_1, x_2$ is the standard basis of global sections of $\cO_{\P^2}(1)$.  Thus the data from the monad may be expressed in terms of a quiver representation:

\begin{equation*}
\begin{tikzpicture}[scale=1.0, every node/.style={scale=1.0}]
%\draw[help lines,step=1] (0,0) grid (10,2);

%\draw[fill] (2,1) circle [radius=0.1];
\node[] at (2,1) {$V$};
\node [above] (bel-a01) at (2.1,1.4) {};
\node  (bel-a11) at (2.1,1) {};
\node [below] (bel-a21) at (2.1,0.6) {};

%\draw[fill] (5,1) circle [radius=0.1];
\node[] at (5,1) {$\widetilde{W}$};
\node [above] (bel-a02) at (4.9,1.4) {};
\node  (bel-a12) at (4.9,1) {};
\node [below] (bel-a22) at (4.9,0.6) {};

\node [above] (bel-b01) at (5.1,1.4) {};
\node  (bel-b11) at (5.1,1) {};
\node [below] (bel-b21) at (5.1,0.6) {};

%\draw[fill] (8,1) circle [radius=0.1];
\node[] at (8,1) {$V'$};
\node [above] (bel-b02) at (7.9,1.4) {};
\node  (bel-b12) at (7.9,1) {};
\node [below] (bel-b22) at (7.9,0.6) {};

\draw [thick, ->] (bel-a01)  -- node [above] {$a_0$} (bel-a02);
\draw [thick, ->] (bel-a11)  -- node [above] {$a_1$} (bel-a12);
\draw [thick, ->] (bel-a21)  -- node [above] {$a_2$} (bel-a22);

\draw [thick, ->] (bel-b01)  -- node [above] {$b_0$} (bel-b02);
\draw [thick, ->] (bel-b11)  -- node [above] {$b_1$} (bel-b12);
\draw [thick, ->] (bel-b21)  -- node [above] {$b_2$} (bel-b22);

\end{tikzpicture}
\end{equation*}

Since $ba = 0$ we have the arrows in the above representation are subject to the following relations:
\begin{align*}
& b_0a_0 = b_1a_1 = b_2a_2 = 0\\
& b_0a_1 + b_1a_0 = 0 \\
& b_1a_2 + b_2a_1 = 0\\
& b_2a_0 + b_0a_2 = 0
\end{align*}

Restricting our monad to $\l$, we obtain:
\[
V \otimes \cO_{\l}(-1) \xrightarrow{a_{\l}} \widetilde{W} \otimes \cO_{\l} \xrightarrow{b_{\l}} V' \otimes \cO_{\l}(1),
\]
where $a_{\l} := a|_{\l} = a_1x_1 + a_2x_2$ and $b_{\l} := b|_{\l} = b_1x_1 + b_2x_2$.  This gives rise to the complex
\[
0 \rightarrow V \otimes \cO_{\l}(-1) \rightarrow \ker(b_{\l}) \rightarrow \cF|_{\l} \rightarrow 0,
\]
which is a short exact sequence since $\cF|_{\l}$ is a vector bundle.  The corresponding long exact sequence on cohomology is:
\begin{align*}
& 0 \rightarrow H^0(\l, \cO_{\l}(-1)) \otimes V \rightarrow H^0(\l, \ker(b_{\l})) \rightarrow H^0(\l, \cF|_{\l})\\
& \rightarrow H^1(\l, \cO_{\l}(-1)) \otimes V \rightarrow H^1(\l, \ker(b_{\l})) \rightarrow H^1(\l, \cF|_{\l}) \rightarrow 0.
\end{align*}
Since $H^0(\l, \cO_{\l}(-1)) = H^1(\l, \cO_{\l}(-1)) = 0$, we have $H^0(\l, \ker(b_{\l})) \cong H^0(\l, \cF|_{\l})$ and $H^1(\l, \ker(b_{\l})) \cong H^1(\l, \cF|_{\l})$.  Therefore, $\cF|_{\l} \cong \cO_{\l}^r$ implies $H^1(\l, \ker(b_{\l})) = H^1(\l, \cF|_{\l}) = 0$.  The framing $\Phi$ allows us to obtain the trivialization $H^0(\l, \ker(b_{\l})) \cong H^0(\l, \cF|_{\l}) \cong \C^r$. 

We can use our monad to obtain a second short exact sequence:
\[
0 \rightarrow \ker(b_{\l}) \rightarrow \widetilde{W} \otimes \cO_{\l} \rightarrow V' \otimes \cO_{\l}(1) \rightarrow 0
\]
with the corresponding long exact sequence on cohomology:
\begin{align*}
& 0 \rightarrow H^0(\l, \ker(b_{\l})) \rightarrow H^0(\l, \cO_{\l}) \otimes \widetilde{W} \rightarrow H^0(\l, \cO_{\l}) \otimes V'\\
& \rightarrow H^1(\l, \ker(b_{\l})) \otimes V \rightarrow H^1(\l, \cO_{\l}) \otimes \widetilde{W} \rightarrow H^1(\l, \cO_{\l}) \otimes V' \rightarrow 0.
\end{align*}
From the previous computation $H^1(\l, \ker(b_{\l})) = 0$.  Moreover, since $x_1, x_2$ form a basis for the vector space $H^0(\l, \cO_{\l}(1))$, we have $H^0(\l, \cO_{\l}(1)) \otimes V' \cong (\C x_1 \oplus \C x_2) \otimes V ' \cong V' \oplus V'$.  Therefore, the long exact sequence yields the following short exact sequence:
\[
0 \rightarrow H^0(\l, \ker(b_{\l})) \rightarrow \widetilde{W} \xrightarrow{\begin{pmatrix}b_1\\ b_2 \end{pmatrix}} V' \oplus V' \rightarrow 0.
\]
Let $W := H^0(\l, \ker(b_{\l})) = \ker \begin{pmatrix}b_1\\ b_2 \end{pmatrix} = \ker(b_1) \cap \ker(b_2)$.  Now, let us consider the dual of the restriction of our monad to $\l$:
\[
(V')^{\vee} \otimes \cO_{\l}(1) \xrightarrow{\ltrans{b_{\l}}} (\widetilde{W})^{\vee} \otimes \cO_{\l} \xrightarrow{\ltrans{a_{\l}}} V^{\vee} \otimes \cO_{\l}(1).
\]
Using an analogous long exact sequence, we obtain the short exact sequence:
\[
0 \rightarrow H^0(\l, \ker(\ltrans{a_{\l}})) \rightarrow \widetilde{W} \xrightarrow{(\ltrans{a_1}, \ltrans{a_2})} V^{\vee}\oplus V^{\vee} \rightarrow 0.
\]
This means $(a_1, a_2): V \oplus V \rightarrow \widetilde{W}$ is injective, so the morphism $a_{p} = \lambda_1 a_1 + \lambda_2 a_2$ induced by $a$ on the fiber over $p = [0: \lambda_1: \lambda_2] \in \l$ is also injective.  Furthermore, $\ker{b_p}/\Im(a_p) = \cF_p$ is isomorphic to $W$ via the trivialization $\Phi$.  This isomorphism factors through the quotient map, so if $w \in \Im(a_p) \cap W$, then $w = 0$.  Consequently, for $p = [0:1:0]$, we get $\ker(a_1) \cap \Im(b_2) = \ker(a_1) \cap W = 0$.  This makes $b_1a_2 = - b_2a_1: V \rightarrow V'$ injective, so it is an isomorphism by Lemma \ref{cohdim}.

Noting that $\Im(a_1) \cap \Im(a_2) = 0$ and $\Im(a_2) \cap \ker(b_1) = 0$, we see $\widetilde{W}$ is isomorphic to $V \oplus V \oplus W$ by Lemma \ref{cohdim} (i.e. $\dim \Im(a_1) \oplus \Im(a_2) \oplus W = \dim \widetilde{W}$).  Identifying $V$ with $V'$ and $\widetilde{W}$ with $V \oplus V \oplus W$ via the isomorphisms, we see that 
\[
a_1 = \begin{pmatrix}-\Id_V\\ 0\\ 0 \end{pmatrix} \quad a_2 = \begin{pmatrix}0\\ -\Id_V\\ 0\end{pmatrix} \quad b_1 = \begin{pmatrix} 0 & -\Id_V & 0 \end{pmatrix} \quad b_2 = \begin{pmatrix} \Id_V & 0 & 0\end{pmatrix}.
\]

Similarly, we can decompose $a_0$ and $b_0$ as follows:
\[
a_0 = \begin{pmatrix} B_1\\ B_2\\ j \end{pmatrix} \quad b_0 = \begin{pmatrix} -B_2 & B_1 & i \end{pmatrix}.
\]

\begin{equation*}
\begin{tikzcd}[row sep=5pt, column sep=10pt]
& V \arrow[ddr, "-B_2"] &\\
& \oplus &\\
V \arrow[r, "B_2"] \arrow[uur, "B_1"] \arrow[ddr, "j"] & V \arrow[r, "B_1"] & V\\
& \oplus &\\
& W \arrow[uur, "i"]&
\end{tikzcd}
\end{equation*}
 
Using $ba = 0$, we get that $[B_1, B_2] + ij = 0$.  Note that $\Phi$ defines a trivialization for $W$, identifying it with $\C^r$.  Choosing a basis of $V$, we can identify it with $\C^n$.  This means $B_1,B_2 \in \Hom(\C^n, \C^n)$, $i \in \Hom(\C^r, \C^n)$, and $j \in \Hom(\C^n, \C^r)$.  For any other choice of basis of $V$, the corresponding matrices differ from $(B_1, B_2, i, j)$ by an action of $\GL(n,\C)$ as described in the theorem statement.

In order to show that $(B_1, B_2, i, j)$ defines a point in $X$ it remains to verify condition (2).  Indeed, suppose there exists a proper subspace $U \subset V$ such that $B_1(U) \subset U$, $B_2(U) \subset U$, and $\Im(i) \subset U$.  Let $U^{\bot} = \{f \in V^{\vee}|f(U)=0\}$ be the orthogonal complement of $U$.  Note that $\Im(i) \subset U$ if and only if $U^{\bot} \subset \ker(\ltrans{i})$.  Additionally, $[B_1, B_2] + ij = 0$ implies $[\ltrans{B_1}, \ltrans{B_2}] + \ltrans{j}\ltrans{i} = 0$.

Since $B_1(U) \subset U$, $B_2(U) \subset U$ we have $\ltrans{B_1}(U^{\bot}) \subset U^{\bot}$ and $\ltrans{B_2}(U^{\bot}) \subset U^{\bot}$.  Moreover, restricting to $U^{\bot}$ we get $[\ltrans{B_1}, \ltrans{B_2}]|_{U^{\bot}} = 0$.  This means $\ltrans{B_1}, \ltrans{B_2}$ have a common invariant subspace, so they share an eigenvector.  Let $0 \neq f \in U^{\bot}$ be such that 
\begin{align*}
\ltrans{B_1} f = \lambda_1 f\\
\ltrans{B_2} f =  \lambda_2 f,
\end{align*}
for some $\lambda_1, \lambda_2 \in \C$.  Let $\lambda = [0:\lambda_1: \lambda_2] \in \P^2$. If $b_{\lambda}$ is the restriction of $b$ to the fiber over $\lambda$, we have:
\[
\ltrans{b}_{\lambda} f  = \begin{pmatrix}-\ltrans{B_2} f + \lambda_2 f\\ \ltrans{B_1}f - \lambda_1 f\\ \ltrans{i}f  \end{pmatrix} = 0.
\]
This means $\ltrans{b_{\lambda}}$ is not injective, and consequently $b_{\lambda}$ is not surjective.  Thus, $b$ is not surjective, leading to a contradiction.

 Now, let $(B_1, B_2, i, j)$ represent a point in $X$.  Consider the following complex of sheaves on $\P^2$:
\[
\C^n\otimes \cO(-1) \xrightarrow{a} (\C^n \otimes \cO) \oplus (\C^n \otimes \cO) \oplus (\C^r \otimes \cO) \xrightarrow{b} \C^n \otimes \cO(1),
\]
where $a = \begin{pmatrix}B_1x_0 - x_1\\B_2x_0-x_2\\j x_0 \end{pmatrix}$ and $b = \begin{pmatrix}-(B_2x_0 -x_2) & B_1x_0-x_1 & ix_0 \end{pmatrix}$ (it is not hard to check that indeed $ba = 0$).  We will need to show that this complex is a monad, and the cohomology $\cF = \ker(b)/\Im(a)$ is a torsion-free sheaf.

It is easy to see from the way it is defined that $a$ is injective on $\l$, so we only need to show that $a$ is injective on the complement $\P^2 - \l = \C^2$. If $z_1 = \frac{x_1}{x_0},z_2 = \frac{x_2}{x_0}$, we get that 
\[
a|_{\C^2}f = \begin{pmatrix}B_1f - z_1f\\B_2f-z_2f\\jf \end{pmatrix},
\]
where $f(z_1,z_2)$ is a local section of $\C^n\otimes \cO(-1)|_{\C^2}$.  If $a|_{\C^2}f = 0$, then $a|_{\C^2}f(\lambda_1,\lambda_2) = 0$ for all $(\lambda_1, \lambda_2) \in \C^2$.  Therefore, $f(\lambda_1,\lambda_2) = 0$ or $\lambda_1$ is an eigenvalue of $B_1$ and $\lambda_2$ is an eigenvalue of $B_2$.  However, $B_1$ and $B_2$ have only finitely many eigenvalues, so  $f = 0$ at all but finitely many points of $\C^2$.  Thus it is identically $0$ on $\C^2$, and $a$ is injective.

Note that $b$ is surjective on $\l$ by construction.  Now, suppose $b$ is not surjective on $\P^2 - \l$.  This means  $\ltrans{b_{\lambda}}$ is not injective for some $\lambda = [1:\lambda_1, \lambda_2]$, and consequently there is a $0 \neq f \in \ker(\ltrans{b_{\lambda}})$.  Let $U = \ker(f)$.  Reversing the argument above, we can see that $B_1(U) \subset U$, $B_2(U) \subset U$ and $\Im(i) \subset U$.  Therefore, $b$ is surjective. 

Thus we see that our complex is a monad.  Furthermore, if we restrict it to $\l$, then it can be written as the direct sum of complexes:
\[
\C^n \otimes \cO_{\l}(-1) \xrightarrow{\tilde{a}} \C^n\otimes \cO_{\l} \oplus \C^n \otimes \cO_{\l} \xrightarrow{\tilde{b}} \C^n \otimes \cO_{\l}(1),
\]
where $\tilde{a} = \begin{pmatrix}- x_1\\-x_2 \end{pmatrix}$ and $\tilde{b} = \begin{pmatrix} x_2 & -x_1\end{pmatrix}$, and
\[
0 \rightarrow \C^r \oplus \cO_{\l} \rightarrow 0.
\]
Note that the first complex is exact, so we get that $\cF_{\l} \cong \C^r \otimes \cO_{\l} \cong \cO_{\l}^r$, making $\cF$ trivial at infinity.

We can see that $\cF$ is torsion-free on the rest of $\P^2$ as follows.  Let $s = (s_1, s_2. s_3) \in \ker(b) \subset (\C^n \otimes \cO) \oplus (\C^n \otimes \cO) \oplus (\C^r \otimes \cO)(U)$ be a local section over an open subset $U \subset \P^2 - \l = \C^2$ such that these is a nonzero $f \in \cO(U)$ satisfying $fs \in \Im (a|_{U})$.  That is, there exists a $t \in \C^n\otimes \cO(-1)(U)$ such that:
\begin{align*}
& fs_1 = (B_1 - z_1)t\\
& fs_2 = (B_2 - z_2)t\\
& fs_3 = jt.
\end{align*}
For $l = 1,2$, we have that if $(B_l - z_l)$ is invertible on $U$, then $t = f\inv{(B_l-z_l)}s_l$.  Consequently, if either $(B_1 - z_1)$ or $(B_2 - z_2)$ is invertible, then $h = \frac{t}{f} \in \C^n\otimes \cO(-1)(U)$ is well-defined, and moreover $s = a|_{U}h$.  However,  $(B_1 - z_1)$ and  $(B_2 - z_2)$ simultaneously fail to be invertible only at finitely many points of $\C^2$ (i.e. pairs consisting of eigenvalues of $B_1$ and $B_2$, respectively).  Therefore, $h$ is well-defined at all but finitely many points of $U$, so it is defined on all of $U$.  Thus, $s \in \Im(a|_{U})$, and therefore $\cF(U)$ is torsion-free.

As seen previously, a long exact sequence for cohomology gives us the isomorphism $H^0(\l, \cF_{\l}) \cong H^0(\l, \C^r \otimes \cO_{\l})$.  Denote this isomorphism by $\Phi$.  Since $\cF$ is trivial at infinity, $\Phi: \cF_{\l} \rightarrow \cO_{\l}^r$ define a trivialization.  Note we have the two exact sequences 
\begin{align*}
& 0\rightarrow \C^n \otimes \cO(-1) \rightarrow \ker(b) \rightarrow \cF \rightarrow 0\\ 
& 0 \rightarrow \ker(b) \rightarrow (\C^n \otimes \cO) \oplus (\C^n \otimes \cO) \oplus (\C^r \otimes \cO) \rightarrow \C^n \otimes \cO(1) \rightarrow 0.
\end{align*}  
This means the total Chern class of $\cF$ may be computed from 
\[
c(\cF)c(\C^n \otimes \cO(-1))c(\C^n \otimes \cO(1)) = c((\C^n \otimes \cO) \oplus (\C^n \otimes \cO) \oplus (\C^r \otimes \cO)). 
\]
Indeed, $c(\C^n \otimes \cO(-1)) = (1-H)^n$, c($\C^n \otimes \cO(1)) = (1+H)^n$, and $c((\C^n \otimes \cO) \oplus (\C^n \otimes \cO) \oplus (\C^r \otimes \cO)) = 1$.  Therefore, 
$c(\cF) (1-H^2)^n = 1$.  Expanding this and noting that $H^*(\P^2, \Z) = \Z[H]/H^3$, gives us $c(\cF) = 1 + nH^2$.  Thus, $c_2(\cF) = n$.

It is not hard to check that the equivalence relation on the framed torsion-free sheaves gives quadruples $(B_1, B_2, i, j)$ equivalent under the given $\GL(n, \C)$ action and vice versa.  Furthermore, the two constructions described above are mutually inverse.  Therefore, we obtain a bijection of sets.
\end{proof}
\section{Algebraic group actions and quotients}
A common approach to constructing a moduli space (or moduli stack) para\-met\-rizing equivalence classes of elements of a set $\cA$ with respect to the equivalence relation $\sim$ is to identify $\cA$ with points of a \emph{parameter space} $Y$ and to encode $\sim$ in the action of an algebraic group $G$ on $Y$.  That is, the set of orbits $Y/G$ is in bijection with the set of equivalence classes $\cA/\sim$.  In the ideal scenario, $Y/G$ can be given the structure of an algebraic variety and made into a coarse or fine moduli space. 

\subsection{Actions of algebraic groups}
Let us recall a few basic definitions from the theory of algebraic groups.  As before, all varieties and schemes are over the field  of complex numbers $\C$.  Recall that an \emph{algebraic group} is a group $G$ such that $G$ is an algebraic variety where the multiplication operation $m: G \times G \rightarrow G$ and the inversion operation $i: G \rightarrow G$ are both morphisms of algebraic varieties.  Such a group is called an \emph{affine algebraic group} if the underlying variety is affine.  Examples include $\C$ with the standard addition operation, $\C^{\times}$ with respect to multiplication, and $\GL_n(\C)$.
The definitions of subgroup and normal subgroup can all be transferred to the algebraic setting by specifying that the subgroups should also be subvarieties.

An action of $G$ on a variety $X$ is called \emph{algebraic} if the corresponding mapping $a: G \times X \rightarrow X$ is a morphism of varieties.  A morphism of varieties $f: X \rightarrow Y$ is called $G$-\emph{equivariant} if $f(gx) = gf(x)$.  It is called $G$-\emph{invariant} if $f(gx) = f(x)$.The action of $G$ on $X$ induces an action on the $\C$-algebra of regular functions $\cO(X)$ which may be written as $g \cdot f(x) = f(\inv{g}\cdot x)$.  A regular function $f \in \cO(X)$ is called $G$-\emph{invariant} if $g \cdot f = f$.  Note that $G$-invariant functions can be defined similarly for regular functions $\cO(U)$ on any open subset $U \subset X$.    

For any point $x \in X$, the stabilizer $G_x$ is a closed subvariety of $G$ since it is the preimage of $x$ under the morphism $a_x: G \rightarrow X$ given by $a_x := a(-, x)$.  The structure of an orbit is more complicated and may be described as follows:

\begin{prp}
\label{orbit}
Let $G$ be an affine algebraic group and let $X$ be an algebraic variety with an algebraic action of $G$.  For any $x \in X$, the orbit $G \cdot x$ is open in its closure, and the boundary $\overline{G \cdot x} - G \cdot x$ consists of a union of orbits of strictly smaller dimension.
\end{prp}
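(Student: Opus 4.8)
The plan is to analyze the \emph{orbit morphism} $a_x : G \to X$, $g \mapsto g\cdot x$, whose image is exactly $G\cdot x$, and to combine a general constructibility result with the homogeneity of the orbit. First I would invoke Chevalley's theorem: since $a_x$ is a morphism of varieties, its image $G\cdot x$ is a constructible subset of $X$. A constructible subset of a Noetherian space contains a nonempty subset that is open in its closure, so setting $Y := \overline{G\cdot x}$ there is a nonempty open $U \subseteq Y$ with $U \subseteq G\cdot x$. Note also that $Y$ is $G$-invariant, being the closure of the $G$-invariant set $G\cdot x$, because each $g\in G$ acts on $X$ as an automorphism (with inverse given by the action of $g^{-1}$) and hence restricts to a homeomorphism $Y \to Y$.

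Next I would upgrade this single open set to openness of the whole orbit by homogeneity. Fix a point $u = h\cdot x \in U$. For an arbitrary point $g\cdot x$ of the orbit, the automorphism of $X$ given by the action of $gh^{-1}$ maps $Y$ homeomorphically to itself and carries $u$ to $g\cdot x$; it therefore carries $U$ to an open neighborhood $(gh^{-1})\cdot U$ of $g\cdot x$ inside $(gh^{-1})\cdot(G\cdot x) = G\cdot x$. Thus every point of $G\cdot x$ has a neighborhood open in $Y$ and contained in $G\cdot x$, which shows that $G\cdot x$ is open in $Y = \overline{G\cdot x}$. The first assertion of the proposition follows.

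The boundary statement is then essentially formal. Since $G\cdot x$ is open in $Y$, its complement $\overline{G\cdot x} - G\cdot x$ is closed in $Y$; and since $Y$ is $G$-invariant while $G\cdot x$ is a single orbit, this complement is $G$-invariant, hence a union of orbits. It remains only to control dimensions. Let $d = \dim(G\cdot x)$ and let $G^0$ be the identity component, a finite-index normal subgroup, so that $G^0\cdot x$ is irreducible and $G\cdot x$ is a finite union of its translates. Consequently $Y$ is equidimensional of dimension $d$, and, as $G\cdot x$ is open and dense in $Y$, every irreducible component of $Y$ meets $G\cdot x$. Hence the closed boundary contains no full irreducible component of $Y$, so its intersection with each component is a proper closed subset of an irreducible variety of dimension $d$ and thus has dimension $< d$; therefore $\dim(\overline{G\cdot x} - G\cdot x) < d$ and every orbit in the boundary has dimension strictly less than $d$.

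The heart of the argument is the passage from Chevalley constructibility to genuine openness via homogeneity, which is clean. The main subtlety I would flag is the final dimension drop when $G$ — and hence $Y$ — need not be irreducible: the claim ``strictly smaller dimension'' requires the equidimensionality of $Y$ coming from the finitely many translates of the irreducible set $G^0\cdot x$, together with the fact that each component of $Y$ already meets the dense orbit, so that no component can be swallowed by the boundary.
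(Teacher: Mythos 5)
Your proof is correct and follows essentially the same route as the paper's: Chevalley constructibility of the image of the orbit map, a nonempty subset of the orbit open in the closure, translation by group elements to get openness of the whole orbit, and then $G$-invariance of the boundary plus a dimension comparison. The only difference is one of detail: where the paper simply asserts $\dim(\overline{G\cdot x} - G\cdot x) < \dim \overline{G\cdot x}$, you justify it carefully (equidimensionality of the closure via translates of $G^0\cdot x$, and the fact that the dense open orbit meets every irreducible component), which is a worthwhile refinement rather than a different argument.
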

\begin{proof}
The orbit $G \cdot x$ is the image of $G \times \{x\} \cong G$ in $X$ under the action morphism. Therefore, by Chevalley's Theorem, $G \cdot x$ is a constructible subset (i.e. a finite union of locally closed subsets) of $X$.  This is equivalent to the existence of a dense open subset $U \subset \overline{G \cdot x}$ of $G \cdot x$ such that $U \subset G \cdot x$ (Section AG.1.3 in \cite{Bor1991}).  Since the action of $G$ is transitive on the orbit, it follows that every element of $G \cdot x$ is contained in $gU$ for some $g \in G$.  Consequently, $G \cdot x = \bigcup_{g \in G} gU$, so $G \cdot x$ is open in $\overline{G \cdot x}$.

Note that the boundary $\overline{G \cdot x} - G \cdot x$ is invariant under the action of $G$.  Indeed, $\overline{G \cdot x}$ is invariant under the action of $G$ because multiplication by any $g \in G$ is an isomorphism and the image of $\overline{G \cdot x}$ contains $G \cdot x$.  Since $G \cdot x$ is also invariant under the action of $G$, then $\overline{G \cdot x} - G \cdot x$ is invariant.  This means $\overline{G \cdot x} - G \cdot x$ may be written as the union of orbits.  Furthermore, we have $\dim G \cdot x = \dim \overline{G \cdot x}$ since it is dense and open in its closure, while $\overline{G \cdot x} > \dim (\overline{G \cdot x} - G \cdot x)$.  Therefore, the dimension of the orbits comprising $\overline{G \cdot x} - G \cdot x$ is strictly smaller than the dimension of $G \cdot x$.

\end{proof}

Note that the proposition implies orbits of minimal dimension are closed (and hence that closed orbits exist).  Let us looks at some examples of algebraic group actions.

\begin{exa}
\label{quotientex}
Here are a few examples demonstrating the orbit structures of actions of affine algebraic groups. 
\begin{enumerate}
\item
\begin{enumerate}
\item The group $\C^{\times}$ acts on $\A^2 = \C^2$ by $t \cdot (x,y) = (tx, ty)$.  The orbits under this action are either punctured lines $\ell-\{(0,0)\}$ through the origin and the origin $\{(0,0)\}$ itself.  Note that $\{(0,0)\}$ is a $0$-dimensional closed orbit contained in the closure of each $1$-dimensional punctured line.  It's easy to generalize this to an action of $\C^{\times}$ on $\A^n$.  The orbits are once again just punctured lines through the origin together with the origin.   
\item The action of $\C^{\times}$ on $\A^2$ descends to $\A^2 - \{(0,0)\}$.  This removes the origin from the set of orbits described above, leaving only the punctured lines through the origin $\ell-\{(0,0)\}$.  This suggests that the orbit space may be viewed as an algebraic variety since it can be identified (set-theoretically) with $\P^1$.  Similarly, the orbit space of the action of $\C^{\times}$ on $\A^n$ can be identified with $\P^{n-1}$. 
\end{enumerate}
\item Slightly modifying the first example above, we can define a $\C^{\times}$ action on $\A^2$ by $t \cdot (x,y) = (tx, \inv{t}y)$.  For this action we can describe the orbits as follows:
\begin{itemize}
\item the curves $\{(x,y)| xy = a\}\subset \A^2$ where $a \in \C^{\times}$ are $1$-dimensional orbits,
\item degenerate case $xy = 0$ is the union of the punctured coordinate axes $\{(x,0)\} \subset \A^2$ and $\{(0,y)\}\subset \A^2$, which are both $1$-dimensional orbits,
\item the origin $\{(0,0)\}$ is a $0$-dimensional closed orbit contained in the closure of each of the other orbits.
\end{itemize}
\item The group $\GL(2,\C)$ acts on the space of $2 \times 2$ matrices $\Hom(\C^2, \C^2)$ by conjugation $g \cdot B = g B\inv{g}$.  Note that the orbit of $B$ may be identified with the Jordan normal form of $B$.  Therefore, we can see that the orbits under this action may be described as follows:
\begin{itemize}
\item the conjugacy class of each Jordan block $\begin{pmatrix}\lambda & 1 \\ 0 & \lambda\end{pmatrix}$ is a $2$-dimensional orbit,
\item the conjugacy class of each diagonal matrix $\begin{pmatrix}\lambda_1 & 0 \\ 0 & \lambda_2\end{pmatrix}$ where $\lambda_1 \neq \lambda_2$ defines a closed $2$-dimensional orbit,
\item each scalar matrix $\begin{pmatrix} \lambda & 0 \\ 0 & \lambda\end{pmatrix}$ is a closed $0$-dimensional orbit contained in the closure of the orbit defined by $\begin{pmatrix}\lambda & 1 \\ 0 & \lambda\end{pmatrix}$. 
\end{itemize}  
\end{enumerate}
\end{exa}  

\subsection{Quotients}
Recall from differential geometry that given a smooth manifold $M$ with a (smooth) free, proper action of Lie group $G$, the set of orbits $M/G$ can be given a unique structure of a smooth manifold such that the quotient map $M \rightarrow M/G$ is a submersion (in fact, $M \rightarrow M/G$ is a principal $G$-bundle).

  Similarly, in the case of an affine algebraic group $G$ acting on an algebraic variety $X$, we would like to endow the set of orbits $X/G$ with a reasonable enough structure of an algebraic variety.  Unfortunately, $X/G$ may be poorly behaved and not allow for such a structure.  Thus, before constructing quotients we should specify what properties we would like these quotients to have.  We begin by defining a general concept of quotient in the category of varieties. 

\begin{defn}
Let $G$ be an affine algebraic group acting on a variety $X$.  A \textbf{categorical quotient} of the action of $G$ on $X$ is a pair $(Y, \phi)$ where $Y$ is an algebraic variety and $\phi: X \rightarrow Y$ is a morphism such that the following conditions hold:
\begin{enumerate}
\item $\phi(g \cdot x) = \phi(x)$
\item For any algebraic variety $Z$ with an action of $G$ and any morphism $\psi: X \rightarrow Z$ satisfying (1), there is a unique $f: Y \rightarrow Z$ such that $\psi = f \circ \phi$.
\end{enumerate}
If $\inv{\phi}(y)$ is a single orbit of the $G$ action, then $(Y, \phi)$ is called an \textbf{orbit space}.
\end{defn}
Note that the first condition means that $\phi$ is a $G$-invariant morphism and the second is the universal property.  This means $Y$ is unique up to unique isomorphism. We will refer to just $Y$ as the categorical quotient when no confusion can occur. 

\begin{exa}
\label{categoricalex}
 If $(Y, \phi)$ is a categorical quotient, then $\phi$ is invariant on any orbit, so it maps the entire orbit to a single point.  However, since $\phi$ is continuous, the preimage of that point is a closed subset.  Therefore, $\phi$ is constant on the closure of any orbit.  

In part 1 (a) of Example \ref{quotientex} there is an orbit consisting of single point contained in the closure of every other orbit.  This means $(\Spec \C, \phi)$, where $\phi$ is the constant morphism, is the categorical quotient for each of these examples. Indeed, it is easy to see that the constant morphism is $G$-invariant.  Furthermore, any $G$-invariant morphism $\psi: \A^2 \rightarrow Z$ must be constant.  If the image of $\psi$ is $z \in Z$, then $f: \Spec \C \rightarrow Z$ defined by $f(\Spec \C) = z$ satisfies $\psi = f \circ \phi$.  It is also clearly unique.

In part 1 (b) of Example \ref{quotientex}, the categorical quotient is $\P^1$ together with the standard quotient map $\phi: \A^2 - \{(0,0)\} \rightarrow \P^1$.  It's easy to see that $\phi$ is $\C^{\times}$-invariant.  Moreover, any morphism $\A^2 - \{(0,0)\} \rightarrow Z$ constant on lines through the origin descends to a unique $f: \P^1 \rightarrow Z$ such that $\psi = f \circ \phi$.
\end{exa}

Note that the discussion in the above example implies that a categorical quotient cannot be an orbit space unless all of the orbits are closed.  We would like to refine the concept of a categorical quotient in order to obtain a variety with additional geometric properties.  To do this, we will be working with the following definition motivated by geometric invariant theory.

\begin{defn}
Let $G$ be an affine algebraic group acting on a variety $X$.  A \textbf{good quotient} is a pair $(Y, \phi)$ where $Y$ is an algebraic variety and $\phi: X \rightarrow Y$ is an affine morphism such that the following holds:
\begin{enumerate}
\item $\phi$ is $G$-invariant,
\item $\phi$ is surjective,
\item if $U \subset Y$ is open, the induced morphism on regular functions $\cO_Y(U) \rightarrow \cO_X(\inv{\phi}(U))$ is an isomorphism between $\cO_Y(U)$ and the $G$-invariant functions in $\cO_X(\inv{\phi}(U))$,
\item if $W \subset X$ is a closed $G$-invariant subset, then $\phi(W)$ is closed,
\item if $W_1, W_2$ are closed $G$-invariant subsets of $X$ such that $W_1\cap W_2 = \varnothing$, then $\phi(W_1) \cap \phi(W_2) = \varnothing$.
\end{enumerate}
If $\inv{\phi}(y)$ is a single orbit of the $G$ action, then $(Y, \phi)$ is called a \textbf{geometric quotient}. 
\end{defn}

\begin{rmk}
\label{goodrmk}
An immediate consequence of the definition of good quotient is that for any $x_1, x_2 \in X$, we have $\phi(x_1) = \phi(x_2)$ if an only if $\overline{G \cdot x_1} \cap \overline{G \cdot x_2} \neq \varnothing$.  Indeed, since $\phi$ is $G$-invariant, then it is constant on orbit closures, and therefore $\overline{G \cdot x_1} \cap \overline{G \cdot x_2} \neq \varnothing$ implies $\phi(x_1) = \phi(x_2)$.  Conversely, if $\phi(x_1) = \phi(x_2)$, then (5) in the definition of good quotient implies $\overline{G \cdot x_1} \cap \overline{G \cdot x_2} \neq \varnothing$.  This means good quotients separate orbit closures.

Another consequence of the definition is that if all the orbits of an action with a good quotient are closed, then the quotient is geometric.  Let $y \in Y$.  The preimage $\inv{\phi}(y)$ is nonempty, so it contains at least one orbit.  It must also contain that orbit's closure, and therefore it contains a closed orbit (by Proposition \ref{orbit}).  It cannot contain more than one closed orbit, since that would contradict property (5) in the definition of good quotient.  If all orbits are closed, then $\inv{\phi}(y)$ must contain exactly one orbit, so the quotient is geometric. 
\end{rmk}

All of the examples of group actions considered in Example \ref{quotientex} have good quotients.  It is not hard to check that the categorical quotients described in Example \ref{categoricalex} also happen to be good.  The remaining cases are addressed below.   

\begin{exa}
We will look at the following examples more closely after we define the affine GIT quotient.
\begin{enumerate}
\item The pair $(\A^1, \phi_1)$, where $\phi_1: \A^2 \rightarrow \A^1$ is defined by $\phi_1(x,y) = xy$, is a good quotient for the action in Example \ref{quotientex} (2).
\item The pair $(\A^2, \phi_2)$, where $\phi_2: \Hom(\C^2,\C^2) \rightarrow \A^2$ is defined by $\phi_2(A) = (\tr(A), \det(A))$ is a good quotient for the action in Example \ref{quotientex} (3).
\end{enumerate}
\end{exa}

As indicated by these examples, categorical and good quotients are related to each other.

\begin{prp}
Let $G$ be an affine algebraic group acting on a variety $X$.  Any good quotient for this action is a categorical quotient.
\end{prp}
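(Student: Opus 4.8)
The plan is to verify the universal property of the categorical quotient directly, separating the argument into uniqueness and existence. Uniqueness is immediate: since $\phi$ is surjective by property (2) of a good quotient, any two morphisms $f_1, f_2 \colon Y \to Z$ satisfying $f_1 \circ \phi = f_2 \circ \phi = \psi$ must agree on $\phi(X) = Y$, hence $f_1 = f_2$. So the real content is to produce, for each morphism $\psi \colon X \to Z$ satisfying $\psi(g \cdot x) = \psi(x)$, a morphism $f \colon Y \to Z$ with $\psi = f \circ \phi$.

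First I would build $f$ as a map of sets. Because $Z$ is a variety its points are closed, so for each $x \in X$ the fiber $\inv{\psi}(\psi(x))$ is a closed subset of $X$; as $\psi$ is constant on the orbit $G \cdot x$, this closed set contains $\overline{G \cdot x}$, i.e. $\psi$ is constant on orbit closures. By Remark \ref{goodrmk} the equality $\phi(x_1) = \phi(x_2)$ holds precisely when $\overline{G \cdot x_1} \cap \overline{G \cdot x_2} \neq \varnothing$, and choosing a point in this intersection forces $\psi(x_1) = \psi(x_2)$. Thus $\psi$ is constant on the fibers of $\phi$, and surjectivity of $\phi$ lets me define a unique set-theoretic map $f \colon Y \to Z$ with $f \circ \phi = \psi$.

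It remains to show $f$ is a morphism of varieties, and this is where I expect the main obstacle to lie. The key preliminary observation is that $\phi$ is a topological quotient map: for a subset $S \subseteq Y$, the preimage $\inv{\phi}(S)$ is automatically $G$-invariant, so if it is closed then property (4) together with surjectivity gives that $\phi(\inv{\phi}(S)) = S$ is closed; combined with continuity of $\phi$, this shows $S$ is closed iff $\inv{\phi}(S)$ is closed. Consequently, for any open $V \subseteq Z$ the set $U := \inv{f}(V)$ is open in $Y$, since $\inv{\phi}(U) = \inv{\psi}(V)$ is open in $X$. Now I would cover $Z$ by affine opens $V$ and work over each $U = \inv{f}(V)$: the assignment $h \mapsto h \circ \psi$ carries $\cO_Z(V)$ into the $G$-invariant functions on $\inv{\psi}(V) = \inv{\phi}(U)$ (because $\psi$ is $G$-invariant), and property (3) identifies those invariants with $\cO_Y(U)$ through $\phi$. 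Composing yields a $\C$-algebra homomorphism $\cO_Z(V) \to \cO_Y(U)$, which, since $V$ is affine, is the comorphism of a genuine morphism $U \to V$.

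The delicate points, which I would verify carefully, are that this locally constructed morphism really is the set map $f|_U$ — checked by confirming that its composite with $\phi$ induces the same pullback on $\cO_Z(V)$ as $\psi$ does, so the two agree as morphisms into the affine $V$ and hence $f|_U$ is regular — and that these local morphisms, all restrictions of the single set map $f$, agree on overlaps and therefore glue to the desired global morphism $f \colon Y \to Z$. The heart of the whole argument is the interplay between property (3) (invariant functions descend) and property (4)/surjectivity (the topological quotient property), since together they are exactly what upgrades the set-theoretic factorization into a factorization by morphisms.
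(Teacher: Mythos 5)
Your proof is correct and complete: uniqueness follows from surjectivity (property (2)), the set-theoretic factorization from Remark \ref{goodrmk} (i.e.\ properties (2) and (5)), continuity of the induced map from the quotient-topology property (properties (2) and (4)), and regularity from property (3) applied over an affine open cover of $Z$, with the comorphism comparison correctly identifying the locally constructed morphism with the set map. The paper itself states this proposition without proof, and your argument is precisely the standard one from the sources this section follows (e.g.\ \cite{New1978}, \cite{Hos2015}), so there is nothing to flag.
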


\begin{rmk}
\label{stackyquotient}
There is a third possible type of quotient for an algebraic group $G$ acting on an algebraic variety $X$ that we have already considered.  Namely, the quotient stack $[X/G]$.
Recall that a quotient stack is a sheaf of groupoids in the fppf topology on the category of schemes:
\begin{align*}
& [X/G]: \opcat{Sch}_{fppf} \rightarrow \Gpd \\
& [X/G](T) = \left\langle 
     \begin{tabular}{lll} $\pi: E \rightarrow S$ is a principal $G$-bundle over $X$\\ $\p: E \rightarrow X$ is an equivariant morphism \end{tabular}
   \right\rangle
\end{align*}
There is a $2$-categorical version of the Yoneda lemma (\cite{Vi2005}) which implies that $\Hom(T, [X/G]) \cong [X/G](S)$.  Since the $\C$-points of [X/G] are equivalence classes of morphisms $\Spec \C \rightarrow [X/G]$, then this implies these points may be viewed as $G$-equivariant morphisms $p: G \rightarrow X$.  In other words, they are orbits under the action of $G$ on $X$.  
\end{rmk}

\section{Geometric invariant theory}
\label{GIT}
\subsection{The affine quotient}
\label{affinegit}
We would like to begin our construction of good quotients by considering the case when an affine algebraic group $G$ acts on an affine variety $X$.  Recall that this action may be written as a morphism of varieties $a: G \times X \rightarrow X$.  Such a morphism induces a corresponding homomorphism of $\C$-algebras of regular functions:
\begin{align*}
& a^\#: \cO(X) \rightarrow \cO(G \times X)\\ 
& a^\#(f)(g, x) = f(g \cdot x).
\end{align*}
This, in turn, defines a $G$-action on $\cO(X)$ as follows:
\[
g \cdot f(x) = f(\inv{g} \cdot x).
\] 
The set $\cO(X)^G  := \{f \in \cO(X)|g \cdot f = f\}$, consisting of $G$-invariant functions, forms a subalgebra of $\cO(X)$.  If $\phi: X \rightarrow Y$ is a $G$-invariant morphism of varieties, then the induced homomorphism on the regular functions $\phi^\#: \cO(Y) \rightarrow \cO(X)$ satisfies $g \cdot \phi^\#(f)(x) = f(\phi(\inv{g}\cdot x)) = \phi^\#(f)(x)$.  It follows that the image of $\phi^\#$ is contained in $\cO(X)^G$.  In fact, if $(Y, \phi)$ is a good quotient, then the image must be isomorphic to $\cO(X)^G$.  Thus, in order for a good quotient to exist (as a variety), we need this subalgebra to be finitely generated.  Unfortunately, this is not the case for the action of an arbitrary affine algebraic group on an affine variety (Nagata constructs a counterexample in \cite{Nag1960}).  However, it can be shown to be true if we only consider the actions of a specific type of group.

Recall that there is a multiplicative version of the Jordan decomposition that can be obtained for elements of $\GL(n, \C)$.  Indeed, for any $g \in \GL(n, \C)$ the Jordan normal form yields $g = C + N$, where $C$ is a diagonalizable matrix and $N$ is a nilpotent matrix.  Noting that $C$ is invertible, we can write $U = I_n + \inv{C}N$ and obtain the decomposition $g = CU$.  The matrix $C$ is diagonalizable, so $\inv{C}N$ is nilpotent.  This makes $U$ a unipotent matrix.  

A similar result may be obtained for an arbitrary affine algebraic group $G$ by considering faithful representations $G \hookrightarrow \GL(n,\C)$ (these always exist; see Proposition I.1.10 in \cite{Bor1991}).  

\begin{thm}[Jordan decomposition]
\label{jordan}
Let $G$ be an affine algebraic group. 
\begin{enumerate}
\item For any $g \in G$ and faithful representation $\rho: G \hookrightarrow \GL(n, \C)$ there exist unique elements $g_s,g_u \in G$ such that $g = g_sg_u = g_ug_s$, where $\rho(g_s)$ is diagonalizable and $\rho(g_u)$ is unipotent.
\item The decomposition in (1) does not depend on the choice of $\rho$.
\item If $f: G \rightarrow G'$ is a homomorphism of algebraic groups, then we have that $f(g) = f(g_s)f(g_u)$ is the decomposition given in (1) for $f(g)$.\
\end{enumerate}
\end{thm}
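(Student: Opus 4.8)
The plan is to treat part (1) as the substantive claim and to deduce parts (2) and (3) from a single construction that makes the decomposition intrinsic to $G$ and compatible with all morphisms. First I would use the faithful representation $\rho$ to identify $G$ with a closed subgroup of $\GL(V)$, $V = \C^n$, so that the task becomes: given $g \in G \subseteq \GL(V)$, produce a multiplicative Jordan decomposition of the matrix $\rho(g)$ whose two factors lie in $\rho(G)$. The purely linear-algebraic input is the Jordan decomposition of a single invertible endomorphism: writing the additive decomposition $\rho(g) = S + N$ with $S$ diagonalizable, $N$ nilpotent, and $SN = NS$, one sets $U = \Id + S^{-1}N$ to obtain $\rho(g) = SU = US$ with $S$ semisimple and $U$ unipotent. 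Uniqueness, together with the fact that $S$ and $U$ are polynomials in $\rho(g)$ (hence preserve every $\rho(g)$-stable subspace), comes from the standard interpolation argument, which I would recall rather than reprove.

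The heart of the argument, and the step I expect to be the main obstacle, is showing that the factors $S = \rho(g)_s$ and $U = \rho(g)_u$ actually lie in the closed subgroup $\rho(G)$ and not merely in the ambient $\GL(V)$. The plan is to pass to the right regular representation of $\GL(V)$ on its coordinate ring $A = \cO(\GL(V))$, given by $(r_x f)(y) = f(yx)$. This representation is locally finite: every $f$ lies in a finite-dimensional $r$-stable subspace, so the Jordan decomposition is defined for the operator $r_{\rho(g)}$. The key characterization is that a point $x \in \GL(V)$ lies in $G$ if and only if right translation by $x$ preserves the vanishing ideal $I = I(G) \subseteq A$; one direction is clear, and the converse follows by evaluating $r_x f \in I$ at the identity. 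I would then argue that the semisimple and unipotent parts of the locally finite operator $r_{\rho(g)}$ are themselves right translations, namely $r_S$ and $r_U$. This is exactly where local finiteness and the compatibility of Jordan decomposition with the multiplicative structure on $A$ are essential, since together they force these parts to be algebra automorphisms, hence to arise from points of $\GL(V)$. Because $g \in G$, the operator $r_{\rho(g)}$ preserves $I$, and therefore so do its Jordan components $r_S$ and $r_U$; by the characterization this places $S$ and $U$ in $\rho(G)$. Faithfulness of $\rho$ then yields unique $g_s, g_u \in G$ with $\rho(g_s) = S$ and $\rho(g_u) = U$, and $g = g_s g_u = g_u g_s$ follows from $SU = US$.

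For parts (2) and (3) I would extract, as the real output of this construction, the master compatibility statement: for every rational representation $\psi : G \to \GL(W)$ one has $\psi(g)_s = \psi(g_s)$ and $\psi(g)_u = \psi(g_u)$. This follows from the same regular-representation argument once one recalls that any such $W$ embeds into a finite sum of copies of $\cO(G)$, so that the Jordan components transport along $\psi$. Part (2) is then immediate, since $g_s$ and $g_u$ are defined intrinsically through the regular representation on $\cO(G)$ with no reference to $\rho$, and applying the master statement to $\rho$ itself recovers the diagonalizable and unipotent factors of $\rho(g)$ for any faithful $\rho$. For part (3), given $\phi : G \to G'$ I would use that the comorphism $\phi^\#: \cO(G') \to \cO(G)$ intertwines the right-translation operators, so it carries the Jordan decomposition of translation by $\phi(g)$ to that of translation by $g$; combined with the master statement applied to a faithful representation of $G'$, this yields $\phi(g) = \phi(g_s)\phi(g_u)$ with $\phi(g_s)$ semisimple and $\phi(g_u)$ unipotent, which is precisely the asserted decomposition. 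The one point requiring sustained care is the passage between the finite-dimensional Jordan decomposition and the locally finite regular representation, and the verification that the properties ``semisimple'' and ``unipotent'' survive restriction to stable subspaces, tensor products, and comorphisms; I would isolate this bookkeeping as a preliminary lemma on locally finite endomorphisms compatible with an algebra structure.
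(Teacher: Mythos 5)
Your proposal is correct, but there is no proof in the paper to compare it with: the theorem is stated there as a recalled fact. The paper only carries out the linear-algebra step --- converting the additive Jordan decomposition $\rho(g) = C + N$ in $\GL(n,\C)$ into the multiplicative one $\rho(g) = C(I_n + \inv{C}N)$ --- and defers everything else to \cite{Bor1991}. Your regular-representation argument supplies exactly the content hidden in that citation, and it is the standard proof (see \cite{Bor1991}, or Section 2.4 of \cite{Spr1998}): the locally finite Jordan decomposition on $\cO(\GL(n,\C))$, the characterization of membership in $G$ by preservation of the vanishing ideal $I(G)$ under right translation, and the conclusion that the two factors of $\rho(g)$ lie in $\rho(G)$ --- which is the only nontrivial claim in part (1) and the one the paper's sketch does not touch; likewise your master compatibility statement for rational representations is the standard mechanism for deducing (2) and (3). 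One item to make explicit when you write out the bookkeeping lemma: to identify the semisimple part of $r_{\rho(g)}$ with $r_S$, either verify directly that $r_S$ is semisimple and $r_U$ locally unipotent on $\cO(\GL(n,\C))$ and appeal to uniqueness of the locally finite Jordan decomposition, or, if you argue via algebra automorphisms, record also that the Jordan parts commute with all left translations --- an algebra automorphism of $\cO(\GL(n,\C))$ by itself corresponds only to an automorphism of the variety $\GL(n,\C)$, not necessarily to a right translation.
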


We can immediately formulate the following definition:

\begin{defn}
An element $g$ of an affine algebraic group $G$ is called \textbf{semisimple} if there exists a faithful representation $\rho: G \hookrightarrow \GL(n,\C)$ such that $\rho(g)$ is a diagonalizable matrix.  We call $g$ \textbf{unipotent} if $\rho(g)$ is a unipotent matrix.
\end{defn}

\begin{defn}
An algebraic group is called \textbf{unipotent} if all of its elements are unipotent.
\end{defn}  

\begin{exa}
Let $\bU_n \subset \GL(n, \C)$ be the subgroup consisting of upper triangular matrices with 1s on the diagonal.  It is clear that any subgroup of $\bU_n$ is unipotent.

%Moreover, if $G$ is a unipotent group, then it is isomorphic to a subgroup of $\bU_n$. Indeed, if $G$ is unipotent there is a faithful representation $\rho: G \hookrightarrow \GL(n,\C)$.  The image is a subgroup of $\GL(n,\C)$ consisting of unipotent matrices.  Since the eigenvalues of a unipotent matrix are all equal to $1$, then the induced action on $\C^n$ has a nonzero $G$-invariant vector.  If $V_0 \subset \C^n$ is the subspace of $G$-invariant vectors, then the action of $G$ descends to $V_1 := \C^n/V^G$ and $n > \dim V_1$.  Repeating this procedure we obtain a sequence of vector spaces $V_0, V_1, \dots, V_l$ consisting of vectors invariant under the induced action of $G$.  Choosing a basis in each of these, then lifting this basis to $\C^n$ yields a basis   
\end{exa}

\begin{defn}
An affine algebraic group is called \textbf{reductive} if every unipotent normal algebraic subgroup is trivial.
\end{defn}  

\begin{exa}
\leavevmode
\begin{enumerate}
\item The group $\C^{\times} = \GL(1,\C)$ is clearly reductive, since its only unipotent element is the identity. 
\item More generally, the group $\GL(n, \C)$ is reductive.  Indeed, let $U \in G$ be an element of a unipotent normal subgroup $G \subset \GL(n, \C)$.  Since $G$ is normal, then the Jordan normal form $J_U$ of $U$ is also in $G$.  Moreover, since $\ltrans{J_U}$ has the same Jordan normal form as $J_U$, then it is also in $G$.  The matrix $\ltrans{J_U}J_U$ is real symmetric (the eigenvalues of unipotent matrix are all equal to $1$), so it is diagonalizable.  The only way for such a matrix to also be unipotent is if it is equal to $I_n$.  However, this is only possible if $J_U = I_n$.  Thus, $G$ is trivial.
\item The direct product of general linear groups $\GL(n,\C) \times \GL(m,\C)$ is reductive.  Since there is an injective morphism $\GL(n,\C) \times \GL(m,\C) \hookrightarrow \GL(n+m,\C)$ defined $(A,B) \mapsto \begin{pmatrix} A &0\\ 0 & B\end{pmatrix}$, the projection of a normal unipotent subgroup $H$ of $\GL(n,\C) \times \GL(m,\C)$ onto either $\GL(n,\C)$ or $\GL(m,\C)$ is a normal unipotent subgroup.  This means both projections are trivial, and therefore $H$ is trivial.  It is easy to see this generalizes to show that any finite direct product of reductive groups is reductive.

%Since $\begin{pmatrix} A & 0\\ 0 & B\end{pmatrix}$ is a block diagonal matrix, it is conjugate to its Jordan normal form via a block diagonal change of basis matrix.  Thus, the argument given in the previous part of the example can be used to show $\GL(n,\C) \times \GL(m,\C)$ contain no nontrivial unipotent normal subgroups. 
\item The (additive) group $\C$ is not reductive because it is unipotent.  Indeed, the homomorphism $\C \rightarrow \GL(2, \C)$ defined by $x \mapsto \begin{pmatrix}1 & x\\ 0 & 1\end{pmatrix}$
sends every element of $\C$ to a unipotent matrix.
\end{enumerate}
\end{exa}

If a reductive group $G$ is acting on an affine variety $X$, then a theorem of Nagata (\cite{Nag1963}) says that the algebra of invariants $\cO(X)^G$ is finitely generated.  Let $X//G$ be the affine variety corresponding to $\cO(X)^G$.  Note that $\cO(X)^G$ is automatically reduced because $\cO(X)$ is reduced.  The inclusion $\cO(X)^G \hookrightarrow \cO(X)$ induces an affine morphism of varieties $\phi: X \rightarrow X//G$.  We call $(X//G, \phi)$ the \emph{affine GIT quotient}.

\begin{thm}
Let $G$ be a reductive group acting on an affine variety $X$.  The affine GIT quotient $(X//G, \phi)$ is a good quotient.
\end{thm}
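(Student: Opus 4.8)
The plan is to verify the five defining properties of a good quotient one at a time, the single essential input beyond the finite generation already invoked being the existence of the \emph{Reynolds operator}. Since $G$ is reductive and we work over $\C$ (characteristic zero), $G$ is linearly reductive, so every rational $G$-module splits canonically as the direct sum of its invariants and a $G$-stable complement. Applied to $\cO(X)$ this furnishes an $\cO(X)^G$-linear, $G$-equivariant projection $E \colon \cO(X) \to \cO(X)^G$ restricting to the identity on $\cO(X)^G$, and, more importantly, it makes the functor $(-)^G$ exact on rational $G$-modules. I expect essentially every nontrivial step below to reduce to these two facts.

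Property (1) and affineness are immediate: $\phi$ is by construction induced by the inclusion $\cO(X)^G \hookrightarrow \cO(X)$, so for invariant $f$ we have $\phi^\#(f)(g \cdot x) = f(g \cdot x) = f(x)$, whence $\phi(g \cdot x) = \phi(x)$; and $\phi$ is a morphism of affine varieties, hence affine. For surjectivity (2), I would take $y \in X//G$ with maximal ideal $\mathfrak m_y \subset \cO(X)^G$ and show the extended ideal $\mathfrak m_y \cO(X)$ is proper: if $1 = \sum a_i f_i$ with $f_i \in \mathfrak m_y$ and $a_i \in \cO(X)$, then applying $E$ gives $1 = \sum E(a_i) f_i \in \mathfrak m_y$, a contradiction; any maximal ideal of $\cO(X)$ containing $\mathfrak m_y \cO(X)$ then yields a preimage. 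For the sheaf-theoretic condition (3), I would reduce to distinguished opens $Y_f$ with $f \in \cO(X)^G$, observe $\phi^{-1}(Y_f) = X_f$, and check $(\cO(X)^G)_f = (\cO(X)_f)^G$ because localization at an invariant element commutes with $E$; the general open case then follows by gluing through the sheaf axioms.

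The separation properties (4) and (5) are where exactness of $(-)^G$ does the real work. For (4), write a closed $G$-invariant $W = V(I)$ with $I$ a $G$-stable ideal; applying $(-)^G$ to $0 \to I \to \cO(X) \to \cO(X)/I \to 0$ yields the exact sequence $0 \to I^G \to \cO(X)^G \to (\cO(X)/I)^G \to 0$, so the surjection $\cO(X)^G \twoheadrightarrow (\cO(X)/I)^G$ exhibits $W//G \hookrightarrow X//G$ as a closed subvariety cut out by $I^G$; composing with the map $W \to W//G$, which is surjective by (2) applied to $W$, identifies $\phi(W) = V(I^G)$, hence closed. For (5), given disjoint closed $G$-invariant $W_1, W_2$ I would use $I(W_1) + I(W_2) = \cO(X)$ to pick $a \in I(W_1)$ with $a \equiv 1$ on $W_2$, then set $\tilde a = E(a) \in \cO(X)^G$; because $W_1, W_2$ are $G$-invariant and $a$ vanishes on $W_1$ while equalling $1$ on $W_2$, the compatibility of $E$ with restriction to invariant closed subvarieties forces $\tilde a|_{W_1} = 0$ and $\tilde a|_{W_2} = 1$, so $\tilde a$ descends to a regular function on $X//G$ separating $\phi(W_1)$ from $\phi(W_2)$.

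The main obstacle is conceptual rather than computational: it is the passage from ``reductive'' to ``linearly reductive'', i.e.\ the construction of $E$ and the resulting exactness of $(-)^G$. Once this averaging device is in hand, all five verifications are short, whereas without it surjectivity and both separation statements fail. I would therefore isolate the existence of $E$, together with the facts that it commutes with localization at invariants and restricts compatibly along $G$-invariant closed subvarieties, as the lemma carrying the entire argument.
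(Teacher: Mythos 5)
The paper itself supplies no proof of this theorem---it defers entirely to \cite{New1978}---so the only meaningful comparison is with that standard reference, and your argument is precisely the classical one found there and in \cite{Muk2003}: reduce all five properties of a good quotient to the existence and formal properties of the Reynolds operator $E$ (the Reynolds identity for surjectivity, exactness of $(-)^G$ for closedness of images of invariant closed sets, and $E(I)\subset I$ for $G$-stable ideals for the separation property). Your proof is correct as written; the one hypothesis it leans on, that reductive implies linearly reductive, is valid here because the base field is $\C$, a point the paper itself flags in its remark comparing the notions of reductivity.
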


A full proof of this theorem may be found in \cite{New1978}.  Our previous discussion concerning good quotients in Remark \ref{goodrmk} immediately yields the following corollary.
\begin{cor}
\label{gitgeom}
Let $G$ be a reductive group acting on an affine variety $X$ such that $(X//G, \phi)$ is the affine GIT quotient.  We have $\phi(x_1) = \phi(x_2)$ if and only if $\overline{G \cdot x_1} \cap \overline{G \cdot x_2} \neq \varnothing$.  Furthermore, for any $y \in X//G$ the preimage $\inv{\phi}(y)$ contains a unique closed orbit.  If all the $G$-orbits under the action on $X$ are closed, then $(X//G, \phi)$ is a geometric quotient.
\end{cor}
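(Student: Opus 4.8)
The plan is to read the corollary off directly from the preceding theorem, which already tells us that the affine GIT quotient $(X//G, \phi)$ is a good quotient, and then to invoke the general consequences of the good-quotient axioms recorded in Remark \ref{goodrmk}. No fresh geometric content is required: the entire argument unwinds the defining properties (1)--(5) of a good quotient, with Proposition \ref{orbit} supplying the one nontrivial input.

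First I would prove the orbit-closure separation statement. For the direction in which a nonempty intersection of closures forces equal images, I would use that $\phi$ is $G$-invariant and continuous, hence constant on each orbit and therefore on each orbit closure; a common point $z \in \overline{G \cdot x_1} \cap \overline{G \cdot x_2}$ then yields $\phi(x_1) = \phi(z) = \phi(x_2)$. For the reverse direction I would argue by contradiction: if $\phi(x_1) = \phi(x_2)$ while the two closures are disjoint, then $\overline{G \cdot x_1}$ and $\overline{G \cdot x_2}$ are disjoint closed $G$-invariant subsets, so axiom (5) gives $\phi(\overline{G \cdot x_1}) \cap \phi(\overline{G \cdot x_2}) = \varnothing$, contradicting $\phi(x_1) = \phi(x_2)$.

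Next I would analyze a fiber $\inv{\phi}(y)$. Surjectivity (axiom (2)) makes it nonempty, and as the preimage of the point $y$ under the continuous map $\phi$ it is closed and $G$-invariant. Choosing any orbit inside it, the orbit's closure remains in $\inv{\phi}(y)$, and by Proposition \ref{orbit} the orbit of minimal dimension within this closure is closed, so $\inv{\phi}(y)$ contains at least one closed orbit. Uniqueness is again axiom (5): two distinct closed orbits in the same fiber would be disjoint closed $G$-invariant sets with the common image $\{y\}$, which is impossible. Finally, if every orbit is closed, then the unique closed orbit fills the whole fiber, so each $\inv{\phi}(y)$ is a single orbit and $(X//G, \phi)$ is a geometric quotient by definition.

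Since each step is a direct application of one of the good-quotient axioms or of Remark \ref{goodrmk}, there is no genuine obstacle in this proof. The only place demanding a little care is the existence of a closed orbit inside each fiber: here one must appeal to Proposition \ref{orbit}, which guarantees that repeatedly passing to boundary orbits of strictly decreasing dimension must terminate, and that the terminal orbit is necessarily closed.
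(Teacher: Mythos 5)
Your proof is correct and follows essentially the same route as the paper: the paper derives this corollary directly from the good-quotient theorem together with Remark \ref{goodrmk}, whose arguments (orbit-closure separation via $G$-invariance plus axiom (5), existence of a closed orbit in each fiber via Proposition \ref{orbit}, and uniqueness again via axiom (5)) are exactly the steps you spell out. No gaps.
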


\begin{rmk}
To keep the exposition simple we have combined several different notions of reductivity into one.  Namely, in addition to definition given above, there are two more that appear in geometric invariant theory.  If $G$ is an affine algebraic group, then
\begin{itemize}
\item $G$ is called \emph{geometrically reductive} if for every finite dimensional representation $\rho: G \rightarrow \GL(n,\C)$ and $G$-invariant vector $v\in V$ (i.e. $g \cdot v = v$) there exists a $G$-invariant homogeneous polynomial $f$ with $\deg f \ge 1$ such that $f(v) \neq 0$;
\item $G$ is called \emph{linearly reductive} if the same is true for an invariant homogenous polynomial of degree $1$.
\end{itemize}  
Nagata's theorem concerning algebras of invariants used geometrically reductive groups. A similar result was proved for actions of linearly reductive groups (\cite{Hil1890}, \cite{Wey1925}).  However, since we are working over $\C$, which has characteristic $0$, reductive, linearly reductive, and geometrically reductive groups all coincide (see \cite{Wey1925}, \cite{Nag1963}, and \cite{Hab1975}, among others).
\end{rmk}

Let us compute the affine GIT quotients for some of the examples we have seen so far.

\begin{exa}
\label{affinegitex}
\leavevmode
\begin{enumerate}
\item If $\C^{\times}$ acts on $\A^2$ by $t \cdot (x,y) = (tx, ty)$, then the induced action on the algebra of functions $\cO(\A^2) = \C[x,y]$ is $t \cdot f(x,y) = f(\inv{t}x, \inv{t}y)$.  The only invariants with respect to this action are the constant functions.  That it, $\cO(X)^{\C^{\times}} = \C$.  Thus, the affine GIT quotient is the one point variety $\Spec \C$ together with the constant morphism $\phi: \A^2 \rightarrow \Spec \C$.

\item If $\C^{\times}$ acts on $\A^2$ by $t \cdot (x,y) = (tx, \inv{t}y)$, then the induced action on $\cO(\A^2) = \C[x,y]$ is $t \cdot f(x,y) = f(\inv{t}x, ty) = \sum_{i,j} t^{j-i}x^iy^j$.  In order for $f(x,y)$ to be $G$-invariant we must have $i = j$.  Therefore, we have $\cO(X)^{\C^{\times}} = \C[xy]$.  It follows that the affine GIT quotient is $(\A^1, \phi)$, where $\phi: \A^2 \rightarrow \A^1$ is defined by $\phi(x,y) = xy$.

\item If $G = \GL(2,\C)$ acts on $\A^4 = \Hom(\C^2, \C^2)$ by $g \cdot B = g B \inv{g}$, where $B = \begin{pmatrix}x_1 & x_2\\ x_3 & x_4\end{pmatrix}$, then the induced action on $\cO(\A^4) = \C[x_1,x_2,x_3,x_4]$ is $g\cdot f(x) = f(\inv{g}\cdot x)$.  

If $f$ is $G$-invariant, then it is constant on the conjugacy class closures of $2 \times 2$ matrices.  Therefore, by Example \ref{quotientex}, we see that $f$ is determined by its values on diagonal matrices.  This means $\cO(\A^4)^G$ is contained in the subalgebra $\C[\lambda_1,\lambda_2]$ (where $\lambda_1$ and $\lambda_2$ represent the eigenvalues).  Moreover, since the Jordan normal form is only determined up to permutation of the Jordan blocks, then $G$-invariants in $\C[\lambda_1,\lambda_2]$ must be invariant under the action of the symmetric group $S_2$.  Therefore, $\cO(\A^4)^G$ is contained in $\C[\lambda_1,\lambda_2]^{S_2}$.  Note that $\tr B$ and $\det B$ are both $G$-invariant regular functions, restricting to the elementary symmetric polynomials $\lambda_1 + \lambda_2$ and $\lambda_1 \lambda_2$ on diagonal matrices.  Thus $\cO(\A^4)^G = \C[\lambda_1,\lambda_2]^{S_2} \cong \C[\lambda_1,\lambda_2]$.  Consequently, the affine GIT quotient is $(\A^2, \phi)$, where $\phi: \A^4 \rightarrow \A^2$ is given by $\phi(B) = (\tr B, \det B)$

\item The above example can be generalized to the action of $G = \GL(n,\C)$ on $\A^{n^2} = \Hom(\C^n, \C^n)$ by conjugation. As in the case of $2\times2$ matrices, each conjugacy class closure contains a diagonal matrix (the diagonal of the Jordan normal form).  Therefore, the algebra of invariants $\cO(\A^{n^2})^G$ is contained in the algebra of regular functions on the diagonal matrices $\C[\lambda_1, \lambda_{2} \cdots, \lambda_{n}]$.  Invariance under the permutation of diagonal elements means $\cO(\A^{n^2})^G$ is actually contained in the algebra $\C[\lambda_1,\lambda_{2} \cdots, \lambda_{n}]^{S_n}$ of invariants under the action of the symmetric group.

Let $\phi: \A^{n^2} \rightarrow \A^n$ be defined by 
$\phi(B) = (a_0, \cdots, a_{n-1})$, where $p_B(t) = a_0 + \cdots a_{n-1}t^{n-1} + t^n$ is the characteristic polynomial of $B$. Restricting $\phi$ to diagonal matrices, we see that $a_0, \cdots, a_{n-1}$ are the elementary symmetric polynomials in $\lambda_1,\lambda_{2} \cdots, \lambda_{n}$.  These functions are $G$-invariants (since the characteristic polynomial is invariant under conjugation) and also generate $\C[\lambda_1,\lambda_{2} \cdots, \lambda_{n}]^{S_n}$.  Thus, $(\A^n, \phi)$ is the affine GIT quotient for the action of $\GL(n, \C)$.
\end{enumerate}
\end{exa}

As we have already seen in Corollary \ref{gitgeom}, an affine GIT quotient $X//G$ cannot be a geometric quotient unless the orbits of the group action are closed.  Since this is not always the case, we would like there at least to be a large enough subset of $X$ such that the restriction of the quotient morphism $\phi$ to this subset defines a geometric quotient.  In order to characterize the points of such a subset we need the following definition:

\begin{defn}
A point $x \in X$ is called \textbf{stable} if the orbit $G\cdot x$ is closed and the stabilizer $G_x$ is finite.
\end{defn}

The set of stable points of $X$ is exactly the open subset we need to obtain geometric quotient. 

\begin{prp}
\label{affinestab}
Let $G$ be a reductive group acting on an affine variety $X$ with affine GIT quotient $(X//G, \phi)$.  The set $X_s$ of stable points is a $G$-invariant subset of $X$ such that the following properties hold:
\begin{enumerate}
\item $X_s = \inv{\phi}(\phi(X_s))$,
\item $\phi(X_s)$ is open,
\item $(\phi(X_s), \phi|_{X_s})$ is a geometric quotient.
\end{enumerate}
\end{prp}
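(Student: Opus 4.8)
The plan is to pin down the stable locus explicitly as the preimage under $\phi$ of an open subset of $Y := X//G$, after which all three assertions drop out. Write $S = \{x \in X : \dim G_x > 0\}$ for the locus of points with positive-dimensional (equivalently, infinite) stabilizer. The $G$-invariance of $X_s$ is immediate from the definitions: for $g \in G$ one has $G\cdot(g\cdot x) = G\cdot x$ and $G_{g\cdot x} = gG_x\inv{g}$, so being a closed orbit with finite stabilizer is preserved. The heart of the argument is the identity
\[
X_s = \inv{\phi}\bigl(Y \setminus \phi(S)\bigr).
\]
First I would record that $x \mapsto \dim G_x$ is upper semicontinuous (the stabilizer $G_x$ is a fibre of the orbit map $a_x : G \to X$, so its dimension can only jump up on closed sets), whence $S$ is closed; it is also visibly $G$-invariant. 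Since the affine GIT quotient $\phi$ is a good quotient it carries closed $G$-invariant subsets to closed subsets, so $\phi(S)$ is closed in $Y$ and the right-hand side above is open and $G$-invariant.

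To prove the inclusion $\inv{\phi}(Y\setminus\phi(S)) \subseteq X_s$, fix $x$ with $\phi(x) \notin \phi(S)$; then the entire fibre $\inv{\phi}(\phi(x))$ misses $S$, so every orbit it contains has dimension $\dim G$. By Corollary \ref{gitgeom} this fibre contains a unique closed orbit $C$, and by Proposition \ref{orbit} the closure of any orbit $O$ in the fibre contains an orbit of minimal dimension, which is closed and hence must equal $C$; thus $C \subseteq \overline{O}$. Since $\dim O = \dim G = \dim C$ while the boundary $\overline{O}\setminus O$ consists of orbits of strictly smaller dimension (Proposition \ref{orbit}), we are forced to have $O = C$. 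Hence the fibre is the single closed orbit $C$, of dimension $\dim G$, so its stabilizers are finite and $x$ is stable. For the reverse inclusion, suppose $x \in X_s$ yet $\phi(x) = \phi(s)$ for some $s \in S$. By Corollary \ref{gitgeom} the orbit closures $\overline{G\cdot s}$ and $\overline{G\cdot x} = G\cdot x$ meet, so $G\cdot x \subseteq \overline{G\cdot s}$; but then $\dim G = \dim G\cdot x \le \dim \overline{G\cdot s} = \dim G\cdot s < \dim G$, a contradiction. Thus $\phi(x) \notin \phi(S)$, which establishes the identity.

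With this identity in hand the three statements follow quickly. Because $X_s = \inv{\phi}(Y\setminus\phi(S))$ is a preimage it is saturated, i.e. $X_s = \inv{\phi}(\phi(X_s))$, giving (1); and surjectivity of $\phi$ gives $\phi(X_s) = Y\setminus\phi(S)$, which is open, giving (2). For (3), I would invoke that the restriction of a good quotient to a saturated open subset is again a good quotient: properties (1)--(5) in the definition transport directly, where saturation of $X_s$ is what lets one check that the image of a closed $G$-invariant subset of $X_s$ is closed in $\phi(X_s)$. Hence $(\phi(X_s),\phi|_{X_s})$ is a good quotient, and by the computation above every fibre over a point of $\phi(X_s)$ is a single closed $G$-orbit. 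In particular all orbits in $X_s$ are closed, so Remark \ref{goodrmk} upgrades this good quotient to a geometric one.

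The step I expect to be the main obstacle is the identity $X_s = \inv{\phi}(Y\setminus\phi(S))$, and specifically its forward inclusion: one must rule out that a fibre meeting $X_s$ contains, besides the closed orbit, a higher-dimensional non-closed orbit. This is precisely where upper semicontinuity of stabilizer dimension has to be married to the dimension bookkeeping of Proposition \ref{orbit}. Everything else is formal, and crucially the openness of $\phi(X_s)$ is obtained \emph{for free} from exhibiting $X_s$ as a saturated preimage, rather than by arguing openness of the stable locus directly.
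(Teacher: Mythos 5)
Your proof is correct, but there is an important contextual point: the paper does not prove this proposition at all --- it defers entirely to Section 5.1.c of \cite{Muk2003}. So there is no internal argument to compare against, and what you have written is a self-contained proof assembled from results the paper does establish (Proposition \ref{orbit}, Corollary \ref{gitgeom}, Remark \ref{goodrmk}). Your organizing device --- the identity $X_s = \inv{\phi}\bigl((X//G) \setminus \phi(S)\bigr)$, where $S$ is the locus of positive-dimensional stabilizers --- is an efficient one: once the stable locus is exhibited as a saturated preimage of an open set, assertions (1) and (2) are formal consequences of surjectivity and property (4) of good quotients, and (3) reduces to the standard lemma that a good quotient restricts to a good quotient over a saturated open subset, combined with your fibrewise argument that every fibre over $\phi(X_s)$ is a single closed orbit of dimension $\dim G$. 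Both inclusions of the key identity are argued soundly: the forward one by playing the uniqueness of the closed orbit in a fibre (Corollary \ref{gitgeom}) against the dimension bookkeeping of Proposition \ref{orbit}, the reverse one by the orbit-closure separation property of good quotients.

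One step needs repair, though the fact it asserts is true and standard. Your justification that $S$ is closed --- ``the stabilizer $G_x$ is a fibre of the orbit map $a_x : G \to X$, so its dimension can only jump up on closed sets'' --- is not a proof: semicontinuity of fibre dimension for the single morphism $a_x$ says nothing about how $\dim G_x$ varies with $x$. The correct argument uses the incidence variety $Z = \{(g,x) \in G \times X \mid g \cdot x = x\}$, which is closed in $G \times X$, with projection $p: Z \rightarrow X$ whose fibres are $G_x \times \{x\}$. Chevalley's theorem makes $z \mapsto \dim_z \inv{p}(p(z))$ upper semicontinuous on $Z$, and pulling back along the identity section $x \mapsto (e,x)$ --- using that the algebraic group $G_x$ is equidimensional, so its dimension at $e$ equals $\dim G_x$ --- gives upper semicontinuity of $x \mapsto \dim G_x$ on $X$ (equivalently, lower semicontinuity of orbit dimension). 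With that substitution your proof stands as a complete argument for a statement the paper leaves to the literature.
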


For a proof of this proposition see Section 5.1.c in \cite{Muk2003}.  Let us determine the stable points for each of the group actions in the previous example.

\begin{exa}
\leavevmode
\begin{enumerate}
\item The action of $\C^{\times}$ on $\A^2$ by $t \cdot (x,y) = (tx, ty)$ has no stable points, since the $\{(0,0)\}$ is the only closed orbit, and the entire group $\C^{\times}$ is its stabilizer.

\item The stable points of the $\C^{\times}$ action on $\A^2$ by $t \cdot (x,y) = (tx, \inv{t}y)$ are just the points $(x,y)$ belonging to the curves $xy = a$ for $a \in \C^{\times}$.  The restriction $\phi: \A^2 - \{(x,y)| xy = 0\} \rightarrow \A^1-\{0\}$ of the quotient morphism $\phi(x,y) = xy$ defines a geometric quotient.

\item The action by conjugation of $\GL(n, \C)$ on the space of matrices $\A^{n^2} = \Hom(\C^n, \C^n)$ also has no stable points, since every Jordan normal form has a nontrivial stabilizer.
\end{enumerate}
\end{exa}

\subsection{The Proj quotient}
\label{projgit}
We now proceed to describe a variation of Mumford's general GIT quotient construction for reductive group actions on affine varieties.  Let $G$ be a reductive group acting on an affine algebraic variety $X$.  Let $\chi: G \rightarrow \C^{\times}$ be a character of the group $G$ ($\chi$ is taken to be algebraic).  The group $G$ acts on the direct product $X \times \C$ by 
\[
g \cdot (x,z) = (g \cdot x, \inv{\chi(g)}z).
\]
Note that the algebra of regular functions is $\cO(X \times \C) = \cO(X)\otimes\C[z] = \cO(X)[z]$.  The induced $G$-action on this algebra may be written as 
\[
g \cdot f(x,z) = g \cdot (\sum_{n=0}^l f_n(x)z^n) =  \sum_{n=0}^l \chi(g)^{-n}f_n(\inv{g} \cdot x)z^n.
\]
Therefore, if $f(x,z)$ is $G$-invariant, then $f_n(\inv{g}x) = \chi(g)^n f_n(x)$ for all $0 \le n \le l$.  We call the functions $f_n(x) \in \cO(X)$ such that $f_n(\inv{g}x) = \chi(g)^n f_n(x)$ is satisfied \emph{$\chi^n$-semi-invariants}.  Note that semi-invariants define a $\Z_{\ge 0}$-grading on the algebra of invariants so that we have:
\[
A_{\chi} := \cO(X \times \C)^G = \bigoplus_{n\ge 0} \cO(X)^{\chi^n}, 
\]
where $\cO(X)^{\chi^n} \subset \cO(X)$ is the subspace of $\chi^n$-semi-invariants.  It can be shown using Hilbert's theorem that the graded algebra $\cO(X \times \C)^G$ is finitely generated (Section 6.1.b in \cite{Muk2003}).  We will denote by $X//_{\chi}G$ the quasi-projective variety corresponding to the closed points of $\Proj \cO(X \times \C)^G$.

Specifically, recall from Section 2 of \cite{Ha1977} that for a graded algebra $S = \bigoplus_{n \ge 0} S_n$ there exists a quasi-projective scheme $\Proj S$ whose points are homogeneous prime ideals that do not contain the \emph{irrelevant ideal} $S_{+} = \bigoplus_{n > 0} S_n$.  The closed points of $\Proj S$ are the ideals that are maximal among homogeneous ideals not containing the irrelevant ideal.  Considering only the closed points when $\Proj S$ is a reduced scheme of finite type we obtain a quasi-projective variety.  

Note that the inclusion $S_0 \subset S$ induces a projective morphism $\Proj S \rightarrow \Spec S_0$ (this is true on the level of varieties as well).  In the case of the graded algebra of invariants $A_{\chi}$ we can interpret this as saying that there is a projective morphism $X//_{\chi}G \rightarrow X//G$ induced by the inclusion $\cO(X)^G = \cO(X)^{\chi^0} \subset \cO(X \times \C)^G$.

We would like $X//_{\chi}G$ to be a good quotient of a $G$-action just as we did in the case of the affine GIT quotient.  To do this we require the following definition:

\begin{defn}
Let $G$ be a reductive group acting on an affine variety $X$, and for nonzero $f \in \cO(X)^{\chi^n}$ let $X_f = \{x \in X| f(x) \neq 0\}$. 
\begin{enumerate}
\item A point $x \in X$ is called \textbf{$\chi$-semistable} if for some $n > 0$ there exists a semi-invariant $f \in \cO(X)^{\chi^n}$ such that $x \in X_f$.  We denote the set of $\chi$-semistable points of $X$ by $X^{ss}_{\chi}$.  Points that are not semistable are called \textbf{unstable}.
\item If $x_1, x_2 \in X^{ss}_{\chi}$, then $x_1$ and $x_2$ are called \textbf{S-equivalent} if and only if $\overline{G \cdot x_1} \cap \overline{G \cdot x_2} \cap X^{ss}_{\chi}\neq \varnothing$.
\end{enumerate}
\end{defn}

Note that the subset $X^{ss}_{\chi} = \bigcup X_f$ (the union is taken over all semi-invariants $f$ or positive degree).  Since each $X_f$ is open by definition and $G$-invariant because $f$ is a semi-invariant, then $X^{ss}_{\chi}$ is an open $G$-invariant subset of $X$.

If $x \in X^{ss}_{\chi}$, then let $I_x = \{f \in A_{\chi}| f(x) = 0\}$ be a homogeneous ideal of $A_{\chi}$.  This ideal does not contain the irrelevant ideal because $x$ is semistable.  Furthermore, it is maximal since $x$ is a point.  Thus it defines a point in $X//_{\chi}G$.  This defines a natural morphism $\phi: X^{ss}_{\chi} \rightarrow X//_{\chi}G$.  The pair $(X//_{\chi}G, \phi)$ is called the \emph{Proj GIT quotient} with respect to $\chi$.  The following theorem is a special case of the results found in Section 1.4 of \cite{MFK1994}:

\begin{thm}
\label{projgood}
Let $G$ be a reductive group acting on an affine variety $X$.  The Proj GIT quotient $(X//_{\chi}G, \phi)$ is a good quotient, and the morphism $\phi$ induces a bijection between the points of $X//_{\chi}G$ and the S-equivalence classes of $G$-orbits in $X^{ss}_{\chi}$
\end{thm}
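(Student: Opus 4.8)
The plan is to reduce the global statement to the affine GIT quotient theorem applied to a well-chosen affine open cover of $X//_{\chi}G$. Recall that $X//_{\chi}G = \Proj A_{\chi}$ with $A_{\chi} = \bigoplus_{n \ge 0}\cO(X)^{\chi^n}$, and that $\Proj$ of a finitely generated graded algebra is covered by the basic affine opens $D_+(f)$ as $f$ ranges over homogeneous elements of positive degree, i.e. over the semi-invariants $f \in \cO(X)^{\chi^n}$ with $n > 0$. First I would observe that for such an $f$ the preimage $\phi^{-1}(D_+(f))$ is exactly the principal open set $X_f = \{x \in X \mid f(x) \neq 0\}$: this is affine (as $X$ is affine and $f$ is regular) and $G$-invariant (since $f$ is a semi-invariant, its vanishing locus is $G$-stable). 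Because these $X_f$ cover $X^{ss}_{\chi}$ by the definition of semistability, the $D_+(f)$ cover $X//_{\chi}G$, and $\phi$ is an affine morphism.

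The key step is the identification of $\phi|_{X_f}$ with an affine GIT quotient. The coordinate ring of $D_+(f)$ is the degree-zero part $(A_{\chi})_{(f)}$ of the localization, consisting of fractions $h/f^k$ with $h \in \cO(X)^{\chi^{nk}}$. On the other hand $\cO(X_f) = \cO(X)[1/f]$, and a direct computation with the induced action $g \cdot (h/f^k) = (g \cdot h)/(\chi(g)^{nk} f^k)$ shows that $h/f^k$ is $G$-invariant precisely when $h \in \cO(X)^{\chi^{nk}}$. Hence $\cO(X_f)^G = (A_{\chi})_{(f)}$, so $\phi|_{X_f}\colon X_f \to D_+(f)$ is nothing but the affine GIT quotient $X_f \to X_f // G$. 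Since $G$ is reductive, each $\phi|_{X_f}$ is therefore a good quotient by the affine GIT theorem.

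It then remains to glue these local good quotients into a global one, using that the defining properties are all local on the base. Invariance and surjectivity are immediate; the structure-sheaf condition (3) holds on each $D_+(f)$ by the affine case and hence on the whole cover; and for a closed $G$-invariant $W \subset X^{ss}_{\chi}$ one has $\phi(W) \cap D_+(f) = \phi(W \cap X_f)$, which is closed in $D_+(f)$, so property (4) follows since closedness can be checked on a cover, and the same argument applied to a disjoint pair $W_1, W_2$ yields property (5). Compatibility on overlaps is automatic, since $D_+(f) \cap D_+(g) = D_+(fg)$ has preimage $X_{fg} = X_f \cap X_g$, on which both restrictions agree with the affine quotient of $X_{fg}$. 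Finally, the bijection statement is a direct consequence of Remark \ref{goodrmk}: applied to the good quotient $(X//_{\chi}G, \phi)$ on the variety $X^{ss}_{\chi}$, it gives $\phi(x_1) = \phi(x_2)$ if and only if $\overline{G \cdot x_1} \cap \overline{G \cdot x_2} \cap X^{ss}_{\chi} \neq \varnothing$, which is exactly the definition of S-equivalence.

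The main obstacle I anticipate is the localization computation together with the verification that the locally defined quotients genuinely patch: one must check both that $\phi^{-1}(D_+(f))$ equals $X_f$ on the nose (rather than merely containing it) and that the isomorphisms $\cO(X_f)^G \cong (A_{\chi})_{(f)}$ are compatible across overlaps, so that the affine GIT quotients of the $X_f$ assemble into a single morphism $\phi$ with the advertised target $\Proj A_{\chi}$. Everything else reduces cleanly to the already-established affine case.
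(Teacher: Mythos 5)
Your proposal is correct and follows essentially the same route as the proof the paper relies on: the paper gives no argument of its own but defers to Section 1.4 of \cite{MFK1994}, and that proof is precisely your argument — cover $\Proj A_{\chi}$ by the affine opens $D_+(f)$, identify $\phi^{-1}(D_+(f)) = X_f$ and $\cO(X_f)^G \cong (A_{\chi})_{(f)}$ so that each piece is an affine GIT quotient, and glue the resulting good quotients (indeed, the paper explicitly invokes this invariant affine cover later, in the discussion preceding Theorem \ref{alghamred}). Your derivation of the S-equivalence bijection from Remark \ref{goodrmk} applied to the good quotient on $X^{ss}_{\chi}$ is also exactly the intended one.
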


\begin{exa}
\label{projgitex}
\leavevmode
\begin{enumerate}
\item Consider the action of $G = \C^{\times}$ on $X = \A^2$ by $t \cdot (x,y) = (tx, ty)$, together with the character $\chi: \C^{\times} \rightarrow \C^{\times}$ given by $\chi(t) = \inv{t}$.  For $f(x,y) = \sum_{i,j}a_{ij}x^iy^j \in \cO(X)$ we have 
\[
t \cdot f(x,y) = \sum_{i+j} a_{ij} t^{-n}x^iy^j,
\]
so the $\chi^n$-semi-invariants are degree $n$ homogeneous polynomials.  It follows that $A_{\chi} = \C[x,y]$ with the standard $\Z_{\ge 0}$-grading.  Consequently, $\A^2//_{\chi} \C^{\times} = \Proj \C[x,y] = \P^1$.  Note that $X^{ss}_{\chi} = \A^2 - \{(0,0)\}$, since every homogenous polynomial of degree greater than $0$ vanishes at the origin.  Therefore, $(\P^1, \phi)$ where $\phi: \A^2 - \{(0,0)\} \rightarrow \P^1$ is the morphism sending every line through the origin to the corresponding point of $\P^1$ is the Proj GIT quotient.  It is not difficult to generalize this construction to the action of $\C^{\times}$ on $\A^{n+1}$ to obtain $\P^n$ as the quotient.  In this case, since orbits intersect outside of $X^{ss}_{\chi}$, the $S$-equivalence classes of orbits are the orbits themselves (i.e. lines going through the origin).

\item Let $G$ be reductive group acting on an affine variety $X$, and let $\chi: G \rightarrow \C^{\times}$ be the trivial character.  In this case, we have that $A_{\chi} = \cO(X)^G\otimes \C[z] = \cO(X)^G[z]$, where elements of $\cO(X)^G$ have degree $0$ and $z$ has degree $1$. Note that a homogeneous ideal of $\cO(X)^G[z]$ contains the irrelevant ideal if and only if it contains $z$. Therefore, any maximal ideal $\fm \subset \cO(X)^G$ determines a unique homogeneous maximal ideal $M \subset \cO(X)^G[z]$ not containing the irrelevant ideal such that $\fm  = M \cap \cO(X)^G$.  It follows that $X//_{\chi}G = \Proj \cO(X)^G[z] = \Spec \cO(X)^G = X//G$.  Since $z$ is nonzero on all of $X$, then $X^{ss}_{\chi} = X$.  Consequently, the Proj GIT quotient $(X//_{\chi}G, \phi)$ is the same as the affine GIT quotient.
\end{enumerate}
\end{exa}

As in the case of the affine GIT quotient, we can impose additional conditions on the points we consider in order to obtain an open subset of $X$ on which the $G$-action gives us a geometric quotient.

\begin{defn}
Let $G$ be a reductive group acting on an affine variety $X$.  A point $x \in X$ is called \textbf{$\chi$-stable} if for some $n > 0$ there exists a semi-invariant $f \in \cO(X)^{\chi^n}$ such that $x \in X_f$ (i.e. $x$ is semistable), the action $G$ restricted to $X_f$ is closed, and the stabilizer $G_x$ is finite.  We denote the subset of $\chi$-stable points of $X$ by $X^{s}_{\chi}$.
\end{defn}

We have the following analogue of Proposition \ref{affinestab}:

\begin{prp}
\label{projstab}
Let $G$ be a reductive group acting on an affine variety $X$ with Proj GIT quotient $(X//_{\chi}G, \phi)$.  The set $X^s_{\chi}$ of stable points is a $G$-invariant subset of $X$ such that $X^s_{\chi} = \inv{\phi}(\phi(X^s_{\chi}))$, $\phi(X^s_{\chi})$ is open, and $(\phi(X^s_{\chi}), \phi|_{X^s_{\chi}})$ is a geometric quotient.
\end{prp}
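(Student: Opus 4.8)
The plan is to reduce the statement to the affine case already established in Proposition \ref{affinestab}, using the standard affine cover of the $\Proj$ construction. Recall that $X//_{\chi}G$ is the variety of closed points of $\Proj A_{\chi}$, where $A_{\chi} = \bigoplus_{n \ge 0} \cO(X)^{\chi^n}$, and that $\Proj A_{\chi}$ is covered by the distinguished affine opens $D_+(f) = \Spec (A_{\chi})_{(f)}$ as $f$ ranges over the nonzero homogeneous elements of positive degree, i.e. over the semi-invariants $f \in \cO(X)^{\chi^n}$ with $n > 0$; here $(A_{\chi})_{(f)}$ is the degree-zero part of the localization $(A_{\chi})_f$. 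Since every point of $X_f$ is semistable (witnessed by $f$ itself), each $X_f$ lies in $X^{ss}_{\chi}$, so $\phi$ is defined on $X_f$.

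First I would establish the key lemma: for each such $f$, the restriction $\phi|_{X_f}$ is precisely the affine GIT quotient of the $G$-action on the affine open $X_f$. Two points must be checked. First, $\inv{\phi}(D_+(f)) = X_f$: a semistable point $x$ lies over $D_+(f)$ exactly when $f \notin I_x$, that is when $f(x) \ne 0$, which says $x \in X_f$. Second, $\cO(X_f)^G = (A_{\chi})_{(f)}$: a $G$-invariant regular function on $X_f = \Spec \cO(X)_f$ has the form $h/f^k$, and invariance together with the identity $f(\inv{g}x) = \chi(g)^n f(x)$ forces $h(\inv{g}x) = \chi(g)^{nk}h(x)$, i.e. $h \in \cO(X)^{\chi^{nk}}$; these are exactly the degree-zero elements of $(A_{\chi})_f$. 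As $G$ is reductive and $X_f$ is affine, $\cO(X_f)^G$ is finitely generated and $X_f//G = \Spec \cO(X_f)^G = D_+(f)$, with $\phi|_{X_f}$ the quotient morphism.

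With the lemma in hand, all three assertions follow by gluing the conclusions of Proposition \ref{affinestab} over the cover $\{D_+(f)\}$. Unwinding the definition of $\chi$-stability gives $X^s_{\chi} = \bigcup_f (X_f)^s$, where $(X_f)^s$ denotes the stable locus for the $G$-action on the affine variety $X_f$ (closed orbit in $X_f$, finite stabilizer): a point is $\chi$-stable precisely when it lies in some such $X_f$ and is affine-stable there. $G$-invariance of $X^s_{\chi}$ is then immediate, since each $X_f$ and each $(X_f)^s$ is $G$-invariant. Openness of $\phi(X^s_{\chi})$ follows from $\phi(X^s_{\chi}) = \bigcup_f \phi((X_f)^s)$, each term being open in $D_+(f)$ by Proposition \ref{affinestab}, hence open in $X//_{\chi}G$. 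For saturation, suppose $\phi(x') = \phi(x)$ with $x \in X^s_{\chi}$, and choose $f$ with $x \in (X_f)^s$; then $\phi(x') = \phi(x) \in D_+(f)$ forces $x' \in X_f$, and Proposition \ref{affinestab}(1) applied to $X_f$ yields $x' \in (X_f)^s \subseteq X^s_{\chi}$, giving $X^s_{\chi} = \inv{\phi}(\phi(X^s_{\chi}))$. Finally, $(\phi((X_f)^s), \phi|_{(X_f)^s})$ is a geometric quotient for each $f$ by Proposition \ref{affinestab}(3); since being a good quotient can be verified locally on the base and the geometric condition (fibers are single orbits) is likewise local, these restrictions of the single morphism $\phi$ glue to a geometric quotient $(\phi(X^s_{\chi}), \phi|_{X^s_{\chi}})$.

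The main obstacle is the key lemma, and within it the identification $\cO(X_f)^G = (A_{\chi})_{(f)}$ together with the compatibility of the $\Proj$-quotient morphism $\phi$ with the affine quotient morphisms on the pieces $X_f$; the subtlety is that $f$ is a semi-invariant rather than an invariant, so one must track the $\chi^{nk}$-grading carefully when comparing localized rings. Once this local picture is secured, every global assertion is a routine gluing over the affine cover, using that good and geometric quotients are local on the base.
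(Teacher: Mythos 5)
The paper offers no proof of this proposition (it defers to Section 1.4 of \cite{MFK1994}), so the comparison is with the standard argument found there; your strategy---covering $X//_{\chi}G$ by the affine opens $D_+(f) = \Spec (A_{\chi})_{(f)}$, identifying $\phi|_{X_f}\colon X_f \rightarrow D_+(f)$ with the affine GIT quotient of $X_f$, and gluing the conclusions of Proposition \ref{affinestab}---is exactly that argument, and its architecture is sound. Within your key lemma, the identification $\cO(X_f)^G = (A_{\chi})_{(f)}$ needs one more line than you give: invariance of $h/f^k$ forces $h(\inv{g}x) = \chi(g)^{nk}h(x)$ only for $x \in X_f$, not on all of $X$, so the numerator $h$ need not itself be a semi-invariant. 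The standard repair is to rewrite $h/f^k = hf/f^{k+1}$ and observe that $f\cdot\bigl(g\cdot h - \chi(g)^{nk}h\bigr)$ vanishes on $X_f$ and on the zero set of $f$, hence vanishes identically since $X$ is reduced, so $hf \in \cO(X)^{\chi^{n(k+1)}}$.

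The genuine gap is the claim that ``unwinding the definition of $\chi$-stability gives $X^s_{\chi} = \bigcup_f (X_f)^s$.'' The paper's (Mumford's) definition of a $\chi$-stable point requires a semi-invariant $f$ with $x \in X_f$ such that the $G$-action on \emph{all} of $X_f$ is closed, i.e.\ every orbit in $X_f$ is closed, together with finiteness of $G_x$; your characterization asks only that the single orbit $G \cdot x$ be closed in some $X_f$. Unwinding the definition therefore gives only $X^s_{\chi} \subseteq \bigcup_f (X_f)^s$, while both of your later steps---openness of $\phi(X^s_{\chi})$, which needs equality of the two sides, and saturation, which ends with $(X_f)^s \subseteq X^s_{\chi}$---use the reverse inclusion. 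That reverse inclusion is true, but it is a theorem (essentially Amplification 1.11 of \cite{MFK1994}), not a rephrasing. There are two repairs. The cheap one: index your union only by those $f$ for which the $G$-action on $X_f$ is closed; for such $f$ one has $(X_f)^s = \{x \in X_f \mid G_x \textrm{ finite}\}$, the identity $X^s_{\chi} = \bigcup (X_f)^s$ over this subfamily really is the definition, and every subsequent step of your proof runs verbatim. Alternatively, prove the reverse inclusion with the tools you already have: given $x \in (X_f)^s$, apply Proposition \ref{affinestab} to $X_f$ to find an invariant $\psi = h/f^k \in \cO(X_f)^G$ with $\phi(x) \in D(\psi) \subseteq \phi\bigl((X_f)^s\bigr)$; then $X_{fh} = \inv{(\phi|_{X_f})}(D(\psi))$ lies inside the saturated set $(X_f)^s$, so every orbit in $X_{fh}$ is closed in $X_f$, hence closed in $X_{fh}$, and the semi-invariant $fh$ witnesses $\chi$-stability of $x$ in the paper's sense.
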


The proof can be found in Section 1.4 of \cite{MFK1994}.  

\begin{exa}
\leavevmode
\begin{enumerate}
\item If $G = \C^{\times}$ acts on $X = \A^{n+1}$ by $t \cdot (x_0, \dots, x_n) = (tx_0, \dots, tx_n)$, and $\chi: \C^{\times} \rightarrow \C^{\times}$ is given by $\chi(t) = \inv{t}$, then the stable points are those with finite stabilizers and closed orbits in $\A^{n+1} -  \{(0, \dots, 0) \}$.  This means $X^s_{\chi} = \A^{n+1} -  \{(0, \dots, 0) \}$.
\item If $G$ is a reductive group acting on an affine variety $X$, and $\chi: G \rightarrow \C^{\times}$ is the trivial character, then $X^s_{\chi} = X_s$. 
\end{enumerate}
\end{exa}

Note that an analogue of the corresponding result for quotients by actions of Lie groups can be seen from the following theorem (see Section 1.4 and Appendix 1.D in \cite{MFK1994} ):
\begin{thm}
\label{projbun}
Let $G$ be a reductive group acting on a smooth affine variety $X$ such that all points of $X$ have connected stabilizers, and let $\chi$ be a character of $G$.  Let $(Y, \phi)$ be the geometric quotient corresponding to $X^s_{\chi}$.  

If the action of $G$ is free on $X^{ss}_{\chi}$, then all $\chi$-semistable points of $X$ are $\chi$-stable, $Y$ is smooth, and $\phi: X^s_{\chi} \rightarrow Y$ is a principal $G$-bundle in the \'{e}tale topology.  
\end{thm}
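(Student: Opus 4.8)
The plan is to establish the three assertions in turn: the equality of the semistable and stable loci by an elementary orbit-dimension argument, and the smoothness of $Y$ together with the bundle structure by producing an \'{e}tale-local model for $\phi$.

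First I would show that $X^{ss}_{\chi} = X^{s}_{\chi}$. Since the action on $X^{ss}_{\chi}$ is free, every stabilizer $G_x$ with $x \in X^{ss}_{\chi}$ is trivial, hence finite, so the stabilizer condition in the definition of $\chi$-stability holds automatically. It remains to see that each semistable orbit is closed in $X^{ss}_{\chi}$. Triviality of the stabilizers forces every such orbit to have the maximal dimension $\dim G$, and by Proposition \ref{orbit} the boundary $\overline{G\cdot x} - G\cdot x$ of any orbit consists of orbits of strictly smaller dimension; thus within the $G$-invariant open set $X^{ss}_{\chi}$ no orbit can acquire a boundary, so every orbit is closed there. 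As each basic open $X_f \subseteq X^{ss}_{\chi}$ is itself $G$-invariant, the restricted action on $X_f$ has closed orbits, and every semistable point meets the definition of a $\chi$-stable point. Hence $X^{ss}_{\chi} = X^{s}_{\chi}$ and $Y = \phi(X^{s}_{\chi}) = X//_{\chi}G$.

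For the remaining assertions I would study the morphism
\[
\Psi \colon G \times X^{s}_{\chi} \longrightarrow X^{s}_{\chi} \times_Y X^{s}_{\chi}, \qquad (g,x) \mapsto (g\cdot x,\, x).
\]
Freeness makes $\Psi$ injective on points, and since $(Y,\phi)$ is a geometric quotient (Proposition \ref{projstab}) its fibres are single orbits, so $\Psi$ is also surjective; the real content is that $\Psi$ is an \emph{isomorphism} and that $\phi$ is \'{e}tale-locally trivial. The engine for this is Luna's \'{e}tale slice theorem, applicable because $G$ is reductive, $X$ (hence $X^{s}_{\chi}$) is smooth, and every stable orbit is closed with trivial stabilizer. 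At a point $x \in X^{s}_{\chi}$ the slice theorem produces a smooth locally closed $S \ni x$ and a $G$-invariant open $U \subseteq X^{s}_{\chi}$ with an \'{e}tale $G$-morphism $G \times S \to U$; since $G_x$ is trivial this map is an isomorphism onto $U$, so $U \cong G \times S$ as $G$-varieties. Passing to quotients, $\phi$ restricts over an open $V \subseteq Y$ to the projection $G \times S \to S$, identifying $V$ with the smooth $S$. This simultaneously shows that $Y$ is smooth and that $\phi$ is \'{e}tale-locally isomorphic to the trivial bundle $G \times V \to V$; compatibility over overlaps follows from uniqueness of the quotient, so $\phi$ is a principal $G$-bundle in the \'{e}tale topology. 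Once $Y$ is known to be smooth of dimension $\dim X - \dim G$ with all fibres isomorphic to $G$, miracle flatness confirms that $\phi$ is faithfully flat, and $\Psi$ is then the expected isomorphism.

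The hard part will be the \'{e}tale-local triviality encoded in the slice theorem: constructing the slice $S$ and checking that $G \times S \to X^{s}_{\chi}$ is \'{e}tale uses reductivity of $G$, smoothness of $X$, and closedness of the orbit in an essential way. It is precisely here that the freeness hypothesis enters, collapsing Luna's $G \times_{G_x} S$ to an honest $G \times S$, while the connected-stabilizer hypothesis guarantees that the finite stabilizers allowed by the definition of stability are in fact trivial (a connected finite group being trivial), so that the action is free on all of $X^{s}_{\chi}$ and the structure group of the resulting bundle is the full group $G$. The remaining bookkeeping—patching the trivializations and verifying flatness—is routine once this local model is in hand.
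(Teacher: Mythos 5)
The paper does not actually prove Theorem \ref{projbun}; it only cites Section 1.4 and Appendix 1.D of \cite{MFK1994}, so your proposal has to be judged on its own merits. Your first half is correct and nicely elementary: freeness on $X^{ss}_{\chi}$ gives trivial (hence finite) stabilizers and forces every semistable orbit to have dimension $\dim G$, so by Proposition \ref{orbit} no semistable orbit can have boundary points inside the $G$-invariant open set $X^{ss}_{\chi}$; all orbits are therefore closed in $X^{ss}_{\chi}$, hence in each chart $X_f$, and $X^{ss}_{\chi}=X^{s}_{\chi}$ follows.

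The gap is in your use of Luna's slice theorem, and it is not cosmetic. When $G_x$ is trivial, Luna's theorem gives an \emph{\'{e}tale} $G$-morphism $G\times S\to U$ onto a saturated open set, not an isomorphism: the slice $S$ is transverse to the orbit through $x$, but nothing prevents it from meeting nearby orbits several times, and no Zariski shrinking of $S$ removes this (\'{e}tale maps are not Zariski-locally injective). Your intermediate claims --- $U\cong G\times S$ as $G$-varieties, hence $V\cong S$ and Zariski-local triviality of $\phi$ --- are genuinely false for general reductive $G$. Concretely, $\PGL_n$ acts freely on the open set of pairs of $n\times n$ matrices generating the full matrix algebra (such points have closed orbits in the affine space of pairs, so Luna applies there verbatim), yet the quotient admits no Zariski-local sections: a rational section would split the generic division algebra, contradicting Amitsur's theorem. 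This is exactly why the theorem is stated for the \'{e}tale topology. The repair is to quote Luna's conclusion in full (the ``strongly \'{e}tale'' form): the induced map $S\to U//G$ is \'{e}tale and the square it forms with $G\times S\to U$ is Cartesian, i.e.\ $G\times S\cong S\times_{U//G}U$ as $G$-varieties over $S$. That Cartesian statement \emph{is} \'{e}tale-local triviality, and smoothness of $Y$ follows because $S$ is smooth and $S\to Y$ is \'{e}tale with open image. A secondary, repairable point: Luna's theorem needs an affine $G$-variety and an orbit closed \emph{in that variety}; a $\chi$-stable orbit need not be closed in $X$ (in Example \ref{projgitex}(1) the stable orbits are punctured lines whose closures in $\A^2$ contain the origin), so the slice argument must be run inside an affine chart $X_f\subseteq X^{ss}_{\chi}$, where your first step guarantees closedness, rather than ``at a point $x\in X^{s}_{\chi}$'' of $X$ as written.
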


We can see that the relationship between the $\chi$-stable and $\chi$-semistable points of $X$ in the following diagram:

\begin{equation*}
\begin{tikzcd}
X^s_{\chi} \arrow[d] & \subset & X^{ss}_{\chi} \arrow[d] & \subset & X \arrow[d] \\
X^s_{\chi}/G & \subset & X//_{\chi}G \arrow[rr] & & X//G
\end{tikzcd}
\end{equation*}

\begin{rmk}
The Proj GIT quotient described in this section is a special case of Mumford's GIT quotient with respect to a specific linearization.  Namely, let $G$ be a reductive group acting on a quasi-projective variety $X$.  A \emph{linearization} of the action of $G$ with respect to a line bundle $\pi: L \rightarrow X$ is an action of $G$ on $L$ such that 
\begin{enumerate}
\item the morphism $\pi$ is $G$-equivariant
\item for all $x \in X$ and $g \in G$, the morphism $L_x \rightarrow L_{g\cdot x}$ induced by the action of $G$ on the fibers is linear.
\end{enumerate}
In other words, $L$ is a $G$-equivariant line bundle on $X$.  The action of $a: G \times X \rightarrow X$ induces the action $\tilde{a}: G \times L \rightarrow L$, which in turn defines a linear action of $G$ on $H^0(X,L^{\otimes r})$ for $r \ge 0$.  The GIT quotient is defined in terms of 
\[
X//_L G : =\Proj \left( \bigoplus_{r \ge 0} H^0(X, L^{\otimes r})^G \right).
\]
The semistable and stable points are defined as
\begin{itemize}
\item $x \in X$ is \emph{semistable} if for some $r > 0$ there exists an invariant section $\sigma \in H^0(X, L^{\otimes r})^G$ such that $x \in X_{\sigma} = \{x \in X| \sigma(x) \neq 0 \}$ and $X_{\sigma}$ is affine;
\item  $x \in X$ is \emph{stable} if in addition to being semistable, the stabilizer $G_x$ is finite, and the action of $G$ on the corresponding $X_{\sigma}$ is closed.
\end{itemize}
In this case, there are versions of Theorems \ref{projgood}, \ref{projbun}, and Proposition \ref{projstab} (see Chapter 1 of \cite{MFK1994}) that hold true.

If $X$ is an affine variety with the action of a reductive group $G$ and $\chi: G \rightarrow \C^{\times}$ is a character, then we can define a linearization on the trivial line bundle $X \times \C \rightarrow X$ by 
\[
g \cdot (x,z) = (g \cdot x, \chi(g)z).
\]
 One can check that this linearization gives us the Proj GIT quotient construction.
\end{rmk}

\subsection{Quotients and coarse moduli spaces}

We conclude this section by outlining the relationship between quotients and coarse moduli spaces.  Recall from Section 2 that if we are interested in finding a variety that classifies objects of a set $\cA$ up to an equivalence relation $\sim$, we begin by upgrading $\sim$ to an equivalence relation on the set of families $\cA_T$ for each algebraic variety $T$.  This allows us to define a moduli functor $\cM: \opcat{\Var} \rightarrow \Set$ by $\cM(T) = \cA_T/\sim_T$ on the objects and $\cM(f)(\cF) = f^*\cF$ on morphisms $f: S \rightarrow T$.

Now, fix an algebraic variety $X$, and let $\cF \in \cA_X$.  We say $\cF$ has the \emph{local universal property} if for any family $\cG \in \cA_T$ and point $t \in T$ there exists an open neighborhood $ t \in U \subseteq T$ and a morphism $f: U \rightarrow X$ such that $\cG|_{U} \sim_U f^*\cF$.  We will make use of the following lemma.

\begin{lmm}
\label{natinvlemma}
Let $\cM$ be a moduli functor, and let $\cF \in \cA_X$ be a family with the local universal property over an algebraic variety $X$.  Suppose there is an algebraic group $G$ acting on $X$ such that $x_1, x_2 \in X$ belong to the same $G$-orbit if and only if there is an equivalence of the fibers $\cF_{x_1} \sim \cF_{x_2}$.  

For any algebraic variety $Y$ there is a bijection between the set of natural transformations $\Nat(\cM, h_Y)$ and the set of $G$-invariant morphisms $f: X \rightarrow Y$.
\end{lmm}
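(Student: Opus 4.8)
The plan is to exhibit explicit maps in both directions and to verify they are mutually inverse. Write $[\cF]\in\cM(X)$ for the class of the given family. To a natural transformation $\eta\in\Nat(\cM,h_Y)$ I would associate the morphism $f_\eta:=\eta_X([\cF])\in h_Y(X)=\Hom(X,Y)$. To see $f_\eta$ is $G$-invariant I would evaluate at points via the morphisms $g_x\colon\Spec\C\to X$ that pick out $x\in X$: naturality of $\eta$ with respect to $g_x$ gives $f_\eta\circ g_x=\eta_{\Spec\C}(\cM(g_x)([\cF]))=\eta_{\Spec\C}([\cF_x])$, since $\cM(g_x)([\cF])=[g_x^*\cF]=[\cF_x]$. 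Hence $f_\eta(x)$ depends only on the class of $\cF_x$ in $\cA/\sim$. If $x_1,x_2$ lie in a common $G$-orbit then $\cF_{x_1}\sim\cF_{x_2}$ by hypothesis, so $f_\eta(x_1)=f_\eta(x_2)$; thus $f_\eta$ is constant on orbits, i.e.\ $G$-invariant.

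Conversely, given a $G$-invariant morphism $f\colon X\to Y$ I would build a natural transformation $\eta^f$ as follows. For a family $\cG\in\cA_T$ and a point $t\in T$, the local universal property supplies an open neighbourhood $t\in U\subseteq T$ and a morphism $f_U\colon U\to X$ with $\cG|_U\sim_U f_U^*\cF$; I set the local piece over $U$ to be $f\circ f_U\colon U\to Y$. The main obstacle is to glue these local pieces into a morphism $\eta^f_T([\cG])\colon T\to Y$ that is independent of all choices. On an overlap, for any closed point $s$ the fibres $(f_U^*\cF)_s=\cF_{f_U(s)}$ and $(f_{U'}^*\cF)_s=\cF_{f_{U'}(s)}$ are both equivalent to $\cG_s$, so $\cF_{f_U(s)}\sim\cF_{f_{U'}(s)}$; by hypothesis $f_U(s)$ and $f_{U'}(s)$ lie in a single $G$-orbit, and $G$-invariance of $f$ forces $f(f_U(s))=f(f_{U'}(s))$. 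Thus the two local morphisms agree on all $\C$-points of the overlap, and since $Y$ is separated while the overlap is reduced and of finite type over the algebraically closed field $\C$ (so the closed equalizer contains a dense set of points), they agree as morphisms. The same fibrewise comparison shows independence of the chosen cover and of the $\sim_T$-representative of $[\cG]$, and naturality of $\eta^f$ in $T$ follows by pulling the local data back along any $h\colon S\to T$ together with the moduli-functor compatibility $(f_U\circ h)^*\cF\sim h^*f_U^*\cF$.

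It remains to check that $\eta\mapsto f_\eta$ and $f\mapsto\eta^f$ are inverse to each other. For $\eta^f$ evaluated on $[\cF]\in\cM(X)$ I would use the trivial local datum $U=X$, $f_U=\Id_X$ (legitimate because $\Id_X^*\cF=\cF$), giving $(\eta^f)_X([\cF])=f\circ\Id_X=f$; hence $f_{\eta^f}=f$. In the other direction, for $\cG\in\cA_T$ with local datum $(U,f_U)$, naturality of $\eta$ along $f_U$ gives $\eta_U([f_U^*\cF])=f_\eta\circ f_U$, while $\cG|_U\sim_U f_U^*\cF$ together with naturality along the inclusion $U\hookrightarrow T$ yields $\eta_T([\cG])|_U=\eta_U([\cG|_U])=f_\eta\circ f_U=(\eta^{f_\eta})_T([\cG])|_U$. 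Gluing over the cover gives $\eta_T([\cG])=(\eta^{f_\eta})_T([\cG])$, so $\eta^{f_\eta}=\eta$. This establishes the desired bijection, the gluing step in the second construction being the only genuinely delicate point.
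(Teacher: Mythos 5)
Your proposal is correct and follows essentially the same route as the paper's proof: the same assignment $\eta \mapsto \eta_X(\cF)$ with $G$-invariance checked via naturality at points, the same gluing construction of the inverse using the local universal property and the orbit hypothesis, and the same mutual-inverse verification via the trivial datum $(\Id_X, \cF)$ and naturality along the local charts. Your explicit justification that agreement on closed points of a reduced, separated, finite-type overlap forces agreement of morphisms is a welcome sharpening of a step the paper leaves implicit, but it does not change the structure of the argument.
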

\begin{proof}
The mapping $\psi(\eta) = \eta_X(\cF)$ sends $\eta \in \Nat(\cM, h_Y)$ to a morphism $\eta_X(\cF): X \rightarrow Y$.  Since $\eta$ is a natural transformation, we have 
\[
\eta_X(\cF)(x) = \eta_{\Spec \C}(\cF_x) = \eta_{\Spec \C}(\cF_{g \cdot x}) = \eta_X(\cF)(g \cdot x),
\]
so $\eta_X(\cF)$ is $G$-invariant.  

We construct the inverse of $\psi$ by gluing.  Let $T$ be an algebraic variety and $\cG \in \cA_t$ a family over $T$.  Let $f: X \rightarrow Y$ be a $G$-invariant morphism.  Since $\cF$ has the local universal property, then there exists a cover $\{U_i \rightarrow T\}$ and a collection of morphisms $h_i: U_i \rightarrow X$ such that $\cG|_{U_i} \sim_{U_i} h_i^* \cF$.  

The compositions $f \circ h_i$ glue to define a morphism $T \rightarrow Y$.  Indeed, if $u \in U_i \cap U_j$, we have
\[
\cF_{h_i(u)} \sim (h_i^* \cF)_u \sim \cG_u \sim (h_j^* \cF)_u \cF_{h_j(u)}.
\]
This implies $h_i(u)$ and $h_j(u)$ belong to the same $G$-orbit, so $f \circ h_i(u) = f \circ h_j(u)$.  This means the $f \circ h_i(u) $ agree on intersections, so they glue.  Note that a similar argument shows that the resulting morphism does not depend on the choice of open cover and $h_i$. 

Define $\nu: \cM \rightarrow h_Y$ by letting $\nu_T(\cG)$ be result of gluing the $f \circ h_i$.  Let $\varphi: S \rightarrow T$ be a morphism. Let $\{U_i \rightarrow T\}$ and $h_i$ be as above.  We have that $V_i = \inv{\varphi}(U_i)$ form an open cover of $S$.  Moreover, we have
\[
(h_i \circ \varphi)^*\cF|_{V_i} \sim_{V_i} (\varphi^* \circ h_i^*\cF)|_{V_i} \sim_{\V_i} \varphi^* \cG|_{V_i}.
\]
Therefore $\nu_T(\cG) \circ \varphi|_{V_i} = (f \circ h_i) \circ \varphi = \nu_S(\varphi^* \cG)|_{V_i}$.  Since the $V_i$ cover $S$, then $\nu_T(\cG) \circ \varphi = \nu_S(\varphi^* \cG)$, so $\nu$ is a natural transformation.

The mapping $\overline{\psi}(f) = \nu$ is the inverse of $\psi$.  Indeed, $\psi \circ \overline{\psi} = \nu_X(\cF)$.  By the above construction, $\nu_X(\cF) = f \circ \Id_X = f$.  Conversely, $\overline{\psi} \circ \psi(\eta) = \nu$ such that $\nu_T(\cG)|_{U_i} = \eta_X(\cF) \circ h_i$, where $U_i$ and $h_i$ are as above.  Therefore, we get
\[
\nu_T(\cG)|_{U_i} = \eta_X(\cF) \circ h_i = \eta_{U_i}(h_i^*\cF) = \eta_T(\cG)|_{U_i}
\]
by functoriality.  Thus $\nu_T(\cG) = \eta_T(\cG)$ and we are done.
\end{proof}

This lemma gives us the following result.
\begin{thm}
\label{coarsemodulithm}
Under the assumptions of Lemma \ref{natinvlemma} we have that \begin{enumerate}
\item If the moduli functor $\cM$ admits a coarse moduli space $(Y, \eta: \cM \rightarrow h_Y)$, then there exists a morphism $\phi: X \rightarrow Y$ such that $(Y, \phi)$ is a categorical quotient.
\item If $(Y, \phi)$ is a categorical quotient of the action of $G$ on $X$, then there exists a natural transformation $\eta: \cM \rightarrow h_Y$ that makes $Y$ into a coarse moduli space for $\cM$ if and only if $(Y, \phi)$ is an orbit space.
\end{enumerate} 
\end{thm}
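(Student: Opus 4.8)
The engine for both parts is the bijection of Lemma \ref{natinvlemma}, which for every target variety $Z$ identifies $\Nat(\cM, h_Z)$ with the set of $G$-invariant morphisms $X \to Z$ via $\nu \mapsto \nu_X(\cF)$. Under this identification the universal property of a coarse moduli space (every $\nu \colon \cM \to h_Z$ factors uniquely through $\eta$) and the universal property of a categorical quotient (every $G$-invariant $\psi \colon X \to Z$ factors uniquely through $\phi$) quantify over exactly the same data. The plan is therefore to transport the relevant universal property across this bijection, and to analyze separately the one piece of the coarse-moduli definition that has no counterpart for categorical quotients, namely the requirement that $\eta_{\Spec \C}$ be a bijection on $\C$-points.

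For part (1), I would set $\phi := \eta_X(\cF)$; Lemma \ref{natinvlemma} immediately gives that $\phi$ is $G$-invariant. To verify the universal property of a categorical quotient, take a variety $Z$ and a $G$-invariant morphism $\psi \colon X \to Z$ (the action on $Z$ is immaterial, so give it the trivial one). By the lemma $\psi = \nu_X(\cF)$ for a unique $\nu \colon \cM \to h_Z$, and since $(Y, \eta)$ is a coarse moduli space there is a unique $g \colon Y \to Z$ with $\nu = h(g) \circ \eta$. Evaluating the component at $X$ on $\cF$ turns $\nu = h(g) \circ \eta$ into $\psi = g \circ \phi$, the desired factorization; uniqueness of $g$ follows by running the same evaluation backwards and invoking injectivity in Lemma \ref{natinvlemma} together with the uniqueness clause of the coarse-moduli property. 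Hence $(Y, \phi)$ is a categorical quotient.

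For part (2), the key computation is the value of $\eta_{\Spec \C}$ on points. Using the last axiom of a moduli functor, $\cM(\Spec \C) = \cA/\sim$, and the hypotheses of Lemma \ref{natinvlemma} (the local universal property together with the fact that orbits are exactly the fibrewise-equivalence classes) give a bijection $\cA/\sim \;\cong\; X/G$, under which $\eta_{\Spec \C}$ becomes the map $G \cdot x \mapsto \phi(x)$ induced by the $G$-invariant $\phi := \eta_X(\cF)$. Since the fibres of $\phi$ are $G$-stable, this map is a bijection onto $Y$ precisely when every fibre $\inv{\phi}(y)$ is a single orbit, i.e. precisely when $(Y, \phi)$ is an orbit space. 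For the ``if'' direction I would define $\eta := \overline{\psi}(\phi)$ from the given categorical quotient, note that its universal property holds by the argument of part (1) read in reverse, and observe that the bijectivity of $\eta_{\Spec\C}$ holds by the orbit-space hypothesis. For the ``only if'' direction, given any $\eta$ making $(Y,\eta)$ a coarse moduli space, part (1) shows that $(Y, \eta_X(\cF))$ is a categorical quotient; by uniqueness of categorical quotients up to unique isomorphism it agrees with the given $(Y,\phi)$ after an automorphism of $Y$, and since composing with an isomorphism preserves the orbit-space property, the bijectivity of $\eta_{\Spec\C}$ forces $(Y, \phi)$ to be an orbit space.

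I expect the main obstacle to be bookkeeping rather than a deep difficulty: the translation between the two universal properties is clean once Lemma \ref{natinvlemma} is in hand, so the delicate points are checking carefully that $\eta_{\Spec \C}$ really is the orbit-to-point map $G\cdot x \mapsto \phi(x)$ — this rests on the naturality square for the inclusion $g_x \colon \Spec \C \to X$ of a point and the identity $g_x^*\cF = \cF_x$ — and, in the ``only if'' direction of part (2), correctly reducing the statement about an arbitrary coarse-moduli $\eta$ to the distinguished morphism $\phi$ via the uniqueness of categorical quotients noted after their definition.
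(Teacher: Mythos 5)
Your proof is correct and follows essentially the same route as the paper: both parts transport the relevant universal property across the bijection of Lemma \ref{natinvlemma} (setting $\phi = \eta_X(\cF)$ in one direction and $\eta = \overline{\psi}(\phi)$ in the other), and part (2) rests on identifying $\eta_{\Spec \C}$ with the map $G \cdot x \mapsto \phi(x)$ under the bijection $\cM(\Spec \C) \cong X/G$. If anything, your handling of the ``only if'' direction of (2) — reducing an arbitrary coarse-moduli $\eta$ to the given $\phi$ via uniqueness of categorical quotients up to unique isomorphism — is more explicit than the paper's proof, which only analyzes the canonical $\eta$ associated to $\phi$ and leaves that reduction implicit.
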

\begin{proof}
If $(Y, \eta: \cM \rightarrow h_Y)$ is a coarse moduli space, the existence of $\phi$ follows immediately from Lemma \ref{natinvlemma}, and its universal property is a consequence of the universal property of $\eta$.

If $(Y, \phi)$ is a categorical quotient, then by Lemma \ref{natinvlemma} $\phi = \eta_X(\cF)$ for some natural transformation $\eta: \cM \rightarrow h_Y$.  Note that by the local universal property every family in $\cM(\Spec \C)$ is equivalent to a fiber of $\cF$.  Moreover, two fibers of $\cF$ are not equivalent if and only if they are over points from different orbits.  Thus the mapping $\eta_{\Spec \C}: \cM(\Spec \C) \rightarrow \Hom(\Spec \C, Y)$ is a bijection if an only if $(Y, \phi)$ is an orbit space. 
\end{proof}

\begin{exa}
Consider the moduli functor $\cM$ classifying families of vector spaces of dimension $n$ up to isomorphism (seen in Example \ref{vspacenonrep}). 

Let $X = \GL(n, \C)$ be viewed as the space parametrizing choices of basis for the vector space $\C^n$.  The group $\GL(n, \C)$ acts on $X$ by left multiplication.  It is easy to check that the pair $(\Spec \C, \phi)$, where $\phi: X \rightarrow \Spec \C$ is the constant morphism, is an orbit space for is action.
  
	Let $F = X \times \C^n$ be the trivial family over $X$.  Let $E$ be a rank $n$ vector bundle over an algebraic variety $T$. Since $E$ is trivializable, then each point $t \in T$ has an open neighborhood $t \in U \subset T$ such that $U \times \C^n \sim_U h^*F$, where $h: U \rightarrow X$ is any constant morphism.  This means, $F$ has the local universal property with respect to $\cM$.  Since the action of $\GL(n,\C)$ on $X$ only has one orbit, it is clear that two fibers of $F$ are equivalent if and only if the corresponding points of $X$ lie in the same orbit.
		
Thus, the coarse moduli space corresponding to $\cM$ is the pair $(\Spec \C, \eta)$, where $\eta: \cM \rightarrow h_{\Spec \C}$ is a natural transformation such that $\eta_T$ is the constant map.  We have already seen in Example \ref{coarsemodex} that this is the case. 
 
\end{exa}

\section{Moduli spaces of quiver representations}
\subsection{Quiver basics}

We begin our discussion of moduli space constructions for quiver representations by introducing basic notation and terminology.  Recall from the introduction that a \emph{quiver} $Q = (I_Q, A_Q, s, t)$ is a multidigraph. That is, $Q$ is a directed graph where multiple edges between two vertices and loops are allowed.  The data defining $Q$ consists of a set of vertices $I_Q$, a set of arrows $A_Q$, and two mappings $s,t: A_Q \rightarrow I_Q$ defined such that the arrow $a$ starts at $s(a)$ and ends at $t(a)$.  A quiver is said to be \emph{finite} if $I_Q$ and $A_Q$ are finite.  From now on, we will assume all quivers we work with are finite, though much of this section remains valid even if this is not the case. 

\begin{defn}
A \textbf{representation} $V$ of a quiver $Q$ consists of a collection of $\C$-vector spaces $\{V_i\}_{i \in I_Q}$ and a collection of linear transformations $\{f_a\}_{a \in A_Q}$ where $f_a: V_{s(a)} \rightarrow V_{t(a)}$.  

A representation $V$ of $Q$ is said to be \textbf{finite-dimensional} if all of the $V_i$ are finite-dimensional. In this case the \textbf{dimension vector} $\alpha = (\alpha_i)_{i \in I_Q}$ of $V$ is defined by $\alpha_i = \dim V_i$. 
\end{defn}

  We will often consider representations $V$ in coordinate spaces, meaning that $V_i = \C^{\alpha_i}$.  This makes each representation $V = (\{V_i\}, \{f_a\})$ with dimension vector $\alpha = (\alpha_i)$ an element of the vector space $\prod_{a \in A_Q}\Hom(\C^{\alpha_{s(a)}}, \C^{\alpha_{t(a)}})$.

\begin{defn}
Let $V = (\{V_i\}, \{f_a\})$ and $W = (\{W_i\}, \{h_a\})$ be two representations of a quiver $Q$.  A \textbf{morphism} $\varphi: V \rightarrow W$ of quiver representations is a collection of linear transformations $\varphi_i: V_i \rightarrow W_i$ such that $\varphi_{t(a)} \circ f_a = h_a\circ \varphi_{s(a)}$ for all $a \in A_Q$.
\end{defn}

We can generalize many notions from linear algebra to the context of quiver representations.  For example, a \emph{subrepresentation} $W \subset V$ of a representation of $Q$ consists of subspaces $W_i \subset V_i$ and arrows $h_a: W_{s(a)} \rightarrow W_{t(a)}$ such that $f_a|_{W_{s(a)}} = h_a$.  One can similarly extend the definitions of quotient representation, as well as of the kernel, the cokernel, the image of a morphism, etc.  Note that the representations of a quiver $Q$ form a category $\cR(Q)$, and it is not hard to see that $\cR(Q)$ is a $\C$-linear abelian category.

We will denote by $R(Q, \alpha)$ the set of representations of $Q$ with dimension vector $\alpha$.  If we are considering representations of $Q$ with dimension vector $\alpha$ in coordinate spaces, then $R(Q, \alpha)$ has the structure of an affine space, and we denote it by $\Rep(Q, \alpha)$.  The group $G(\alpha):= \prod_{i \in I_Q} \GL(V_i, \C)$ acts on $\Rep(Q, \alpha)$ by change of basis at each vertex.  That is, if $V = (f_a) \in \Rep(Q, \alpha)$ and $g = (g_i) \in G(\alpha)$, then $g \cdot V = (g_{t(a)}f_a\inv{g_{s(s)}})$.  Note that two representations in $\Rep(Q, \alpha)$ are isomorphic if and only if they lie on the same $G(\alpha)$-orbit.

\begin{exa}
\leavevmode
We have already seen several examples of quiver representations.

\begin{enumerate} 
\item
The Jordan quiver $J$ consists of a single vertex and single loop.  Its representations may be interpreted as pairs $(V, f_a)$, where $V$ is a vector space and $f_a \in \End(V)$ is its endomorphism.  
We saw in Example \ref{jorquivex} that there is no moduli space parametrizing the finite-dimensional representations of $J$ up to isomorphism.
\[
\begin{tikzcd}
V \arrow[out=0,in=90,loop, swap, "f_a"]
\end{tikzcd}
\]

\item The Kronecker quiver $K_2$ consists of a pair of vertices and two arrows in the same direction.  In Section 4 we saw that an open subset of $\Rep(K_2, \alpha)$ parametrizes vector bundles on $\P^1$ with some additional rigidity structure.

\[
\begin{tikzcd}
0 & 1 \arrow[l, shift left=1ex, "b_0"] \arrow[l, shift right=1ex, swap, "b_1"]
\end{tikzcd}
\]

\item The $A_n$ quiver a quiver associated to the Dynkin graph $A_n$ consisting of $n$ vertices and $n-1$ arrows.  
\[
\begin{tikzcd}
1 & 2 \arrow[l, "a_1"] & \cdots \arrow[l, "a_2"] & n \arrow[l, "a_{n-1}"]
\end{tikzcd}
\]
\end{enumerate}
\end{exa}

Note that representations of $A_1$ are just $\C$-vector spaces.  In representation theory it is often convenient to view representations of objects as modules over an algebra.  We can obtain this kind of description for quiver representations as follows.

Given a quiver $Q$, a \emph{path} $p = a_n \cdots a_2a_1$ in $Q$ is a sequence of arrows $a_n, \dots, a_2, a_1$ such that $s(a_i) = t(a_{i+1})$ for $1 \le i \le n-1$.  We will allow paths that contain no arrows.  Namely, for each vertex $i \in I_Q$ there is a \emph{trivial path} denoted by $p_i$ such that $s(p_i) = t(p_i) = i$.  We will denote $s(p) := s(a_n)$ and $t(p) := t(a_1)$. If $s(p) = t(p)$, then the path is called a \emph{cycle}.   

The \emph{path algebra} $\C Q$ of $Q$ is the associative $\C$-algebra generated as a vector space by $e_p$ for all paths $p$ in $Q$.  Multiplication is defined by concatenation of paths.  Namely, 
\[
e_p e_q = \begin{cases}
e_{pq} & \textrm{if } s(p) = t(q), \\
0 & \textrm{otherwise}.
 \end{cases} 
\] 
The elements $e_i := e_{p_i}$ are idempotent.  In fact, they satisfy $e_p e_i = e_p$ for any $p$ such that $s(p) = i$ and $e_i e_p = e_p$ for any $p$ such that $t(p) = i$.  Note that if $Q$ is finite, then $1_Q := \sum_{i \in I_Q} e_i$ is a unit element in the algebra $\C Q$.

If $V$ is a representation of the quiver $Q$, then any path $p = a_n \cdots a_2a_1$ in $Q$ defines a linear transformation $f_p: V_{s(p)} \rightarrow V_{t(p)}$ given by $f_{a_n} \circ \cdots \circ f_{a_1}$. If $V$ is a representation of $Q$, then we can define a $\C Q$-module structure on $\bigoplus_{i \in I_Q} V_{i}$ by 
\[
e_pv  = \begin{cases} f_p(v) & \textrm{if } v \in V_{s(p)}.\\
0 & \textrm{otherwise}.
\end{cases}
\] 
Conversely, if $V$ is a $\C Q$-module, then let $V_i = e_iV$.  For each $a \in A_Q$, let $f_a: V_{s(a)} \rightarrow V_{t(a)}$ be given by $f_a(v) = e_av$.  This morphism is well-defined since
\[
f_a(V_{s(a)}) = e_aV_{s(a)} = e_ae_{s(a)}V = e_{t(a)}e_aV \subset e_{t(a)}V = V_{t(a)}.  
\]
Thus, we see that $(\{V_i\}, \{f_a\})$ is a representation of $Q$.

It is not hard to extend the above constructions to morphisms and obtain the following result (see Section 1.2 of \cite{Bri2012} for details).
\begin{thm}
For a finite quiver $Q$, the category $\cR(Q)$ is equivalent to the category $\C Q-\modl$ of modules over the path algebra of $Q$.
\end{thm}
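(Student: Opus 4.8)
The plan is to package the object-level constructions from the preceding discussion into a pair of functors and then check they are mutually quasi-inverse. First I would define $F\colon \cR(Q) \to \C Q\text{-}\modl$ on objects by $F(V) = \bigoplus_{i \in I_Q} V_i$ with the module structure already described, and on a morphism $\varphi = (\varphi_i)\colon V \to W$ by $F(\varphi) = \bigoplus_i \varphi_i$. To see $F(\varphi)$ is $\C Q$-linear it suffices to check compatibility with the algebra generators of $\C Q$, namely the trivial paths $e_i$ and the arrows $e_a$: compatibility with $e_i$ is automatic since $\varphi$ is graded, while compatibility with each $e_a$ is exactly the defining relation $\varphi_{t(a)} \circ f_a = h_a \circ \varphi_{s(a)}$ of a morphism of representations. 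In the other direction I would define $G\colon \C Q\text{-}\modl \to \cR(Q)$ on objects by $G(M) = (\{e_i M\}, \{f_a\})$ with $f_a(v) = e_a v$, and on a module map $\psi\colon M \to N$ by restriction, $G(\psi)_i = \psi|_{e_i M}$; this lands in $e_i N$ because $\psi(e_i m) = e_i \psi(m)$, and it commutes with the arrow maps since $\psi$ is $\C Q$-linear. Preservation of identities and composition is then immediate in both cases.

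Next I would exhibit the two natural isomorphisms. For $G \circ F$ I would observe that, because the trivial path $p_i$ acts as the identity on $V_i$ and by zero on $V_j$ for $j \neq i$, the idempotent $e_i$ acts on $F(V) = \bigoplus_j V_j$ as the projection onto $V_i$; hence $e_i F(V) = V_i$, and the recovered arrow map is $e_a v = f_a(v)$. Thus $G \circ F = \Id_{\cR(Q)}$ on the nose, and I would only need to record that this identification is natural. For $F \circ G$ the situation is genuinely up to isomorphism: $F(G(M)) = \bigoplus_i e_i M$, and I would use the summation map $\theta_M\colon \bigoplus_i e_i M \to M$, $(m_i) \mapsto \sum_i m_i$. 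Here finiteness of $Q$ enters through the unit $1_Q = \sum_i e_i$ of $\C Q$: since the $e_i$ are orthogonal idempotents summing to $1_Q$, every $m \in M$ decomposes uniquely as $m = \sum_i e_i m$ with $e_i m \in e_i M$, so $\theta_M$ is a bijection. I would then check that $\theta_M$ intertwines the two $\C Q$-actions by verifying on the generators $e_a$, using the concatenation rule $e_{a_n}\cdots e_{a_1} = e_p$ together with the relations $e_a e_{s(a)} = e_a = e_{t(a)} e_a$ to track which summand the result lands in; naturality of $\theta$ in $M$ follows because module maps commute with multiplication by each $e_i$.

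The only real subtlety, and the step I would treat most carefully, is the $F \circ G$ direction: one must confirm that $\theta_M$ is genuinely $\C Q$-linear and not merely an isomorphism of underlying vector spaces. This is where the hypothesis that $Q$ is finite is used essentially, since it guarantees $1_Q \in \C Q$ and hence the decomposition $M = \bigoplus_i e_i M$; without it $M$ would be a module over a non-unital algebra and the summation map could fail to be surjective. Everything else — well-definedness on morphisms, functoriality, and the $G \circ F$ identification — is a direct unwinding of the definitions and the idempotent relations $e_i e_j = \delta_{ij} e_i$ recorded above.
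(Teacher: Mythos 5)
Your proof is correct and takes essentially the same approach as the paper: the functors you define are precisely the object-level constructions $V \mapsto \bigoplus_{i} V_i$ and $M \mapsto (\{e_i M\}, \{e_a \cdot\,\})$ given in the text preceding the theorem, and you supply the morphism-level, functoriality, and naturality details that the paper defers to its cited reference. Your isolation of where finiteness enters (the unit $1_Q = \sum_{i} e_i$ yielding the decomposition $M = \bigoplus_i e_i M$, hence the $\C Q$-linear bijectivity of the summation map) matches exactly the paper's remark that $1_Q$ is a unit element only when $Q$ is finite.
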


\begin{exa}
\leavevmode
\begin{enumerate}
\item Since the only nontrivial paths in the Jordan quiver $J$ consists of concatenations of the loop with itself, the path algebra $\C J$ is just the polynomial algebra $\C[z]$.  
\item The path algebra of the Kronecker quiver $K$ may be realized as a subalgebra of $\Hom(\C^2,\C^2)[z]$
that is generated by
\[
\begin{pmatrix}1 & 0\\ 0 & 0\end{pmatrix}, \begin{pmatrix}0 & 0\\ 0 & 1\end{pmatrix}, \begin{pmatrix}0 & 1\\ 0 & 0\end{pmatrix}, \begin{pmatrix}0 & z\\ 0 & 0\end{pmatrix},
\]
these correspond to the elements $e_1$, $e_2$, $e_{b_1}$, and $e_{b_2}$ respectively.
\end{enumerate}
\end{exa}

Let us recall several definitions from algebra in the context of quiver representations.
\begin{defn}
Let $V$ be a representation of the quiver $Q$.  
\begin{itemize}
\item We say $V$ is \textbf{simple} if it contains no nontrivial subrepresentations.  
\item We say $V$ is \textbf{semisimple} if $V  = \oplus_{i = 1}^m V^i$ where $V^i$ are simple.  
\item We say $V$ is \textbf{indecomposable} if it cannot be written as the direct sum $V = W^1 \oplus W^2$ such that $W^1$ and $W^2$ are both nontrivial subrepresentations.
\end{itemize}
\end{defn}

Note that the correspondence between representations of $Q$ and $\C Q$-modules preserves subobjects.  Therefore, we can import the following basic results from algebra.

\begin{thm}[Jordan-H\"{o}lder filtration]
\label{jhfilt}
Let $V$ be a finite-dimensional representation of a quiver $Q$.  There exists a filtration of $V$ by subrepresentations
\[
0 = V_0 \subset V_1 \subset \cdots \subset V_{n-1} \subset V_n = V,
\]
such that $V_{i}/V_{i-1}$ is simple for $1 \le i \le n$.  Moreover, the quotients 
$V_{i}/V_{i-1}$ are unique up to reordering.
\end{thm}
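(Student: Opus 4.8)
The plan is to transport the statement through the equivalence $\cR(Q) \simeq \C Q\text{-}\modl$ established above, so that the standard machinery for finite-length modules applies; equivalently, one can argue directly inside the abelian category $\cR(Q)$, using the total dimension $\dim V$ (the sum of the dimensions of the vector spaces attached to the vertices of $Q$) as a strictly monotone length function, since every proper subrepresentation $W \subsetneq V$ satisfies $\dim W < \dim V$. Either way $V$ has finite length, and both isomorphism theorems for this abelian category are available. I would treat existence and uniqueness separately, both by induction.

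For existence, I would induct on $\dim V$. If $V = 0$ the empty filtration works, and if $V$ is simple the filtration $0 \subset V$ does. Otherwise, choose a nonzero subrepresentation $V_1 \subset V$ of minimal positive total dimension; minimality forces $V_1$ to be simple, since any proper nonzero subrepresentation of $V_1$ would have strictly smaller positive dimension. As $\dim(V/V_1) < \dim V$, the inductive hypothesis supplies a filtration of $V/V_1$ with simple quotients, and pulling it back along the quotient map $V \to V/V_1$ (via the order-preserving correspondence between subrepresentations of $V/V_1$ and subrepresentations of $V$ containing $V_1$) yields the desired filtration of $V$.

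For uniqueness, I would induct on the length $n$ of a composition series, comparing it with a second one $0 = W_0 \subset \cdots \subset W_m = V$. Write $V_{n-1}$ and $W_{m-1}$ for the top proper terms; both are maximal subrepresentations because $V/V_{n-1}$ and $V/W_{m-1}$ are simple. If $V_{n-1} = W_{m-1}$, then restricting each series gives two composition series of $V_{n-1}$ sharing the same top quotient, and the inductive hypothesis concludes. The essential case is $V_{n-1} \neq W_{m-1}$: maximality forces $V_{n-1} + W_{m-1} = V$, so setting $K = V_{n-1} \cap W_{m-1}$ the second isomorphism theorem gives $V_{n-1}/K \cong V/W_{m-1}$ and $W_{m-1}/K \cong V/V_{n-1}$, both simple. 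Fixing any composition series of $K$, one obtains on $V_{n-1}$ a second composition series ending in $K \subset V_{n-1}$; applying the inductive hypothesis to $V_{n-1}$ (and symmetrically to $W_{m-1}$) shows $m = n$ and that the simple factors of each original series equal the factors of $K$ together with $V/V_{n-1}$ and $V/W_{m-1}$, whence the two factor multisets agree.

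The main obstacle is precisely this cross case $V_{n-1} \neq W_{m-1}$: the induction on length must be arranged so that it can be invoked simultaneously on $V_{n-1}$ and on $W_{m-1}$, and the simple factors must then be tracked carefully through the isomorphisms $V_{n-1}/K \cong V/W_{m-1}$ and $W_{m-1}/K \cong V/V_{n-1}$ in order to match the two multisets. Everything else reduces to routine verifications in $\cR(Q)$ (or, after transporting, in $\C Q\text{-}\modl$), where kernels, cokernels, images, and the isomorphism theorems behave exactly as for modules.
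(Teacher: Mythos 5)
Your proposal is correct, and it takes the same route the paper does: the paper gives no proof at all, simply importing the theorem from module theory via the equivalence between $\cR(Q)$ and the category of $\C Q$-modules (after noting that this correspondence preserves subobjects), which is precisely your opening move. The details you then supply --- existence by induction on total dimension using a subrepresentation of minimal positive dimension, uniqueness by induction on length with the second isomorphism theorem applied to $K = V_{n-1} \cap W_{m-1}$ in the cross case --- are the standard Jordan--H\"{o}lder argument, correctly executed, including the point that the induction hypothesis must apply simultaneously to $V_{n-1}$ and $W_{m-1}$ via the series through $K$.
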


\begin{lmm}[Schur's Lemma]
\label{schur}
If $V$ and $W$ are simple representations of the quiver $Q$, then all morphisms $V \rightarrow W$ are either isomorphisms or equal to zero.  All endomorphisms of $V$ are equal to $c\Id$, where $c \in \C$ and $\Id$ is the identity endomorphism.
\end{lmm}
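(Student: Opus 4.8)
The plan is to run the two standard Schur arguments, relying on the fact established just above that $\cR(Q)$ is a $\C$-linear abelian category, so that the kernel, image, and cokernel of any morphism of representations are themselves subrepresentations (resp. quotient representations).

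For the first statement, let $\varphi\colon V \to W$ be a nonzero morphism. First I would observe that $\Ker\varphi$ is a subrepresentation of $V$; since $V$ is simple it is either $0$ or $V$, and because $\varphi \neq 0$ it must be $0$, so $\varphi$ is injective. Dually, $\Im\varphi$ is a subrepresentation of $W$, hence either $0$ or $W$ by simplicity of $W$; since $\varphi \neq 0$ it equals $W$, so $\varphi$ is surjective. A morphism in an abelian category that is both mono and epi is an isomorphism, which yields the desired dichotomy.

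For the second statement, I would assume (as is implicit throughout this section) that $V$ is finite-dimensional and use that $\C$ is algebraically closed. Given $\varphi = (\varphi_i) \in \End(V)$, I would consider the single linear operator $\tilde\varphi = \bigoplus_{i \in I_Q} \varphi_i$ on the finite-dimensional $\C$-vector space $\bigoplus_{i \in I_Q} V_i$. Its characteristic polynomial has a root $\lambda \in \C$, so $\tilde\varphi$ admits a nonzero $\lambda$-eigenvector. Then $\varphi - \lambda\,\Id$ is again an endomorphism of $V$, so $\Ker(\varphi - \lambda\,\Id)$ is a subrepresentation of $V$, and it is nonzero because it contains that eigenvector. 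By simplicity $\Ker(\varphi - \lambda\,\Id) = V$, i.e. $\varphi = \lambda\,\Id$.

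The step I would be most careful about is the passage from an eigenvalue of the ``total'' operator $\tilde\varphi$ to the conclusion $\varphi = \lambda\,\Id$: although the eigenvector initially lives at a single vertex, the point is that $\Ker(\varphi - \lambda\,\Id)$ is a genuine subrepresentation (since $\varphi - \lambda\,\Id$ is a morphism in $\cR(Q)$), so simplicity forces the eigenvalue $\lambda$ to be shared across all vertices at once. Equivalently, the first statement shows $\End(V)$ is a finite-dimensional division algebra over $\C$, and $\C$ is the only such algebra; I would mention this as an alternative but carry out the eigenvalue argument as the self-contained route.
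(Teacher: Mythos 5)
Your proof is correct, but it takes a different route from the paper's. The paper does not prove Schur's Lemma directly at all: it is stated immediately after the observation that the equivalence $\cR(Q) \simeq \C Q\text{-}\modl$ preserves subobjects, and the lemma is simply ``imported'' from the standard result for modules over a $\C$-algebra. Your argument is instead self-contained inside $\cR(Q)$: the kernel/image dichotomy for the first claim, and for the second the eigenvalue trick --- pick $\lambda$ a root of the characteristic polynomial of $\bigoplus_i \varphi_i$, note $\varphi - \lambda\Id$ is a morphism in the $\C$-linear category $\cR(Q)$, so its kernel is a nonzero subrepresentation, hence all of $V$. Both are sound; the paper's route is shorter and reinforces the dictionary with the path algebra that is the point of that subsection, while yours avoids invoking the equivalence of categories and makes visible exactly where the hypotheses enter: algebraic closedness of $\C$ and finite-dimensionality of $V$ (which the paper's statement leaves implicit, as you rightly flag). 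One small imprecision worth fixing: an eigenvector of $\bigoplus_i \varphi_i$ need not live at a single vertex --- it may have nonzero components at several vertices --- but any nonzero component is itself a $\lambda$-eigenvector of the corresponding $\varphi_i$, so $\Ker(\varphi - \lambda\Id)$ is nonzero all the same and the argument goes through unchanged. Your closing remark that $\End(V)$ is a finite-dimensional division algebra over $\C$, hence equal to $\C$, is precisely the module-theoretic statement the paper is implicitly appealing to, so the two routes meet there.
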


\subsection{Affine quotients for quiver representations}
We would like to describe affine GIT quotients for the action of the reductive group $G(\alpha)$ on $\Rep(Q, \alpha)$.  This can be done at the level of points by determining the closed orbits under this action, or at the level of functions by determining the invariants.  Let us begin with a few examples:

\begin{exa}
\leavevmode
\begin{enumerate}
\item  Consider the group action $\GL(1,\C) \times \GL(1, \C)$ on the product of affine spaces $\A^{1} \times \A^{1}  = \Hom(\C, \C)\times \Hom(\C, \C)$ by $(t,s) \cdot (x,y) = (tx\inv{s}, sy\inv{t})$.  Notice that $(tx\inv{s}, sy\inv{t}) = (t\inv{s}x, s\inv{t}y)$, so the orbits and the affine GIT quotient are exactly the same as in Example \ref{affinegitex} (2).

\item Let us look at an upgraded version of the previous example.  Consider the quiver $\overline{A_2}$ consisting of the $A_2$ quiver together with one additional arrow in the opposite direction.
\[
\begin{tikzcd}
1 \arrow[r, shift left=0.7ex, ""] & 2 \arrow[l, shift left=0.7ex, ""]
\end{tikzcd}
\]
  Let $\alpha = (2,1)$.  The group $G(\alpha) = \GL(2,\C) \times \GL(1, \C)$ acts on the variety $\Rep(\overline{A_2}, \alpha) := \A^{2} \times \A^{2}  = \Hom(\C^2, \C)\times \Hom(\C, \C^2)$ by 
\[
(A,t) \cdot \left(\begin{pmatrix}x_1 & y_1 \end{pmatrix}, \begin{pmatrix}x_2\\ y_2 \end{pmatrix} \right) = \left(t\begin{pmatrix}x_1 & y_1 \end{pmatrix} \inv{A}, A\begin{pmatrix}x_2 \\ y_2 \end{pmatrix}\inv{t}\right),
\]
where $A = \begin{pmatrix}a_1 & a_2\\ a_3 & a_4 \end{pmatrix} \in \GL(2,\C)$.  Note that the product $\begin{pmatrix}x_1 & y_1 \end{pmatrix}\begin{pmatrix}x_2\\ y_2 \end{pmatrix} = x_1x_2 + y_1y_2$ is constant on the orbits of $G(\alpha)$.  Furthermore, all products $X := \begin{pmatrix}x_2\\ y_2 \end{pmatrix}\begin{pmatrix}x_1 & y_1 \end{pmatrix} = \begin{pmatrix}x_1x_2 & x_2y_1\\ x_1y_2 & y_1y_2\end{pmatrix}$ coming from elements in the same $G(\alpha)$-orbit lie in the same $\GL(2, \C)$ conjugacy class. 

Consider all elements of $\Rep(\overline{A_2}, \alpha)$ such that $x_1x_2 + y_1y_2 = c$ for a fixed $c \in \C$.  Note that the eigenvalues of $X$ are $c$ and $0$.  If $c \neq 0$, this means $X$ is diagonalizable by some $A \in \GL(2, \C)$.  Let $\begin{pmatrix}z_1 & w_1 \end{pmatrix} = \begin{pmatrix}x_1 & y_1 \end{pmatrix}\inv{A}$ and $\begin{pmatrix}z_2 \\ w_2 \end{pmatrix} = A\begin{pmatrix}x_2\\ y_2 \end{pmatrix}$.  We have$\begin{pmatrix}z_1z_2 & z_2w_1\\ z_1w_2 & w_1w_2\end{pmatrix} = \begin{pmatrix}c & 0 \\0 & 0\end{pmatrix}$.  Therefore, $w_1 = w_2 = 0$ and $z_1z_2 = c$.  Choosing $t  = c/z_1$, we get that each element of $X$ such that $x_1x_2 + y_1y_2 = c \neq 0$ is contained in the $G(\alpha)$-orbit of $\left(\begin{pmatrix}c & 0 \end{pmatrix}, \begin{pmatrix}1\\ 0 \end{pmatrix} \right)$  Thus, the equation $x_1x_2 + y_1y_2 = c \neq 0$ defines a unique closed orbit of $G(\alpha)$.

In the case that $c = 0$, a similar computation shows that each orbit of $G(\alpha)$ satisfying $x_1x_2 + y_1y_2 = 0$ is an orbit of an element of one of the following forms 
\begin{align*}
& \left(\begin{pmatrix}0 & 0 \end{pmatrix}, \begin{pmatrix}x_2\\ y_2 \end{pmatrix} \right), \left(\begin{pmatrix}x_1 & y_1 \end{pmatrix}, \begin{pmatrix}0\\ 0 \end{pmatrix} \right),\\ & \left(\begin{pmatrix}0 & y_1 \end{pmatrix}, \begin{pmatrix}x_2\\ 0 \end{pmatrix} \right), \left(\begin{pmatrix}0 & 0 \end{pmatrix}, \begin{pmatrix}0\\ 0 \end{pmatrix} \right).
\end{align*}  
Only the last of these defines a closed $G(\alpha)$-orbit (consisting of a single point) as all of the others contain it in their closures.

Thus, we see that every $G(\alpha)$-invariant regular function is defined by its values on $\left(\begin{pmatrix}c & 0 \end{pmatrix}, \begin{pmatrix}1\\ 0 \end{pmatrix} \right)$ and $\left(\begin{pmatrix}0 & 0 \end{pmatrix}, \begin{pmatrix}0\\ 0 \end{pmatrix} \right)$.  In other words, we have 
\[
\cO(\Rep(\overline{A_2}, \alpha))^{G(\alpha)} = \C[x_1x_2 + y_1y_2],
\]
and consequently the affine GIT quotient is $(\A^1, \phi)$, where $\phi: \Rep(\overline{A_2}, \alpha) \rightarrow \A^1$ is given by $\phi\left(\begin{pmatrix}x_1 & y_1 \end{pmatrix}, \begin{pmatrix}x_2\\ y_2 \end{pmatrix} \right) = x_1x_2 + y_1y_2$.

\item Let us consider the action of the automorphism group on the representations of the $A_2$ quiver in standard coordinate spaces.  That is, we consider the action of $\GL(n,\C) \times \GL(m, \C)$ on the affine space $\A^{mn} = \Hom(\C^m, \C^n)$ of $n \times m$ matrices defined by $(A,B) \cdot C = AC\inv{B}$.

Note that this action is transitive when restricted to matrices of rank $r$, where $0 \le r \le \min(m,n)$.  Since rank $r$ matrices are those with at least one nonvanishing $r \times r$ minor, then the subset $M_{\le r} \subset \Hom(\C^m, \C^n)$ of matrices of rank less than or equal to $r$ is a closed subvariety defined by vanishing $r \times r$ minors.  Similarly, the set of matrices $M_{>r}$ of rank greater than $r$ is open.  It follows that the orbits of the $\GL(n,\C) \times \GL(m, \C)$ are all locally closed (matrices of maximal rank form an open subset), with the only closed orbit being that of the zero matrix.  Thus there are no stable points for this action.

The affine GIT quotient in this case is $\Spec \C$ since there are no nonconstant $G$-invariant functions (this is already true just for left multiplication by elements of $\GL(n, \C)$).  
\end{enumerate}
\end{exa}

The following theorem (proved in \cite{LeBPro1990}) allows us to compute invariants for the standard group action on quiver representations in coordinate spaces.

\begin{thm}[Le Bryun and Procesi]
\label{lbp}
Let $Q$ be a quiver.  The algebra of invariants $\cO(\Rep(Q, \alpha))^{G(\alpha)}$ is generated by the functions $\tr(f_p)$, where $p$ is a cycle in $Q$.
\end{thm}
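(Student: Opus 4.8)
The plan is to reduce the statement to the classical First Fundamental Theorem (FFT) for the general linear group together with a combinatorial identification of tensor contractions with cycles. Write $V_i = \C^{\alpha_i}$, so that $\Rep(Q,\alpha) = \bigoplus_{a \in A_Q} \Hom(V_{s(a)}, V_{t(a)})$ and $\cO(\Rep(Q,\alpha)) = \Sym\big(\bigoplus_a \Hom(V_{s(a)}, V_{t(a)})^*\big)$. Since the base field has characteristic $0$, all invariants are determined by the multilinear ones via polarization and restitution; concretely, it suffices to show that every multilinear $G(\alpha)$-invariant function on $\bigoplus_a \Hom(V_{s(a)}, V_{t(a)})$ is a linear combination of products of the trace functions $\tr(f_p)$ attached to cycles $p$ (allowing several arrows between the same pair of vertices, so that an arbitrary multidegree becomes multilinear after formally duplicating arrows). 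First I would record the easy inclusion: if $p = a_n \cdots a_1$ is a cycle based at a vertex $i$, then $f_p \in \End(V_i)$, and under $g = (g_i) \in G(\alpha)$ the conjugating factors at each intermediate vertex telescope while the factor at $i$ is absorbed by the cyclic invariance of the trace; hence each $\tr(f_p)$ is $G(\alpha)$-invariant and the subalgebra they generate sits inside the invariant ring.

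The heart of the argument is the reverse inclusion. A multilinear function on $\bigoplus_a \Hom(V_{s(a)}, V_{t(a)})$ is an element of $\bigotimes_a \big(V_{t(a)}^* \otimes V_{s(a)}\big)$; grouping the tensor factors by vertex, the factor $V_i$ occurs once for each arrow with tail $i$ and the factor $V_i^*$ once for each arrow with head $i$. Invariance under the single factor $\GL(V_i) \subset G(\alpha)$ is governed by the FFT for $\GL(V_i)$: the $\GL(V_i)$-invariants in a tensor product of copies of $V_i$ and $V_i^*$ vanish unless the numbers of $V_i$ and $V_i^*$ factors agree, and when they agree the invariant space is spanned by the complete contractions indexed by bijections between the $V_i$-slots and the $V_i^*$-slots. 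Imposing this simultaneously at every vertex, a spanning set for the full invariant space is obtained by choosing, at each vertex $i$, a bijection between the arrows leaving $i$ and the arrows entering $i$.

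I would then translate such a system of vertex-bijections into combinatorics: gluing the head of each arrow to the tail prescribed by the chosen bijection partitions $A_Q$ into a disjoint union of oriented closed walks, that is, cycles $p_1, \dots, p_k$ in $Q$. The corresponding complete contraction is exactly the product $\tr(f_{p_1}) \cdots \tr(f_{p_k})$, because each elementary contraction pairs an output index of one arrow with an input index of the arrow glued to it, which is precisely matrix composition, and closing up a walk produces a trace. This identifies every spanning contraction with a product of traces of cycles, and restitution upgrades the multilinear statement to arbitrary polynomial invariants, completing the proof.

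The main obstacle I anticipate is the invocation and correct bookkeeping of the FFT: one must use it in the mixed-tensor form (invariants of $V^{\otimes p} \otimes (V^*)^{\otimes q}$) and verify that the per-vertex permutation contractions assemble, across all vertices, into genuine cycle-traces rather than some more intricate contraction pattern. A route that sidesteps this multilinear algebra is to embed $\Rep(Q,\alpha)$ into $(\End \C^N)^{A_Q}$ with $N = \sum_i \alpha_i$ by placing $f_a$ in the $(t(a),s(a))$ block, identify $G(\alpha)$ with the stabilizer in $\GL_N$ of the vertex idempotents $E_i$, and deduce the theorem from Procesi's result that the conjugation invariants of tuples of $N \times N$ matrices are generated by traces of words; this exchanges the FFT bookkeeping for the task of showing that adjoining the constant idempotents $E_i$ reduces $\GL_N$-invariants to $G(\alpha)$-invariants.
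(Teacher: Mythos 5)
Your proof is correct, but be aware that the paper itself contains no proof of Theorem \ref{lbp}: it states the result and defers entirely to \cite{LeBPro1990}, so there is no internal argument to compare against. Your main route is the classical one, and the steps you flag as delicate all go through over $\C$: polarization reduces to multilinear invariants on the arrow-duplicated quiver; invariants of a tensor product under the product group factor as $\bigl(\bigotimes_i W_i\bigr)^{\prod_i \GL(V_i)} = \bigotimes_i W_i^{\GL(V_i)}$, since taking invariants for one factor commutes with the action of the others; the center of each $\GL(V_i)$ forces the in- and out-degree count to match at every vertex; the first fundamental theorem in mixed-tensor form spans the per-vertex invariants by bijections; and the successor permutation on $A_Q$ obtained by gluing each arrow's head slot to its matched tail slot decomposes the arrow set into oriented cycles, whose complete contraction is exactly $\tr(f_{p_1})\cdots\tr(f_{p_k})$, with restitution carrying cycle traces of the duplicated quiver back to cycle traces of $Q$. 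Your alternative sketch at the end is, in substance, the route of the cited original: Le Bruyn and Procesi embed $\Rep(Q,\alpha)$ block-diagonally into tuples of $N \times N$ matrices with $N = \sum_i \alpha_i$ and deduce the theorem from Procesi's description of conjugation invariants of matrix tuples as traces of words, with the vertex idempotents mediating the passage from $\GL_N$-invariants to $G(\alpha)$-invariants. The trade-off is real: the FFT route is self-contained classical invariant theory and makes the cycle combinatorics transparent, while the matrix-tuple route imports the Razmyslov--Procesi degree bounds and thereby yields the sharper form of the theorem proved in \cite{LeBPro1990} --- generation by traces of cycles of bounded length --- which addresses the finiteness issue the paper raises immediately after the statement (the naive generating set, being closed under concatenating a cycle with itself, is infinite).
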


If a quiver contains no cycles, we immediately obtain.
\begin{cor}
Let $Q$ be a quiver with no cycles.  We have $\cO(\Rep(Q, \alpha))^{G(\alpha)} = \C$, and consequently $\Rep(Q,\alpha)//G(\alpha) = \Spec \C$.
\end{cor}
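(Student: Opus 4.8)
The plan is to deduce this as an essentially immediate consequence of the Le Bruyn--Procesi theorem (Theorem \ref{lbp}). That theorem tells us that $\cO(\Rep(Q,\alpha))^{G(\alpha)}$ is generated as a $\C$-algebra by the functions $\tr(f_p)$ ranging over all cycles $p$ in $Q$. So the whole argument reduces to understanding what the cycles of a quiver with no cycles look like, and computing the corresponding traces.

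The key observation I would make is that, according to the definitions in this section, a path $p$ is a cycle precisely when $s(p) = t(p)$, and this condition is satisfied by the trivial paths $p_i$ for $i \in I_Q$ (since $s(p_i) = t(p_i) = i$). When we say $Q$ has no cycles we mean it has no cycles using at least one arrow, so the \emph{only} cycles in $Q$ are the trivial paths $p_i$. For such a trivial path the associated linear map is $f_{p_i} = \Id_{V_i}$, and hence $\tr(f_{p_i}) = \dim V_i = \alpha_i$, which is a constant. Thus every generator supplied by Theorem \ref{lbp} is a scalar, and the algebra of invariants they generate is exactly $\C$.

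From $\cO(\Rep(Q,\alpha))^{G(\alpha)} = \C$ the second assertion is immediate: by construction the affine GIT quotient is $\Spec$ of the invariant algebra, so
\[
\Rep(Q,\alpha)//G(\alpha) = \Spec\, \cO(\Rep(Q,\alpha))^{G(\alpha)} = \Spec \C.
\]

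The argument has no real obstacle of its own, since it is a direct specialization of Theorem \ref{lbp}; the only point requiring care is the edge case of the trivial paths. One must explicitly note that trivial paths \emph{do} qualify as cycles under the definition given, verify that their traces are the (constant) dimensions $\alpha_i$ rather than something that could enlarge the invariant ring, and confirm that ``no cycles'' is being read as ``no nontrivial cycles.'' Once this bookkeeping is settled, the conclusion follows.
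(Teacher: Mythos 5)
Your proof is correct and follows exactly the route the paper intends: the corollary is stated as an immediate consequence of Theorem \ref{lbp}, since with no (nontrivial) cycles the only generators $\tr(f_p)$ of the invariant ring are constants, forcing $\cO(\Rep(Q,\alpha))^{G(\alpha)} = \C$ and hence $\Rep(Q,\alpha)//G(\alpha) = \Spec\C$. Your explicit bookkeeping about trivial paths (they are cycles under the paper's definition, but their traces $\tr(f_{p_i}) = \alpha_i$ are constants) is a careful touch the paper leaves implicit, but it does not change the argument.
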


Note that any cycle in $Q$ can be concatenated with itself any number of times to obtain additional cycles.  This means, the above theorem provides an infinite number of generators, even though $\cO(\Rep(Q, \alpha))^{G(\alpha)}$ is finitely generated.  Therefore, these generators must be subject to additional relations.  

This can be seen directly as follows.  Let $p = a_m \cdots a_1$ be a cycle in $Q$ such that $a_k \neq a_l$ for $k \neq l$, and let $i$ be a vertex in this cycle such that $\alpha_i$ is minimal.  This cycle defines a morphism of affine spaces $\Rep(Q, \alpha) \rightarrow \A^{\alpha_i^2}$ given by $V \mapsto f_p$.  Since $\alpha_i$ is minimal, the morphism is surjective. Therefore $\cO(\A^{\alpha_i^2})$ injects into $\cO(\Rep(Q, \alpha))$.  The action of $G(\alpha)$ induced on $\cO(\Rep(Q, \alpha))$ is just conjugation by $\GL(\alpha_i,\C)$ when restricted to $\cO(\A^{\alpha_i^2})$.  It follows that, the $G(\alpha)$-invariants $\tr(f_p^l)$ are contained in $\cO(\A^{\alpha_{i}^2})^{\GL(\alpha_i,\C)}$ for $l \ge 0$.  As seen in Example \ref{affinegitex}, the latter is just the ring of symmetric polynomials $\C[\lambda_1, \cdots, \lambda_{\alpha_i}]^{S_{\alpha_i}}$.  Under this identification, $\tr(f_p^l)$ corresponds to the power sum symmetric polynomial $p_l := \lambda_1^l + \cdots \lambda_{\alpha_i}^l$.  It is well-known that $p_1, \dots, p_{\alpha_i}$ generate the ring of symmetric polynomials, so we need only consider the traces of $f_p, \dots, f_p^{\alpha_i}$.

\begin{exa}
Consider $\overline{A_2}$, the double of the $A_2$ quiver, consisting of two vertices and two mutually inverse arrows.

\[
\begin{tikzcd}
1 \arrow[r, shift left=0.7ex, "a_1^*"] & 2 \arrow[l, shift left=0.7ex, "a_1"]
\end{tikzcd}
\]

The group $GL(\alpha) = \GL(\alpha_1, \C) \times \GL(\alpha_2, \C)$ acts on $\Rep(\overline{A_2}, \alpha)$.  By Theorem \ref{lbp} and the subsequent discussion, the algebra of invariants $\cO(\Rep(Q, \alpha))^{G(\alpha)}$ is $\C[\tr(f_p), \dots, \tr(f_p^{n})]$, where $p = a_1^*a_1$ and $n = \min\{\alpha_1, \alpha_2\}$.  This makes the affine GIT quotient $\Rep(\overline{A_2}, \alpha)//G(\alpha) = \A^{n}$.
\end{exa}

Recall that affine GIT quotients parametrize closed orbits under the group action.  Thus, in order to better understand $\Rep(\overline{A_2}, \alpha)//G(\alpha)$, we would like a nice description of the representations in $\Rep(\overline{A_2}, \alpha)$ that give rise to closed orbits.  We begin by describing orbit closures in terms of actions of one-parameter subgroups.  

\begin{defn}
Let $G$ be an algebraic group.  A \textbf{one-parameter subgroup} (also called a \textbf{cocharacter}) of $G$ is a morphism of algebraic groups $\lambda: \C^{\times} \rightarrow G$.
\end{defn}

This notion is dual to that of a character $\chi: G \rightarrow \C^{\times}$, which we have already seen in the previous section.  Note that both the set of characters $\Hom(G, \C^{\times})$ and the set of cocharacters $\Hom(\C^{\times}, G)$ of $G$ have natural group structures.  In the case that $G = \C^{\times}$, these groups coincide, and can be computed using the following result from the theory of algebraic groups.

\begin{prp}
\label{charcomp}
If $\chi(t) \in \Hom(\C^{\times}, \C^{\times})$, then $\chi(t) = t^n$ for some $n \in \Z$.  Furthermore, mapping $\Hom(\C^{\times}, \C^{\times}) \rightarrow \Z$ given by $\chi \mapsto n$ is an isomorphism of groups. 
\end{prp}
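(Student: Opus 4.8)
The plan is to translate the statement about algebraic-group homomorphisms $\chi\colon\C^{\times}\to\C^{\times}$ into a statement about the coordinate ring of $\C^{\times}=\gm$, namely the Laurent polynomial ring $\C[t,t^{-1}]$, and then to pin down its units. First I would recall that a morphism of (affine) varieties $\chi\colon\C^{\times}\to\C^{\times}$ is the same datum as a $\C$-algebra homomorphism $\chi^{\#}\colon\C[s,s^{-1}]\to\C[t,t^{-1}]$, and that such a homomorphism is completely determined by the image $\chi^{\#}(s)$. Since $s$ is invertible, $\chi^{\#}(s)$ must be a unit of $\C[t,t^{-1}]$, and as a function this unit is precisely $\chi(t)$.

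The key step is therefore the computation of the group of units $\C[t,t^{-1}]^{\times}$, which I would show consists exactly of the monomials $ct^{n}$ with $c\in\C^{\times}$ and $n\in\Z$. Given $fg=1$ in $\C[t,t^{-1}]$, I would write $f=t^{-a}\tilde f$ and $g=t^{-b}\tilde g$ with $\tilde f,\tilde g\in\C[t]$ having nonzero constant term; then $\tilde f\tilde g=t^{a+b}$, and comparing constant terms together with the fact that $\C[t]$ is an integral domain forces $a+b=0$ and $\tilde f,\tilde g\in\C^{\times}$. Hence $f=ct^{-a}$, so $\chi(t)=ct^{n}$ for some $c\in\C^{\times}$ and $n\in\Z$.

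Next I would use that $\chi$ is a homomorphism of groups, not merely a morphism of varieties. The identity element of $\C^{\times}$ maps to the identity, so $\chi(1)=c\cdot 1^{n}=c=1$, giving $\chi(t)=t^{n}$ and proving the first assertion. Conversely, each map $t\mapsto t^{n}$ is visibly a morphism of algebraic groups, so every $n\in\Z$ is realized. For the isomorphism claim I would recall that the group law on $\Hom(\C^{\times},\C^{\times})$ is pointwise multiplication in the target, so for $\chi_{1}(t)=t^{n_{1}}$ and $\chi_{2}(t)=t^{n_{2}}$ one has $(\chi_{1}\chi_{2})(t)=t^{n_{1}+n_{2}}$; thus $\chi\mapsto n$ is a homomorphism onto $(\Z,+)$. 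Surjectivity is immediate, and injectivity follows since $t^{n}=t^{m}$ as functions on $\C^{\times}$ forces $t^{n-m}=1$ for all $t$, hence $n=m$.

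I expect the only genuine obstacle to be the units computation for $\C[t,t^{-1}]$; once that is in hand, the remaining verifications (that the identity constraint kills the scalar $c$, and that $\chi\mapsto n$ respects the group operations and is bijective) are purely formal. Everything here is elementary and self-contained, relying on nothing beyond the basic correspondence between morphisms of affine varieties and algebra homomorphisms of their coordinate rings.
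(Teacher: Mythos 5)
Your proof is correct, but it takes a genuinely different route from the paper's. The paper exploits the group structure through the coordinate ring's comultiplication: writing $m^*(t)=t\otimes t$, the compatibility $m^*\chi^*(t)=(\chi^*\otimes\chi^*)m^*(t)$ forces $\chi^*(t)=\sum_k a_kt^k$ to satisfy $\sum_k a_k t^k\otimes t^k=\sum_{k,l}a_ka_l\,t^k\otimes t^l$, hence to be a single monomial $a_kt^k$ (the ``group-like'' condition), and the counit at the identity then gives $a_k=1$. You instead ignore the multiplicativity of $\chi$ at first and use only that it is a morphism of varieties: such a morphism corresponds to a unit of $\C[t,t^{-1}]$, and your computation that $\C[t,t^{-1}]^{\times}=\{ct^n\mid c\in\C^{\times},\,n\in\Z\}$ already pins $\chi$ down to $ct^n$; the single identity constraint $\chi(1)=1$ then kills $c$. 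Your route is more elementary (no tensor products) and in fact proves a slightly stronger statement: every morphism of varieties $\C^{\times}\to\C^{\times}$ fixing $1$ is automatically a character. What the paper's argument buys in exchange is generality: the group-like characterization of characters in the coordinate Hopf algebra is the mechanism that extends to arbitrary affine algebraic groups, which is the conceptual thread the paper continues to use (e.g.\ in Corollary \ref{toruscharcomp} and in classifying the characters of $G(\alpha)$), whereas the units computation is special to $\gm$ and its coordinate ring. Both proofs handle the final isomorphism claim by the same formal verification.
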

\begin{proof}
Let $m: \C^{\times} \times \C^{\times} \rightarrow \C^{\times}$ be the multiplication operation on $\C^{\times}$.  Note that this morphism of varieties induces a homomorphism of $\C$-algebras defined by
\begin{align*}
& m^*: \C[t,\inv{t}] \rightarrow \C[t,\inv{t}] \otimes_\C \C[t,\inv{t}]\\
& m^*(t)  = t \otimes t.
\end{align*}
Since $\chi(t)$ is a homomorphism of algebraic groups, we have that $\chi(m(t_1, t_2)) = m(\chi(t_1), \chi(t_2))$ for $t_1, t_2 \in \C^{\times}$.  On the regular functions this gives us $m^*\chi^*(t) = \chi^* \otimes \chi^* m^*(t)$, where $\chi^*: \C[t,\inv{t}] \rightarrow \C[t,\inv{t}]$ is induced by $\chi$.  Let $\chi^*(t) = a_{-m}t^{-m} + \cdots + a_Mt^M = \sum_k a_k t^k$.  We have
\[
m^*\chi^*(t) = \sum_k a_k t^k\otimes t^k = \sum_{k,l} a_ka_l t^k \otimes t^l = \sum_k a_k t^k \otimes \sum_k a_k t^k  = \chi^* \otimes \chi^* m^*(t).
\]
By comparing the two summations, we see that $\chi^*(t) = a_kt^k$ for some $k \in \Z$.  Let $e^*: \C[t,\inv{t}] \rightarrow \C$  be the algebra homomorphism corresponding to the group identity element $e: \Spec \C \rightarrow \C^{\times}$ such that $e(\Spec \C) = 1$.  Note that $e^*(p(t)) = p(1)$.  We have $\chi(e(\Spec \C)) = e(\Spec \C)$, which implies $e^*\chi^*(t) = e^*(t)$.  Since $e^*(t) = 1$, we get $a_k = 1$.  Thus every element $\chi \in \Hom(\C^{\times}, \C^{\times})$ has the form $\chi(t)  = t^n$ for some $n \in \Z$.

Since every $n \in \Z$ defines an element $t \mapsto t^n$ in $\Hom(\C^{\times}, \C^{\times})$, it is easy to check that $\chi \mapsto n$ for $\chi(t) = t^n$ is a group isomorphism $\Hom(\C^{\times}, \C^{\times}) \rightarrow \Z$.
\end{proof}  

Recall that an affine algebraic group $T$ is an algebraic torus if it is isomorphic to $(\C^{\times})^m$ for some $m > 0$.  The above proposition lets us immediately compute the characters and cocharacters of $T$.

\begin{cor}
\label{toruscharcomp}
Let $T \cong (\C^{\times})^m$ be an algebraic torus.  If $\chi(t) \in \Hom(T, \C^{\times})$, then $\chi(t) = t^{n_1} \cdots t^{n_m}$ for some $n_1, \dots, n_m \in \Z$.  If $\lambda(t) \in \Hom(\C^{\times}, T)$, then $\lambda(t) = (t^{l_1}, \dots, t^{l_m})$ for some $l_1, \dots, l_m \in \Z$.   
\end{cor}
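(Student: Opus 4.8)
The plan is to reduce both statements to the rank-one case already established in Proposition \ref{charcomp}, which identifies $\Hom(\C^{\times}, \C^{\times})$ with $\Z$ via $\chi(t) = t^n$. The only structural input needed is that $T \cong (\C^{\times})^m$ comes equipped with coordinate inclusions and projections that are morphisms of algebraic groups, together with the elementary fact that a homomorphism out of (resp. into) a product is determined by its restrictions to (resp. projections onto) the factors.

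For the character statement, I would first introduce the coordinate inclusions $\iota_k: \C^{\times} \rightarrow T$ sending $t$ to the tuple with $t$ in the $k$-th slot and $1$ elsewhere; each $\iota_k$ is a morphism of algebraic groups. Given $\chi \in \Hom(T, \C^{\times})$, the composite $\chi \circ \iota_k$ lies in $\Hom(\C^{\times}, \C^{\times})$, so Proposition \ref{charcomp} supplies an integer $n_k$ with $(\chi \circ \iota_k)(t) = t^{n_k}$. Since every element of $T$ factors as $(t_1, \dots, t_m) = \iota_1(t_1) \cdots \iota_m(t_m)$ and $\chi$ is a group homomorphism, evaluating $\chi$ on this product and using multiplicativity yields $\chi(t_1, \dots, t_m) = t_1^{n_1} \cdots t_m^{n_m}$, as claimed.

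For the cocharacter statement, I would dualize the argument using the coordinate projections $\pi_k: T \rightarrow \C^{\times}$, which are again morphisms of algebraic groups. Given $\lambda \in \Hom(\C^{\times}, T)$, each composite $\pi_k \circ \lambda$ is an element of $\Hom(\C^{\times}, \C^{\times})$, so Proposition \ref{charcomp} gives an integer $l_k$ with $(\pi_k \circ \lambda)(t) = t^{l_k}$. Because $\lambda(t)$ is recovered from its coordinates as $\lambda(t) = ((\pi_1 \circ \lambda)(t), \dots, (\pi_m \circ \lambda)(t))$, this produces $\lambda(t) = (t^{l_1}, \dots, t^{l_m})$.

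There is no serious obstacle here: the mathematical content is entirely contained in the rank-one computation of Proposition \ref{charcomp}, and the corollary merely records that characters and cocharacters of a torus are assembled coordinatewise from those of $\C^{\times}$. The only points deserving a word of care are that the inclusions $\iota_k$ and projections $\pi_k$ are genuine morphisms of algebraic groups (so the composites land in the relevant Hom-sets) and that it is the multiplicativity of $\chi$ that converts the factorization $(t_1, \dots, t_m) = \iota_1(t_1) \cdots \iota_m(t_m)$ into the product formula; both are immediate.
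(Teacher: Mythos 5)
Your proof is correct and is exactly the argument the paper intends: the paper gives no written proof of this corollary, saying only that Proposition \ref{charcomp} lets one ``immediately compute'' the characters and cocharacters of $T$, and your coordinatewise reduction via the inclusions $\iota_k$ (for characters) and the projections $\pi_k$ (for cocharacters) is the standard way to make that deduction explicit. Nothing is missing.
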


The following proposition is a well-known result from the theory of algebraic groups (see e.g. Theorem 3.2.3 in \cite{Spr1998}) that we will be using later.

\begin{thm}
\label{torusweight}
Let $T$ be an algebraic torus, and let $\rho: T \rightarrow \GL(n,\C)$ be a morphism of algebraic groups (i.e. a linear representation of $T$).  There exists a direct sum decomposition 
\[
\C^n = \bigoplus_{\lambda} V_{\lambda},
\]
where $\lambda$ is a cocharacter of $T$ and $V_{\lambda} = \{v \in \C^n|t \cdot v = \lambda(t)v \textrm{ for all } t \in T \}$.
\end{thm}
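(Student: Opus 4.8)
The plan is to recast the representation $\rho$ as a comodule over the coordinate Hopf algebra $\cO(T) \cong \C[t_1^{\pm 1}, \dots, t_m^{\pm 1}]$ and to exploit the fact, established in Corollary \ref{toruscharcomp}, that the Laurent monomials $t^n = t_1^{n_1} \cdots t_m^{n_m}$ --- which form a $\C$-basis of $\cO(T)$ --- are exactly the characters of $T$. The weight spaces $V_\lambda$ then emerge as the images of a family of orthogonal idempotents read off from the coaction. (For the scalar action $\lambda(t)v$ in the statement to make sense the indexing objects $\lambda$ must be characters $T \to \C^{\times}$, i.e. the monomials above; I treat them as such throughout.)

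First I would encode the action $a: T \times \C^n \to \C^n$ by the associated coaction $\mu: \C^n \to \C^n \otimes \cO(T)$ sending $v$ to the regular function $t \mapsto \rho(t)v$. Expanding the matrix coefficients $\rho_{ij}(t) \in \cO(T)$ in the monomial basis produces linear maps $\phi_\lambda: \C^n \to \C^n$, indexed by characters $\lambda$, all but finitely many zero, with $\mu(v) = \sum_\lambda \phi_\lambda(v) \otimes \lambda$.

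Next I would translate the two representation axioms into conditions on the $\phi_\lambda$. The identity $\rho(e) = \Id$ (the counit axiom, since $\lambda(e) = 1$) gives $\sum_\lambda \phi_\lambda = \Id$, while the homomorphism property $\rho(st) = \rho(s)\rho(t)$ (coassociativity), combined with the group-like comultiplication $\Delta(\lambda) = \lambda \otimes \lambda$ on characters --- exactly the computation carried out for a single $\C^{\times}$ in the proof of Proposition \ref{charcomp} --- forces $\phi_{\lambda'} \circ \phi_\lambda = \delta_{\lambda \lambda'}\, \phi_\lambda$. Thus the $\phi_\lambda$ are mutually orthogonal idempotents summing to the identity, so $\C^n = \bigoplus_\lambda \Im \phi_\lambda$, and by construction $\rho(t)$ acts on $\Im \phi_\lambda$ by the scalar $\lambda(t)$. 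Hence $\Im \phi_\lambda = V_\lambda$ and this is the asserted decomposition.

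The main obstacle is the bookkeeping in verifying $\phi_{\lambda'}\phi_\lambda = \delta_{\lambda \lambda'}\phi_\lambda$: one substitutes the expansion of $\mu$ into both sides of the coassociativity relation $(\mu \otimes \Id)\mu = (\Id \otimes \Delta)\mu$ and compares coefficients of the basis vectors $\lambda' \otimes \lambda$ of $\cO(T) \otimes \cO(T)$, using that distinct characters are linearly independent (which again reduces to the single-variable statement of Proposition \ref{charcomp}). Once the group-like behaviour of characters under $\Delta$ is in hand the remaining steps are purely formal; no analytic input is required, only the algebraic structure of $\cO(T)$.
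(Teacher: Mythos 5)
Your proof is correct. Bear in mind that the paper itself contains no proof of this theorem: it is quoted as a well-known result with a citation to Theorem 3.2.3 of \cite{Spr1998}, so the only comparison available is with that standard reference, and your argument is essentially the proof one finds there. Concretely, you expand the matrix coefficients of $\rho$ in the basis of Laurent monomials of $\cO(T) \cong \C[t_1^{\pm 1},\dots,t_m^{\pm 1}]$ (these monomials are exactly the characters of $T$, by Corollary \ref{toruscharcomp}), and the two identities $\rho(e) = \Id$ and $\rho(st) = \rho(s)\rho(t)$, read coefficientwise against the basis $\{\lambda' \otimes \lambda\}$ of $\cO(T \times T)$, show that the coefficient maps $\phi_\lambda$ are mutually orthogonal idempotents summing to the identity; the weight decomposition follows. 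Two remarks. First, your correction of ``cocharacter'' to ``character'' in the indexing is right: $\lambda(t)v$ only makes sense for $\lambda \in \Hom(T,\C^{\times})$; the paper's slip is harmless only because it later applies the theorem with $T = \C^{\times}$ (via one-parameter subgroups), where the two notions agree. Second, the final identification $\Im \phi_\lambda = V_\lambda$ deserves one more line for the inclusion $V_\lambda \subseteq \Im \phi_\lambda$: if $\rho(t)v = \lambda(t)v$ for all $t$, then comparing $\sum_{\mu} \phi_\mu(v)\,\mu(t) = \lambda(t)\,v$ coefficientwise (linear independence of distinct monomials again) gives $\phi_\mu(v) = 0$ for $\mu \neq \lambda$ and $\phi_\lambda(v) = v$. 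This is routine and does not affect the soundness of your argument.
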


If an algebraic group $G$ acting on an algebraic variety $X$, and $\lambda(t)$ is a one-parameter subgroup of $G$, then the action of $G$ induces an action of $\lambda(t)$ on $X$.  Note that there is an injective morphism of varieties $\C^{\times} \hookrightarrow \C = \A^1$.  For any $x \in X$, if the morphism $\lambda(t) \cdot x: \C^{\times} \rightarrow X$ induced by $\lambda$ extends to a morphism $\A^1 \rightarrow X$ we say that $\lim_{t \to 0} \lambda(t) \cdot x$ exists and is equal to the image of $0$ under this morphism. 

One useful consequence of the existence of such a limit is that $\lim_{t \to 0} \lambda(t) \cdot x$ is a limit point of $G \cdot x$, so it is contained in $\overline{G \cdot x}$.  The converse statement is also true as may be seen in the following theorem from \cite{Kem1978}. 
 
\begin{thm}
\label{fundamentalgit}
Let $X$ be an affine variety with the action of a reductive group $G$, and let $x \in X$.  Let $S$ be a closed $G$-invariant subset of $X$ such that $S \cap \overline{G \cdot x} \neq \varnothing$.  There exists a one-parameter subgroup $\lambda$ of $G$ such that $\lim_{t \to 0} \lambda(t) \cdot x$ exists and is in $S$.
\end{thm}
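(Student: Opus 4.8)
The plan is to prove this via the valuative form of the Hilbert--Mumford criterion, using one-parameter subgroups over a discrete valuation ring. First I would reduce to a linear situation: since $X$ is affine and the $G$-action is algebraic, $\cO(X)$ is a union of finite-dimensional rational $G$-submodules, so choosing a finite-dimensional $G$-submodule that generates $\cO(X)$ as an algebra yields a closed $G$-equivariant embedding $X \hookrightarrow V$ into a finite-dimensional $G$-representation $V$. Thus I may assume $X \subset V$ is closed and $G$-invariant and $S \subset V$ is closed and $G$-invariant. Fix a point $y \in S \cap \overline{G \cdot x}$. By Proposition \ref{orbit} the orbit $G \cdot x$ is open and dense in $Z := \overline{G \cdot x}$, so I can choose a curve in $Z$ through $y$ meeting the open orbit; passing to its normalization and localizing at a point lying over $y$ produces a discrete valuation ring $R$ with fraction field $K$ and residue field $\C$, together with a morphism $\Spec R \to Z$ sending the closed point to $y$ and the generic point into $G \cdot x$. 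After a finite extension of $K$ (replacing $R$ by a suitable localization of its integral closure, which keeps the residue field $\C$) the generic point lifts through the smooth surjection $G \to G/G_x \cong G \cdot x$ to an element $g \in G(K)$ with $g \cdot x \in X(R)$ whose reduction modulo the maximal ideal is $y$. Writing $\pi$ for a uniformizer, this says $\lim_{\pi \to 0} g \cdot x = y$.

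The main input is the Cartan (Iwahori) decomposition for the reductive group $G$ over the DVR: $G(K) = G(R)\,\Lambda\,G(R)$, where $\Lambda = \{\lambda(\pi) : \lambda \in \Hom(\C^{\times}, T)\}$ ranges over the cocharacters of a fixed maximal torus $T$. Applying it, I would write $g = \beta_1\,\lambda(\pi)\,\beta_2$ with $\beta_1, \beta_2 \in G(R)$ and $\lambda$ a cocharacter of $G$. Each $\beta_i$ has a well-defined reduction $b_i := \beta_i(0) \in G$. Since $\beta_1(\pi) \to b_1$ with $b_1$ invertible, the identity $\lambda(\pi)\beta_2 \cdot x = \inv{\beta_1}\,(g \cdot x)$ shows that $y' := \lim_{\pi \to 0}\lambda(\pi)\beta_2 \cdot x = \inv{b_1}\, y$ exists, and $y' \in S$ because $S$ is $G$-invariant. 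Setting $z(\pi) := \beta_2 \cdot x \in X(R)$, with reduction $x'' := z(0) = b_2 \cdot x$, the problem is reduced to analyzing $\lim_{\pi \to 0}\lambda(\pi) z(\pi) = y' \in S$.

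Here I would bring in Theorem \ref{torusweight}: decomposing $V = \bigoplus_m V_m$ into $\lambda$-weight spaces, where $\lambda(t)$ acts on $V_m$ by $t^m$, write $z(\pi) = \sum_m z_m(\pi)$ with $z_m(\pi) \in V_m \otimes R$. Integrality of $\lambda(\pi) z(\pi) = \sum_m \pi^m z_m(\pi)$ forces, for every $m < 0$, the coefficientwise valuation bound $v(z_m) \ge |m| \ge 1$, so $z_m(0) = 0$; hence the reduction $x''$ lies in $\bigoplus_{m \ge 0} V_m$, and therefore $\lim_{t \to 0}\lambda(t) \cdot x''$ exists and equals the weight-zero component $(x'')_0$. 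Comparing reductions term by term shows that this weight-zero component coincides with the weight-zero component $(y')_0$ of $y'$, and since $y'$ has only nonpositive weights, $(y')_0 = \lim_{t \to 0}\lambda(t)\cdot y'$; as $S$ is closed and $G$-invariant, this limit lies in $S$. Thus $\lim_{t \to 0}\lambda(t) \cdot x'' \in S$. Finally I would transfer back to $x$ itself by conjugating the cocharacter: the one-parameter subgroup $\mu(t) := \inv{b_2}\,\lambda(t)\,b_2$ satisfies $\mu(t)\cdot x = \inv{b_2}\,\lambda(t) \cdot x''$, so $\lim_{t \to 0}\mu(t) \cdot x = \inv{b_2}\,\big(\lim_{t \to 0}\lambda(t)\cdot x''\big)$ exists and lies in $S$ by $G$-invariance, which is exactly the assertion.

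The principal obstacle is the Cartan--Iwahori decomposition $G(K) = G(R)\,\Lambda\,G(R)$, which is the genuine structure-theoretic content (for $G = \GL(n,\C)$ it is the Smith normal form over $R$, but the general reductive case requires Bruhat--Tits theory); I would cite this rather than prove it. A secondary technical point is the curve-selection and lifting step: to lift the generic point of $\Spec R$ through the torsor $G \to G\cdot x$ one may have to replace $K$ by a finite extension chosen so the residue field stays $\C$, after which the desired $R$-integral point with reduction $y$ is inherited from the already-constructed morphism $\Spec R \to Z$ into the affine orbit closure.
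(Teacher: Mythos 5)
The paper offers no proof of Theorem \ref{fundamentalgit}: it is quoted directly from Kempf \cite{Kem1978}, so there is no internal argument to compare yours against. What you have written is, in substance, the classical proof of this statement (the argument of Kempf's paper, and of the Hilbert--Mumford criterion in Chapter 2 of \cite{MFK1994}): equivariant embedding of $X$ into a finite-dimensional representation, a curve-selection/valuative step producing $g \in G(K)$ over a DVR with $g \cdot x$ integral and reducing to a point of $S$, the Cartan--Iwahori decomposition $G(K) = G(R)\,\Lambda\,G(R)$, a weight-by-weight integrality analysis, and conjugation of the resulting cocharacter by $b_2$ to return to $x$. The skeleton is sound: absorbing $\beta_1$ and $b_2$ via the $G$-invariance of $S$, the vanishing of the negative-weight components of $x'' = b_2 \cdot x$, and the identification $(x'')_0 = (y')_0$ are all correct.

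Two points need repair, neither fatal. First, the Cartan--Iwahori decomposition is a theorem about complete (or at least Henselian) discretely valued fields; the DVR you build by localizing the normalization of a curve is neither, so you should pass to the completion $\hat{R} \cong \C[[\pi]]$, with fraction field $\C((\pi))$, before invoking it. This is harmless: $X(R) \subset X(\hat{R})$ compatibly with reduction, and the output of the argument --- a cocharacter of $G$ and limit statements over $\C$ --- is unaffected. Second, under your own convention ($\lambda(t)$ acts on $V_m$ by $t^m$), the assertion $(y')_0 = \lim_{t \to 0} \lambda(t) \cdot y'$ is false: since $y'$ has only nonpositive weights, that limit diverges whenever a strictly negative component of $y'$ is nonzero. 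What you need is $(y')_0 = \lim_{t \to 0} \inv{\lambda(t)} \cdot y'$ (equivalently, the limit as $t \to \infty$), which exists for exactly the reason you give; since $\inv{\lambda(t)} \cdot y' \in S$ for all $t$ and $S$ is closed, this still yields $(y')_0 \in S$, and nothing downstream changes. Finally, your curve-selection step tacitly assumes that $y$ lies outside the orbit and that $\overline{G \cdot x}$ is irreducible (i.e.\ $G$ connected --- the paper's definition of reductive does not require this). Both degenerate cases are easy and worth a sentence: if $S$ meets $G \cdot x$ itself, then $x \in S$ by invariance and the trivial one-parameter subgroup works; and in general one may replace $G$ by its identity component, through which every cocharacter factors.
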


We will need the next two lemmas in our description of closed $G(\alpha)$-orbits.

\begin{lmm}
\label{filtlmm1}
Let $0 = V_0 \subset \cdots \subset V_{n} = V$ be a filtration of $V \in \Rep(Q, \alpha)$ by subrepresentations.  The associated graded representation $\gr(V_\bullet) := (V_1/V_0)\oplus \cdots \oplus (V_{n}/V_{n-1})$ lies in the orbit closure $\overline{G(\alpha) \cdot V}$.
\end{lmm}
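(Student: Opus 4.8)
The plan is to realize $\gr(V_\bullet)$ as the limit $\lim_{t\to 0}\lambda(t)\cdot V$ for a suitably chosen one-parameter subgroup $\lambda$ of $G(\alpha)$, and then to invoke the observation recorded just before Theorem \ref{fundamentalgit}, that whenever such a limit exists it is a limit point of the orbit and hence lies in $\overline{G(\alpha)\cdot V}$.

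First I would fix, at each vertex $i \in I_Q$, a decomposition adapted to the filtration. Since $0 = (V_0)_i \subset (V_1)_i \subset \cdots \subset (V_n)_i = \C^{\alpha_i}$, I would choose complements $U_{i,k}$ with $(V_k)_i = (V_{k-1})_i \oplus U_{i,k}$, so that $V_i = \bigoplus_{k=1}^n U_{i,k}$ and $U_{i,k}$ is canonically identified with $(V_k/V_{k-1})_i$. Writing each structure map $f_a$ in block form $(f_a)_{lk}\colon U_{s(a),k} \to U_{t(a),l}$ relative to these decompositions, the hypothesis that $V_\bullet$ is a filtration by subrepresentations, namely $f_a((V_k)_{s(a)}) \subset (V_k)_{t(a)}$ for all $k$, translates precisely into the vanishing of the blocks with $l > k$. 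Under the identification $U_{i,k}\cong (V_k/V_{k-1})_i$, the diagonal blocks $(f_a)_{kk}$ are exactly the structure maps of the associated graded representation $\gr(V_\bullet)$, which visibly has dimension vector $\alpha$ and so lies in $\Rep(Q,\alpha)$.

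Next I would define the cocharacter $\lambda\colon \C^\times \to G(\alpha)$ by letting $\lambda(t)$ act on $U_{i,k}$ as multiplication by $t^{-k}$, uniformly in $i$; this is a genuine element of $G(\alpha)$ for each $t \neq 0$. Using the action convention $g \cdot V = (g_{t(a)} f_a\, g_{s(a)}^{-1})$, the $(l,k)$-block of the map $f_a$ of $\lambda(t)\cdot V$ is $t^{\,k-l}(f_a)_{lk}$. Because the nonzero blocks all satisfy $l \le k$, every surviving entry is a non-negative power of $t$, so the morphism $\C^\times \to \Rep(Q,\alpha)$, $t \mapsto \lambda(t)\cdot V$, has regular entries and extends to a morphism $\A^1 \to \Rep(Q,\alpha)$. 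Evaluating at $t=0$ kills every off-diagonal block (where $k > l$ gives a strictly positive power of $t$) and preserves each diagonal block (where $k=l$ gives $t^0$), so $\lim_{t\to 0}\lambda(t)\cdot V$ is exactly the representation whose only maps are the $(f_a)_{kk}$, that is, $\gr(V_\bullet)$.

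Finally, since this limit exists, the cited observation yields $\gr(V_\bullet) = \lim_{t\to 0}\lambda(t)\cdot V \in \overline{G(\alpha)\cdot V}$, as desired. The one genuinely substantive step is the bookkeeping in the second paragraph: verifying that the single condition ``$V_\bullet$ consists of subrepresentations'' forces simultaneous block-triangularity of all the $f_a$ at every vertex at once, and that the surviving diagonal blocks reassemble into the associated graded quiver representation together with its own arrows, rather than merely a vertexwise graded collection of vector spaces. Everything else, that $\lambda$ lands in $G(\alpha)$, that the limit exists because the matrix entries are polynomial in $t$, and that limits of one-parameter orbits lie in the orbit closure, is routine given the earlier material.
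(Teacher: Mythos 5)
Your proof is correct and is essentially the same as the paper's: both choose vertexwise complements adapted to the filtration, observe that the subrepresentation condition forces block upper-triangularity of each $f_a$, act by the cocharacter with weight $t^{-k}$ on the $k$-th graded piece so that the $(l,k)$-block scales by $t^{k-l}$, and conclude that the limit as $t \to 0$ exists and equals the block-diagonal representation $\gr(V_\bullet)$, hence lies in $\overline{G(\alpha)\cdot V}$. No gaps; the bookkeeping you flag as the substantive step is exactly the content of the paper's argument as well.
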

\begin{proof}
Denote by $V_{ki}$ the vector spaces in the subrepresentation $V_{k}$, for each vertex $i \in I_Q$.  For $i \in I_Q$, choose a complementary subspace $V_{k+1i}'$ to the subspace $V_{ki}$ in $V_{k+1i}$ where $0 \le k \le n-1$.  Consider the one-parameter group $\lambda(t): \C^\times \rightarrow G(\alpha)$ where $\lambda(t) = (\lambda_i(t))_{i \in I_Q}$ is such that $\lambda_i(t)$ is the direct sum of scalar matrices that act by $t^{-k}$ on $V_{ki}'$.

Note that $V_{i} = V_{1i}'\oplus V_{2i}' \oplus \cdots \oplus V_{ni}'$.  Therefore, each of the matrices $f_a$ may be written as
\[
f_a = \begin{pmatrix} f_a^{11} & f_a^{12} & \cdots & f_a^{1n}\\
0 & f_a^{22} & \cdots & f_a^{2n}\\
\vdots & \vdots & \ddots & \vdots\\
0 & 0 & \cdots & f_a^{nn} 
 \end{pmatrix},
\]
where $f_a^{lm}: V_{ms(a)}' \rightarrow V_{lt(a)}'$ are obtained by restricting $f_a$ to the direct summands.  The action of $\lambda(t)$ restricted to $f_a^{lm}$ is just multiplication by $t^{m-l}$.  It follows that 
\[
\lim_{t \to 0} \lambda(t) \cdot f_a = \lim_{t \to 0} \begin{pmatrix} f_a^{11} & tf_a^{12} & \cdots & t^{n-1}f_a^{1n}\\
0 & f_a^{22} & \cdots & t^{n-2}f_a^{2n}\\
\vdots & \vdots & \ddots & \vdots\\
0 & 0 & \cdots & f_a^{nn} 
 \end{pmatrix} = 
\begin{pmatrix} f_a^{11} & 0 & \cdots & 0\\
0 & f_a^{22} & \cdots & 0\\
\vdots & \vdots & \ddots & \vdots\\
0 & 0 & \cdots & f_a^{nn} 
 \end{pmatrix}.
\]
This means $\lim_{t \to 0} V =  V_1'\oplus \cdots \oplus V_n'$, where $V_l' \cong V_{l}/V_{l-1}$.  Thus $\gr(V_\bullet)$ is in the closure of $G(\alpha) \cdot V$.
\end{proof}

\begin{lmm}
\label{filtlmm2}
Let $V, W \in \Rep(Q, \alpha)$ be such that there exists a one-parameter subgroup $\lambda: \C^{\times} \rightarrow G(\alpha)$ with the property that $\lim_{t \to 0} \lambda(t) \cdot V = W$.
There exists a filtration $0 = V_0 \subset \cdots \subset V_n = V$ such that $W$ is isomorphic to $\gr(V_{\bullet})$.
\end{lmm}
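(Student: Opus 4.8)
The plan is to diagonalize the one-parameter subgroup $\lambda$ and read off the desired filtration from its weight spaces, essentially reversing the construction in Lemma \ref{filtlmm1}. First I would write $\lambda = (\lambda_i)_{i \in I_Q}$ and apply Theorem \ref{torusweight} to each linear representation $\lambda_i : \C^{\times} \to \GL(V_i, \C)$, together with Corollary \ref{toruscharcomp} to identify the cocharacters of $\C^{\times}$ with integers. This yields a weight-space decomposition $V_i = \bigoplus_{m \in \Z} V_i(m)$, where $\lambda_i(t)$ acts on $V_i(m)$ as multiplication by $t^m$. Decomposing each arrow map into blocks $f_a = \sum_{l,m} f_a^{lm}$ with $f_a^{lm} : V_{s(a)}(m) \to V_{t(a)}(l)$, the induced action is $\lambda(t) \cdot f_a^{lm} = t^{l-m} f_a^{lm}$, since precomposition with $\lambda_{s(a)}(t)^{-1}$ contributes $t^{-m}$ and postcomposition with $\lambda_{t(a)}(t)$ contributes $t^{l}$.

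Next I would extract the content of the hypothesis that $\lim_{t \to 0} \lambda(t) \cdot V = W$ exists. Because $t^{l-m}$ diverges as $t \to 0$ exactly when $l < m$, existence of the limit forces $f_a^{lm} = 0$ whenever $l < m$; equivalently, every $f_a$ carries $V_{s(a)}(m)$ into $\bigoplus_{l \ge m} V_{t(a)}(l)$, so the maps are weight-nondecreasing. The surviving blocks are those with $l = m$, so the limit $W$ is the block-diagonal representation with arrow maps $W_a = \sum_m f_a^{mm}$, under the natural identification $V_i = \bigoplus_m V_i(m)$.

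With this established I would construct the filtration. For an integer $p$, set $(U^{\ge p})_i = \bigoplus_{m \ge p} V_i(m)$; the weight-nondecreasing property shows $f_a\bigl((U^{\ge p})_{s(a)}\bigr) \subseteq (U^{\ge p})_{t(a)}$, so each $U^{\ge p}$ is a subrepresentation of $V$. Letting $w_1 < \cdots < w_n$ be the distinct weights occurring across the vertices, I would put $V_0 = 0$ and $V_k = U^{\ge w_{n-k+1}}$ for $1 \le k \le n$; since $U^{\ge p}$ shrinks as $p$ grows, this is an increasing filtration $0 = V_0 \subset \cdots \subset V_n = V$. A direct check gives $V_k/V_{k-1} \cong \bigoplus_i V_i(w_{n-k+1})$, and the map induced by $f_a$ on this subquotient is precisely the diagonal block $f_a^{w_{n-k+1},\,w_{n-k+1}}$, because passing to the quotient at the target vertex kills all strictly higher weights. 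Summing over $k$ then identifies $\gr(V_\bullet)$ with the block-diagonal representation $W$.

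Apart from the weight bookkeeping, the one genuinely delicate step—and the only place the hypothesis enters—is the implication that existence of the limit forces the vanishing of every weight-lowering block $f_a^{lm}$ with $l < m$. Once the weight decomposition is in hand, the remaining verifications (that $U^{\ge p}$ is a subrepresentation, and that $\gr(V_\bullet)$ reproduces exactly the diagonal blocks of $W$) are formal and amount to running the computation of Lemma \ref{filtlmm1} in reverse.
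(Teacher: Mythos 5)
Your proof is correct and follows essentially the same route as the paper's: decompose each $V_i$ into $\lambda$-weight spaces via Theorem \ref{torusweight}, observe that existence of $\lim_{t\to 0}\lambda(t)\cdot V$ kills all weight-lowering blocks of the $f_a$, filter by weight-truncated subspaces, and identify the diagonal blocks with $\gr(V_\bullet)$. The only difference is cosmetic: you use the $t^{m}$ sign convention where the paper uses $t^{-k}$, so your filtration accumulates weights from the top down while the paper's runs bottom up, which is just a relabeling.
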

\begin{proof}
Let $\lambda_i: \C^{\times} \rightarrow \GL(\alpha_i, \C)$ be the components of $\lambda = (\lambda_i(t))$  corresponding to the vertex $i \in I_Q$.  By Theorem \ref{torusweight}, we have $V_i = \bigoplus_{k \in \Z} V_{ik}'$, where $V_{ik}'$ be the subspace of $V_i$ such that $\lambda_i(t)$ acts on $V_{ik}'$ by multiplication by $t^{-k}$ (see Proposition \ref{charcomp}). Since each $V_i$ is finite-dimensional and $Q$ is finite, we can order all $l$ such that $V_{il}' \neq 0$, appearing across all the direct sum decompositions for the $V_i$, obtaining
$l_1 < l_2 < \cdots < l_n$.

Writing $f_a = \sum_{k,l} f_a^{kl}$, where $f_a^{kl}: V_{s(a)l}' \rightarrow V_{t(a)k}'$ is a linear transformation, we see $\lambda(t)$ acts on the vector space $\Hom(V_{s(a)l}', V_{t(a)k}')$ by $t^{l-k}$.  Therefore, the limit $\lim_{t \to 0} \lambda(t) \cdot V$ exists only if $f_a^{kl} = 0$ for $k > l$.
 
Thus, for a fixed $m$, the $V_{mi} := \bigoplus_{l_1 \le l \le l_m} V_{il}'$ and $f_a^{m} := \sum_{\substack{l_1 \le l \le l_m \\ l_1 \le k \le l}} f_a^{kl}$ together define a subrepresentation $V_m$ of $V$.  Moreover, these subrepresentations form a filtration
\[
0 = V_0 \subset \cdots \subset V_n = V.
\]
By construction, $W = W_1 \oplus \cdots \oplus W_n$, where $W_m$ is a representation consisting of the subspaces $V_{im}'$ and the arrows $f_a^{mm}$.  It is easy to see that each $W_m$ is isomorphic to $V_{m+1}/V_m$, so the result of the lemma follows.
\end{proof}

We are now ready to give a description of the closed orbits of the $G(\alpha)$ action on $\Rep(Q, \alpha)$.

\begin{thm}
\label{closedorbitthm}
Let $V \in \Rep(Q, \alpha)$.  The orbit $G(\alpha) \cdot V$ is closed if and only if $V$ is semisimple.
\end{thm}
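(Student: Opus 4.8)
The plan is to prove both implications by relating the representation $V$ to the associated graded of a suitable filtration, using the two filtration lemmas (Lemma \ref{filtlmm1} and Lemma \ref{filtlmm2}) as the bridge between orbit closures and filtrations by subrepresentations. The recurring algebraic input is the elementary fact that if $V$ is semisimple and $0 = V_0 \subset \cdots \subset V_n = V$ is \emph{any} filtration by subrepresentations, then $\gr(V_\bullet) \cong V$; this holds because every subrepresentation of a semisimple representation is a direct summand, so an easy induction on the length of the filtration splits each successive quotient off as a summand. I would record this as a preliminary observation before starting either direction, since both rely on the principle (noted in the text just after the definition of the $G(\alpha)$-action) that two representations lie in the same $G(\alpha)$-orbit if and only if they are isomorphic.

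For the forward direction, suppose $G(\alpha) \cdot V$ is closed. First I would invoke the Jordan-H\"{o}lder filtration (Theorem \ref{jhfilt}) to produce a filtration $0 = V_0 \subset \cdots \subset V_n = V$ whose successive quotients $V_k/V_{k-1}$ are all simple. By Lemma \ref{filtlmm1} the associated graded $\gr(V_\bullet) = \bigoplus_k V_k/V_{k-1}$ lies in $\overline{G(\alpha) \cdot V}$, and since the orbit is assumed closed, $\gr(V_\bullet)$ lies in $G(\alpha) \cdot V$ itself. As isomorphic representations occupy the same orbit, this forces $V \cong \gr(V_\bullet)$, which is a direct sum of simple representations and hence semisimple.

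For the reverse direction, suppose $V$ is semisimple and argue by contradiction that $G(\alpha) \cdot V$ is not closed. Since $\overline{G(\alpha) \cdot V}$ is a closed invariant set, it contains a closed orbit of minimal dimension (as noted after Proposition \ref{orbit}); by Proposition \ref{orbit} this orbit has strictly smaller dimension than $G(\alpha) \cdot V$ and so lies in the boundary. Call it $G(\alpha) \cdot W$. Taking $S = G(\alpha) \cdot W$, which is closed, $G(\alpha)$-invariant, and meets $\overline{G(\alpha) \cdot V}$, Kempf's Theorem \ref{fundamentalgit} yields a one-parameter subgroup $\lambda$ of $G(\alpha)$ with $\lim_{t \to 0} \lambda(t) \cdot V$ existing and lying in $S$; replacing $W$ by this limit, we may assume $\lim_{t \to 0} \lambda(t) \cdot V = W$. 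Lemma \ref{filtlmm2} then supplies a filtration $0 = V_0 \subset \cdots \subset V_n = V$ with $W \cong \gr(V_\bullet)$. By the preliminary observation, $\gr(V_\bullet) \cong V$ since $V$ is semisimple, so $W \cong V$ and hence $W \in G(\alpha) \cdot V$, contradicting $\dim G(\alpha) \cdot W < \dim G(\alpha) \cdot V$.

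The main obstacle is organizational rather than technical: the two filtration lemmas run in complementary directions (one pushes $\gr(V_\bullet)$ into the orbit closure, the other extracts a filtration from a limit along a one-parameter subgroup), and the proof hinges on pairing each with the correct structural fact — Jordan-H\"{o}lder for ``closed implies semisimple'' and the splitting of filtrations of semisimple modules for the converse. The one genuinely substantive point to verify carefully is that the limit furnished by Theorem \ref{fundamentalgit} can be taken to lie in the chosen closed orbit, so that Lemma \ref{filtlmm2} produces a filtration with $W$ in that orbit and the dimension comparison delivers the contradiction.
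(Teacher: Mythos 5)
Your proof is correct and follows essentially the same route as the paper: Jordan--H\"{o}lder plus Lemma \ref{filtlmm1} for the forward direction, and Kempf's Theorem \ref{fundamentalgit} plus Lemma \ref{filtlmm2} for the converse. The only differences are cosmetic: you frame the converse as a contradiction via orbit dimensions where the paper argues directly that $V \in G(\alpha)\cdot W$, and your preliminary observation (the associated graded of any filtration of a semisimple representation is isomorphic to that representation) is a cleaner packaging of the paper's inline argument about the intersections $V_i \cap V_j'$.
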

\begin{proof}
Suppose $V \in \Rep(Q, \alpha)$ is such that $G(\alpha) \cdot V$ is closed.  By Theorem \ref{jhfilt}, there exists a filtration $0 = V_0 \subset \cdots \subset V_n = V$ such that $V_{i+1}/V_i$ are simple representations.  By Lemma \ref{filtlmm1}, we have that $\gr(V_\bullet) = (V_1/V_0)\oplus \cdots \oplus (V_{n}/V_{n-1})$ is a semisimple representation contained in $G(\alpha) \cdot V$.  Therefore, $V$ is isomorphic to $\gr(V_\bullet)$, so it is semisimple.

Conversely, suppose $V = V_1 \oplus \cdots \oplus V_n$ where the $V_i$ are simple.  The orbit closure $\overline{G(\alpha) \cdot V}$ contains a closed orbit $G(\alpha) \cdot W$, so by Theorem \ref{fundamentalgit} there is a one-parameter subgroup $\lambda(t)$ of $G(\alpha)$ such that $\lim_{t \to 0} \lambda(t) \cdot V = W$.  By Lemma \ref{filtlmm2}, there exists a filtration
$0 = V_0' \subset \cdots \subset V_m' = V$ such that $W$ is isomorphic to $V_1'/V_0' \oplus \cdots \oplus V_m'/V_{m-1}'$.  Note that because $V_i$ is simple we have that $V_i \cap V_j'$ is either equal to the zero representation or to $V_i$.  Consequently, any nonzero $V_j'$ is the direct sum of some of the $V_i$.  Therefore, $V_1'/V_0' \oplus \cdots \oplus V_m'/V_{m-1}'$ is isomorphic to $V_1 \oplus \cdots \oplus V_n$.  It follows that $W$ is isomorphic to $V$, so $V \in G(\alpha) \cdot W$, which means $G(\alpha) \cdot V$ is closed.
\end{proof}

Thus, we see that the points of $\Rep(Q, \alpha)//G(\alpha)$ correspond to orbits of semisimple representations.  We would like to have a similar description for orbits of stable points.  Unfortunately, we immediately run into the problem that there is a one-parameter subgroup $\Delta$ in $G(\alpha)$ consisting of tuples of scalar matrices $(tI_{\alpha_i})$ such that any representation $V \in \Rep(\overline{Q}, \alpha)$ contains $\Delta$ in its stabilizer.  This means no stabilizer can by finite, so there are no stable points.  

To get around this problem we alter our definition of stable points to call $V$ stable if $G \cdot V$ is closed and $G_x/\Delta$ is finite.  This is equivalent to considering the induced action of $PG(\alpha) := G(\alpha)/\Delta$ on $\Rep(Q, \alpha)$.  Using this definition of stability we can state the following corollary to the above theorem.

\begin{cor}
A representation $V \in \Rep(Q, \alpha)$ is stable under the action of $G(\alpha)$ if and only if it is simple.
\end{cor}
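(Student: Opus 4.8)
The plan is to combine Theorem \ref{closedorbitthm}, which already identifies closed orbits with orbits of semisimple representations, with a direct computation of the stabilizer using Schur's Lemma (Lemma \ref{schur}). The first observation I would record is that the stabilizer of $V = (\{V_i\}, \{f_a\})$ under the $G(\alpha)$-action is exactly the automorphism group $\Aut(V)$ of $V$ as a quiver representation: a tuple $g = (g_i) \in G(\alpha)$ fixes $V$ precisely when $g_{t(a)} f_a g_{s(a)}^{-1} = f_a$ for every arrow $a$, which is the condition that $g$ be an endomorphism of $V$, and invertibility is automatic since each $g_i \in \GL(V_i)$. Thus $G(\alpha)_V = \Aut(V)$ is the group of units of the endomorphism algebra $\End(V)$, and the modified stability condition reads: $G(\alpha)\cdot V$ is closed and $\Aut(V)/\Delta$ is finite, where $\Delta \cong \C^{\times}$ is the central one-parameter subgroup of global scalars $(tI_{\alpha_i})$.

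For the direction that simple implies stable, I would argue as follows. A simple representation is in particular semisimple, so by Theorem \ref{closedorbitthm} its orbit is closed. By Schur's Lemma every endomorphism of $V$ is a scalar multiple of $\Id$, so $\End(V) = \C\Id$ and hence $\Aut(V) = \C^{\times}\Id = \Delta$. Therefore $\Aut(V)/\Delta$ is trivial, in particular finite, and $V$ is stable.

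For the converse, suppose $V$ is stable. Closedness of the orbit together with Theorem \ref{closedorbitthm} forces $V$ to be semisimple, so I would write $V \cong \bigoplus_{i=1}^{k} V_i^{\oplus m_i}$ with the $V_i$ pairwise non-isomorphic simple representations and each $m_i \ge 1$. Schur's Lemma then gives $\End(V) \cong \prod_{i=1}^{k} \Mat_{m_i}(\C)$, and consequently $\Aut(V) \cong \prod_{i=1}^{k} \GL(m_i, \C)$, a connected algebraic group of dimension $\sum_{i} m_i^2$, inside which $\Delta$ sits as the $1$-dimensional subgroup of simultaneous scalars. The key step, which I expect to carry the real content, is the finiteness criterion: $\Aut(V)/\Delta$ is finite if and only if $\dim \Aut(V) = \dim \Delta = 1$, i.e. $\sum_{i} m_i^2 = 1$, which with each $m_i \ge 1$ forces $k = 1$ and $m_1 = 1$. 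Hence $V \cong V_1$ is simple. The main obstacle, and the only point requiring care, is the precise identification of $\Delta$ with the diagonal scalar torus of $\prod_{i} \GL(m_i,\C)$ under the isotypic decomposition, so that it is genuinely one-dimensional and central and the dimension count correctly governs finiteness of the quotient.
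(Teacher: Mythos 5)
Your proof is correct and takes essentially the same route as the paper's: both directions rest on Theorem \ref{closedorbitthm} plus Schur's Lemma, with stability of a decomposable $V$ ruled out by a dimension count on the stabilizer $\Aut(V)$ modulo $\Delta$. The only difference is one of precision, not of method: the paper bounds $\dim\End(V)\ge n$ from an arbitrary decomposition into simples, whereas you compute $\End(V)\cong\prod_i\Mat_{m_i}(\C)$ exactly from the isotypic decomposition; both force the number of simple summands to be one.
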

\begin{proof}
If $V$ is simple, then $G(\alpha) \cdot V$ is closed by Theorem \ref{closedorbitthm}.  By Lemma \ref{schur} $G(\alpha)_V/\Delta$ is finite.  Therefore, $V$ is stable.  

Conversely, if $V$ is stable, then by Theorem \ref{closedorbitthm} we have that $V = V_1 \oplus \cdots \oplus V_n$, where the $V_i$ are simple.  This means $\dim G(\alpha)_V = \dim \Aut(V) = \dim \End(V) \ge n$.  It follows that $\dim G(\alpha)_V/\Delta \ge n-1$.  Thus, the only way for $G(\alpha)_V/\Delta$ to be finite is if $n = 1$. Consequently, $V$ is simple.  
\end{proof}

\begin{exa}
\leavevmode
\begin{enumerate}
\item The $A_2$ quiver has exactly two simple representations, corresponding to the dimension vectors $(1,0)$ and $(0,1)$.  This means semisimple representations of dimension vector $\alpha = (\alpha_0, \alpha_1)$ are necessarily equal to the zero matrix of size $\alpha_0 \times \alpha_1$.  This makes sense, since $\Rep(A_2,\alpha)//G(\alpha) = \Spec \C$, where the single point corresponds to the orbit of the zero matrix.  Since the only stable points occur for the dimension vectors $(1,0)$ and $(0,1)$, then in each of those two cases the mapping sending the single orbit of the simple element to $\Spec \C$ is the corresponding geometric quotient.
 
\item The two simple representations of $A_2$ are also simple representations of the doubled quiver $\overline{A_2}$.  In addition, any representation with dimension vector $(1,1)$ such that the arrows correspond to invertible matrices is also simple.

This means closed orbits of the $G(\alpha)$-action on $\Rep(\overline{A_2}, \alpha)$ are either a pair of zero matrices (as before) or orbits of pairs
\[
\left( \begin{pmatrix}A & 0\\ 0 & 0\end{pmatrix}, \begin{pmatrix}B & 0\\ 0 & 0\end{pmatrix} \right),
\]
where $A, B$ are invertible diagonal matrices of the same size.  Note that acting by a product of diagonal matrices from $G(\alpha)$ we can guarantee that $A$ is an invertible diagonal matrix and $B$ is the identity matrix.  The maximum possible rank of $A$ is $n = \min\{\alpha_0, \alpha_1\}$, so each closed orbit defines a point in $\Rep(\overline{A_2},\alpha)//G(\alpha) = \A^n$. 

Looking at stable points, it is not hard to check that in addition to the two geometric quotients seen in the $A_2$ case, there is one more in dimension $(1,1)$.  This is obtained by sending the $G(\alpha)$-orbit of the simple representation $\{(a,1)\}$ to $a \in \A^1 - \{0\}$ and $\{(0,0)\}$ to $0$.
\end{enumerate}
\end{exa}

\subsection{Moduli spaces of quiver representations}

We would like to use the Proj GIT quotient $\Rep(Q,\alpha)//_{\chi}G(\alpha)$ (see Section 7) in order to construct moduli spaces of quiver representations.  While it is possible to do so directly by describing the $\chi$-semi-invariants of the $G(\alpha)$ action (see \cite{SVdB2001}) and the corresponding $\chi$-semistable points in $\Rep(Q,\alpha)$, we will instead use an equivalent notion of semistability (introduced by King in \cite{King1994}) that is easier to verify.  

The key component in proving King's semistability criterion is an important result due to Mumford.  Let $G$ be an affine algebraic group.  Define the pairing $\langle - , - \rangle: \Hom(G, \C^{\times}) \times \Hom(\C^{\times}, G) \rightarrow \Z$ by $\langle \chi, \lambda \rangle = n$ where $\chi \circ \lambda(t) = t^n$.  The following theorem is a version of Theorem 2.1 in \cite{MFK1994}.

\begin{thm}[The Hilbert-Mumford criterion]
\label{hmcriterion}
Let $G$ be a reductive group acting on an affine variety $X$ and let $\chi$ be a character of $G$.  A point $x \in X$ is $\chi$-semistable if and only if for any one-parameter subgroup $\lambda$ of $G$ such that $\lim_{t \to 0} \lambda(t) \cdot x$ exists we have $\langle \chi, \lambda \rangle \ge 0$.  If the inequality is strict for any nontrivial such $\lambda$, then $x$ is $\chi$-stable.
\end{thm}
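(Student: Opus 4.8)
The plan is to deduce the whole statement from Kempf's theorem (Theorem \ref{fundamentalgit}) by transporting the question to the affine cone $\widehat{X} := X \times \C$, equipped with the $G$-action $g\cdot(x,z) = (g\cdot x, \chi(g)^{-1}z)$ from Section \ref{projgit}, for which $\cO(\widehat X)^G = A_\chi$ is finitely generated and $\widehat X // G$ is a good quotient. Write $\widehat x = (x,1)$ and $\widehat X_0 = X\times\{0\}$, a closed $G$-invariant subset. First I would establish the reformulation
\[
x \text{ is } \chi\text{-semistable} \iff \overline{G\cdot \widehat x}\cap \widehat X_0 = \varnothing .
\]
For the forward direction, a semi-invariant $f \in \cO(X)^{\chi^n}$ with $n>0$ and $f(x)\neq 0$ gives $F(y,z) = f(y)z^n$, which is precisely a degree-$n$ element of $A_\chi = \cO(\widehat X)^G$; it is constant and nonzero on $\overline{G\cdot\widehat x}$ but vanishes on $\widehat X_0$ (as $n>0$), separating the two sets. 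For the converse, if $\overline{G\cdot\widehat x}$ and $\widehat X_0$ are disjoint closed invariant sets, property (5) of the good quotient $\widehat X // G$ (Corollary \ref{gitgeom}) separates their images and yields $F = \sum_n f_n z^n \in A_\chi$ with $F|_{\widehat X_0}=0$ and $F(\widehat x)\neq 0$; then $f_0=0$ and some $f_n(x)\neq 0$ with $n>0$ witnesses $\chi$-semistability.

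Second, for the numerical criterion I would compute, for a one-parameter subgroup $\lambda$,
\[
\lambda(t)\cdot\widehat x = \bigl(\lambda(t)\cdot x,\ t^{-\langle \chi,\lambda\rangle}\bigr),
\]
using $\chi(\lambda(t)) = t^{\langle\chi,\lambda\rangle}$. The limit as $t\to 0$ exists in $\widehat X$ and lies in $\widehat X_0$ exactly when $\lim_{t\to0}\lambda(t)\cdot x$ exists and $\langle\chi,\lambda\rangle < 0$. Both implications now follow. If every $\lambda$ with $\lim_{t\to0}\lambda(t)\cdot x$ existing has $\langle\chi,\lambda\rangle\geq 0$, then should $\overline{G\cdot\widehat x}$ meet $\widehat X_0$, Kempf's theorem applied with $S=\widehat X_0$ supplies a $\lambda$ with $\lim_{t\to0}\lambda(t)\cdot\widehat x\in\widehat X_0$, whose first coordinate converges while $\langle\chi,\lambda\rangle<0$ — a contradiction; so $x$ is $\chi$-semistable. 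Conversely, if $x$ is $\chi$-semistable but some $\lambda$ with $\lim_{t\to0}\lambda(t)\cdot x$ existing had $\langle\chi,\lambda\rangle<0$, then $\lim_{t\to0}\lambda(t)\cdot\widehat x\in\overline{G\cdot\widehat x}\cap\widehat X_0$, contradicting disjointness.

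Third, for stability I would assume $\langle\chi,\lambda\rangle > 0$ for every nontrivial $\lambda$ with $\lim_{t\to0}\lambda(t)\cdot x$ existing. Semistability is immediate, so fix $f$ with $x\in X_f$; since $x$ is semistable, $\overline{G\cdot\widehat x}\subset \widehat X\setminus\widehat X_0$. The orbit $G\cdot\widehat x$ is closed: its closure contains a closed orbit (Proposition \ref{orbit}), and if that were a proper boundary orbit, Kempf gives $\lambda$ with $y:=\lim_{t\to0}\lambda(t)\cdot\widehat x$ a boundary point; as $y\notin\widehat X_0$ the second coordinate converges to a nonzero value, forcing $\langle\chi,\lambda\rangle=0$, while the first coordinate converges, so strict positivity makes $\lambda$ trivial and $y=\widehat x$, contradicting $y$ lying in the boundary. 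For the stabilizer, any nontrivial one-parameter subgroup with image in $G_x$ fixes $x$, so it and its inverse both have limit $x$ at $0$ and satisfy $\langle\chi,\lambda\rangle=-\langle\chi,\lambda^{-1}\rangle$, which cannot both be positive; hence $G_x$ contains no nontrivial one-parameter subgroup. Using that $G\cdot\widehat x$ is closed, the stabilizer $G_{\widehat x}=G_x\cap\ker\chi$ is reductive, so being free of nontrivial one-parameter subgroups it is finite; then $G_x$, which maps to $\C^\times$ with kernel $G_{\widehat x}$, is finite as well once the remaining one-dimensional cases are excluded (a $\C^\times$ would be a nontrivial one-parameter subgroup in $G_x$, and the additive group admits no nontrivial character, hence would sit inside $\ker\chi = $ the finite $G_{\widehat x}$). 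Together with closedness of the orbit inside $X_f$, this gives $\chi$-stability.

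The hard part will be the stability direction, and within it the finiteness of the stabilizer: the numerical criterion only sees one-parameter subgroups, so excluding a positive-dimensional stabilizer requires ruling out a \emph{unipotent} stabilizer component, which carries no nontrivial one-parameter subgroups. This is exactly where I must use closedness of $G\cdot\widehat x$ to force the stabilizer to be reductive (Matsushima's criterion, which lies outside the present development and would be cited), after which the absence of nontrivial one-parameter subgroups in $G_x$ finishes the argument. The remaining bookkeeping — verifying that the orbit of $x$ is closed inside the affine chart $X_f$ as demanded by the paper's definition of $\chi$-stability — is routine given the closedness of $G\cdot\widehat x$ already obtained.
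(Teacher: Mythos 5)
The paper itself gives no proof of Theorem \ref{hmcriterion} --- it is quoted as a version of Theorem 2.1 of \cite{MFK1994} --- so there is no in-paper argument to compare against; your route (pass to the cone $\widehat{X}=X\times\C$, identify $\chi$-semistability of $x$ with $\overline{G\cdot\widehat{x}}\cap(X\times\{0\})=\varnothing$, then run Kempf's Theorem \ref{fundamentalgit} on $\widehat{X}$) is the standard modern one, essentially Lemma 2.2 of \cite{King1994}. Your treatment of the semistability biconditional is complete and correct: the dictionary between weight-$n$ semi-invariants and degree-$n$ invariants on $\widehat{X}$, the separation of the two disjoint closed invariant sets via properties (3)--(5) of the good quotient, and the computation $\lambda(t)\cdot\widehat{x}=(\lambda(t)\cdot x,\,t^{-\langle\chi,\lambda\rangle})$ feed into Kempf's theorem exactly as needed. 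Two caveats on the periphery: the sign conventions you inherit are already inconsistent inside the paper ($f(y)z^n$ is invariant for the action $g\cdot(y,z)=(g\cdot y,\inv{\chi(g)}z)$ only under the convention $f(g\cdot y)=\chi(g)^n f(y)$, not the paper's $f(\inv{g}\cdot y)=\chi(g)^n f(y)$), so a final write-up must fix one consistent choice; and Matsushima's criterion, which you correctly flag, is a second substantial external citation on top of \cite{Kem1978}.

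The genuine gap is the endgame of the stability direction. The paper's definition of $\chi$-stable demands a semi-invariant $f$ with $x\in X_f$ such that \emph{the action of $G$ on $X_f$ is closed}, i.e.\ every $G$-orbit in $X_f$ is closed in $X_f$, plus finiteness of $G_x$; you paraphrase this as ``the orbit of $x$ is closed inside $X_f$'' and assert it is routine for whichever $f$ witnesses semistability. Both halves of that are wrong: the quantifier runs over all orbits in $X_f$, and whether it holds depends on the choice of $f$. The paper's own running example shows this. Take $G=\C^{\times}$ acting on $X=\A^2$ by $t\cdot(a,b)=(ta,\inv{t}b)$ with $\chi$ trivial: the point $(1,1)$ satisfies your hypothesis (no nontrivial $\lambda$ has a limit there), $G\cdot\widehat{x}$ is closed with trivial stabilizer, and the constant $f=1$ is a legitimate positive-weight semi-invariant with $(1,1)\in X_f=\A^2$ in which the orbit of $(1,1)$ is closed --- yet the action of $G$ on $X_f$ is not closed (the punctured axes), so this $f$ does not witness stability; one must instead choose $f=ab$. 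So passing from what you actually proved (closedness of $G\cdot\widehat{x}$, finiteness of $G_x$) to the paper's definition is a real lemma, not bookkeeping. Its proof runs roughly as follows: the locus $Z\subset\widehat{X}$ of points with positive-dimensional stabilizer is closed (upper semicontinuity of stabilizer dimension), $G$-invariant, and invariant under scaling of the $z$-coordinate; $G\cdot\widehat{x}$ is closed, invariant, and disjoint from the closed invariant set $Z\cup(X\times\{0\})$; separating the two by an invariant function on $\widehat{X}$ and extracting a homogeneous component (legitimate because the ideal of $Z\cup(X\times\{0\})$ is homogeneous) yields a positive-weight semi-invariant $f$ with $f(x)\neq 0$ such that no $(y,1)$ with $y\in X_f$ lies in $Z$; your $\C^{\times}$-versus-additive-group dichotomy then upgrades finiteness of $G_y\cap\ker\chi$ to finiteness of $G_y$ for every $y\in X_f$, so all orbits in $X_f$ have dimension $\dim G$, and Proposition \ref{orbit} (orbit boundaries are unions of strictly smaller orbits) forces every orbit in $X_f$ to be closed in $X_f$. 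Without this construction, or MFK's equivalent of it, the stability half of the theorem is not established as written.
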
 

Note that in our situation, each point of $\Rep(Q, \alpha)$ has the nontrivial subgroup $\Delta$ contained in its stabilizer.  Therefore, we must include the condition that $\chi(\Delta) = 1$ in the semistability condition.  Additionally, in the condition for stability we need to consider $\lambda$ that are not in $\Delta$.

In order to simplify the criterion for semistability, we would like to understand the characters of the group $G(\alpha)$.  Since $G(\alpha) = \prod_{i \in I_Q} \GL(\alpha, \alpha_i)$, then every character $\chi \in \Hom(G(\alpha), \C^{\times})$ is of the form $\chi = \prod_{i \in I_Q} \chi_i$, where $\chi_i$ is a character of $\GL(n, \alpha_i)$.  Since $\chi_i$ is a group homomorphism, then it is invariant with respect to the standard conjugation action of $\GL(n, \alpha_i)$ on itself.  This means, $\chi_i$ is defined where it maps invertible diagonal matrices.  That is, $\chi$ is defined by its restriction to the algebraic torus $(\C^{\times})^{\alpha_i}$. By Corollary \ref{toruscharcomp}, we know that $\chi_i = t_1^{n_1} \cdots t_m^{n_m}$ for some $n_1, \dots, n_m \in \Z$.  Furthermore, $\chi|_{(\C^{\times})^{\alpha_i}}$ must be invariant under permutation of the diagonal matrix entries.  This means $n_1 = \cdots = n_m$.  Consequently, $\chi_i = \det^{-\theta_i}$, for some $\theta_i \in \Z$ (here $\det$ is the character that sends each element of $\GL(\alpha)$ to its determinant).  Thus any character of $G(\alpha)$ has the form $\chi = \prod_{i \in I_Q} \det^{-\theta_i}$ for some $\theta_i \in \Z$.

The converse of this statement is also clearly true.  Namely, given $\theta = (\theta_i) \in \Z^{I_Q}$, the product $\prod_{i \in I_Q} \det^{-\theta_i}$ is a character of $G(\alpha)$.  Note that in order for us to apply the Hilbert-Mumford criterion to the $G(\alpha)$-action on $\Rep(Q, \alpha)$, we must have $\chi(\Delta) = 1$ (i.e. we need $\chi$ to be a character of $PG(\alpha)$).  This condition is equivalent to $\theta \cdot \alpha = \sum_{i} \theta_i\alpha_i = 0$.  Since choosing a character of $G(\alpha)$ (or $PG(\alpha)$) is the same as picking a $\theta \in \Z^{I_Q}$, we will denote such characters by $\chi_{\theta}$.  This equivalent way of interpreting characters of $G(\alpha)$ leads us to an alternative definition of semistability. 

\begin{defn}
\label{thetastabdef}
Let  $\theta = (\theta_i) \in \R^{I_Q}$.  A representation $V$ of the quiver $Q$ with (nonzero) dimension vector $\alpha = (\alpha_i)$ is called \textbf{$\theta$-semistable} if $\theta \cdot \alpha = 0$ and for any subrepresentation $W \subset V$ with dimension vector $\beta$ we have $\theta \cdot \beta \le 0$.  We say that $V$ is \textbf{$\theta$-stable} if under the previous assumptions $\theta \cdot \beta < 0$ for any nontrivial proper subrepresentation $W \subset V$ with dimension vector $\beta$. 
\end{defn}  

We can now state the following theorem.

\begin{thm}[King]
\label{kingstab}
Let $Q$ be a quiver, and let $\theta = (\theta_i) \in \Z^{I_Q}$. Let $\alpha \in \Z^{I_Q}_{\ge 0}$ be a dimension vector such that $\theta \cdot \alpha = 0$.  Any $V \in \Rep(Q, \alpha)$ is $\chi_{\theta}$-semistable (resp. $\chi_{\theta}$-stable) if and only if $V$ is $\theta$-semistable (resp. $\theta$-stable). 
\end{thm}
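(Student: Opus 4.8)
The plan is to derive the equivalence straight from the Hilbert--Mumford criterion (Theorem \ref{hmcriterion}), using the dictionary between one-parameter subgroups of $G(\alpha)$ and filtrations of $V$ by subrepresentations that is already set up in Lemmas \ref{filtlmm1} and \ref{filtlmm2}. The entire argument reduces to one numerical computation: rewriting the Mumford pairing $\langle\chi_\theta,\lambda\rangle$ in terms of the dimension vectors of the steps of the filtration attached to $\lambda$.

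First I would fix a one-parameter subgroup $\lambda$ of $G(\alpha)$ for which $\lim_{t\to 0}\lambda(t)\cdot V$ exists. Applying Theorem \ref{torusweight} vertex by vertex gives weight decompositions $V_i=\bigoplus_k V_{ik}'$, with $\lambda_i(t)$ acting on $V_{ik}'$ by $t^{-k}$, exactly as in the proof of Lemma \ref{filtlmm2}. Ordering the weights $l_1<\cdots<l_n$ that actually occur, that lemma shows the subspaces $V_m:=\bigoplus_{l\le l_m}V_{il}'$ form a filtration $0=V_0\subset\cdots\subset V_n=V$ by subrepresentations; I write $\beta^{(m)}$ for the dimension vector of $V_m$ and $d_{l_m}$ for that of the weight-$l_m$ space, so that $\beta^{(m)}=d_{l_1}+\cdots+d_{l_m}$.

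Next I would compute the pairing. Since $\chi_\theta=\prod_i\det^{-\theta_i}$ and $\det\lambda_i(t)=t^{-\sum_k k\dim V_{ik}'}$, one finds $\chi_\theta(\lambda(t))=t^{N}$ with $N=\sum_k k\,(\theta\cdot d_k)$, whence $\langle\chi_\theta,\lambda\rangle=\sum_{m=1}^n l_m\,(\theta\cdot d_{l_m})$. The key step is a summation by parts: putting $s_m=\theta\cdot\beta^{(m)}$ and using $s_0=0$ together with $s_n=\theta\cdot\alpha=0$ (the standing hypothesis), the sum telescopes to $\langle\chi_\theta,\lambda\rangle=-\sum_{m=1}^{n-1}(l_{m+1}-l_m)\,\theta\cdot\beta^{(m)}$, a combination with strictly positive coefficients $l_{m+1}-l_m>0$ of the numbers $\theta\cdot\beta^{(m)}$ attached to the proper nonzero subrepresentations $V_m$. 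I expect this translation to be the main obstacle --- not because it is deep, but because keeping the sign conventions consistent with Lemmas \ref{filtlmm1}--\ref{filtlmm2} and correctly accounting for the subgroup $\Delta$ is where the argument is easiest to get wrong.

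With this formula in hand both implications fall out. For the forward direction, if $V$ is $\theta$-semistable then every $\theta\cdot\beta^{(m)}\le 0$, so $\langle\chi_\theta,\lambda\rangle\ge 0$ for all admissible $\lambda$, and Theorem \ref{hmcriterion} yields $\chi_\theta$-semistability; in the stable case each $\theta\cdot\beta^{(m)}<0$, and for $\lambda$ whose image is not contained in $\Delta$ at least two distinct weights occur, forcing a proper nonzero step, so the sum is strictly positive. Conversely, given any subrepresentation $W\subset V$ of dimension vector $\beta$, I would build the two-step one-parameter subgroup attached to $0\subset W\subset V$ as in Lemma \ref{filtlmm1}, assigning $W$ the lower weight so the limit exists; then $\langle\chi_\theta,\lambda\rangle=-(l_2-l_1)\,\theta\cdot\beta$, so $\theta\cdot\beta>0$ would violate $\chi_\theta$-semistability, while $\theta\cdot\beta=0$ with $W$ proper and nonzero (hence $\lambda\notin\Delta$) would violate $\chi_\theta$-stability. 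I would close by noting that the restriction to $\lambda\notin\Delta$ in the stability statement is exactly what is needed: a $\lambda$ with image in $\Delta$ acts by a single scalar on each $V_i$, gives the trivial filtration, and contributes $\langle\chi_\theta,\lambda\rangle=0$, so it must be excluded --- which is precisely the convention recorded after Theorem \ref{hmcriterion}.
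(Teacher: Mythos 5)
Your proof is correct and follows essentially the same route as the paper's: both convert a one-parameter subgroup with existing limit into a filtration by subrepresentations via the weight decomposition of Lemma \ref{filtlmm2}, compute the pairing $\langle\chi_\theta,\lambda\rangle$ in terms of that filtration, and apply the Hilbert--Mumford criterion in both directions, using the two-step subgroup of Lemma \ref{filtlmm1} attached to $0 \subset W \subset V$ for the converse. If anything, your version is slightly more careful than the paper's: the paper's computation implicitly treats the weights as the consecutive integers $1,\dots,n$, whereas your summation-by-parts formula $\langle\chi_\theta,\lambda\rangle = -\sum_{m}(l_{m+1}-l_m)\,\theta\cdot\beta^{(m)}$ is valid for arbitrary weights, and you spell out the role of the subgroup $\Delta$ in the stable case, which the paper dismisses as ``analogous.''
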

\begin{proof}
Let $V \in \Rep(Q, \alpha)$.  Using the argument from the proof of Lemma \ref{filtlmm2}, any one-parameter subgroup $\lambda$ of $G(\alpha)$ such that $\lim_{t \to 0} \lambda(t)\cdot V$ exists induces a filtration by subrepresentations
\[
0 = V_0 \subset V_1 \subset \cdots \subset V_n = V,
\]
 where $(V_k)_i = \bigoplus_{0 \le l \le k} V_{ik}'$ given by the weight decomposition $V_i = \bigoplus_k V_{ik}'$ with respect to the $\lambda(t)$-action (see Theorem \ref{torusweight}).  Furthermore, the weight decomposition gives us
\[
\chi_{\theta}(\lambda(t)) = \prod_i{ \det(\lambda(t))^{-\theta_i}} = \prod_i \prod_k t^{k\theta_i \dim V_{ik}'} = t^{\sum_{i,k}k \theta_i \dim V_{ik}' }.
\]
Let $\beta_i$ be the dimension vector of the quotient representation $V_k/V_{k-1}$ for $1 \le k \le n$.  The above computation gives us
\begin{align*}
\langle \chi_\theta, \lambda \rangle & = \sum_{i,k}k \theta_i\dim V_{ik}'  = \sum_k k \theta \cdot \beta_k = (n+1)\theta \cdot \alpha - \sum_k (n-k+1) \theta \cdot \beta_k\\ &= - \sum_k (n-k+1) \theta \cdot \beta_k, 
\end{align*}
since $\alpha = \beta_1 + \cdots + \beta_n$ and $\theta \cdot \alpha = 0$.  If $V$ is $\theta$-semistable, then this implies $\langle \chi_\theta, \lambda \rangle \ge 0$ by Theorem \ref{hmcriterion} because $V_k$ is a subrepresentation of $V$ with dimension vector $\beta_1 + \cdots + \beta_k$.

Conversely, let $V$ be $\chi_{\theta}$-semistable.  Let $W \subset V$ be a subrepresentation with dimension vector $\beta$.  Consider the one-parameter subgroup $\lambda(t)$ of $G(\alpha)$ defined such that $\lambda$ acts by $t^{-1}$ on $W$ and by $t^{-2}$ on $V/W$.  By the computation above we get
\[
\langle \chi_\theta, \lambda \rangle = - 2\theta \cdot \beta - \theta \cdot (\alpha - \beta) = - \theta \cdot \beta \ge 0.
\]
Therefore, $\theta \cdot \beta \le 0$, so $V$ is $\theta$-semistable.  An analogous argument in the case of stable representations completes the proof.
\end{proof}

We will denote by $\Rep^{ss}_{\theta}(Q, \alpha)$ the open subset of $\theta$-semistable representation in $\Rep(Q,\alpha)$, and by $\Rep^{s}_{\theta}(Q, \alpha)$ the open subset of $\theta$-stable ones.  The corresponding Proj GIT quotient and geometric quotient will be denoted by $\Rep(Q, \alpha)//_{\theta} G(\alpha)$ and $\Rep^s_\theta(Q, \alpha)/G(\alpha)$, respectively (note that we are using the definition of stability that takes into account the subgroup $\Delta$).

%\begin{rmk}
%\label{algstab}
%It is possible to extend the above theorem to the variety of representations of an arbitrary finite-dimensional \C-algebra (see Theorem 4.1 in \cite{King1994}).  Namely, given such an algebra $A$, the category $\modl-A$ of modules over $A$ is an abelian subcategory of $\mod-\CQ$ for a certain quiver $Q$.  It is therefore possible to define $\theta$-stability and $\chi_\theta$-stability for $A$-modules via the corresponding quiver representations.  For finite-dimensional modules one can recover the equivalence between the two.  
%\end{rmk}

\begin{rmk}
\label{slopestab}
Using the formalism proposed in \cite{Rud1997} or \cite{Br2007} it is possible to give a slightly different definition of $\theta$-stability.  Namely, let $\theta = (\theta_i) \in \R^{I_Q}$, and let $V$ be a quiver representation with dimension vector $\alpha$.  The \emph{slope} of $V$ is the quotient 
\[
\phi(V) = \frac{\theta \cdot \alpha}{\sum_{i} \alpha_i}.
\]
We say $V$ is \emph{$\theta$-semistable} if for any nontrivial subrepresentation $W \subset V$ we have $\phi(W) \le \phi(V)$.  We say $V$ is \emph{$\theta$-stable} if the inequality is strict for any nontrivial proper subrepresentation $W$.  It is not hard to show that in the case $\theta \cdot \alpha = 0$ this corresponds to the definition of $\theta$-stability we saw earlier.  Moreover, $\theta$-semistability is invariant (semistable objects remain semistable) under shifts by multiples of the vector $(1,1, \dots, 1)$, and it is always possible to find such that shift that would make the dot product with $\alpha$ equal $0$ (Lemma 3.4 \cite{Rud1997}).  Unless specified otherwise, we will be using Definition \ref{thetastabdef} of stability, rather than the one given in the remark.
\end{rmk}

If $Q$ is a quiver, then for a fixed $\theta$ the $\theta$-semistable representations of $Q$ form a subcategory $\cR_{\theta}(Q)$ of $\cR(Q)$.  Furthermore, one can check that this category is abelian (see proof of Proposition 3.1 in \cite{King1994}). 

\begin{prp}
The category $\cR_{\theta}(Q)$ is abelian and an object in this category is simple if and only if it is $\theta$-stable.
\end{prp}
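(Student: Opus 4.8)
The plan is to exhibit $\cR_{\theta}(Q)$ as a full abelian subcategory of the ambient abelian category $\cR(Q)$, and then to read off its simple objects by unwinding the definition of a subobject. Throughout I would write $\mu(V) := \theta \cdot \alpha$ for a representation $V$ with dimension vector $\alpha$; this quantity is additive on short exact sequences, and by Definition \ref{thetastabdef} the objects of $\cR_{\theta}(Q)$ are precisely the $V$ with $\mu(V) = 0$ and $\mu(W) \le 0$ for every subrepresentation $W \subseteq V$ (the zero representation being included vacuously).

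The load-bearing step is a sandwich argument. Given a morphism $f \colon V \to W$ between objects of $\cR_{\theta}(Q)$, its image $I = \Im f$ is simultaneously a subrepresentation of $W$, whence $\mu(I) \le 0$ by semistability of $W$, and a quotient $V/\ker f$ of $V$, whence $\mu(I) = \mu(V) - \mu(\ker f) = -\mu(\ker f) \ge 0$, using $\mu(V) = 0$ and $\mu(\ker f) \le 0$. Thus $\mu(I) = 0$, and consequently $\mu(\ker f) = \mu(\coker f) = 0$ as well. From here closure is routine: each of $\ker f \subseteq V$ and $\Im f \subseteq W$ is a subrepresentation of a semistable representation, so every one of its own subrepresentations $U$ has $\mu(U) \le 0$, and combined with the vanishing just established this places both in $\cR_{\theta}(Q)$; for $\coker f = W/\Im f$ a subrepresentation has the form $U/\Im f$ with $\Im f \subseteq U \subseteq W$ and $\mu(U/\Im f) = \mu(U) \le 0$, so $\coker f \in \cR_{\theta}(Q)$ too. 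A parallel computation gives closure under extensions, hence under biproducts: if $0 \to A \to B \to C \to 0$ is exact with $A,C \in \cR_{\theta}(Q)$, then for $U \subseteq B$ the sequence $0 \to U \cap A \to U \to U/(U \cap A) \to 0$ yields $\mu(U) = \mu(U \cap A) + \mu(U/(U \cap A)) \le 0$, the outer terms being subrepresentations of $A$ and $C$. Since kernels, cokernels, and the canonical coimage--image isomorphism are all inherited from $\cR(Q)$, the full subcategory $\cR_{\theta}(Q)$ is abelian.

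For the second assertion I would first observe that a subrepresentation $W \subseteq V$ of a semistable $V$ lies in $\cR_{\theta}(Q)$ if and only if $\mu(W) = 0$: such a $W$ inherits semistability, since any $U \subseteq W \subseteq V$ is a subrepresentation of $V$ and so has $\mu(U) \le 0$. Hence the subobjects of $V$ in $\cR_{\theta}(Q)$ are exactly the subrepresentations with $\mu(W) = 0$. Now $V$ is simple in $\cR_{\theta}(Q)$ precisely when $0$ and $V$ are its only such subrepresentations, whereas $V$ is $\theta$-stable precisely when every nontrivial proper subrepresentation satisfies $\mu(W) < 0$; since semistability already forces $\mu(W) \le 0$ for all $W$, the strict inequality is equivalent to $\mu(W) \neq 0$, and the two conditions coincide. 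The equivalence is therefore immediate once the subobject structure is pinned down.

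The main obstacle is the sandwich computation in the second paragraph: everything else is bookkeeping with the additivity of $\mu$, but forcing $\mu(\Im f) = 0$ requires reading the image both as a quotient (giving $\mu \ge 0$) and as a subobject (giving $\mu \le 0$), and keeping the two inequalities pointed in the correct directions is the one genuinely delicate point. Once that identity is in hand, closure under the abelian operations and the characterization of simples both follow formally.
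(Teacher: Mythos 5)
Your proof is correct, and it is essentially the argument the paper defers to: the paper gives no proof of its own but cites Proposition 3.1 of \cite{King1994}, whose proof is exactly this slope-additivity ("sandwich") argument on $\Im f$, followed by closure of the semistable subcategory under kernels, cokernels, and extensions, and the identification of simple objects with $\theta$-stable representations. Nothing further is needed.
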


A consequence of this proposition is that an analogue of Theorem \ref{jhfilt} holds, so that every $\theta$-semistable representation of $Q$ has a filtration 
$0 = V_0 \subset \cdots \subset V_n = V$ by subrepresentations such that the quotients $V_k/V_{k-1}$ are $\theta$-stable and these quotients are uniquely determined up to permutation.  Another way of saying this is that $\gr(V_{\bullet})$ is determined by $V$ up to isomorphism.  We will therefore write $\gr^s(V)$ in this case.  A argument similar to the one found in Theorem \ref{closedorbitthm} can be used to prove the following (see Proposition 3.2 in \cite{King1994} for details).

\begin{thm}
\label{quiverpoly}
Let $V \in \Rep^{ss}(Q, \alpha)$. The orbit $G(\alpha) \cdot V$ is closed if and only if $V$ is the direct sum of $\theta$-stable representations.  Two representations $V_1, V_2 \in \Rep^{ss}(Q, \alpha)$ are S-equivalent if and only if $\gr^s(V_1) \cong \gr^s(V_2)$.
\end{thm}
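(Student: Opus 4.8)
The plan is to transport the proof of Theorem \ref{closedorbitthm} into the abelian category $\cR_{\theta}(Q)$ of $\theta$-semistable representations, in which (by the preceding proposition) the simple objects are exactly the $\theta$-stable ones and the Jordan--H\"older factors assemble into $\gr^s(V)$. Throughout, ``closed orbit'' means closed inside the semistable locus $\Rep^{ss}_{\theta}(Q,\alpha)$, and I will reduce statements about this locus to the affine setting by passing to the cone $\widetilde X := \Rep(Q,\alpha) \times \C$ with the twisted $G(\alpha)$-action $g\cdot(V,z) = (g\cdot V, \chi_{\theta}(g)^{-1}z)$, so that Theorem \ref{fundamentalgit} becomes applicable.

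First I would treat the easy half of the orbit statement. Given $V$ with closed orbit, apply the Jordan--H\"older filtration in $\cR_{\theta}(Q)$ to obtain $0 = V_0 \subset \cdots \subset V_n = V$ with each $V_k/V_{k-1}$ being $\theta$-stable. By Lemma \ref{filtlmm1} the associated graded $\gr(V_\bullet) = \gr^s(V)$ lies in $\overline{G(\alpha)\cdot V}$; since each factor has dimension vector $\beta$ with $\theta\cdot\beta = 0$ and is $\theta$-semistable, their direct sum is again $\theta$-semistable, so $\gr^s(V)$ sits in the semistable locus. Closedness of the orbit then forces $V \cong \gr^s(V)$, a direct sum of $\theta$-stable representations.

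For the converse, suppose $V = \bigoplus_i V_i$ with each $V_i$ $\theta$-stable. A positive-degree semi-invariant $f$ with $f(V)\neq 0$ yields the $G(\alpha)$-invariant $(V,z)\mapsto f(V)z^n$ on $\widetilde X$, which is nonzero at $(V,1)$ and vanishes on $\Rep(Q,\alpha)\times\{0\}$, so $\overline{G(\alpha)\cdot(V,1)}$ avoids $\Rep(Q,\alpha)\times\{0\}$ and the closed orbit it contains still has nonzero last coordinate. Applying Theorem \ref{fundamentalgit} in $\widetilde X$ produces a one-parameter subgroup $\lambda$ with $W = \lim_{t\to 0}\lambda(t)\cdot V$ semistable and $\langle \chi_{\theta}, \lambda\rangle = 0$, the vanishing being exactly what keeps the $z$-coordinate $t^{-\langle\chi_\theta,\lambda\rangle}$ from degenerating. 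By Lemma \ref{filtlmm2}, $W \cong \gr(W_\bullet)$ for a filtration $0 = W_0 \subset \cdots \subset W_m = V$. The crucial new input, borrowed from the computation in the proof of Theorem \ref{kingstab}, is that $\langle \chi_{\theta}, \lambda\rangle = 0$ together with semistability of $V$ forces $\theta\cdot(\dim W_k) = 0$ for every $k$, whence each $W_k$ is $\theta$-semistable and the filtration lives in $\cR_{\theta}(Q)$. Now the combinatorial argument of Theorem \ref{closedorbitthm} applies verbatim inside $\cR_{\theta}(Q)$: since each $V_i$ is simple there, $V_i \cap W_k$ is $0$ or $V_i$, each $W_k$ is a direct sum of some of the $V_i$, and the subquotients reassemble to $W \cong \bigoplus_i V_i = V$. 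Thus $G(\alpha)\cdot V$ is closed.

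Finally, the S-equivalence statement follows by combining the two halves with Theorem \ref{projgood}. Each point of $\Rep(Q,\alpha)//_{\theta}G(\alpha)$ corresponds to an S-equivalence class containing a unique closed orbit; by the first half this orbit is $G(\alpha)\cdot\gr^s(V)$, and by the second half it is genuinely closed and lies in $\overline{G(\alpha)\cdot V}$. Hence $V_1$ and $V_2$ are S-equivalent if and only if they determine the same closed orbit, i.e. $\gr^s(V_1)\cong\gr^s(V_2)$, since isomorphism of representations coincides with sharing a $G(\alpha)$-orbit. I expect the main obstacle to be the backward direction: setting up the reduction to the affine cone $\widetilde X$ so that Theorem \ref{fundamentalgit} delivers a degenerating $\lambda$ with $\langle\chi_{\theta},\lambda\rangle = 0$, and checking that this vanishing propagates to $\theta$-semistability of every filtration step; everything else is a faithful transcription of the semisimple case.
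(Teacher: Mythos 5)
Your architecture is the intended one---it is precisely the transcription of Theorem \ref{closedorbitthm} into the abelian category $\cR_{\theta}(Q)$ that the paper points to (deferring to Proposition 3.2 of King)---and the two substantive ingredients are correct: the forward direction via the Jordan--H\"older filtration in $\cR_{\theta}(Q)$ combined with Lemma \ref{filtlmm1}, and the weight computation showing $\langle \chi_{\theta}, \lambda \rangle = -\sum_{k} \theta \cdot \dim W_k$, so that $\langle \chi_{\theta}, \lambda \rangle = 0$ together with semistability of $V$ forces every filtration step to be $\theta$-semistable. The gap is the final inference of your converse: from ``the unique closed orbit in $\overline{G(\alpha)\cdot(V,1)} \subset \widetilde X$ is $G(\alpha)\cdot(W,1)$ with $W \cong V$'' you conclude ``thus $G(\alpha)\cdot V$ is closed,'' where closed means closed in $\Rep^{ss}_{\theta}(Q,\alpha)$; but the promised reduction back from the cone is never supplied, and it is not automatic. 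Two things are being glossed over. First, cone orbits refine orbits in $\Rep(Q,\alpha)$: one has $(gV,1) \in G(\alpha)\cdot(V,1)$ only if some $s$ in the stabilizer of $V$ satisfies $\chi_{\theta}(gs) = 1$, and in fact $\chi_{\theta}$ is \emph{trivial} on the stabilizer of a polystable point (since $\theta \cdot \dim V_i = 0$ for each stable summand), so $W \cong V$ does not by itself identify $S$ with $G(\alpha)\cdot(V,1)$. Second, even granting that $G(\alpha)\cdot(W,1)$ is closed upstairs, closedness of $G(\alpha)\cdot W$ in the semistable locus still requires an argument; nothing you wrote rules out a degeneration inside $\Rep^{ss}_{\theta}(Q,\alpha)$ that is not seen at the level $z = 1$ of the cone.

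The cleanest repair is to drop the cone in the converse and apply Theorem \ref{fundamentalgit} directly in the affine variety $X_f := \{f \neq 0\} \subset \Rep(Q,\alpha)$, where $f$ is a $\chi_{\theta}^n$-semi-invariant ($n > 0$) with $f(V) \neq 0$; this is a $G(\alpha)$-invariant principal open subset of an affine variety, hence affine. The key point is that the closure of $G(\alpha)\cdot V$ inside $\Rep^{ss}_{\theta}(Q,\alpha)$ lies in the fiber $\inv{\phi}(\phi(V))$ of the good quotient of Theorem \ref{projgood} (continuity of $\phi$), and this fiber is contained in $X_f$, because by the construction of $\phi$ in Section \ref{projgit} a semistable point $y$ satisfies $\phi(y) \in D_{+}(f)$ exactly when $f(y) \neq 0$. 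Hence closedness in $X_f$ is equivalent to closedness in $\Rep^{ss}_{\theta}(Q,\alpha)$, and the closure of $G(\alpha)\cdot V$ in $X_f$ contains the unique closed orbit $T$ of the fiber. Kempf's theorem in $X_f$ with $S = T$ produces $\lambda$ with $W := \lim_{t \to 0}\lambda(t)\cdot V$ existing in $X_f$ and lying in $T$; then $f(\lambda(t)\cdot V) = t^{n\langle \chi_{\theta},\lambda\rangle}f(V) \to f(W) \neq 0$ forces $\langle \chi_{\theta}, \lambda \rangle = 0$, and your computation plus the semisimplicity argument in $\cR_{\theta}(Q)$ gives $W \cong V$, so $T = G(\alpha)\cdot V$ is closed. (Alternatively, your cone argument can be salvaged by adding the two missing facts: triviality of $\chi_{\theta}$ on stabilizers of polystable points, and finiteness---hence closedness---of the projection $\{f(x)z^n = f(V)\} \rightarrow X_f$, which transports the closed orbit $S$ down to a closed orbit in $X_f$.) With either repair, your S-equivalence paragraph goes through as written.
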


The representation $V$ in the above theorem that is the direct sum of $\theta$-stable representation is called \emph{$\theta$-polystable}.  A consequence of the theorem is that the points of $\Rep(Q, \alpha)//_{\theta}G(\alpha)$ are in bijection with $\theta$-polystable representations.

\begin{exa}
\label{quivermoduliex}
\leavevmode
\begin{enumerate}
\item  Consider $\Rep(J, \alpha)$ for a nonzero $\alpha$.  Since the Jordan quiver consists of a single vertex, then the condition that $\theta \cdot \alpha = 0$ implies $\theta  = 0$.  This means all representations are $\theta$-semistable and $\Rep^{ss}_{\theta}(J, \alpha) = \Rep(J, \alpha)$.  As previously seen in Example \ref{projgitex}, we have $\Rep(J , \alpha)//_{\theta} G(\alpha) = \Rep(J,\alpha)^{G(\alpha)} = \A^{\alpha}$.

Note that $\theta = 0$ implies no $\theta$-stable representation of $J$ can have nontrivial proper subrepresentations.  This means that the only $\theta$-stable are simple representations.  This means $\alpha = 1$, and $\Rep^s_{\theta}(J, \alpha)/G(\alpha) = \A^1$.

\item In general, if $\theta = 0$, we see that the all representations in $\Rep(Q, \alpha)$ are $\theta$-semistable, and the only $\theta$-stable representations are simple representations.  This makes sense, since by Example \ref{projgitex}, the Proj GIT quotient for the trivial character $\chi_{\theta}$ is the same as the affine GIT quotient. 

\item Consider $\Rep(A_2, \alpha)$ with respect to the standard action of $G(\alpha)$.  Assume $\theta \neq 0$ ($\theta = 0$ was already discussed above).  If $\theta  = (0, \theta_2)$, then $\alpha_1 = 0$ for $\theta \cdot \alpha = 0$.  In this case, all representations of $A_2$ are $\theta$-semistable, and the only $\theta$-stable representation is the simple representation with dimension vector $\alpha = (1,0)$.  Similarly, if $\theta  = (\theta_1, 0)$, then all representations with dimension vector $(0, \alpha_1)$ are $\theta$-semistable, and the simple representation for $\alpha = (0,1)$ is $\theta$-stable.

If $\theta_1$ and $\theta_2$ are both nonzero, then for any $\alpha \neq 0$ such that $\theta \cdot \alpha = 0$ we have $\frac{\alpha_0}{\alpha_1} = -\frac{\theta_2}{\theta_1}$.  It follows that $\theta = c(\alpha_1, -\alpha_0)$ where $c \in \Z$.

Therefore, for a representation $V \in \Rep(A_2, \alpha)$ to be $\theta$-semistable we must have $\alpha_1\beta_0 -\alpha_0\beta_1 \le 0$ for any subrepresentation with dimension vector $\beta$ or $\alpha_0\beta_1 -\alpha_1\beta_0 \le 0$ for any subrepresentation with dimension vector $\beta$.  The latter situation is impossible since there is always a subrepresentation of dimension $(0,1)$.  

This means we have the first situation, and there cannot be a subrepresentation with $\beta = (1,0)$, so the linear transformation corresponding to the arrow must be injective.  Furthermore, we have $\alpha_1 \ge \alpha_0$.  However, even if this is true there is a subrepresentation with dimension vector $(1,1)$ which violates the inequality unless $\alpha_0 = \alpha_1$.  If $\alpha = (\alpha_0,\alpha_0)$, then every representation $V = (V_0,V_1,f_a)$ with injective $f_a$ is $\theta$-semistable.  Moreover, it follows that the only $\theta$-stable representations have dimension $\alpha = (1,1)$ and injective $f_a$.

Note that there is a single $G(\alpha)$-orbit containing all of the $\theta$-semistable (respectively, $\theta$-stable) representations whenever they exist.  It follows that for any fixed $\alpha$ and $\theta$ such that $\theta$-semistable representations exist, we have  $\Rep(A_2, \alpha)//_{\theta} G(\alpha) = \Spec \C$.
Similarly $\Rep^s_{\theta}(A_2, \alpha)/G(\alpha) = \Spec \C$ whenever $\theta$-stable representations exist.

\item Let $K_n$ be the Kronecker quiver consisting of $2$ vertices and $n$ arrows.  Consider $\Rep(K_n, \alpha)$ for $\alpha = (\alpha_0, \alpha_1) = (d,1)$, and let $\theta = (1,-d)$.

\[
\begin{tikzcd}
1 
& 0 \arrow[l, bend left, shift left=1.75ex]
\arrow[l, draw=none, "\raisebox{-4.5ex}{\vdots}" description]
\arrow[l, bend right, shift left=1.0 ex]
\arrow[l, bend right, shift right=1.0ex]  
\end{tikzcd}
\]

If the representation $V \in \Rep(K_n, \alpha)$ is $\theta$-semistable, then it cannot contain the subrepresentation with dimension vector $(1,0)$.  This means $\bigcap_{1 \le k \le n} \ker(f_k) = 0$, and consequently the linear transformation $f := (f_{a_1}, \cdots, f_{a_n})$ must be injective.  Conversely, if $f$ is injective, then every subrepresentation of $V$ must have dimension vector $(\beta_0, 1)$ with $\beta_0 \le d$.  Therefore, $V$ is $\theta$-semistable.  In fact, $V$ must also be $\theta$-stable, since every proper subrepresentation must have dimension vector $(\beta_0, 1)$ with $\beta_0 < d$.

It follows that representation $V$ is $\theta$-stable if and only if it is $\theta$-semistable if and only if the induced linear transformation $f: \C^{d} \rightarrow \C^n$
is injective.  Thus, $\Rep^s_\theta(K_n, \alpha)/G(\alpha) = \Rep(K_n, \alpha)//_{\theta} G(\alpha)$ parametrizes subspaces of $\C^n$ of dimension $d$, identifying it with the Grassmannian $\Gr(d,n)$.

\end{enumerate}
\end{exa}

We conclude this section by demonstrating that the quotient $\Rep^s_\theta(A_2, \alpha)/G(\alpha)$ is a moduli space.

Let $Q$ be a quiver and $\alpha$ a dimension vector. Let us consider the following moduli functor:
\begin{align*}
& \cM^s_{\theta}(Q, \alpha): \opcat{\Var} \rightarrow \Set\\
& \cM^s_{\theta}(Q, \alpha)(T) = \left \{\begin{tabular}{lll} $\cF = (\{ T \times \C^{\alpha_i}\}_{i \in I_Q}, \{\varphi_a\}_{a \in A_Q})$,\\ $\varphi_a: T \times \C^{\alpha_{s(a)}} \rightarrow T \times \C^{\alpha_{t(a)}}$\\ $\cF|_t \in \Rep(Q, \alpha)$ is $\theta$-stable \end{tabular} \right\} \Bigg/  \begin{tabular}{lll}vector bundle isomor-\\phisms commuting\\ with the $\varphi_a$\end{tabular}\\
& \cM^s_{\theta}(Q, \alpha)(f) = f^* \textrm{ for } f: S \rightarrow T.  
\end{align*}

Note that the identity morphism $\prod_{a \in A_Q} \Hom(\C^{\alpha_{s(a)}}, \C^{\alpha_{t(a)}}) \rightarrow \Rep(Q, \alpha)$ defines the \emph{tautological family} $\cT  = (\{\Rep(Q, \alpha)\times \C^{\alpha_i}\}, \{\varphi_a\})$ of representations over $\Rep(Q, \alpha)$ consisting of trivial bundles $\Rep(Q, \alpha)\times \C^{\alpha_i}$ and morphisms $\varphi_a: \Rep(Q, \alpha)\times \C^{\alpha_{s(a)}} \rightarrow \Rep(Q, \alpha)\times \C^{\alpha_{t(a)}}$ such that $(\varphi_a)_V = f_a$ on the fiber for each  $V \in \Rep(Q, \alpha)$.  Restricting this family to the open subset of $\theta$-stable representations yields the tautological family $\cT^s_{\theta}$.

\begin{thm}
\label{quivermodulic}
The quotient variety $M^s_{\theta}(Q, \alpha) := \Rep^s_\theta(A_2, \alpha)/G(\alpha)$ is a coarse moduli space for the functor $\cM^s_{\theta}(Q, \alpha)$. 
\end{thm}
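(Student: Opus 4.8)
The plan is to invoke the general machinery of Lemma \ref{natinvlemma} together with Theorem \ref{coarsemodulithm}, taking $X = \Rep^s_{\theta}(Q, \alpha)$, the group $G = G(\alpha)$ (acting through $PG(\alpha) = G(\alpha)/\Delta$), and the family $\cF = \cT^s_{\theta}$ obtained by restricting the tautological family to the $\theta$-stable locus. First I would check that $\cT^s_{\theta}$ has the local universal property. Since an object of $\cM^s_{\theta}(Q,\alpha)(T)$ is by definition a collection of \emph{trivial} bundles $T \times \C^{\alpha_i}$ together with bundle morphisms $\varphi_a$, each $\varphi_a$ is recorded by a regular map $T \to \Hom(\C^{\alpha_{s(a)}}, \C^{\alpha_{t(a)}})$; assembling these yields a classifying morphism $f : T \to \Rep(Q,\alpha)$ which lands in $\Rep^s_{\theta}(Q,\alpha)$ precisely because every fibre $\cF|_t$ is $\theta$-stable. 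One then has $\cF = f^*\cT^s_{\theta}$ on the nose, so in fact every family is a global, and \emph{a fortiori} local, pullback of $\cT^s_{\theta}$.

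Second, I would verify the orbit hypothesis of Lemma \ref{natinvlemma}: two points $x_1, x_2 \in \Rep^s_{\theta}(Q, \alpha)$ lie in the same $G(\alpha)$-orbit if and only if the corresponding representations are isomorphic, which is exactly the statement that the fibres $\cT_{x_1}$ and $\cT_{x_2}$ are equivalent. This is the observation recorded in Section 8.1 that isomorphism classes of representations in $\Rep(Q,\alpha)$ coincide with $G(\alpha)$-orbits. With these two checks the hypotheses of Lemma \ref{natinvlemma} are met, so $\Nat(\cM^s_{\theta}(Q,\alpha), h_Y)$ is in natural bijection with $G(\alpha)$-invariant morphisms $X \to Y$ for every variety $Y$.

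Third, I would identify the correct quotient. By King's theorem (Theorem \ref{kingstab}) the $\chi_{\theta}$-stable points are exactly the $\theta$-stable representations, and Proposition \ref{projstab} provides a geometric quotient $\phi : \Rep^s_{\theta}(Q, \alpha) \to M^s_{\theta}(Q, \alpha)$. A geometric quotient is in particular a good quotient whose fibres are single orbits, so $(M^s_{\theta}(Q, \alpha), \phi)$ is simultaneously an orbit space and, since every good quotient is a categorical quotient (the proposition following Remark \ref{goodrmk}), a categorical quotient. Theorem \ref{coarsemodulithm}(2) then applies verbatim: a categorical quotient that is an orbit space carries a natural transformation $\eta : \cM^s_{\theta}(Q,\alpha) \to h_{M^s_{\theta}(Q,\alpha)}$ making $M^s_{\theta}(Q, \alpha)$ into a coarse moduli space, which is the assertion of the theorem (the symbol $A_2$ in the statement being a typo for the general quiver $Q$).

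The step requiring the most care is not any single calculation but the bookkeeping around the scalar subgroup $\Delta$: the stabiliser of every representation contains the one-parameter subgroup of simultaneous scalars, so stability here means finiteness of $G(\alpha)_V/\Delta$, and the geometric quotient is really for the $PG(\alpha)$-action. I would therefore make sure that the notions of orbit and orbit-equivalence used in applying Lemma \ref{natinvlemma} are phrased for $PG(\alpha)$ — equivalently, I would note that $\Delta$ acts trivially on $\Rep(Q,\alpha)$, so that $G(\alpha)$-orbits and $PG(\alpha)$-orbits coincide — ensuring that the orbit space produced by Proposition \ref{projstab} matches the orbit structure governing the equivalence relation on fibres.
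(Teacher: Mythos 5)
Your proposal is correct and follows essentially the same route as the paper's proof: establish the local universal property of the tautological family $\cT^s_{\theta}$, check that equivalence of fibres corresponds exactly to $G(\alpha)$-orbits (noting $\Delta$ acts trivially, so $G(\alpha)$- and $PG(\alpha)$-orbits coincide), and then conclude via Theorem \ref{coarsemodulithm}(2) applied to the geometric quotient furnished by King's theorem and Proposition \ref{projstab}. If anything, your verification of the first step is sharper than the paper's: because the functor only allows trivial bundles, the classifying morphism $t \mapsto \cF|_t$ exhibits every family as a \emph{global} pullback of $\cT^s_{\theta}$, whereas the paper's claim that $\cF$ is locally isomorphic to a constant family is stronger than what actually holds (and more than is needed).
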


\begin{proof}
Note that the tautological bundle $\cT^s_{\theta}$ has the local universal property with respect to the moduli functor $\cM^s_{\theta}(Q, \alpha)$.  Indeed, if $\cF$ is a family of stable representations of $Q$ over an algebraic variety $T$, then it can be trivialized in a neighborhood $U \subset T$ of any $t \in T$.  That is, $\cF|_{U} \cong (\{U \times V_i\}, \{\Id \times f_a\})$ for some $V = (\{V_i\}, \{f_a\}) \in \Rep^s_\theta(Q,\alpha)$.  Taking $h: U \rightarrow \Rep^s_\theta(Q,\alpha)$ to be the constant morphism sending $U$ to $V$, gives us $h^*\cT^s_{\theta} \cong \cF|_{U}$.

If $V_1, V_2 \in \Rep^s_\theta(Q,\alpha)$, then clearly $(\cT^s_{\theta})_{V_1} \cong (\cT^s_{\theta})_{V_2}$ if and only if $V_1 \cong V_2$, which only happens if and only if $V_1$ and $V_2$ belong to the same $G(\alpha)$-orbit.  Thus Theorem \ref{coarsemodulithm} implies there is a natural transformation $\eta: \cM^s_{\theta}(Q, \alpha) \rightarrow h_{M^s_{\theta}(Q, \alpha)}$ that makes $(M^s_{\theta}(Q, \alpha), \eta)$ into a coarse moduli space. 
\end{proof}

\begin{rmk}
\label{quiverssmoduli}
It is easy to modify the moduli functor above to describe families of $\theta$-semistable representations of $\Rep(Q, \alpha)$ up to S-equivalence.  One can show (see Proposition 5.2 in \cite{King1994}) that the quotient variety $M^{ss}_{\theta}(Q, \alpha) := \Rep(A_2, \alpha)//_{\theta} G(\alpha)$ is a coarse moduli space for this functor.  As we have already seen, Theorem \ref{quiverpoly} implies that $M^{ss}_{\theta}(Q, \alpha)$ parametrizes $\theta$-polystable representations up to isomorphism.    
\end{rmk}

If $\alpha = (\alpha_i)$ is such that the greatest common divisor among the $\alpha_i$ is $1$, then $\alpha$ is called \emph{indivisible}. Suppose $\alpha$ is indivisible. Let $c = (c_i) \in \Z^{I_Q}$ be such that $\sum_{i} c_i\alpha_i = 1$.  Consider the character $\chi_c: G(\alpha) \rightarrow \C^{\times}$ defined by $\chi_c(g) = \prod_i \det(g_i)^{-c_i}$.  

The $G(\alpha)$-action on $\Rep(Q,\alpha)$ induces a $G(\alpha)$-bundle structure on $\cT^s_{\theta}$, such that the one-parameter subgroup $\Delta$ acts by $t$ on the fibers.  Multiplying by the character $\chi_c$ along the fibers defines a $G(\alpha)$-bundle structure that $\Delta$ acts trivially.  Therefore, the family $\cT^s_{\theta}$ descends (see Theorem 2.3 in \cite{DN1989} for general case) to a family $\cU$ (i.e. $\cU$ pulls back to $\cT^s_{\theta}$) of $\theta$-stable representations on $M^s_{\theta}(Q, \alpha)$.  One can show this family is universal, giving us the following theorem (see Proposition 5.3 in \cite{King1994}.

\begin{thm}
\label{quivermodulif}
If $\alpha$ is indivisible (i.e. the $\alpha_i$ do not have a nontrivial common divisor), then $M^s_{\theta}(Q, \alpha)$ is a fine moduli space.
\end{thm}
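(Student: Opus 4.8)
The plan is to upgrade the coarse moduli space $M^s_{\theta}(Q,\alpha)=\Rep^s_\theta(Q,\alpha)/G(\alpha)$ produced in Theorem \ref{quivermodulic} to a fine one by exhibiting a universal family. By Corollary \ref{yonedacor3} it suffices to construct a family $\cU$ over $M^s_\theta(Q,\alpha)$ so that $(M^s_\theta(Q,\alpha),\cU)$ is a universal object for $\cM^s_\theta(Q,\alpha)$. I will obtain $\cU$ by \emph{descending} the tautological family $\cT^s_\theta$ along the quotient map $\phi\colon \Rep^s_\theta(Q,\alpha)\to M^s_\theta(Q,\alpha)$, and then check universality using the local universal property already established in the proof of Theorem \ref{quivermodulic}.

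First I would record the geometric input. A $\theta$-stable representation is a simple object of the abelian category $\cR_\theta(Q)$, so by Schur's Lemma \ref{schur} its endomorphism algebra is $\C$ and its automorphism group is exactly the scalars. Hence the $G(\alpha)$-stabilizer of every stable point is precisely the central one-parameter subgroup $\Delta=\{(tI_{\alpha_i})_{i\in I_Q}\}$, the induced $PG(\alpha)=G(\alpha)/\Delta$-action on $\Rep^s_\theta(Q,\alpha)$ is free, and $\phi$ is a principal $PG(\alpha)$-bundle in the \'etale topology (Theorem \ref{projbun}). The tautological family $\cT^s_\theta$ carries a natural $G(\alpha)$-equivariant structure, but $\Delta$ acts by the scalar $t$ on each fiber $\C^{\alpha_i}$; this nontrivial central action is exactly the obstruction to descending $\cT^s_\theta$ through $\phi$.

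The crux of the argument, and the only place the hypothesis is used, is removing this obstruction. Since $\alpha$ is indivisible I may choose $c=(c_i)\in\Z^{I_Q}$ with $\sum_i c_i\alpha_i=1$ and form the character $\chi_c(g)=\prod_i\det(g_i)^{-c_i}$ of $G(\alpha)$. On $\Delta$ one computes
\[
\chi_c\bigl((tI_{\alpha_i})_i\bigr)=\prod_i\bigl(t^{\alpha_i}\bigr)^{-c_i}=t^{-\sum_i c_i\alpha_i}=t^{-1},
\]
so twisting the $G(\alpha)$-equivariant structure on $\cT^s_\theta$ by $\chi_c$ changes the $\Delta$-action on the fibers from $t$ to $t\cdot t^{-1}=1$. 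The twisted family then descends to a $PG(\alpha)$-equivariant family, and \'etale descent along the principal bundle $\phi$ (Theorem 2.3 in \cite{DN1989}) yields a family $\cU$ on $M^s_\theta(Q,\alpha)$ with $\phi^*\cU\cong\cT^s_\theta$. I expect this descent step to be the main obstacle: when $\gcd_i\alpha_i=d>1$ the sum $\sum_i c_i\alpha_i$ is always a multiple of $d$, so no such $\chi_c$ exists, the central scalar action cannot be trivialized, and the construction genuinely fails (there is a nontrivial gerbe-type obstruction). Thus indivisibility is precisely what makes $\cU$ exist.

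Finally I would verify universality. Given any family $\cF\in\cM^s_\theta(Q,\alpha)(T)$, Theorem \ref{coarsemodulithm} (via the natural transformation $\eta$ of Theorem \ref{quivermodulic}) supplies the unique candidate morphism $f=\eta_T(\cF)\colon T\to M^s_\theta(Q,\alpha)$, with uniqueness inherited from the coarse moduli property; it then remains to prove $f^*\cU\sim_T\cF$. Using the local universal property of $\cT^s_\theta$ I would trivialize $\cF$ over a cover $\{U_i\to T\}$, producing lifts $h_i\colon U_i\to\Rep^s_\theta(Q,\alpha)$ of $f$ and local isomorphisms $\cF|_{U_i}\cong h_i^*\cT^s_\theta\cong(f^*\cU)|_{U_i}$. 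On overlaps these differ by a fiberwise automorphism of a family of stable representations, hence by a $\C^{\times}$-valued function by Schur's Lemma; the rigidification built into $\cU$ by the twist $\chi_c$ is exactly what forces this transition cocycle to be a coboundary, so the local isomorphisms glue to a global $f^*\cU\cong\cF$. This establishes that $(M^s_\theta(Q,\alpha),\cU)$ is a universal object, i.e. $M^s_\theta(Q,\alpha)$ is a fine moduli space, in agreement with Proposition 5.3 of \cite{King1994}.
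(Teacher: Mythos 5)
Your construction of the universal family coincides exactly with the paper's: the same character $\chi_c$ built from $\sum_i c_i\alpha_i=1$, the same twist of the $G(\alpha)$-equivariant structure on $\cT^s_\theta$ so that $\Delta$ acts trivially, and the same descent along $\phi$ via Theorem 2.3 of \cite{DN1989}. The gap is in the universality verification, at precisely the step you present as the conclusion: the claim that the $\C^\times$-valued transition cocycle ``is forced to be a coboundary'' by the $\chi_c$-rigidification. Nothing in your setup justifies this, and it is false in the generality in which you are arguing (families whose bundles are only locally trivial). What the gluing actually produces is an isomorphism $\cF \cong f^*\cU \otimes L$, where $L$ is the line bundle on $T$ determined by the cocycle $(u_{ij})$, and $L$ can be nontrivial. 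Concretely, take $Q=K_2$, $\alpha=(1,1)$, $\theta=(1,-1)$, so that $M^s_\theta(Q,\alpha)=\P^1$: the families $\cU$ and $\cU\otimes\cO_{\P^1}(1)$ over $\P^1$ have the same classifying map (the identity) but are not isomorphic as families, since the bundles sitting at a given vertex are $\cO_{\P^1}(-1)$ and $\cO_{\P^1}$ respectively. No normalization of $\cU$ can repair this, because the defect lies with $\cF$, not with $\cU$: a family is recoverable from the universal one only up to tensoring by a line bundle pulled back from the base.

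This twist ambiguity is exactly what King's Proposition 5.3 handles by changing the moduli functor: families are allowed arbitrary locally free sheaves at the vertices, and two families over $T$ are declared equivalent when they differ by tensoring with a line bundle pulled back from $T$. With that equivalence your argument closes, since the cocycle no longer needs to die --- it is absorbed into the equivalence relation --- and indivisibility then enters a second time: because $\sum_i c_i\alpha_i=1$, every family $\cF$ has a canonical normalized representative $\cF\otimes\bigl(\bigotimes_i \det(\cF_i)^{\otimes c_i}\bigr)^{-1}$ in its twist class, and the descended $\cU$ is automatically normalized, as $\bigotimes_i\det(\cU_i)^{\otimes c_i}$ descends from the trivial equivariant line bundle (the $\chi_c$-twists cancel precisely because $\sum_i c_i\alpha_i=1$). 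Note also that you cannot dodge the issue by adhering to the paper's literal definition of $\cM^s_\theta(Q,\alpha)$, in which every family has trivial bundles: there a family over $T$ is globally a morphism $T\to\Rep^s_\theta(Q,\alpha)$, so your cocycle argument becomes unnecessary, but then $\cU$ itself is not an object of $\cM^s_\theta(Q,\alpha)\bigl(M^s_\theta(Q,\alpha)\bigr)$ (its bundles are nontrivial, as the Kronecker example shows), so $(M^s_\theta(Q,\alpha),\cU)$ cannot be a universal object in the sense of Corollary \ref{yonedacor3}, which your plan invokes at the outset. In short, the descent construction is correct and matches the paper, but the remaining content of the theorem is the adjustment of the moduli functor (equivalence up to line-bundle twist, as in \cite{King1994}), and your proposal passes over exactly that point.
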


\section{Framed representations}

\subsection{Framing and stability}
Given a quiver $Q$ we can define a bilinear form on the dimension vectors of its representations.

\begin{defn}
Let $Q$ be a quiver. The \textbf{Euler-Ringel form} associated with $Q$ is the bilinear form given by
\[
\langle \alpha, \beta \rangle = \sum_{i \in I_Q} \alpha_i \beta_i - \sum_{a \in A_Q} \alpha_{s(a)}\beta_{t(a)},
\]
where $\alpha, \beta \in \Z^{I_Q}$.
The quadratic form $q(\alpha) = \langle \alpha, \alpha \rangle$ is called the \bf{Tits form}.
\end{defn}

Note that the Euler-Ringel form computes the Euler characteristic of a pair of (finite-dimensional) representations $V$ and $W$ with dimension vectors $\alpha$ and $\beta$ (see Corollary 1.4.3 in \cite{Bri2012}).  That is $\langle \alpha, \beta \rangle = \dim \Hom(V,W) - \dim \Ext^1(V,W)$.

An easy consequence of Theorems \ref{projstab} and \ref{projbun} is the following proposition.
\begin{prp}
Let $\theta \in \Z^{I_Q}$ be such that there are $\theta$-stable representations in $\Rep(Q, \alpha)$.  We have that $M^s_\theta(Q, \alpha)$ is a smooth open subset of $M^{ss}_\theta(Q, \alpha)$ such that 
\[
\dim M^s_\theta(Q, \alpha) = \dim \Rep(Q,\alpha) - \dim G(\alpha) + 1 = 1 - q(\alpha).
\]
\end{prp}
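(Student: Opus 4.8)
The plan is to pass to the effective group $PG(\alpha) = G(\alpha)/\Delta$ and to exhibit the quotient map as a principal $PG(\alpha)$-bundle over a smooth base. First I would dispose of the second, purely numerical, equality, which requires no geometry: since $\dim \Rep(Q,\alpha) = \sum_{a \in A_Q} \alpha_{s(a)}\alpha_{t(a)}$ and $\dim G(\alpha) = \sum_{i \in I_Q}\alpha_i^2$, the definition of the Tits form gives $\dim\Rep(Q,\alpha) - \dim G(\alpha) = -q(\alpha)$, whence $\dim\Rep(Q,\alpha) - \dim G(\alpha) + 1 = 1 - q(\alpha)$.

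Next I would set up the GIT picture with $X = \Rep(Q,\alpha)$, $G = G(\alpha)$ and $\chi = \chi_\theta$. By Theorem \ref{kingstab} the $\chi_\theta$-stable points coincide with the $\theta$-stable representations, so $X^s_{\chi_\theta} = \Rep^s_\theta(Q,\alpha)$, and applying Theorem \ref{projstab} shows that $\phi(\Rep^s_\theta(Q,\alpha)) = M^s_\theta(Q,\alpha)$ is open in $M^{ss}_\theta(Q,\alpha) = \Rep(Q,\alpha)//_\theta G(\alpha)$, that $\Rep^s_\theta(Q,\alpha) = \phi^{-1}(\phi(\Rep^s_\theta(Q,\alpha)))$ is saturated, and that $\phi$ restricts to a geometric quotient there. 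This settles the openness assertion. To control the fibres I would establish freeness of the effective action: a $\theta$-stable $V$ is a simple object of $\cR_\theta(Q)$ by the Proposition preceding Theorem \ref{quiverpoly}, so Schur's Lemma (Lemma \ref{schur}) forces $\End(V) = \C$ and hence $\Aut(V) = \C^\times = \Delta$. Thus the $G(\alpha)$-stabilizer of every stable point is exactly $\Delta$, the stabilizers are connected, and $PG(\alpha)$ acts freely on $\Rep^s_\theta(Q,\alpha)$.

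Finally I would invoke Theorem \ref{projbun}, whose conclusion that $\phi\colon X^s \to Y$ is a principal $G$-bundle in the \'{e}tale topology over a smooth $Y$ gives directly that $M^s_\theta(Q,\alpha)$ is smooth and that $\phi\colon \Rep^s_\theta(Q,\alpha) \to M^s_\theta(Q,\alpha)$ is a principal $PG(\alpha)$-bundle. The dimension formula then drops out: since $\Rep^s_\theta(Q,\alpha)$ is a nonempty (by hypothesis) open subset of the irreducible affine space $\Rep(Q,\alpha)$ it has dimension $\dim\Rep(Q,\alpha)$, while $\dim PG(\alpha) = \dim G(\alpha) - 1$, so the principal bundle structure yields
\[
\dim M^s_\theta(Q,\alpha) = \dim \Rep^s_\theta(Q,\alpha) - \dim PG(\alpha) = \dim \Rep(Q,\alpha) - \dim G(\alpha) + 1,
\]
which by the numerical identity above equals $1 - q(\alpha)$.

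The main obstacle is matching hypotheses with Theorem \ref{projbun}, which as stated requires the action to be free on the whole semistable locus $X^{ss}_\chi$ — a condition that fails for quivers as soon as strictly semistable representations exist, since a properly polystable representation (a direct sum of non-isomorphic $\theta$-stable summands of the same slope) has automorphism group strictly larger than $\Delta$. The resolution is to apply the bundle and smoothness conclusions of Theorem \ref{projbun} only over the properly stable locus $\Rep^s_\theta(Q,\alpha)$, where freeness does hold: one restricts to the $PG(\alpha)$-saturated open set $\Rep^s_\theta(Q,\alpha)$ (saturation and the geometric-quotient property are exactly what Theorem \ref{projstab} provides), and one checks that the principal-bundle assertion is local over the stable part. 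Granting freeness on the stable locus together with connectedness of stabilizers, smoothness of the base is forced as in the theorem, because a principal bundle projection with smooth total space is faithfully flat with smooth fibres, and regularity of the source descends along such a map.
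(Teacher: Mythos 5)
Your proof is correct and takes essentially the approach the paper intends: the paper gives no written proof, asserting only that the proposition is an ``easy consequence of Theorems \ref{projstab} and \ref{projbun},'' and your argument---Theorem \ref{kingstab} to identify $\chi_{\theta}$-stability with $\theta$-stability, Proposition \ref{projstab} for openness, saturation, and the geometric quotient, Schur's lemma (Lemma \ref{schur}, applied to $\theta$-stable objects as the simple objects of $\cR_{\theta}(Q)$) to get freeness of the $PG(\alpha)$-action on the stable locus, Theorem \ref{projbun} for smoothness and the bundle structure, and the Tits-form identity for the numerical equality---is exactly that consequence written out. Your further observation that Theorem \ref{projbun} as stated requires freeness on all of $X^{ss}_{\chi}$ (which fails whenever strictly semistable representations exist) and must instead be applied locally over the saturated stable locus is a genuine subtlety the paper elides, and your resolution, localizing the slice-theoretic argument to $\Rep^{s}_{\theta}(Q,\alpha)$, is the standard correct one.
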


Thus we see that for a given dimension vector $\alpha$ it is often the case that $M^s_\theta(Q, \alpha)$ is either empty or finite (see Example \ref{quivermoduliex}).  In order to obtain a more interesting class of varieties we introduce additional structure to the quiver $Q$.

%Note that throughout this (THE NEXT FEW?) section we will be using the slope definition of $\theta$-stability given in Remark \ref{slopestab}.  Namely, that a quiver representation $V \in \Rep(Q, \alpha)$ is $\theta$-semistable (resp. $\theta$-stable) if for every nontrivial proper subrepresentation $W \subset V$ is is true that $\phi(W) \le \phi(V)$ (resp. $\phi(W) < \phi(V)$), where $\phi(V) = \frac{\theta \cdot \alpha}{\sum_{i} \alpha_i}$.

\begin{defn}
Let $Q = (I_Q, A_Q, s, t)$ be quiver.  A \textbf{framing} of $Q$ is a quiver $Q^{fr} = (I_{Q^{fr}}, A_{Q^{fr}}, s^{fr},t^{fr})$ such that
\begin{enumerate}
\item $I_{Q^{fr}} = I_Q \sqcup I'_Q$ together with a bijection between $I_Q$ and $I'_Q$ via the mapping $i \mapsto i'$,
\item $A_{Q^{fr}} = A_Q \sqcup \{b_i\}_{i \in I_Q}$,
\item $s^{fr}(a) = s(a)$ and $t^{fr}(a) = t(a)$ for all $a \in A_Q$,
\item $s^{fr}(b_i) = i$ and $t^{fr}(b_i) = i'$.
\end{enumerate}
\end{defn}

In other words, $Q^{fr}$ consists of the quiver $Q$ together with one additional vertex for each vertex of $Q$ connected to that vertex by a single arrow.  It follows that a representation of $Q^{fr}$ consists of $V = (\{V_i\} \sqcup\{V'_{i}\}, \{f_a\} \sqcup \{f_{b_i}\})$, where $V'_{i}$ is placed at the vertex $i' \in I'_Q$ and $f_{b_i}: V_i \rightarrow V'_i$ is a linear transformation.

We can similarly define the representations of $Q^{fr}$ in the standard coordinate spaces.  For a dimension vector $(\alpha, \alpha') \in \Z_{\ge 0}^{I_Q} \sqcup \Z_{\ge 0}^{I'_Q} = \Z_{\ge 0}^{I_{Q^{fr}}}$, denote the affine variety of such representation by $\Rep(Q^{fr}, \alpha, \alpha')$.  We will call the elements of $\Rep(Q^{fr}, \alpha, \alpha')$ \emph{framed representations} of $Q$.

There is an action of the group $G(\alpha)$ on $\Rep(Q^{fr}, \alpha, \alpha')$ defined by $f_a \mapsto g_{t(a)} f_a \inv{g_{s(a)}}$ for $a \in A_Q$ and $f_{b_i} \mapsto f_{b_i} \inv{g_i}$.

An alternative description of framed representations can be given as follows.

\begin{prp}
\label{framedeqprp}
Let $Q = (I_Q, A_Q, s, t)$ be a quiver. Fix a dimension vector $\alpha$ for Q, and let $\hat{Q}$ be the quiver with vertex set $I_Q \sqcup \{\infty\}$ and an arrow set consisting of $A_Q$ together with $\alpha_i$ additional arrows from each vertex $i \in I_Q$ to $\infty$. Let the dimension vector $\hat{\alpha}$ be given by $\hat{\alpha}_i = \alpha_i$ for $i \in I_Q$ and $\hat{\alpha}_{\infty} = 1$.  There is a $G(\alpha)$-equivariant isomorphism $\Rep(Q^{fr}, \alpha, \alpha') \rightarrow \Rep(\hat{Q}, \hat{\alpha})$.
\end{prp}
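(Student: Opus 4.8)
The plan is to build the isomorphism directly at the level of the defining matrix data, since both $\Rep(Q^{fr}, \alpha, \alpha')$ and $\Rep(\hat{Q}, \hat{\alpha})$ are explicit affine spaces assembled from $\Hom$-spaces attached to arrows. The arrows coming from $A_Q$ occur in both quivers with identical source and target in $I_Q$, so I would leave the corresponding matrices $f_a$ completely untouched; the entire content of the statement is the comparison of the single framing arrow at each vertex with the bundle of arrows running into $\infty$.

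First I would fix the standard basis of each framing space $V'_i \cong \C^{\alpha'_i}$ together with the coordinate projections $\pi^{(k)}_i \colon V'_i \to \C$ for $1 \le k \le \alpha'_i$. A representation of $Q^{fr}$ carries one framing arrow $f_{b_i} \in \Hom(V_i, V'_i)$, and composing it with the projections yields the tuple $(\pi^{(1)}_i \circ f_{b_i}, \dots, \pi^{(\alpha'_i)}_i \circ f_{b_i})$ of elements of $\Hom(V_i, \C)$. This is nothing but the canonical isomorphism $\Hom(V_i, V'_i) \cong \bigoplus_{k=1}^{\alpha'_i} \Hom(V_i, \C)$, and I would declare the $k$-th component to be the datum assigned to the $k$-th arrow $i \to \infty$ of $\hat{Q}$, whose head carries the one-dimensional space indexed by $\hat{\alpha}_{\infty} = 1$. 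Running this over all $i \in I_Q$ and keeping the $f_a$ fixed defines a map $\Phi \colon \Rep(Q^{fr}, \alpha, \alpha') \to \Rep(\hat{Q}, \hat{\alpha})$, whose inverse simply reassembles the functionals attached to the arrows into $\infty$ into a single matrix $f_{b_i}$. Since $\Phi$ is a linear bijection between the two affine spaces, it is automatically an isomorphism of varieties.

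The substantive point is then $G(\alpha)$-equivariance. Here $G(\alpha) = \prod_{i \in I_Q} \GL(\alpha_i)$ sits inside $G(\hat{\alpha})$ as the subgroup whose component at $\infty$ is trivial, $g_{\infty} = \Id$. On the $A_Q$-arrows the two actions agree verbatim by definition, so only the framing part needs checking. On the source side the action twists $f_{b_i} \mapsto f_{b_i} \, \inv{g_i}$; on the $\hat{Q}$ side an arrow $\ell \colon i \to \infty$ transforms as $\ell \mapsto g_{\infty} \, \ell \, \inv{g_i} = \ell \, \inv{g_i}$ because $g_{\infty} = \Id$. Composing with the fixed projections gives $\pi^{(k)}_i \circ (f_{b_i} \, \inv{g_i}) = (\pi^{(k)}_i \circ f_{b_i}) \, \inv{g_i}$, which is exactly the transformed arrow datum, so $\Phi$ intertwines the two actions.

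I expect the only delicate bookkeeping to concern the vertex $\infty$: one must record that $G(\alpha)$ is indexed only by $I_Q$, so the group does not act on the framing spaces $V'_i$ (equivalently, it does not act on $I'_Q$), which is precisely what makes the fixed projections $\pi^{(k)}_i$ legitimate $G(\alpha)$-equivariant data rather than data well-defined only up to the extra $\GL(1)$-torus sitting at $\infty$ in $G(\hat{\alpha})$. Once this is made explicit, the remainder is a routine unwinding of the definition of $\Rep$ in coordinate spaces, and no deeper obstruction arises.
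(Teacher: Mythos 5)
Your proof is correct and takes essentially the same route as the paper: the paper also defines the morphism by leaving the $f_a$ untouched and sending $f_{b_i}$ to its rows $r_{ik}$ (your $\pi^{(k)}_i \circ f_{b_i}$), then checks equivariance via the embedding $G(\alpha) \hookrightarrow G(\hat{\alpha})$, $(g_i) \mapsto (g_i, \Id)$, exactly as you do with $g_{\infty} = \Id$. The only differences are cosmetic — you phrase rows as compositions with fixed coordinate projections — and you correctly use $\alpha'_i$ arrows into $\infty$, silently fixing what is a typo ($\alpha_i$) in the statement.
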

\begin{proof}
The group $G(\alpha)$ acts on $\Rep(\hat{Q}, \hat{\alpha})$ via the embedding $G(\alpha) \hookrightarrow G(\hat{\alpha})$ given by $g = (g_i) \mapsto (g_i,\Id)$.  This may be viewed as the action of $G(\alpha) \cong G(\hat{\alpha})/\C^{\times}$ after scaling by $g_{\infty}$.

Consider the morphism of affine varieties $\Rep(Q^{fr}, \alpha, \alpha') \rightarrow \Rep(\hat{Q}, \hat{\alpha})$ defined by $(\{V_i\}, \{V'_{i}\}, \{f_a\}, \{f_{b_i}\}) \mapsto (\{V_i\}, \{V_{\infty}\}, \{f_a\}, \{r_{ik}\})$ where $V_{\infty} = \C$ and $r_{ik}$ is the $k$-th row of $f_{b_i}$ for $1 \le k \le \alpha_i$.  Note that $g_{t(a)} f_a \inv{g_{s(a)}} \mapsto  g_{t(a)} f_a \inv{g_{s(a)}}$ and $f_{b_i} \inv{g_i} \mapsto (r_{ik}\inv{g_i})$ under this morphism, so it is $G(\alpha)$-equivariant.  It is easy to check the morphism is bijective.
\end{proof}

Thus, if we are interested in constructing GIT quotients of $\Rep(Q^{fr}, \alpha, \alpha')$ with respect to the $G(\alpha)$ action described above, we can instead look at the action of $G(\alpha)$ on  $\Rep(\hat{Q}, \hat{\alpha})$.  This approach yields the following following theorem:

\begin{thm}
\label{framedthm}
Let $\theta \in \Z_{> 0}^{I_Q}$. Let $\chi_{\theta}$ be the corresponding character of $G(\alpha)$.  We have the following
\begin{enumerate}
\item Let $V = (\{V_i\}, \{f_a\}) \in \Rep(Q, \alpha)$.  A framed representation $V^{fr} = (\{V_i\},\\ \{V'_{i}\}, \{f_a\}, \{f_{b_i}\}) \in \Rep(Q^{fr}, \alpha, \alpha')$ is $\chi_{\theta}$-semistable if and only if there is no nontrivial subrepresentation $W \subset V$ such that $W_i \subset \ker f_{b_i}$ for each $i \in I_Q$. 
\item The $G(\alpha)$ action on the $\chi_{\theta}$-semistable elements of $\Rep(Q^{fr}, \alpha, \alpha')$ is free, and every $\chi_{\theta}$-semistable representation in $\Rep(Q^{fr}, \alpha, \alpha')$ is $\chi_{\theta}$-stable.
\item If $\cM_{\theta}(Q, \alpha, \alpha') := \Rep(Q^{fr}, \alpha, \alpha')//_{\chi_{\theta}}G(\alpha)$ is nonempty, then it is smooth and of dimension $\dim \Rep(Q^{fr}, \alpha, \alpha') - \dim G(\alpha) = \alpha \cdot \alpha' - q(\alpha)$.
\end{enumerate}
\end{thm}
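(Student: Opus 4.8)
The plan is to reduce everything to King's criterion through the equivalence of Proposition \ref{framedeqprp}, and then to read off freeness, smoothness, and the dimension count from Theorem \ref{projbun}. Under the $G(\alpha)$-equivariant isomorphism $\Rep(Q^{fr}, \alpha, \alpha') \cong \Rep(\hat Q, \hat\alpha)$, a framed representation becomes a representation of $\hat Q$ whose value at the new vertex $\infty$ is one-dimensional. I extend $\theta$ to a weight $\hat\theta \in \Z^{I_{\hat Q}}$ by setting $\hat\theta_i = \theta_i$ for $i \in I_Q$ and $\hat\theta_\infty = -\theta\cdot\alpha$, so that $\hat\theta\cdot\hat\alpha = 0$ and Definition \ref{thetastabdef} applies to $(\hat Q, \hat\alpha, \hat\theta)$. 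Using the identification $G(\alpha) \cong G(\hat\alpha)/\C^\times$ from Proposition \ref{framedeqprp}, together with the fact that $\chi_{\hat\theta}$ is trivial on the diagonal scalars (since $\hat\theta\cdot\hat\alpha=0$), I expect the $\chi_\theta$-semistable framed representations to be exactly the $\hat\theta$-semistable representations of $\hat Q$ by Theorem \ref{kingstab}. Checking that this identification of characters introduces no spurious stability conditions coming from the central torus is the first point I would pin down.

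For part (1) I apply Definition \ref{thetastabdef} to $\hat Q$, where the decisive simplification is $\hat\alpha_\infty = 1$: any subrepresentation $\hat W$ of $\hat V$ has $\hat W_\infty$ equal to $0$ or to all of $\hat V_\infty$. If $\hat W_\infty = \hat V_\infty$, its dimension vector $\hat\beta$ satisfies $\hat\theta\cdot\hat\beta = \theta\cdot\beta - \theta\cdot\alpha \le 0$ automatically, because $\beta_i \le \alpha_i$ and $\theta_i > 0$. If $\hat W_\infty = 0$, then $\hat W$ is exactly a subrepresentation $W \subset V$ of the underlying $Q$-representation with $f_{b_i}(W_i)=0$, i.e. $W_i \subset \ker f_{b_i}$ for all $i$, and the required inequality $\hat\theta\cdot\hat\beta = \theta\cdot\beta \le 0$ forces $\beta = 0$ since $\theta \in \Z_{>0}^{I_Q}$. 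Hence $\hat\theta$-semistability is equivalent to the nonexistence of a nonzero such $W$, which is precisely the statement of (1).

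For part (2) I would prove freeness directly and then invoke Theorem \ref{projbun}. If $g \in G(\alpha)$ stabilizes a semistable $V^{fr}$, then $\psi = (g_i - \Id_{V_i})_i$ commutes with the $f_a$, hence is an endomorphism of the $Q$-representation $V$, so $\Im\psi$ is a subrepresentation; and $f_{b_i}g_i^{-1} = f_{b_i}$ gives $f_{b_i}\psi_i = 0$, so $(\Im\psi)_i \subset \ker f_{b_i}$ for all $i$. By the criterion of (1) this forces $\Im\psi = 0$, i.e. $g = \Id$, so the action is free on the semistable locus. To apply Theorem \ref{projbun} I must verify its hypotheses: $\Rep(Q^{fr},\alpha,\alpha')$ is a smooth affine space, and every stabilizer is connected. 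The connectedness I would obtain by identifying $G(\alpha)_{V^{fr}}$ with $(\Id + B)\cap A^\times$, where $A = \End_Q(V)$ is the finite-dimensional endomorphism algebra and $B = \{\phi \in A : f_{b_i}\phi_i = 0 \text{ for all } i\}$; this is the non-vanishing locus of a single norm polynomial inside the affine space $\Id + B$, hence a principal open subset of an irreducible variety, and therefore connected. Freeness on the semistable locus together with Theorem \ref{projbun} then gives both that every $\chi_\theta$-semistable point is $\chi_\theta$-stable and that the quotient is smooth.

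Finally, part (3) is a dimension count built on the principal bundle from Theorem \ref{projbun}. Since $\cM_\theta(Q,\alpha,\alpha')$ is nonempty, the semistable locus is a nonempty open subset of the irreducible affine space $\Rep(Q^{fr},\alpha,\alpha')$ and so has full dimension $\sum_{a\in A_Q}\alpha_{s(a)}\alpha_{t(a)} + \alpha\cdot\alpha'$. As $\phi$ is a principal $G(\alpha)$-bundle on the (semi)stable locus, $\dim \cM_\theta = \dim\Rep(Q^{fr},\alpha,\alpha') - \dim G(\alpha)$; substituting $\dim G(\alpha) = \sum_i \alpha_i^2$ and recalling $q(\alpha) = \sum_i \alpha_i^2 - \sum_{a\in A_Q}\alpha_{s(a)}\alpha_{t(a)}$ yields $\dim \cM_\theta = \alpha\cdot\alpha' - q(\alpha)$. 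The main obstacle I anticipate is not any single step but the bookkeeping in the reduction to $\hat Q$: making sure the passage between the $G(\alpha)$-action on framed representations and the $G(\hat\alpha)$-action used in King's theorem matches the characters exactly, so that the extra central torus contributes no additional (semi)stability constraints; once that translation is secured, the remaining steps are largely formal applications of Theorems \ref{kingstab} and \ref{projbun}.
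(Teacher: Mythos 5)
Your proposal is correct and follows the paper's proof in all essentials: the reduction to $\hat{Q}$ via Proposition \ref{framedeqprp}, the choice $\hat\theta_\infty = -\theta\cdot\alpha$, the dichotomy on whether $\hat{W}_\infty$ is $0$ or all of $\C$ for part (1), the $\Im(g-\Id)$ argument for freeness, and Theorem \ref{projbun} for part (3), including the same dimension bookkeeping. The one place you genuinely diverge is the claim that every semistable point is stable: the paper extracts this directly from King's criterion, observing that the inequalities of part (1) are strict for nontrivial proper subrepresentations (those with $\hat{W}_\infty = \C$ and $\beta \neq \alpha$ give $\hat\theta\cdot\hat\beta < 0$ because $\theta_i > 0$, while those with $\hat{W}_\infty = 0$ are already excluded by semistability), whereas you obtain it as part of the conclusion of Theorem \ref{projbun}. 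Your route obliges you to verify that theorem's hypothesis that all stabilizers are connected, which you do correctly by exhibiting $G(\alpha)_{V^{fr}} = (\Id + B)\cap A^\times$ as a nonempty principal open subset of the irreducible affine space $\Id + B$, where $A = \End_Q(V)$ and $B$ is the right ideal of endomorphisms killed by the $f_{b_i}$. This is a worthwhile addition rather than a detour: the paper itself invokes Theorem \ref{projbun} for part (3) without checking the connected-stabilizer hypothesis, so your argument makes explicit a verification the paper leaves implicit; conversely, the paper's strictness argument is the more elementary and self-contained way to settle part (2), since it needs nothing beyond the inequality analysis already done in part (1).
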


\begin{proof}
Let $\hat{\theta} = (\theta, - \sum_{i \in I_Q} \theta_i \alpha_i)$. Let $\hat{V} = (\{V_i\}, \{V_{\infty}\}, \{f_a\}, \{r_{ik}\}) \in \Rep(\hat{Q}, \hat{\alpha})$ be such that $r_{ik}$ are the rows of $f_{b_i}$.  By Theorem \ref{kingstab}, we have that $\hat{V}$ is $\chi_{\hat{\theta}}$-semistable with respect to the action of $G(\hat{\alpha})$ if and only if it is $\hat{\theta}$-semistable.  Moreover, regarding $G(\alpha)$ as $G(\hat{\alpha})/\C^{\times}$ we see that $\hat{V}$ is $\chi_{\hat{\theta}}$-semistable if and only if it is $\chi_{\theta}$-semistable with respect to the induced $G(\alpha)$ action.  

Note that $\hat{V}$ is $\chi_{\hat{\theta}}$-semistable if and only if for all subrepresentations $\hat{W} \subset \hat{V}$ (with dimension vector $\hat{\beta}$) the following inequality holds 
\[
\hat{\theta} \cdot \hat{\beta}  = \sum_i \theta_i\beta_i - \beta_{\infty}\sum_i \theta_i \alpha_i \le 0.
\]
Let $\hat{W} \subset \hat{V}$ be a subrepresentation. Note that $\beta_{\infty}$ can only be $0$ or $1$.  If $\beta_{\infty} = 1$, then $\theta_i > 0$ implies the inequality is always satisfied.  If $\beta_{\infty} = 0$, then the inequality is only satisfied if $\beta_i = 0$ for all $i \in I_Q$.  Consequently, $\hat{V}$ is $\chi_{\theta}$-semistable if there is no nontrivial subrepresentation $W \subset V$ such that $W_i \subset \bigcap_k \ker r_{ik} = \ker f_{b_i}$.  

Conversely, any such nontrivial $W \subset V$ satisfying $W_i \subset \bigcap_k \ker r_{ik}$ may be completed to a subrepresentation $\hat{W} \subset \hat{V}$ that violates the above inequality (set $\beta_{\infty} = 0$).  Since the $G(\alpha)$-equivariant isomorphism between $\Rep(Q^{fr}, \alpha, \alpha')$ and $\Rep(\hat{Q}, \hat{\alpha})$ sends $\hat{V}$ to $V^{fr}$, then part (1) follows.  

If the stabilizer of a $\chi_{\theta}$-semistable representation $V^{fr} \in \Rep(Q^{fr}, \alpha, \alpha')$ contains a nontrivial $g$, then $g$ stabilizes $V$.  Furthermore, $f_{b_i}\inv{g} = f_{b_i}$, so $f_{b_i}(g - \Id) = 0$.  Consequently, $\Im(g - \Id)$ is a nontrivial subrepresentation of $V$ violating the semistability condition in (1).  Since the inequality in the proof of part (1) is strict for nontrivial subrepresentations, we see that part (2) follows by an argument analogous to the proof of (1).  Part (3) follows from Theorem \ref{projbun}. 
\end{proof}

Note that in the proof of the above theorem the stability condition for framed representations does not depend on the specific $\theta$, but rather on the fact that the components of $\theta$ were positive.  This implies as long as $\theta_1, \theta_2 \in \Z_{> 0}^{I_Q}$ we have $\cM_{\theta_1}(Q, \alpha, \alpha') = \cM_{\theta_2}(Q, \alpha, \alpha')$.

\begin{rmk}
\label{dualframed}
We can define framings of $Q$ with arrows pointing away from the supplementary vertices (rather than towards them).  This yields a dual notion of a framed representation of $Q$ with a corresponding $G(\alpha)$ action.  Indeed, let $\theta \in \Z_{< 0}^{I_Q}$.  In this case, the analogue of the semistability criterion given in part (1) of Theorem \ref{framedthm} becomes: $V^{fr}$ is $\chi_{\theta}$-semistable (a consequently $\chi_{\theta}$-stable) if and only if there is no proper subrepresentation $W \subset V$ such that $\Im f_{b_i} \subset W_i$ for each $i \in I_Q$.   
\end{rmk}

\subsection{Examples}

\begin{exa}[$A_n$ quiver]
\label{framedanex}
Consider the framed $A_n$ quiver.  Let $\alpha = (\alpha_1, \dots, \alpha_n)$ and $\alpha' = (\alpha_0, 0, \cdots, 0)$ be dimension vectors for this quiver such that $\alpha_0 > \alpha_1 > \cdots > \alpha_n$ holds.

\begin{equation*}
\begin{tikzcd}
\C^{\alpha_1}  \arrow[d, swap, "f_{b_1}"] & \C^{\alpha_2} \arrow[l, swap, "f_{a_1}"] \arrow[d, ""] & \cdots \arrow[l, swap, "f_{a_2}"] & \C^{\alpha_n} \arrow[l, swap, "f_{a_{n-1}}"] \arrow[d, ""]\\
\C^{\alpha_0} & 0 & \cdots & 0 
\end{tikzcd}
\end{equation*}

For a stability parameter $\theta \in \Z_{> 0}^{I_Q}$ we see that $\chi_{\theta}$-stable framed representations are exactly the ones such that the linear transformations $f_{a_1}, \cdots, f_{a_{n-1}}, f_{b_1}$ are all injective.  Indeed, if any one of them fails to be injective, then its kernel has a nontrivial intersection with the kernel of one of the $f_{b_i}$, yielding a subrepresentation that contradicts the stability condition in Theorem \ref{framedthm}.

Letting $\C^{\alpha_0} \supset F_i  = \Im(f_{b_1} \circ f_{a_1} \circ \cdots \circ f_{a_{i}})$, we get a partial flag 
\[
F_{n} \subset F_{n-1} \subset \cdots \subset F_0 = \C^{\alpha_0},
\]
such that $\dim F_i = \alpha_i$.  Since $G(\alpha)$ acts by change of basis at each vertex, we get that the points of $\cM_{\theta}(A_n, \alpha, \alpha')$ are in bijection with the points of the flag variety $\Fl(\alpha_0, \dots, \alpha_n)$, and in fact in can be shown that the two varieties are isomorphic.
\end{exa}

\begin{exa}[Jordan quiver]
Letting $\theta \in \Z_{< 0}^{I_Q}$, we consider the dual framing discussed in Remark \ref{dualframed} for the Jordan quiver $J$. 

\begin{equation*}
\begin{tikzcd}
\C^{\alpha} \arrow[out=135,in=225,loop, swap, "f_a"] & \C^{\alpha'} \arrow[l, swap, "f_b"]
\end{tikzcd}
\end{equation*}

Recall that finite-dimensional representations of $J$ may be viewed as finite-dimensional $\C[z]$-modules, where $z$ acts by $f_a$.  Using the stability criterion in the remark, we see that the a representation $V^{fr} \in \Rep(J^{fr}, \alpha, \alpha')$ is $\chi_{\theta}$-stable if only if $\Im(f_b)$ generates $V$ as a $\C[z]$-module.  

If $\alpha' = 0$, then the stability condition is only satisfied if $\alpha = 0$.  Therefore, for $\alpha \ge 1$ we have that $\cM_{\theta}(J, \alpha, 0)$ is empty.

If $\alpha' = 1$, then $f_b$ must have rank $1$ for there to be stable framed representations.  For a representation to be stable, the vector $f_b(1)$ must generate $\C^{\alpha}$ as a $\C[z]$-module.  In other words, stable representations correspond to cyclic $\C[z]$-modules (the vector $f_b(1)$ is called a \emph{cyclic vector}).  

Using the structure theorem for modules over a principal ideal domain, this module is isomorphic to $\C[z]/(p(z))$, where $p(z)$ is a polynomial of degree $\alpha$ (a polynomial of smaller degree would not suffice to generate $\C^{\alpha}$).  This implies $p(z)$ is the characteristic polynomial of $f_a$ up to multiplication by a constant.  Therefore, geometrically, a stable representation corresponds to an (unordered) collection of the eigenvalues of $f_a$ (counting multiplicity). Thus, $\cM_{\theta}(J, \alpha, 1) \cong \A^{(\alpha)} := \A^{\alpha}/S_{\alpha}$ is the quotient of $\A^{\alpha}$ by the action of the symmetric group $S^{\alpha}$ (i.e. $\alpha$-th symmetric power of $\A^1$).  

Note that in this case $\cM_{\theta}(J, \alpha, 1)$ may also be interpreted as the \emph{punctual Hilbert scheme} $\textrm{Hilb}^{\alpha}(\A^1)$.  In general, the variety $\textrm{Hilb}^{\alpha}(X)$ parametrizes collections of $\alpha$ points (potentially with multiplicities) in the algebraic variety $X$ (see \cite{Na1999} for details).
\end{exa}

\section{Double quivers and Hamiltonian reduction}
\subsection{Symplectic Geometry}

We begin with a quick reminder of basic symplectic geometry and the moment map construction for cotangent bundles to $C^{\infty}$-manifolds.  

A \emph{symplectic manifold} consists of a pair $(M, \omega)$, where $M$ is is a $C^{\infty}$-manifold and $\omega$ is a closed, non-degenerate $2$-form on $M$ (called the \emph{symplectic form}).  If $(M_1, \omega_1), (M_2, \omega_2)$ are symplectic manifolds, then a diffeomorphism $f: M_1 \rightarrow M_2$ is called a \emph{symplectomorphism} if $f^*\omega_2 = \omega_1$.

\begin{exa}
\leavevmode
\begin{enumerate}
\item The manifold $\R^{2n}$ is a symplectic manifold.  If the coordinates on $\R^{2n}$ are $x_1, 
\cdots, x_n, y_1, \cdots, y_n$, then $\omega = dx_1 \wedge dy_1 + \cdots + dx_n \wedge dy_n$.
\item If $M$ is a $C^{\infty}$-manifold, then the cotangent bundle $T^*M$ has a canonical structure of a symplectic manifold.  If $q_1, \cdots , q_n$ are the local coordinates along $M$ and $p_1, \cdots, p_n$ are the induced coordinates on the fiber, then $\omega = dp_1 \wedge dq_1 + \cdots + dp_n \wedge dq_n$.  It can be shown this defines a symplectic form that is independent of the choice of coordinates. 

We can also obtain this form without resorting to local coordinates.  Let $\pi: T^*M \rightarrow M$ be the natural projection, let $q \in M$, and let $p \in T_q^* M$.  There exists a canonical $1$-form $\Theta$ on $T^*M$, which can be defined fiberwise as 
\[
\Theta(v) = p(d_{(q,p)}\pi(v)),
\]
where $d_{(q,p)}\pi: T_{(q,p)} (T^*M) \rightarrow T_q M$ is the induced mapping of tangent spaces and $v \in T_{(q,p)}^* (T^*M)$.  Define the symplectic form by $\omega  = d\Theta$.  In the above local coordinates, this form can be written as $\Theta = \sum_i p_idq_i$, which matches the description of $\omega$ given above.
\end{enumerate}
\end{exa}

Let $G$ be a Lie group acting on a $C^{\infty}$-manifold $M$.  This induces an action of $G$ on $T^*M$.  The \emph{moment map} corresponding to this action is the mapping $\mu: T^*M \rightarrow \fg^*$ defined by 
\[
\mu(q,p)(\xi) := p(X_{\xi}(q)) = \Theta(X_{\xi})(q,p), 
\]
where $(q,p) \in T^*M$, $\xi \in \fg$, and $X_{\xi}(q)$ is the vector field corresponding to the infinitesimal action of $\xi$ on $M$ evaluated at $q$.  

\begin{exa}
\label{vspacesymex}
\leavevmode
\begin{enumerate}
\item Let $M = \R^n$.  In this case, the cotangent bundle $T^*M$ is just $\R^n \times (\R^n)^{\vee}$.  We  can use the symplectic form on $\R^{2n}$ in the previous example to get that (on a tangent space to $T^*M$) we can compute
\[
\omega ((x,x^*),(y, y^*)) = y^*(x) - x^*(y).
\]
Furthermore, if $\GL(n, \R)$ acts on $M$ in the standard way, then a simple computation shows that for any $A \in \gl_n (\R)$ we have 
\[
\mu(x, x^*)(A) = x^*(Ax).
\]
\item Let us consider a variant of the previous example where $M = \Hom(\R^n, \R^m)$ and the group $G = \GL(m, \R) \times \GL(n, \R)$ acts on $M$ by  $(A,B) \cdot P = AP\inv{B}$.  As above, we can compute that the moment map $\mu: T^*M \rightarrow \fg^{*}$ with respect to the induced action of $G$ on $T^*M = \Hom(\R^n, \R^m) \times \Hom(\R^n, \R^m)^{\vee}$ is equal to
 \[
\mu(P, P^*)(A,B) = P^*(AP - PB).
\]
\end{enumerate}
\end{exa}

Note that the above definitions of symplectic manifold and moment map can be transferred both to holomorphic manifolds and to (smooth) algebraic varieties.  For details see Chapter 1 of \cite{CG1997}.
	
We can use the  moment map to construct further symplectic manifolds via the following classic theorem (see e.g. Section 5.4 in \cite{McDS2017}):

\begin{thm}
\label{hamquotient}
Let $M$ be a $C^{\infty}$-manifold with a free and proper action (i.e. the preimages of compact sets under the mapping $G \times M \rightarrow M \times M$ induced by the $G$-action are compact) of a Lie group $G$.  The following is true:
\begin{enumerate}
\item If $\cO \subset \fg^{*}$ is a coadjoint orbit under the action of $G$, then the quotient $\inv{\mu}(\cO)/G$ has a natural structure of a symplectic manifold.
\item If $\cO = 0$, then the inclusion $\inv{\mu}(0) \hookrightarrow T^*X$ induces a symplectomorphism $T^*(X/G) \cong \inv{\mu}(0)/G$. 
\end{enumerate}
\end{thm}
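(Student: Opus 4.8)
The plan is to treat the statement as the cotangent-bundle case of the Marsden--Weinstein symplectic reduction theorem. The $G$-action on the base lifts canonically to a Hamiltonian action on $T^*M$ preserving the canonical form $\omega = d\Theta$, and the moment map $\mu\colon T^*M \to \fg^*$ defined earlier is $G$-equivariant with respect to the coadjoint action on $\fg^*$. I would first establish part (1) in the case $\cO = 0$, then deduce the symplectomorphism of part (2) by explicitly identifying the reduced space, and finally recover the general coadjoint orbit via the shifting trick, so that no new analytic input is needed beyond freeness and properness.

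For the case $\cO = 0$, the first step is to show that $0$ is a regular value of $\mu$ along $\inv{\mu}(0)$. Because the $G$-action on $M$ is free, the infinitesimal action $\xi \mapsto X_\xi(q)$ is injective for every $q$, and the moment map identity $\mu(q,p)(\xi) = p(X_\xi(q))$ forces $d\mu_x$ to be surjective at each $x \in \inv{\mu}(0)$; hence $\inv{\mu}(0)$ is a smooth submanifold with $T_x\inv{\mu}(0) = \ker d\mu_x$. Since the lifted action is again free and proper, it restricts to a free proper action on $\inv{\mu}(0)$, so the quotient $\inv{\mu}(0)/G$ is a smooth manifold and $\pi\colon \inv{\mu}(0) \to \inv{\mu}(0)/G$ is a principal $G$-bundle. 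The heart of the argument is the linear-algebra identity $\ker d\mu_x = \bigl(T_x(G\cdot x)\bigr)^{\omega}$, the symplectic orthogonal of the orbit direction, which follows from the moment map condition $\iota_{X_\xi}\omega = d\mu^{\xi}$; combined with the fact that $G\cdot x \subset \inv{\mu}(0)$ makes $T_x(G\cdot x)$ isotropic (by equivariance $\mu^{\xi}$ is constant along the orbit), this shows that the null space of the pulled-back form $i^*\omega$, where $i$ is the inclusion, is precisely $T_x(G\cdot x)$. Consequently $i^*\omega$ descends to a unique $2$-form $\omega_{\mathrm{red}}$ on the quotient with $\pi^*\omega_{\mathrm{red}} = i^*\omega$, and this form is nondegenerate. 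Closedness is then immediate: $\pi^* d\omega_{\mathrm{red}} = i^* d\omega = 0$, and since $\pi$ is a submersion we conclude $d\omega_{\mathrm{red}} = 0$.

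For part (2) I would make the identification of the reduced space with $T^*(X/G)$ explicit, writing $X = M$ to match the statement. A covector $(q,p) \in T^*X$ lies in $\inv{\mu}(0)$ if and only if $p$ annihilates every $X_\xi(q)$, that is, if and only if $p$ kills the vertical subspace of the principal bundle $X \to X/G$; such a $p$ therefore descends to a well-defined covector on $X/G$ at the image point. This assignment is constant on $G$-orbits and, using local trivializations of $X \to X/G$, yields a diffeomorphism $\inv{\mu}(0)/G \xrightarrow{\ \sim\ } T^*(X/G)$. To see it is a symplectomorphism, I would compare tautological $1$-forms: a direct check of the defining formula $\Theta(v) = p(d\pi(v))$ shows that the pullback of the Liouville form of $T^*(X/G)$ agrees with $i^*\Theta$, and taking exterior derivatives identifies $\omega_{\mathrm{red}}$ with the canonical symplectic form on $T^*(X/G)$.

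Finally, the general coadjoint orbit $\cO$ is handled by the shifting trick: endow $\cO$ with minus its Kirillov--Kostant--Souriau form, consider the diagonal $G$-action on $T^*M \times \cO$, and observe that its moment map $\tilde{\mu}(x,\xi) = \mu(x) - \xi$ satisfies $\inv{\tilde{\mu}}(0)/G \cong \inv{\mu}(\cO)/G$ canonically; applying the $\cO = 0$ construction to this enlarged space produces the desired symplectic structure. The main obstacle I expect is the nondegeneracy step, namely the symplectic-orthogonal computation $\ker d\mu_x = \bigl(T_x(G\cdot x)\bigr)^{\omega}$ together with the verification that quotienting by the orbit null directions leaves a nondegenerate form, since everything else (smoothness of the quotients, closedness, and the cotangent identification) follows formally once freeness and properness are in hand.
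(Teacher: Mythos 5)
Your proposal is correct, but there is no line-by-line comparison to make: the paper does not prove Theorem \ref{hamquotient} at all, quoting it as a classical result with a pointer to Section 5.4 of \cite{McDS2017}. Your argument is the standard Marsden--Weinstein reduction proof specialized to cotangent lifts, and it is sound: freeness of the base action gives surjectivity of $d\mu$ (vary the covector in the fiber direction), so $\inv{\mu}(0)$ is smooth with tangent space $\ker d\mu$; the identity $\ker d_x\mu = \bigl(T_x(G\cdot x)\bigr)^{\omega}$ together with isotropy of the orbit shows the null space of the restricted form is exactly the orbit direction, so the form descends and is nondegenerate; and the identification in (2) follows by descending covectors that annihilate vertical vectors and matching tautological one-forms. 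It is worth noting that this is precisely the computation the paper \emph{does} carry out in its algebraic setting: Lemma \ref{momentimlmm} is your statement that the image of $d_\gamma\mu$ is the annihilator of the stabilizer algebra (equivalently, $\ker d_\gamma\mu$ is the symplectic orthogonal of the orbit tangent space), and the proof of Theorem \ref{affinehamred} descends $\omega$ to the quotient by exactly your well-definedness and nondegeneracy argument. You also go beyond that template in two places: the explicit symplectomorphism $\inv{\mu}(0)/G \cong T^*(X/G)$, and the treatment of a general coadjoint orbit $\cO$ via the shifting trick, whereas the paper's algebraic analogue only handles $\lambda$ fixed by the coadjoint action. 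One small phrasing issue there: your regular-value argument as written uses the fiber directions of $T^*M$, which are not available on the product $T^*M \times \cO$; but since you have already shown $d\mu$ is surjective at \emph{every} point of $T^*M$ when the base action is free, surjectivity of the shifted moment map $\tilde{\mu}(x,\xi) = \mu(x) - \xi$ is immediate, so this is cosmetic rather than a gap.
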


\begin{rmk}
Since we are only interested in working with cotangent bundles, the definition of the moment map simplifies significantly.  In general, given a symplectic manifold $(M, \omega)$ on which a Lie group $G$ acts by symplectomorphisms, the moment map $\mu: M \rightarrow \fg^*$ is defined by 
\[
d(\mu(\xi))  = \iota_{X_{\xi}} \omega,
\]    
where $\iota_{X_{\xi}} \omega$ is the contraction of $\omega$ by the vector field $X_{\xi}$.  In order for the mapping $\mu(\xi): M \rightarrow \R$ to exist and for it to have nice properties with respect to a change of $\xi \in \fg$, the action of $G$ on $M$ must be \emph{Hamiltonian}.

If the moment map exists, then for a coadjoint $G$-orbit $\cO$, the symplectic quotient $\inv{\mu}(\cO)/G$ described in Theorem \ref{hamquotient} exists under the conditions that the points of $\cO$ are all regular values and the induced action of $G$ on $\inv{\mu}(\cO)$ is free and proper.  For further details concerning this construction see Chapter 5 in \cite{McDS2017}.  
\end{rmk}

\subsection{Hamiltonian reduction}
Let $G$ be a connected reductive group acting on a smooth affine variety $X$.  The action of $G$ lifts to an action on $T^*X$.  This defines an action of $G$ on the algebra of regular functions $\cO(T^*X)$.  We can write this as a morphism $G \rightarrow \Aut(\cO(T^*X))$.  Differentiating this morphism gives us a homomorphism of Lie algebras $\fg \rightarrow \textrm{Der}(\cO(T^*X)) = \cT(T^*X)$ into vector fields on $T^*X$, which defines an action of the Lie algebra $\fg$ of $G$ by derivations on $\cO(T^*X)$.  

That is, this morphism assigns to each $\xi \in \fg$ a vector field $X_{\xi}$ on $T^*X$.  Thus, we can define the moment map $\mu: T^*X \rightarrow \fg^*$ as before by
\[
\mu(q,p)(\xi) := p(X_{\xi}(q)) = \Theta(X_{\xi})(q,p), 
\] 
where $(q,p) \in T^*X$, $\xi \in \fg$, and $X_{\xi}(q)$ is the vector field corresponding to the infinitesimal action of $\xi$ on $X$ evaluated at $q$.  

The moment map is equivariant with respect to the induced $G$ action on $T^*X$ and the coadjoint action of $G$ on $\fg^*$ (see Lemma 1.4.2 in \cite{CG1997}).  This implies that for any $\lambda \in \fg^*$ fixed by the coadjoint action of $G$ the subvariety $\inv{\mu}(\lambda)$ is $G$-invariant.  We may therefore consider its affine and Proj GIT quotients with respect to the action of $G$.  

Unfortunately, the resulting varieties need not always be symplectic.  However, if we make additional assumptions about the action of $G$, we can obtain a result similar to Theorem \ref{hamquotient} in the algebraic setting.  In order to do this, we recall another definition from symplectic geometry.

\begin{defn}
\label{poissondef}
A \textbf{Poisson variety} consists of a pair $(X, \{-,-\})$ such that $X$ is an algebraic variety and $\{-,-\}: \cO_X \times \cO_X \rightarrow \cO_X$ is a $\C$-bilinear morphism defined on the corresponding sheaf of regular functions satisfying
\begin{enumerate}
\item $\{f_1,f_2\} = - \{f_2,f_1\}$,
\item $\{f_1, \{f_2,f_3\}\} + \{f_2, \{f_3,f_1\}\} + \{f_3, \{f_1,f_2\}\} = 0$,
\item $\{f_1, f_2f_3\} = f_3 \{f_1, f_2\}+ f_2\{f_1, f_3\}$, 
\end{enumerate}
for all $f_1, f_2, f_3 \in \cO_X$.  In other words, the sheaf regular functions $\cO(X)$ is a sheaf of \textbf{Poisson algebras}.
\end{defn}

Given a (smooth) affine symplectic variety $(X, \omega)$, then $X$ has a natural structure of a smooth affine Poisson variety.  Indeed, the pairing given by the form $\omega$ defines a canonical isomorphism $TX \cong T^*X$ between the tangent and cotangent bundles of $X$.  This means for any $f \in \cO(X)$ corresponds to a vector field $X_f \in \cT(X)$ (called a \emph{Hamiltonian vector field}) via the pairing $\omega(- , X_f) = df$.  This defines a bilinear bracket $\{-,-\}: \cO(X) \times \cO(X) \rightarrow \cO(X)$ by $\{f_1,f_2\} = \omega(X_{f_1}, X_{f_2})$.  It can be shown (\cite{CG1997} 1.2.7) that this bracket satisfies the conditions of Definition \ref{poissondef} and makes $X$ into a Poisson variety.

Note that the moment map can be seen as a mapping $\fg \rightarrow \cO(T^*X)$ that sends $\xi$ to the function $H_\xi(q,p) := \mu(q,p)(\xi) = \Theta(X_{\xi})(q,p)$ called the \emph{Hamiltonian} associated to $\xi$.  This mapping is a homomorphism of Lie algebras, where the $\fg$ is equipped with the standard Lie bracket and $\cO(T^*X)$ is equipped with the Poisson bracket.  Furthermore, there is a commutative diagram of Lie algebra homomorphisms 

\begin{equation*}
\begin{tikzcd}[row sep=10pt, column sep=10pt, every label/.append style={font=\small}]
 & & & \fg \arrow[ddddlll,""] \arrow[ddddrrr,""]  & & & \\
& & & \xi \arrow[ddl, mapsto] \arrow[ddr, mapsto] & & &\\
& & &  &\\
& & H_{\xi} \arrow[rr, mapsto] & & X_{H_{\xi}} = X_{\xi} & &\\
\cO(T^*M) \arrow[rrrrrr, ""] & & & & & & \cT(T^*X)
\end{tikzcd}
\end{equation*}
This implies $\{H_{\xi}, f\} = X_{\xi}f$ for any $f \in \cO(T^*X)$ (see Section 1.4 in \cite{CG1997} for details).  We will need the next lemma in order to define a symplectic structure on the affine GIT quotient $\inv{\mu}(\lambda)//G$.  

\begin{lmm}
\label{momentimlmm}
Let $X$ be a smooth affine variety with the action of an affine algebraic group $G$.  Let  $\gamma = (q,p) \in T^*X$ and let $d_{\gamma}\mu: T_{\gamma}(T^*X) \rightarrow \fg^*$ be the differential of the moment map.  

Let $\fg_{\gamma}$ be the Lie algebra of the stabilizer $G_{\gamma}$.  The image of $d_{\gamma}\mu(T_{\gamma}(T^*X))$ is the annihilator $\fg_{\gamma}^{\perp}$ of $\fg_{\gamma}$.
\end{lmm}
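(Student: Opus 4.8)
The plan is to reduce the statement to a pointwise computation with the symplectic form $\omega$ together with its nondegeneracy. The starting point is the defining property of the moment map recalled in the preceding remark: writing $H_\xi := \langle \mu(-),\xi\rangle$ for the Hamiltonian attached to $\xi \in \fg$, one has $dH_\xi = \iota_{X_\xi}\omega$. Since $\fg^*$ is a vector space we may identify its tangent space at $\mu(\gamma)$ with $\fg^*$ itself, and then for $v \in T_\gamma(T^*X)$ and $\xi \in \fg$ the chain rule gives $\langle d_\gamma\mu(v),\xi\rangle = d_\gamma H_\xi(v) = \omega_\gamma(X_\xi(\gamma),v)$. This identity is the crux of the argument: it expresses pairing against the image of $d_\gamma\mu$ entirely in terms of $\omega$ and the fundamental vector fields $X_\xi$.

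Next I would identify $\fg_\gamma$ with the set of those $\xi \in \fg$ for which $X_\xi(\gamma) = 0$. Indeed $G_\gamma$ is the fiber over $\gamma$ of the orbit morphism $a_\gamma: G \rightarrow T^*X$, $g \mapsto g\cdot\gamma$, and since we work over $\C$ the stabilizer is smooth, so its Lie algebra is precisely the kernel of $d_e a_\gamma: \fg \rightarrow T_\gamma(T^*X)$, $\xi \mapsto X_\xi(\gamma)$ (up to sign). Thus $\fg_\gamma = \{\xi \in \fg : X_\xi(\gamma) = 0\}$.

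With these two facts in place the result is pure linear algebra. A vector $\xi \in \fg$ annihilates the image $d_\gamma\mu(T_\gamma(T^*X)) \subset \fg^*$ if and only if $\langle d_\gamma\mu(v),\xi\rangle = 0$ for all $v$, i.e. $\omega_\gamma(X_\xi(\gamma),v) = 0$ for all $v$; by nondegeneracy of $\omega_\gamma$ this holds exactly when $X_\xi(\gamma) = 0$, that is, when $\xi \in \fg_\gamma$. Hence the annihilator in $\fg$ of $d_\gamma\mu(T_\gamma(T^*X))$ equals $\fg_\gamma$. Applying the finite-dimensional biduality between subspaces of $\fg$ and of $\fg^*$ (the annihilator of the annihilator is the original subspace) then yields $d_\gamma\mu(T_\gamma(T^*X)) = \fg_\gamma^{\perp}$, as claimed.

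The main obstacle is not the linear algebra but justifying its two inputs in the algebraic (rather than $C^\infty$) setting: that the Hamiltonian description $dH_\xi = \iota_{X_\xi}\omega$ is valid for the lifted $G$-action on $T^*X$ of a smooth affine variety, and that $\fg_\gamma$ is genuinely the kernel of $\xi \mapsto X_\xi(\gamma)$. The former follows from the explicit cotangent construction $\mu(q,p)(\xi) = \Theta(X_\xi)(q,p)$ together with $\omega = d\Theta$ and $G$-invariance of $\Theta$ (see \cite{CG1997}); the latter rests on the characteristic-$0$ smoothness of stabilizer subgroups over $\C$. Once both are secured, nondegeneracy of $\omega_\gamma$ does all the remaining work.
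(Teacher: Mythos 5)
Your proof is correct and takes essentially the same route as the paper's: both rest on the identity $(d_{\gamma}\mu(v))(\xi) = \omega_{\gamma}(X_{\xi}(\gamma), v)$ obtained from the definition of the moment map, together with nondegeneracy of $\omega_{\gamma}$ and the identification of $\fg_{\gamma}$ with the kernel of $\xi \mapsto X_{\xi}(\gamma)$. The only difference is in the final linear-algebra packaging: the paper proves the inclusion $\Im d_{\gamma}\mu \subseteq \fg_{\gamma}^{\perp}$ and then matches dimensions using that $\ker d_{\gamma}\mu$ is the symplectic orthogonal of $T_{\gamma}(G \cdot \gamma)$, whereas you compute the annihilator of $\Im d_{\gamma}\mu$ in $\fg$ to be exactly $\fg_{\gamma}$ and invoke finite-dimensional biduality---two interchangeable forms of the same argument.
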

\begin{proof}
Let $v \in T_{\gamma}(T^*X)$ and $\xi \in \fg$, then considering $\xi$ as an element of $(\fg^{*})^*$, we can compute the following based on the definition of the moment map
\[
(d_{\gamma}\mu(v))(\xi) = d_{\gamma}(\xi \circ \mu)(v) = \omega(X_{\xi}(\gamma), v)
\]
If $\xi \in \fg_\gamma$, then $X_{\xi}(\gamma) = 0$, so $d_{\gamma}\mu(v) \in \fg_{\gamma}^{\perp}$.  

Furthermore, we see that $\ker d_{\gamma} \mu$ is the orthogonal complement of $T_{\gamma}(G \cdot \gamma)$ with respect to the symplectic form $\omega$.  Therefore, we can compute the dimension
\[
\dim \Im d_{\gamma} \mu = \dim T_{\gamma}(T^*X) - \dim \ker d_{\gamma} \mu = \dim T_{\gamma}(G \cdot \gamma) = \dim \fg - \dim \fg_{\gamma} = \dim \fg_{\gamma}^{\perp}.
\]
It follows that $\Im d_{\gamma}\mu =  \fg_{\gamma}^{\perp}$.
\end{proof}

We now have the following theorem.	

\begin{thm}
\label{affinehamred}
Let $G$ be a connected reductive group acting on a smooth affine variety $X$.  Let $\mu: T^*X \rightarrow \fg^*$ be the corresponding moment map, and let $\lambda \in \fg^*$ be a fixed point of the coadjoint action of $G$.  The following is true:
\begin{enumerate}
\item The affine GIT quotient $\inv{\mu}(\lambda)//G$ has a natural structure of a Poisson variety induced by the Poisson structure on $T^*X$.
\item If the action of $G$ on $\inv{\mu}(\lambda)$ is free, then $\inv{\mu}(\lambda)//G$ is a smooth variety with a symplectic structure induced by the symplectic form on $T^*X$.  In this case, we have
\[
\dim \inv{\mu}(\lambda)//G = 2 \dim X - 2 \dim G.
\]
\end{enumerate}
\end{thm}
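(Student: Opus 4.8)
The plan is to work entirely at the level of coordinate rings, transferring the symplectic/Poisson structure on $T^*X$ to the GIT quotient through the ring of invariants. For part (1), I would first record that the lifted $G$-action on $T^*X$ preserves the canonical symplectic form $\omega$, hence acts by Poisson automorphisms on $\cO(T^*X)$; consequently the invariant subring $\cO(T^*X)^G$ is a Poisson subalgebra. Since $\lambda$ is fixed by the coadjoint action, $\inv{\mu}(\lambda)$ is $G$-invariant and its defining ideal $I_\lambda \subset \cO(T^*X)$ is generated by the functions $H_\xi - \lambda(\xi)$ for $\xi \in \fg$. The key computation is the identity $\{H_\xi, f\} = X_\xi f$ recorded above: for any $G$-invariant $f$ the vector field $X_\xi$ annihilates $f$, so $\{H_\xi - \lambda(\xi), f\} = 0$, and by the Leibniz rule this yields $\{I_\lambda, f\} \subseteq I_\lambda$ for every invariant $f$.

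Then I would use reductivity of $G$: taking $G$-invariants is exact, so $\cO(\inv{\mu}(\lambda))^G \cong \cO(T^*X)^G / I_\lambda^G$ with $I_\lambda^G := I_\lambda \cap \cO(T^*X)^G$. Combining the previous step with the fact that a bracket of two invariants is again invariant shows that $I_\lambda^G$ is a Poisson ideal of $\cO(T^*X)^G$, so the quotient ring $\cO(\inv{\mu}(\lambda)//G)$ inherits a well-defined Poisson bracket. This proves (1).

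For part (2), I would first establish smoothness of $\inv{\mu}(\lambda)$: a free action has trivial stabilizers, so $\fg_\gamma = 0$ and Lemma \ref{momentimlmm} gives $\Im d_\gamma\mu = \fg_\gamma^\perp = \fg^*$ for every $\gamma \in \inv{\mu}(\lambda)$; thus $\lambda$ is a regular value and $\inv{\mu}(\lambda)$ is smooth of dimension $2\dim X - \dim G$. Being a closed subvariety of the affine variety $T^*X$, it is affine, so applying Theorem \ref{projbun} with the trivial character (for which the Proj GIT quotient coincides with the affine one and all points are semistable) shows the quotient is smooth and $\phi \colon \inv{\mu}(\lambda) \to \inv{\mu}(\lambda)//G$ is an \'etale-locally trivial principal $G$-bundle, yielding $\dim \inv{\mu}(\lambda)//G = 2\dim X - 2\dim G$.

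It remains to produce the symplectic form, and this linear-algebra descent is where I expect the real work. Writing $W = T_\gamma(G\cdot\gamma)$, the computation inside the proof of Lemma \ref{momentimlmm} identifies $T_\gamma \inv{\mu}(\lambda) = \ker d_\gamma\mu = W^{\perp_\omega}$; since $\lambda$ is coadjoint-fixed the orbit lies in $\inv{\mu}(\lambda)$, so $W \subseteq W^{\perp_\omega}$ is isotropic, and a standard double-orthogonal argument (using nondegeneracy of $\omega$) shows the kernel of $\omega|_{T_\gamma\inv{\mu}(\lambda)}$ is exactly $W$. Because $\phi$ is a principal bundle, $\ker d_\gamma\phi = W$ as well, so $\omega|_{\inv{\mu}(\lambda)}$ descends to a nondegenerate $2$-form $\bar\omega$ on the quotient, closed because $\omega$ is. Finally I would check that $\bar\omega$ recovers the Poisson bracket from part (1). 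The main obstacle is this descent step: one must verify carefully that the restricted form is \emph{basic} (both $G$-invariant and horizontal, i.e.\ killed by orbit directions) so that it genuinely descends, and that nondegeneracy survives passage to the quotient.
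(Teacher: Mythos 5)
Your proposal is correct and follows essentially the same route as the paper: the same key identity $\{H_{\xi}, f\} = X_{\xi}f$ is what makes the ideal of $\inv{\mu}(\lambda)$ compatible with the bracket, Lemma \ref{momentimlmm} plus Theorem \ref{projbun} (with trivial character) give smoothness and the \'{e}tale principal $G$-bundle structure, and the symplectic form descends because $T_{\gamma}\inv{\mu}(\lambda)$ is exactly the $\omega$-orthogonal of the orbit directions, with the orbit directions as the kernel of the restricted form. The only difference is organizational, in part (1): you form the quotient of the invariant Poisson algebra $\cO(T^*X)^G$ by the Poisson ideal $I_{\lambda} \cap \cO(T^*X)^G$, invoking exactness of invariants for reductive $G$ to identify this with $\cO(\inv{\mu}(\lambda))^G$, whereas the paper defines the bracket directly on $(\cO(T^*X)/I)^G$ via representatives and checks independence of the lift; the underlying computation is identical either way.
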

\begin{proof}
Let $I \subset \cO(T^*X)$ be the ideal that cuts out $\inv{\mu}(\lambda)$ as a closed subvariety of $T^*X$.  Let us define a Poisson bracket on $\cO(\inv{\mu}(\lambda))^G = (\cO(T^*X)/I)^G$ using the Poisson bracket on $\cO(T^*X)$ by $\{[f], [h]\} = [\{f,h\}] \in \cO(T^*X)/I$, where $[f]$ indicates the element represented by $f$ in the quotient.  To show this is well-defined we must demonstrate that $\{f, I\} \subset I$ for $f \in \cO(T^*X)$ such that $[f] \in \cO(T^*X)/I$ is $G$-invariant, and that the bracket is $G$-invariant.

Note that by definition $I$ is generated by regular functions $H_{\xi} - \lambda \Id$ where $\xi \in \fg$.  We can also compute $\{f, H_{\xi} - \lambda \Id\} = \{f, H_{\xi}\} = X_{\xi}f \in I$, since $[f]$ is $G$-invariant.  Now, we have $g \cdot\{[f], [h]\} = [\{g\cdot f,g \cdot h\}] = [\{f,h\}]$, since $[f], [h]$ are $G$-invariant.  Therefore, the bracket is well-defined.  It is not hard to see it satisfies the properties of a Poisson bracket, so $\inv{\mu}(\lambda)//G$ has the structure of a Poisson variety induced by that on $T^*X$.

Let $\gamma = (q,p) \in \inv{\mu}(\lambda)$, and consider the differential morphism $d_{\gamma}\mu: T_{\gamma}(T^*X) \rightarrow \fg^*$.  Note that $\gamma$ is a nonsingular point of $\inv{\mu}(\lambda)$ if and only if  $d_{\gamma}\mu$ is surjective.  By Lemma \ref{momentimlmm}, the latter occurs if and only if $\fg_{\gamma} = 0$ .  Since the action of $G$ is free, the point $\gamma$ is nonsingular.  Thus, by Theorem \ref{projbun} we have that $\inv{\mu}(\lambda)//G$ is smooth and $\inv{\mu}(\lambda) \rightarrow \inv{\mu}(\lambda)//G$ is a principal $G$-bundle in the \'{e}tale topology.  It follows that $(T_{[\gamma]}\inv{\mu}(\lambda))//G$ is isomorphic to the quotient space $(T_{\gamma}\inv{\mu}(\lambda))/\fg$, where $[\gamma]$ is the image of $\gamma \in \inv{\mu}(\lambda)$ in $\inv{\mu}(\lambda)//G$ and the tangent space of the orbit $G \cdot \gamma$ is identified with $\fg$ .

We can define a $2$-form $\hat{\omega}$ on $\inv{\mu}(\lambda)//G$ by 
\[
\hat{\omega}_{[\gamma]}([v], [w]) = \omega_{\gamma} (v, w),
\]
for $[v], [w] \in T_{[\gamma]}(\inv{\mu}(\lambda)//G) \cong (T_{\gamma}\inv{\mu}(\lambda))/\fg$.

We now check that this form is well-defined.  Indeed, let $v\in T_{\gamma}\inv{\mu}(\lambda)$ and $\xi \in \fg$. By the proof of Lemma \ref{momentimlmm} we see that 
\[
\omega (v, X_{\xi}) = (d_{\gamma}\mu(v))(\xi).
\]
We have that $\ker d_{\gamma} \mu$ is precisely the orthogonal complement $\fg^{\omega}$ of $\fg$ in $T_{\gamma}(T^*X)$ with respect to the symplectic form (see proof of Lemma \ref{momentimlmm}).  Furthermore,  it is clear that $\ker d_{\gamma} \mu = T_{\gamma}\inv{\mu}(\lambda)$.  Therefore, it follows that $T_{\gamma}\inv{\mu}(\lambda) = \fg^{\omega}$.  

Thus, for $v, w \in T_{\gamma}\inv{\mu}(\lambda)$ and $\xi, \rho \in \fg$ we have
\begin{align*}
& \omega_{\gamma}(v + X_{\xi}(\gamma), w + X_{\rho}(\gamma)) = \omega_{\gamma}(v, w) + (d_{\gamma}\mu(w))(\xi)\\ &+ (d_{\gamma}\mu(v))(\rho) + \omega(X_{\xi}(\gamma), X_{\rho}(\gamma)) = \omega_{\gamma}(v, w).
\end{align*}
Furthermore, since the action of $G$ preserves the symplectic form $\omega$, then $\hat{\omega}$ is well-defined.  It is also closed because it is induced by $\omega$, which is closed.

To show $\hat{\omega}$ is nondegenerate suppose there is a $w \in T_{\gamma}\inv{\mu}(\lambda)$ such that $\omega_{\gamma}(v, w) = 0$ for all $v \in T_{[\gamma]}\inv{\mu}(\lambda)//G$.  As demonstrated above $T_{\gamma}\inv{\mu}(\lambda)$ and $\fg$ are orthogonal complements with respect to $\omega$, so $w \in \fg$ and consequently $[w] = 0$.  Consequently, the $2$-form $\hat{\omega}$ defines a symplectic structure on $\inv{\mu}(\lambda)//G$.  Since $\dim T^*X = 2 \dim X$, $\ker d_{\gamma} \mu = T_{\gamma}\inv{\mu}(\lambda)$, and $d_{\gamma} \mu$ is surjective, then we have 
\[
\dim \inv{\mu}(\lambda)//G = \dim T^*X - \dim \Im d_{\gamma} \mu - \dim G = 2 \dim X - 2 \dim G.
\]
\end{proof}

Note that the proof of Theorem \ref{projgood} given in \cite{MFK1994} (see also Theorem 3.14 in \cite{New1978}) implies that $\inv{\mu}(\lambda)$ can be covered by invariant open affine subvarieties $\{U_i\}$ such that $\inv{\mu}(\lambda)//_{\chi}G$ is covered by affine subvarieties $\{U_i//_{\chi} G\}$ .  Restricting the Poisson bracket (resp. symplectic form), means Theorem \ref{affinehamred} holds for each $U_i$, and the corresponding Poisson brackets (resp. symplectic forms) agree on the intersections.
Gluing over the open cover $\{U_i \rightarrow U\}$ leads to the following theorem:

\begin{thm}
\label{alghamred}
Let $G$ be a connected reductive group acting on a smooth affine variety $X$.  Let $\mu: T^*X \rightarrow \fg^*$ be the corresponding moment map.  Let $\lambda \in \fg^*$ be a fixed point of the coadjoint action of $G$, and let $\chi: G \rightarrow \C^{\times}$ be an algebraic group character.  The following is true:
\begin{enumerate}
\item The Proj GIT quotient $\inv{\mu}(\lambda)//_{\chi}G$ has a structure of a Poisson variety induced by the Poisson structure on $T^*X$.
\item The geometric quotient $(\inv{\mu}(\lambda))^s_{\chi}/G$ is smooth with symplectic structure induced by the symplectic form on $T^*X$, and it has dimension $2 \dim X - 2 \dim G$.
\end{enumerate}
\end{thm}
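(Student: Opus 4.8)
The plan is to reduce everything to the affine case already settled in Theorem \ref{affinehamred} by exploiting the standard fact, recalled in Section \ref{projgit}, that a Proj GIT quotient is glued from affine GIT quotients. Write $N := \inv{\mu}(\lambda)$, a $G$-invariant closed (hence affine) subvariety of the smooth affine variety $T^*X$, cut out by the ideal $I \subset \cO(T^*X)$ generated by the Hamiltonians $H_\xi - \lambda(\xi)$ for $\xi \in \fg$. Recall that the semistable locus $N^{ss}_{\chi} = \bigcup_f N_f$ is a union of principal open sets $N_f = \{x \in N : f(x) \neq 0\}$ indexed by positive-degree semi-invariants $f \in \cO(N)^{\chi^n}$, that each $N_f$ is a $G$-invariant affine open subvariety, and that $N//_{\chi}G$ is covered by the affine pieces $D_+(f)$ with $\cO(D_+(f)) \cong \cO(N_f)^G$. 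In other words, $N//_{\chi}G$ is glued from the affine GIT quotients $N_f//G$.

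First I would produce, for each such $f$, a $G$-invariant affine open $(T^*X)_{\tilde f} \subset T^*X$ with $N_f = N \cap (T^*X)_{\tilde f}$. Since $G$ is reductive and we work in characteristic $0$, complete reducibility makes the restriction $\cO(T^*X)^{\chi^n} \twoheadrightarrow \cO(N)^{\chi^n}$ surjective, so $f$ lifts to a semi-invariant $\tilde f$, and $(T^*X)_{\tilde f}$ does the job. The Poisson bracket on $\cO(T^*X)$ is a sheaf-level structure, so it localizes to $\cO((T^*X)_{\tilde f})$; on this localization the ideal of $N_f$ is still generated by the $H_\xi - \lambda(\xi)$, and the computation of Theorem \ref{affinehamred}(1) applies verbatim: $\{g, H_\xi - \lambda(\xi)\} = X_\xi g \in I$ whenever $g$ is $G$-invariant, and the bracket of two invariants is again invariant. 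Hence $\cO(N_f)^G = \cO(D_+(f))$ inherits a Poisson bracket.

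Next I would check these local brackets are compatible. On an overlap $D_+(f) \cap D_+(f') = D_+(ff')$ both brackets are the restriction of the single Poisson bracket on $\cO((T^*X)_{\widetilde{ff'}})$, so they agree and glue to a Poisson structure on the structure sheaf of $N//_{\chi}G$, proving (1). For (2) I would restrict to the stable locus $N^s_{\chi}$, on which stabilizers are finite, so $\fg_\gamma = \Lie(G_\gamma) = 0$ for every $\gamma \in N^s_{\chi}$. By Lemma \ref{momentimlmm} this forces $d_\gamma\mu$ to be surjective, so every stable point is a smooth point of $N$; covering $N^s_{\chi}/G$ by the stable parts of the affine quotients $N_f//G$ and invoking Theorem \ref{affinehamred}(2) together with the principal-bundle statements of Theorem \ref{projbun} and Proposition \ref{projstab} then yields a smooth variety whose pieces carry the descended form $\hat\omega_{[\gamma]}([v],[w]) = \omega_\gamma(v,w)$. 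These forms glue for the same overlap reason, giving a global symplectic form, and the dimension count $\dim T^*X - \dim \Im d_\gamma\mu - \dim G = 2\dim X - 2\dim G$ is exactly as in Theorem \ref{affinehamred}(2).

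The main obstacle I expect is the gluing/localization bookkeeping: one must verify that the Hamiltonian-reduction argument of Theorem \ref{affinehamred} is genuinely local, i.e.\ that passing to the $G$-invariant principal open $(T^*X)_{\tilde f}$ preserves both the generation of $I$ by the moment-map functions and the $G$-equivariance of the Poisson bivector, so that the brackets defined on the various $\cO(N_f)^G$ are \emph{restrictions} of one another rather than merely abstractly isomorphic. A secondary point requiring care in (2) is that finite stabilizers only give $\fg_\gamma = 0$, hence smoothness of $N^s_{\chi}$ and nondegeneracy of $\hat\omega$; to obtain a smooth quotient variety one should either assume the action is free on the stable locus or interpret smoothness in the \'{e}tale-local sense supplied by Theorem \ref{projbun}.
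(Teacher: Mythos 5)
Your proposal is correct and takes essentially the same approach as the paper: the paper's proof likewise covers $\inv{\mu}(\lambda)$ by $G$-invariant affine opens whose affine GIT quotients cover $\inv{\mu}(\lambda)//_{\chi}G$ (citing the proof of Theorem \ref{projgood} in \cite{MFK1994} for the existence of such a cover), applies Theorem \ref{affinehamred} on each piece, and glues the resulting Poisson brackets and symplectic forms, which agree on overlaps. Your additional bookkeeping—lifting semi-invariants from $\cO(\inv{\mu}(\lambda))$ to $\cO(T^*X)$ via reductivity, and flagging that finite stabilizers on the stable locus give $\fg_{\gamma}=0$ but not literal freeness—only makes explicit details the paper leaves implicit.
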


\subsection{Double quivers and symplectic geometry}

Let us consider the constructions from the previous two sections in the context of quiver representations.  Let $Q$ be quiver and let $\opcat{Q}$ be the quiver with the same vertex set but with the source and target of each of the arrows reversed.  The quiver $\overline{Q}$ with vertex set $I_Q$ and arrow set $A_{\overline{Q}} = A_Q \sqcup A_{\opcat{Q}}$ is called the \emph{double} of $Q$.  It is easy to see that $\Rep(\overline{Q}, \alpha) \cong \Rep(Q, \alpha) \times  \Rep(\opcat{Q}, \alpha)$.  Furthermore, the trace of the product of matrices defines a perfect pairing 
\begin{align*}
& \tr: \Hom(\C^n, \C^m) \times \Hom(\C^m, \C^n) \rightarrow \C\\
& \tr(A,B) = \tr(AB) = \tr(BA).
\end{align*}
Thus there is a canonical isomorphism $\Rep(Q, \alpha) \cong \Rep(\opcat{Q}, \alpha)$, and consequently
\[
T^*\Rep(Q, \alpha) \cong \Rep(Q, \alpha) \times  \Rep(Q, \alpha)^{\vee} \cong \Rep(\overline{Q}, \alpha).
\]

For any arrow $a \in A_Q$ denote by $a^* \in A_{\opcat{Q}}$ the corresponding arrow in the opposite direction.  Following Example \ref{vspacesymex} in the holomorphic (or algebraic) setting, we see that $T^*\Rep(Q, \alpha)$ has a symplectic structure defined by the $2$-form 
\[
\omega (V_1, V_2) = \sum_{a \in A_Q} ((f_{a}^2)^*(f_{a}^1) - (f_{a}^1)^*(f_{a}^2)) = \sum_{a \in A_Q} (\tr(f_{a}^1f_{a^*}^2) - \tr(f_{a}^2f_{a^*}^1)),
\]
where $V_1 = (\{V_i^1\}, \{f_a^1\} \sqcup \{(f_a^1)^*\}), V_2 = (\{V_i^2\}, \{f_a^2\} \sqcup \{(f_a^2)^*\})$ are in $T^*\Rep(Q, \alpha)$ and $(f_a^l)^*$ is identified with $(f_{a^*}^l)$ for $l = 1,2$ via the trace pairing.

The standard action of the group $G(\alpha)$ gives us an induced action of $G(\alpha)$ on $T^*\Rep(Q,\alpha)$.  After the identification with $\Rep(\overline{Q}, \alpha)$ this $G(\alpha)$ can be seen to be the usual action by change of basis at each vertex.  

The Lie group $G(\alpha) = \prod_{i \in I_Q} \GL(\alpha_i, \C)$ has Lie algebra $\fg(\alpha) = \prod_{i \in I_Q} \gl_{\alpha_i}(\C)$.   Using the trace pairing once again we can identify $\fg(\alpha)^{*}$ with $\fg(\alpha)$.  This means the computation from Example \ref{vspacesymex} (once again done in the holomorphic or algebraic setting) results in the following moment map:
\begin{align*}
& \mu: \Rep(\overline{Q}, \alpha) \rightarrow \fg(\alpha)\\
& \mu(\overline{V}) = \left(\sum_{\substack{a \in A_Q\\ t(a) = i}} f_{a}f_{a^*} - \sum_{\substack{a \in A_Q\\ s(a) = i}} f_{a^*}f_{a} \right)_{i \in I_Q}.
\end{align*} 

Note that the formula for the moment map implies that if $C := (C_i) \in \fg(\alpha)$, the fiber $\inv{\mu}(C)$ is empty unless $\sum_i\tr (C_i) =  0$ (since the trace of each commutator is $0$).  This means we can think of the moment map as $\mu: \Rep(\overline{Q}, \alpha) \rightarrow \End(\alpha)_0$, where $\End(\alpha)_0 := \{ (C_i)_{i \in I_Q} \in \fg(\alpha)|\sum_i\tr(C_i) = 0 \}$.  Further note that the moment map formula implies that it is a morphism of algebraic varieties.  Consequently, the fibers of $\mu$ are closed subvarieties of $\Rep(\overline{Q}, \alpha)$. 

If $\lambda = (\lambda_i)_{i \in I_Q} \in \C^{I_Q}$, then we can define an element $(\lambda_i \Id)_{i \in I_Q} \in \fg(\alpha)$, which we will also denote by $\lambda$.  We will use the following definition to give a different interpretation to the fiber $\inv{\mu}(\lambda)$. 

\begin{defn}
Let $Q$ be a quiver, and let $\lambda \in \C^{I_Q}$. The quotient of the path algebra of the double $\overline{Q}$ given by $\Pi_{\lambda}(Q) := \C \overline{Q}/ J$, where $J$ is the (two-sided) ideal generated by the elements
\[
\left(\sum_{\substack{a \in A_Q\\ t(a) = i}} e_{a}e_{a^*} - \sum_{\substack{a \in A_Q\\ s(a) = i}} e_{a^*}e_{a} \right) -  \lambda_i e_i,
\]
for $i \in I_Q$, is called the \textbf{deformed preprojective algebra} of $Q$ associated to $\lambda$.  If $\lambda = 0$, then $\Pi_0(Q)$ is called the \textbf{preprojective algebra} of $Q$.
\end{defn}

Let $\Rep(\Pi_{\lambda}(Q), \alpha)$ be representations of $\Pi_{\lambda}(Q)$ with dimension vector $\alpha$ (i.e. these are representations of $\C \overline{Q}$ such that elements from $J$ act trivially). For $\lambda \cdot \alpha = 0$, we get $\inv{\mu}(\lambda)$ is nonempty, and we can see that by definition there is a bijection between $\Rep(\Pi_{\lambda}(Q), \alpha)$ and $\inv{\mu}(\lambda)$.  It is not hard to check this is an isomorphism of algebraic varieties.  Furthermore, since $\inv{\mu}(\lambda)$ is stable under the action of $G(\alpha)$, the variety $\Rep(\Pi_{\lambda}(Q), \alpha)$ has an induced action of $G(\alpha)$ making the isomorphism $G(\alpha)$-equivariant.

\begin{rmk}
\label{preprojrmk}
Since $\Rep(\Pi_{\lambda}(Q), \alpha)$ is isomorphic to a subvariety of $\Rep(\overline{Q}, \alpha)$, then we can define $\theta$-stability for representations of a deformed preprojective algebra.  Namely, given $\theta \in \Z^{I_Q}$ such that $\theta \cdot \alpha = 0$, we say $V \in \Rep(\Pi_{\lambda}(Q), \alpha)$ is $\theta$-semistable (resp. $\theta$-stable) if and only if for any (resp. any nontrivial, proper) subrepresentation $W \subset V$ with dimension vector $\beta$ we have $\theta \cdot \beta \le 0$ (resp. $\theta \cdot \beta < 0$).

Many of the results concerning $\theta$-stability quiver representations hold for representations of $\Pi_{\lambda}(Q)$.  For example, analogues of Theorems \ref{kingstab}, \ref{quiverpoly}, \ref{quivermodulic}, and \ref{quivermodulif} are all true.  In fact, even stronger versions exist for finite-dimensional representations of finite-dimensional algebras (see Theorem 4.1, Proposition 4.2, Proposition 5.2, and Proposition 5.3 in \cite{King1994}). 
\end{rmk}

\begin{exa}
\label{doublejordanex}
Consider the Jordan quiver $J$.  Its double $\overline{J}$ is pictured below.  

\begin{equation*}
\begin{tikzcd}
1 \arrow[out=135,in=225,loop, swap, "a"] \arrow[out=315,in=45,loop, swap, "a^*"]
\end{tikzcd}
\end{equation*}

Identifying the cotangent bundle $T^*\Rep(J, \alpha)$ with representations of the double $\Rep(\overline{J}, \alpha)$ and $\fg(\alpha)^*$ with $\fg(\alpha)$, we see that the moment map $\mu: \Rep(\overline{J}, \alpha) \rightarrow \fg(\alpha)$ is given by 
\[
\mu(f_a, f_{a^*}) = f_af_{a^*} - f_{a^*}f_a = [f_a, f_{a^*}].
\]
Note that if $\lambda \in \C$, the fiber of the moment map $\inv{\mu}(\lambda)$ is empty unless $\lambda = 0$.  Therefore, the only nonempty fiber $\inv{\mu}(0)$ can be thought of as pairs of commuting $\alpha \times \alpha$ matrices.  Alternatively, we can think of this fiber as $\alpha$-dimensional representations of the preprojective algebra $\Pi_0(J) = \C[e_a,e_{a^*}]$.

Introducing a stability parameter $\theta \in \Z$, we see that it must necessarily be equal to $0$.  Therefore, every representation in $\inv{\mu}(0)$ is $\theta$-semistable, while only $1$-dimensional representations are $\theta$-stable (any pair of commuting square matrices has a common eigenvector).  Since the points of $\inv{\mu}(0)//_{\theta}G(\alpha)$ correspond to $\theta$-polystable representations of $\Pi_0(J)$, we see that it parametrizes pairs of diagonal matrices (up to simultaneous permutation of the diagonal elements). 
\end{exa}
\section{Nakajima quiver varieties}

\subsection{Definition and basic properties}
We now combine framing for quivers with Hamiltonian reduction.  Namely, let $Q$ be a quiver and $Q^{fr}$ its framing.  We can identify $T^*\Rep(Q^{fr}, \alpha, \alpha')$ with representations of the double of the framed quiver $\Rep(\overline{Q^{fr}}, \alpha, \alpha')$ via the trace pairing.  Under this identification, the action of the group $G(\alpha)$ on $\Rep(Q^{fr}, \alpha, \alpha')$ naturally lifts to an action on $\Rep(\overline{Q^{fr}}, \alpha, \alpha')$.  This can be written explicitly as 
\[
g \cdot \overline{V^{fr}} = (\{V_i\}, \{V'_{i}\}, \{g_{t(a)}f_a \inv{g}_{s(a)}\}, \{g_{t(a)}f_{a^*}\inv{g}_{s(a)}\}, \{f_{b_i}\inv{g_i}\},\{g_i f_{b_i^*}\}),
\] 
where $\overline{V^{fr}} = (\{V_i\}, \{V'_{i}\}, \{f_a\}, \{f_{a^*}\}, \{f_{b_i}\}, \{f_{b_i^*}\})$.  The corresponding moment map $\mu: \Rep(\overline{Q^{fr}}, \alpha, \alpha') \rightarrow \fg(\alpha)$ is given by 
\[
\mu \left(\overline{V^{fr}} \right) \mapsto \left(-f_{b_i^*}f_{b_i} +  \sum_{\substack{a \in A_Q\\ t(a) = i}} f_{a}f_{a^*} - \sum_{\substack{a \in A_Q\\ s(a) = i}} f_{a^*}f_{a} \right)_{i \in I_Q}.
\]

We can now state the following definition due to Nakajima (see \cite{Na1994}):
\begin{defn}
Let $\theta \in \Z^{I_Q}$ and $\lambda \in \C^{I_Q}$.  The algebraic variety $\fM_{\theta, \lambda}(\alpha, \alpha') := \inv{\mu}(\lambda)//_{\chi_{\theta}} G(\alpha)$ is called a \textbf{quiver variety}.  We denote by $\fM_{\theta, \lambda}^s(\alpha, \alpha')$ the geometric quotient corresponding to the $\chi_{\theta}$-stable points. 
\end{defn}

We would like to characterize $\chi_{\theta}$-semistable points in $\inv{\mu}(\lambda)$ in a similar fashion to Theorem \ref{framedthm}.  Recall that by Proposition \ref{framedeqprp} there is a $G(\alpha)$-equivariant isomorphism between $\Rep(Q^{fr}, \alpha, \alpha')$ and $\Rep(\hat{Q}, \hat{\alpha})$, which sends each matrix assigned to an arrow going into a framing vertex to its rows.  Let $\widetilde{Q}$ be the double of the framed quiver $\hat{Q}$.  The action of $G(\alpha)$ on $\Rep(\widetilde{Q}, \hat{\alpha})$ lifts from that on $\Rep(\hat{Q}, \hat{\alpha})$.  Therefore, as before, we can construct a $G(\alpha)$-equivariant isomorphism $\Phi: \Rep(\overline{Q^{fr}}, \alpha, \alpha') \rightarrow \Rep(\widetilde{Q}, \hat{\alpha})$.  Namely, if $\overline{V^{fr}} \in \Rep(\overline{Q^{fr}}, \alpha, \alpha')$ is as above, then $\Phi$ maps it to $\widetilde{V} = (\{V_i\}, \{V_{\infty}\}, \{f_a\}, \{f_{a^*}\}, \{r_{ik}\}, \{c_{ik}\})$, where $V_{\infty} = \C$, $r_{ik}$ are the rows of $f_{b_i}$, and $c_{ik}$ are the columns of $f_{b_i^*}$.

The moment map $\hat{\mu}: \Rep(\widetilde{Q}, \hat{\alpha}) \rightarrow \fg(\alpha)$ is given by 
\[
\hat{\mu} \left(\overline{V^{fr}} \right) \mapsto \left(- \sum_k c_{ik}r_{ik} +  \sum_{\substack{a \in A_Q\\ t(a) = i}} f_{a}f_{a^*} - \sum_{\substack{a \in A_Q\\ s(a) = i}} f_{a^*}f_{a} \right)_{i \in I_Q}.
\]
Thus, it is not hard to see that given a $\lambda \in \C^{I_Q}$ there is a $G(\alpha)$-equivariant isomorphism between the varieties $\inv{\mu}(\lambda)$ and $\inv{\hat{\mu}}(\lambda)$ induced by $\Phi$.  In light of Remark \ref{preprojrmk}, we can use this isomorphism to describe semistable elements of $\inv{\mu}(\lambda)$ as follows:

\begin{thm}
\label{nakajimastab}
Let $\theta \in \Z^{I_Q}$ and let $\lambda \in \C^{I_Q}$ be such that $\inv{\mu}(\lambda)$ is nonempty.  The representation $\overline{V^{fr}} = (\{V_i\}, \{V'_{i}\}, \{f_a\}, \{f_{a^*}\}, \{f_{b_i}\}, \{f_{b_i^*}\}) \in \inv{\mu}(\lambda)$ is $\chi_{\theta}$-semistable with respect to the action of $G(\alpha)$ if and only if for any subrepresentation $W \subset \overline{V} = (\{V_i\}, \{f_a\}, \{f_{a^*}\})$ with dimension vector $\beta$ the following conditions hold:
\begin{itemize}[label={}]
\item if $W_i \subset \ker f_{b_i}$ for all $i \in I_Q$, then $\theta \cdot \beta \le 0$,
\item if $W_i \supset \Im f_{b_i^*}$ for all $i \in I_Q$, then $\theta \cdot \beta \le \theta \cdot \alpha$.
\end{itemize} 
The representation $\overline{V^{fr}}$ is $\chi_{\theta}$-stable if the above inequalities are strict with the additional conditions that $W$ is nontrivial in the first case and $W$ is proper in the second case. 
\end{thm}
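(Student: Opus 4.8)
The plan is to reduce the statement to King's numerical stability criterion (Theorem \ref{kingstab}) applied on the doubled framed quiver $\widetilde{Q}$, mirroring the proof of Theorem \ref{framedthm} but now keeping track of the extra arrows coming from the cotangent directions. First I would transport $\overline{V^{fr}}$ through the $G(\alpha)$-equivariant isomorphism $\Phi \colon \Rep(\overline{Q^{fr}}, \alpha, \alpha') \to \Rep(\widetilde{Q}, \hat\alpha)$ to the representation $\widetilde{V} = (\{V_i\}, \{V_{\infty}\}, \{f_a\}, \{f_{a^*}\}, \{r_{ik}\}, \{c_{ik}\})$ with $V_{\infty} = \C$, where the $r_{ik}$ are the rows of $f_{b_i}$ and the $c_{ik}$ the columns of $f_{b_i^*}$. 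Since $\Phi$ carries $\inv{\mu}(\lambda)$ isomorphically onto $\inv{\hat\mu}(\lambda)$, I may regard $\widetilde{V}$ as a representation of the deformed preprojective algebra $\Pi_{\lambda}(\hat{Q})$, for which the analogue of Theorem \ref{kingstab} holds (Remark \ref{preprojrmk}). Setting $\hat\theta = (\theta, -\sum_{i \in I_Q}\theta_i \alpha_i)$ gives $\hat\theta \cdot \hat\alpha = 0$, and, exactly as in the proof of Theorem \ref{framedthm}, regarding $G(\alpha)$ as $G(\hat\alpha)/\C^{\times}$ shows that $\widetilde{V}$ is $\chi_{\hat\theta}$-semistable for $G(\hat\alpha)$ if and only if $\overline{V^{fr}}$ is $\chi_{\theta}$-semistable for $G(\alpha)$, and likewise for stability.

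Next I would translate the numerical condition $\hat\theta \cdot \hat\beta \le 0$ for subrepresentations $\hat{W} \subset \widetilde{V}$ into the two stated inequalities, organizing the argument by the value of $\beta_{\infty} = \dim (\hat{W})_{\infty}$, which is $0$ or $1$ since $\dim V_{\infty} = 1$. Writing $W$ for the collection $\{(\hat{W})_i\}_{i \in I_Q}$, closure of $\hat{W}$ under the $f_a, f_{a^*}$ makes $W$ a subrepresentation of $\overline{V}$ with some dimension vector $\beta$. When $\beta_{\infty} = 0$, closure under the arrows $r_{ik} \colon V_i \to V_{\infty}$ forces $W_i \subset \bigcap_k \ker r_{ik} = \ker f_{b_i}$, while closure under the $c_{ik}$ is automatic; here $\hat\theta \cdot \hat\beta = \theta \cdot \beta$, yielding the first condition. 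When $\beta_{\infty} = 1$, one has $(\hat{W})_{\infty} = V_{\infty}$, so closure under $c_{ik} \colon V_{\infty} \to V_i$ forces $\Im f_{b_i^*} = \sum_k \Im c_{ik} \subset W_i$, while closure under the $r_{ik}$ is automatic; here $\hat\theta \cdot \hat\beta = \theta \cdot \beta - \theta \cdot \alpha$, yielding the second condition. Conversely, any subrepresentation $W \subset \overline{V}$ with $W_i \subset \ker f_{b_i}$ completes to a subrepresentation of $\widetilde{V}$ with $\beta_{\infty} = 0$, and any $W$ with $W_i \supset \Im f_{b_i^*}$ completes to one with $(\hat{W})_{\infty} = V_{\infty}$, so these two families exhaust the subrepresentations of $\widetilde{V}$.

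Finally I would record the stability refinement. Under this dictionary a nontrivial $\hat{W}$ with $\beta_{\infty} = 0$ is automatically proper and corresponds to a nontrivial $W$, whereas a proper $\hat{W}$ with $\beta_{\infty} = 1$ is automatically nontrivial and corresponds to a proper $W$; hence demanding $\hat\theta \cdot \hat\beta < 0$ for every nontrivial proper $\hat{W}$ produces exactly the strict versions of the two inequalities, with $W$ nontrivial in the first case and $W$ proper in the second. The main point to get right is the bookkeeping identifying $\bigcap_k \ker r_{ik} = \ker f_{b_i}$ and $\sum_k \Im c_{ik} = \Im f_{b_i^*}$ under the passage between the single framing arrows $f_{b_i}, f_{b_i^*}$ and their rows and columns through $\Phi$, together with the check that the dichotomy $\beta_{\infty} \in \{0,1\}$ misses no subrepresentation of $\widetilde{V}$; once these are settled, both inequalities drop out of the expansion of $\hat\theta \cdot \hat\beta$.
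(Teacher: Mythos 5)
Your proposal is correct and follows essentially the same route as the paper's own proof: reduce via the $G(\alpha)$-equivariant isomorphism $\Phi$ to representations of the deformed preprojective algebra on $\widetilde{Q}$, apply King's criterion through Remark \ref{preprojrmk} with $\hat{\theta} = (\theta, -\sum_i \theta_i\alpha_i)$, and split the numerical condition $\hat{\theta}\cdot\hat{\beta} \le 0$ according to $\beta_{\infty} \in \{0,1\}$, identifying $\bigcap_k \ker r_{ik} = \ker f_{b_i}$ and $\sum_k \Im c_{ik} = \Im f_{b_i^*}$. Your treatment of the stability refinement is in fact slightly more explicit than the paper's, which simply declares that case analogous.
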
  
\begin{proof}
We proceed as in the proof of Theorem \ref{framedthm}.  Let $\hat{\theta} = (\theta, - \sum_{i \in I_Q} \theta_i \alpha_i)$, and let $\widetilde{V} = (\{V_i\}, \{V_{\infty}\}, \{f_a\}, \{f_{a^*}\}, \{r_{ik}\}, \{c_{ik}\}) = \Phi(\overline{V^{fr}})$.  Note that $\inv{\hat{\mu}}(\lambda)$ is isomorphic to the corresponding variety of representations of the deformed preprojective algebra $\Rep(\Pi_{\lambda}, \hat{\alpha})$.

Therefore, by Remark \ref{preprojrmk}, we have that $\widetilde{V}$ is $\chi_{\hat{\theta}}$-semistable with respect to the action of $G(\hat{\alpha})$ on $\Rep(\Pi_{\lambda}, \hat{\alpha})$ if and only if it is $\hat{\theta}$-semistable.  Furthermore, we have that $\widetilde{V}$ is $\chi_{\hat{\theta}}$-semistable if and only if it is $\chi_{\theta}$-semistable with respect to the induced $G(\alpha)$ action.  

Thus, $\widetilde{V}$ is $\chi_{\hat{\theta}}$-semistable if and only if for every subrepresentation $\widetilde{W} \subset \widetilde{V}$ (with dimension vector $\hat{\beta}$) we have
\[
\hat{\theta} \cdot \hat{\beta}  = \sum_i \theta_i\beta_i - \beta_{\infty}\sum_i \theta_i \alpha_i \le 0.
\]

Let $\widetilde{W} \subset \widetilde{V}$ be a subrepresentation. Note that $\beta_{\infty}$ can only be $0$ or $1$.  If it is $0$, then the subrepresentation $W \subset V$ induced by $\widetilde{V}$ must satisfy $W_i \subset \bigcap_k r_{ik} = \ker f_{b_i}$ for all $i \in I_Q$.  It follows that $\widetilde{V}$ is $\chi_{\hat{\theta}}$-semistable if $\theta \cdot \beta \le 0$.  If $\beta_{\infty} = 1$, then $W$ must satisfy $W_i \supset \sum_k \Im c_{ik} = \Im f_{b_i^*}$, and so $\widetilde{V}$ is $\chi_{\hat{\theta}}$-semistable if $\theta \cdot \beta \le \theta \cdot \alpha$.

Conversely, suppose $\widetilde{V}$ is $\chi_{\hat{\theta}}$-semistable.  If $W \subset V$ is a subrepresentation (with dimension vector $\beta$) such that $W_i \subset \ker f_{b_i}$ for all $i \in I_Q$, then we can complete it to a subrepresentation $\widetilde{W} \subset \widetilde{V}$ by setting $\beta_{\infty} = 0$, $r_{ik} = 0$, and $c_{ik} = 0$.  Semistability implies that $\theta \cdot \beta \le 0$.

If $W_i \supset \Im f_{b_i^*}$ for all $i \in I_Q$, then we can complete $W$ to a subrepresentation $\widetilde{W} \subset \widetilde{V}$ by setting $\beta_{\infty} = 1$, $r_{ik}$ to the rows of $f_{b_i}$ restricted to $W_i$, and $c_{ik}$ to the columns of $f_{b_i^*}$.  Semistability implies that $\theta \cdot \beta \le \theta \cdot \alpha$.  Since the moment maps fibers $\inv{\mu}(\lambda)$ and $\inv{\hat{\mu}}(\lambda)$ are $G(\alpha)$-equivariantly isomorphic, this completes the proof of first part of the theorem.  The proof of the $\chi_{\theta}$-stable is entirely analogous.
\end{proof}

Keeping the same notation as in Theorem \ref{nakajimastab}, we obtain the following easy corollary:
\begin{cor}
\label{nakajimastabcor}
Let $\theta \in \Z_{>0}^{I_Q}$.  We have that $\overline{V^{fr}} \in \inv{\mu}(\lambda)$ is $\chi_{\theta}$-semistable if and only if any subrepresentation $W \subset \overline{V}$ such that $W_i \subset \ker f_{b_i}$ for all $i \in I_Q$ is trivial.  Similarly, if $\theta \in \Z_{<0}^{I_Q}$, then $\overline{V^{fr}}$ is $\chi_{\theta}$-semistable if and only if any subrepresentation $W \subset \overline{V}$ such that $W_i \supset \Im f_{b_i^*}$ for all $i \in I_Q$ is equal to $V$.
\end{cor}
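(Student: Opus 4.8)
The plan is to derive both equivalences directly from Theorem \ref{nakajimastab} by specializing its two semistability inequalities according to the sign of $\theta$. Recall that the theorem characterizes $\chi_{\theta}$-semistability of $\overline{V^{fr}} \in \inv{\mu}(\lambda)$ by requiring, for every subrepresentation $W \subset \overline{V}$ with dimension vector $\beta$, that $\theta \cdot \beta \le 0$ whenever $W_i \subset \ker f_{b_i}$ for all $i$, and that $\theta \cdot \beta \le \theta \cdot \alpha$ whenever $W_i \supset \Im f_{b_i^*}$ for all $i$. The essential point I would exploit is that dimension vectors satisfy $\beta_i \ge 0$ and, since $W \subset V$, also $\beta_i \le \alpha_i$ for every $i \in I_Q$. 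When the components of $\theta$ all share a single sign, one of the two inequalities holds automatically (and so imposes nothing), while the other becomes an equality constraint that pins $W$ down.

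First I would treat the case $\theta \in \Z_{>0}^{I_Q}$. For a subrepresentation with $W_i \supset \Im f_{b_i^*}$, the inequality $\theta \cdot \beta \le \theta \cdot \alpha$ is satisfied for free, because $\theta_i > 0$ and $\beta_i \le \alpha_i$ give $\theta \cdot \beta = \sum_i \theta_i \beta_i \le \sum_i \theta_i \alpha_i = \theta \cdot \alpha$; hence this clause is vacuous. The only surviving requirement concerns subrepresentations with $W_i \subset \ker f_{b_i}$ for all $i$, where we need $\theta \cdot \beta \le 0$. But $\theta_i > 0$ and $\beta_i \ge 0$ force $\theta \cdot \beta \ge 0$, with equality if and only if $\beta = 0$, that is, if and only if $W$ is trivial. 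Therefore the condition $\theta \cdot \beta \le 0$ holds precisely when $W = 0$, which is exactly the stated criterion for the $\theta > 0$ case.

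The case $\theta \in \Z_{<0}^{I_Q}$ is handled by the mirror-image argument. Now the clause for $W_i \subset \ker f_{b_i}$ is automatic, since $\theta_i < 0$ and $\beta_i \ge 0$ yield $\theta \cdot \beta \le 0$; so that requirement imposes nothing. The operative requirement is $\theta \cdot \beta \le \theta \cdot \alpha$ for subrepresentations with $W_i \supset \Im f_{b_i^*}$ for all $i$. Writing $\theta \cdot \beta - \theta \cdot \alpha = \sum_i \theta_i(\beta_i - \alpha_i)$ and noting each factor $\theta_i < 0$ while $\beta_i - \alpha_i \le 0$, every summand is nonnegative, so in fact $\theta \cdot \beta \ge \theta \cdot \alpha$. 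Combined with the required inequality this forces $\theta \cdot \beta = \theta \cdot \alpha$, hence $\beta_i = \alpha_i$ for all $i$, i.e. $W = V$. Thus the surviving condition is exactly that any such $W$ equals $V$, as claimed. I do not expect a genuine obstacle here: the content is purely the sign analysis of the two linear inequalities, and the only point requiring a little care is confirming that in each regime the ``wrong'' inequality is automatically satisfied so that it can be discarded, leaving a single constraint that collapses to triviality or to fullness of $W$.
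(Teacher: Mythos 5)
Your proof is correct and is exactly the argument the paper intends: the corollary is stated as an immediate consequence of Theorem \ref{nakajimastab}, and the intended content is precisely your sign analysis showing that for $\theta$ of a fixed sign one of the two inequalities is vacuous while the other forces $\beta = 0$ (resp. $\beta = \alpha$, hence $W = V$).
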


Note that, as in Theorem \ref{framedthm}, choosing $\theta \in \Z_{>0}^{I_Q}$ guarantees that all semistable points in $\inv{\mu}(\lambda)$ are stable, making $\fM_{\theta, \lambda}(\alpha, \alpha') = \fM_{\theta, \lambda}^s(\alpha, \alpha')$.  However, this is true for other choices of the parameters $(\theta, \lambda)$.
  
Namely, let $(\alpha,\beta) := \langle \alpha, \beta \rangle + \langle \beta, \alpha \rangle$ be the symmetrization of the Euler-Ringel form.  Denote $R_{+}  := \{\gamma \in \Z^{I_Q}_{\ge 0}| (\gamma, \gamma) \le 2 \}/ \{0\}$.  For a dimension vector $\alpha$, denote $R_{+}(\alpha)  := \{\gamma \in R_{+}| \gamma_i \le \alpha_i \textrm{ for all } i \in I_Q \}$.  We say $(\theta, \lambda)$ is \emph{generic} with respect to $\alpha$ if $\theta \cdot \gamma \neq 0$ or $\lambda \cdot \gamma \neq 0$ for all $\gamma \in R_{+}(\alpha)$.  The following result is due to Nakajima (see Theorem 2.8 in \cite{Na1994}).

\begin{thm}
\label{nakajimafree}
Let $(\theta, \lambda)$ be generic with respect to $\alpha$. The action of $G(\alpha)$ on the $\chi_{\theta}$-semistable locus $\inv{\mu}(\lambda)_{\chi_{\theta}}^{ss}$ is free.
\end{thm}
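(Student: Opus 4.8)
The plan is to show that the stabilizer in $G(\alpha)$ of every $\chi_{\theta}$-semistable point of $\inv{\mu}(\lambda)$ is trivial; since freeness is exactly triviality of all stabilizers, this suffices. Fix such a point $\overline{V^{fr}} = (\{V_i\}, \{V'_i\}, \{f_a\}, \{f_{a^*}\}, \{f_{b_i}\}, \{f_{b_i^*}\})$ and suppose $g = (g_i) \in G(\alpha)$ stabilizes it. Unwinding the $G(\alpha)$-action shows that $g$ commutes with every $f_a$ and $f_{a^*}$ and satisfies $f_{b_i} g_i^{-1} = f_{b_i}$ and $g_i f_{b_i^*} = f_{b_i^*}$. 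Setting $\psi := g - \Id$, the commutation relations make $\psi$ an endomorphism of the underlying $\overline{Q}$-representation $\overline{V} = (\{V_i\}, \{f_a\}, \{f_{a^*}\})$, while the framing relations become $f_{b_i}\psi_i = 0$ and $\psi_i f_{b_i^*} = 0$, i.e. $\Im\psi_i \subset \ker f_{b_i}$ and $\Im f_{b_i^*}\subset\ker\psi_i$ for every $i$. I would then assume, toward a contradiction, that $g \neq \Id$, so that $\psi \neq 0$.

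The key object is the subrepresentation $W := \Im\psi$ of $\overline{V}$. Since $\psi$ commutes with the $f_a, f_{a^*}$, its image is a nonzero subrepresentation with $W_i \subset \ker f_{b_i}$ for all $i$; restricting the framed moment-map equation to $W_i$ annihilates the term $f_{b_i^*}f_{b_i}$, so $W$ is in fact a representation of the deformed preprojective algebra $\Pi_{\lambda}(Q)$ (cf. Remark \ref{preprojrmk}), and taking traces of that relation gives $\lambda\cdot\dim W = 0$. Next I would extract two inequalities from the semistability criterion of Theorem \ref{nakajimastab}: applying its first bullet to $W$ yields $\theta\cdot\dim W \le 0$, while applying its second bullet to the subrepresentation $\ker\psi$, which contains $\Im f_{b_i^*}$ at each vertex, yields $\theta\cdot\dim\ker\psi \le \theta\cdot\alpha$; combined with the rank--nullity identity $\dim\ker\psi = \alpha - \dim W$ this forces $\theta\cdot\dim W \ge 0$, hence $\theta\cdot\dim W = 0$.

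To land inside $R_{+}(\alpha)$ I would decompose $W$ into a direct sum of indecomposable $\Pi_{\lambda}(Q)$-modules $W \cong \bigoplus_j U^{(j)}$ with dimension vectors $\gamma^{(j)}$. The one genuinely nontrivial ingredient is the root-theoretic theorem (Crawley--Boevey) that the dimension vector of any indecomposable representation of a deformed preprojective algebra is a positive root with $\lambda\cdot\gamma^{(j)} = 0$; a positive root satisfies $(\gamma^{(j)},\gamma^{(j)}) \le 2$, and clearly $0 \neq \gamma^{(j)} \le \alpha$, so each $\gamma^{(j)} \in R_{+}(\alpha)$. Each summand is itself a subrepresentation of $\overline{V}$ contained in $\bigcap_i \ker f_{b_i}$, so the first bullet of Theorem \ref{nakajimastab} gives $\theta\cdot\gamma^{(j)}\le 0$ for every $j$; since $\sum_j \theta\cdot\gamma^{(j)} = \theta\cdot\dim W = 0$, a sum of nonpositive integers that vanishes must have every term zero, so $\theta\cdot\gamma^{(j)} = 0$ for each $j$. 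Any single $j$ then produces $\gamma^{(j)}\in R_{+}(\alpha)$ with $\theta\cdot\gamma^{(j)} = 0$ and $\lambda\cdot\gamma^{(j)} = 0$, contradicting the genericity of $(\theta,\lambda)$. Hence $g = \Id$, every stabilizer is trivial, and the action is free.

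I expect the main obstacle to be precisely the root-theoretic input in the last paragraph: the passage from $g$ to the subrepresentation $W$ and the manipulation of the two semistability inequalities are formal, but guaranteeing that the pieces extracted from $W$ genuinely lie in $R_{+}$ — that is, controlling the value of the Tits form $(\gamma,\gamma)$ — is where the structure theory of deformed preprojective algebras is unavoidable.
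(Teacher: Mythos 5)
Your proposal is correct, but there is no internal proof to compare it against: the paper states Theorem \ref{nakajimafree} as a quoted result, citing Theorem 2.8 of \cite{Na1994}, and gives no argument, so yours has to stand on its own --- and it does. Checking it step by step: $\psi = g - \Id$ is an endomorphism of $\overline{V}$ satisfying $\Im \psi_i \subset \ker f_{b_i}$ and $\Im f_{b_i^*} \subset \ker \psi_i$; both $\Im\psi$ and $\ker\psi$ are subrepresentations of $\overline{V}$, so the two bullets of Theorem \ref{nakajimastab} together with rank--nullity force $\theta \cdot \dim \Im\psi = 0$; the framed moment-map equation restricted to $\Im\psi$ loses its $f_{b_i^*}f_{b_i}$ term, so $\Im\psi$ is honestly a $\Pi_{\lambda}(Q)$-module, and the trace of the defining relation gives $\lambda\cdot\gamma^{(j)} = 0$ on each Krull--Schmidt summand; each summand is a subrepresentation of $\overline{V}$ lying in $\bigcap_i\ker f_{b_i}$, so $\theta\cdot\gamma^{(j)}\le 0$, and a vanishing sum of nonpositive numbers vanishes termwise. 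Two remarks on the route. First, your image/kernel trick is exactly what is needed to make the argument work in the algebraic category: Nakajima's original argument lives in the hyperk\"ahler setting, where a stabilizing element lies in the compact group $\prod_i U(V_i)$ and is therefore diagonalizable, and his eigenspaces for eigenvalues $\epsilon \neq 1$ play the role of your $W$; in the GIT setting $g$ may be unipotent, and your argument covers that case with no extra effort. Second, you correctly identify the crux: without the bound $(\gamma^{(j)},\gamma^{(j)}) \le 2$ you cannot place $\gamma^{(j)}$ in $R_{+}(\alpha)$, and the only available tool is Crawley--Boevey's theorem that dimension vectors of indecomposable $\Pi_{\lambda}(Q)$-modules are positive roots (in the paper's normalization $(\gamma,\gamma) = 2q(\gamma)$, so real roots give $2$ and imaginary roots give $\le 0$, whence the bound). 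That citation is legitimate, but note it is a theorem of depth comparable to the statement being proved, and it postdates \cite{Na1994}; invoking it is the standard modern shortcut rather than a reconstruction of Nakajima's own proof.
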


Putting the results from the previous few sections together we obtain the following theorem.

\begin{thm}
\label{nakajimavarthm}
Let $\lambda \in \C^{I_Q}$ and $\theta \in \Z^{I_Q}$.  The quiver variety $\fM_{\theta, \lambda}(\alpha, \alpha')$ is a Poisson variety and the natural morphism $\fM_{\theta, \lambda}(\alpha, \alpha') \rightarrow \fM_{0, \lambda}(\alpha, \alpha')$ is a projective morphism of Poisson varieties (i.e. it preserves the bracket).  If $(\lambda, \theta)$ is generic with respect to $\alpha$, then $\fM_{\theta, \lambda}(\alpha, \alpha') = \fM_{\theta, \lambda}^s(\alpha, \alpha')$ and $\fM_{\theta, \lambda}(\alpha, \alpha')$ is a smooth symplectic variety of dimension $2\alpha \cdot \alpha' - 2q(\alpha)$.
\end{thm}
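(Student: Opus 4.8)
The plan is to recognize $\fM_{\theta,\lambda}(\alpha,\alpha')$ as an instance of the algebraic Hamiltonian reduction of Theorem~\ref{alghamred}, applied to the smooth affine variety $X = \Rep(Q^{fr},\alpha,\alpha')$ with its $G(\alpha)$-action, and then to read off each clause of the statement from the results already assembled. First I would verify the hypotheses of Theorem~\ref{alghamred}: the group $G(\alpha) = \prod_{i\in I_Q}\GL(\alpha_i,\C)$ is connected and reductive (a finite product of general linear groups), $X$ is a smooth affine variety (an affine space), the moment map is the $\mu$ constructed in Section~11, and $\lambda = (\lambda_i\Id)_{i\in I_Q}$ is central in $\fg(\alpha)$, hence — under the trace identification $\fg(\alpha)^*\cong\fg(\alpha)$ — a fixed point of the coadjoint action. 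With these in hand, part~(1) of Theorem~\ref{alghamred} immediately endows $\fM_{\theta,\lambda}(\alpha,\alpha') = \inv{\mu}(\lambda)//_{\chi_\theta}G(\alpha)$ with its Poisson structure.

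For the projective morphism, I would first identify the target: taking the trivial character $\chi_0$ gives $\fM_{0,\lambda}(\alpha,\alpha') = \inv{\mu}(\lambda)//G(\alpha)$, the affine GIT quotient, as in Example~\ref{projgitex}(2). The graded algebra computing the Proj quotient is $A_{\chi_\theta} = \cO(\inv{\mu}(\lambda)\times\C)^{G(\alpha)}$, whose degree-zero part is exactly $\cO(\inv{\mu}(\lambda))^{G(\alpha)}$, the coordinate ring of $\fM_{0,\lambda}$. The inclusion of the degree-zero part induces the natural projective morphism $\Proj A_{\chi_\theta}\to\Spec A_{\chi_\theta,0}$ recalled in Section~\ref{projgit}; this is the map $\fM_{\theta,\lambda}\to\fM_{0,\lambda}$, and it is projective. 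To see that it respects the Poisson brackets, I would note that both brackets are defined by the same rule $\{[f],[h]\}=[\{f,h\}]$ descending from the Poisson algebra $\cO(T^*X)$, so the comorphism — the inclusion of the degree-zero Poisson subalgebra into $A_{\chi_\theta}$ — is a homomorphism of Poisson algebras; gluing over the invariant affine cover used in the remark following Theorem~\ref{affinehamred} then shows the two induced structures are compatible sheaf-theoretically.

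For the generic case, the key input is Theorem~\ref{nakajimafree}, which says that genericity of $(\theta,\lambda)$ forces the $G(\alpha)$-action on the semistable locus $\inv{\mu}(\lambda)^{ss}_{\chi_\theta}$ to be free. From freeness every stabilizer is trivial, so every semistable orbit has dimension $\dim G(\alpha)$; if some semistable orbit failed to be closed in the semistable locus, its closure would contain, by Proposition~\ref{orbit}, a strictly lower-dimensional orbit which, being semistable and free as well, would again have dimension $\dim G(\alpha)$ — a contradiction. Hence all semistable orbits are closed in $\inv{\mu}(\lambda)^{ss}_{\chi_\theta}$ with finite stabilizer, there are no strictly semistable points, and $\fM_{\theta,\lambda}(\alpha,\alpha') = \fM^s_{\theta,\lambda}(\alpha,\alpha')$. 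The smoothness, symplectic structure, and dimension then follow from part~(2) of Theorem~\ref{alghamred} applied to the stable locus, where finite stabilizers give $\fg_\gamma = 0$, hence $d_\gamma\mu$ surjective by Lemma~\ref{momentimlmm} and smoothness of $\inv{\mu}(\lambda)$ at $\gamma$. The dimension count is the routine identity $2\dim X - 2\dim G(\alpha) = 2\alpha\cdot\alpha' - 2q(\alpha)$, using $\dim X = \sum_{a\in A_Q}\alpha_{s(a)}\alpha_{t(a)} + \alpha\cdot\alpha'$, $\dim G(\alpha) = \sum_{i}\alpha_i^2$, and $q(\alpha)=\sum_{i}\alpha_i^2-\sum_{a}\alpha_{s(a)}\alpha_{t(a)}$.

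I expect the main obstacle to be the verification that the natural morphism $\fM_{\theta,\lambda}\to\fM_{0,\lambda}$ is genuinely a morphism of Poisson varieties: Theorem~\ref{alghamred} produces the two Poisson structures independently on each quotient, and matching them requires tracking that both descend from $\cO(T^*X)$ through compatible constructions and that the comorphism is the degree-zero inclusion, which is delicate precisely because the Proj structure is only defined locally on the invariant affine cover. The remaining clauses are essentially bookkeeping once the reduction is set up.
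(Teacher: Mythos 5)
Your proposal is correct and follows essentially the same route as the paper: Theorem \ref{alghamred}(1) for the Poisson structure, the $\Proj \to \Spec$ discussion of Section \ref{projgit} plus the fact that both brackets descend from $T^*\Rep(\overline{Q^{fr}},\alpha,\alpha')$ for the projective Poisson morphism, and Theorem \ref{nakajimafree} combined with the orbit-dimension argument of Proposition \ref{orbit} to conclude semistable $=$ stable, with smoothness, the symplectic form, and the dimension count coming from Theorem \ref{alghamred}(2). Your treatment of the closedness of orbits within the semistable locus and of the dimension bookkeeping is, if anything, slightly more careful than the paper's.
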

\begin{proof}
By Theorem \ref{alghamred}, the quiver variety $\fM_{\theta, \lambda}(\alpha, \alpha')$ has a Poisson structure.  The morphism $\fM_{\theta, \lambda}(\alpha, \alpha') \rightarrow \fM_{0, \lambda}(\alpha, \alpha')$ is projective by the discussion in Section \ref{projgit}.  It preserves the Poisson structure since both the bracket on $\fM_{\theta, \lambda}(\alpha, \alpha')$ and on $\fM_{0, \lambda}(\alpha, \alpha')$ are induced by that on $T^*\Rep(\overline{Q^{fr}}, \alpha, \alpha')$.  

By Theorem \ref{nakajimafree}, we have that any orbit of a $\chi_\theta$-semistable point of $\inv{\mu}(\lambda)$ has maximal dimension $\dim G(\alpha)$.  Consequently, by Proposition \ref{orbit} no such orbit can contain another such orbit in its closure.  This means, the orbit of every $\chi_{\theta}$-semistable point is closed, and this implies every such point is also $\chi_{\theta}$-stable.  Therefore, we have $\fM_{\theta, \lambda}(\alpha, \alpha') = \fM_{\theta, \lambda}^s(\alpha, \alpha')$  Furthermore, by Theorem \ref{alghamred}, we see that $\fM_{\theta, \lambda}(\alpha, \alpha')$ is a smooth variety.  It has dimension equal to 
\begin{align*}
& 2 \dim \Rep(\overline{Q^{fr}}, \alpha, \alpha') - 2 \dim G(\alpha)\\ &= 2(\alpha \cdot \alpha' + \sum_{a \in A_Q} \alpha_{s(a)}\alpha_{t(a)}) - 2 \sum_{i \in I_Q} \alpha_i^2  = 2\alpha \cdot \alpha' - 2q(\alpha),
\end{align*}
and a symplectic structure defined by the symplectic form on $\Rep(\overline{Q^{fr}}, \alpha, \alpha')$.
\end{proof}

\subsection{Example: The Jordan quiver}
\label{nakjordanex}
We conclude this section by presenting several examples of quiver varieties and discussing their geometric significance. The first of these concerns the quiver we have already seen in Section \ref{sheafp2}. Namely, let us consider the quiver varieties corresponding to the double of the framed Jordan quiver $\overline{J^{fr}}$ pictured below.

\begin{equation*}
\begin{tikzcd}
1 \arrow[out=45,in=135, loop, swap, "a"] \arrow[out=225,in=315, loop, swap, "a^*"] \arrow[r, shift right = 1.0ex, swap, "b"]  & 1' \arrow[l, shift right = 1.0ex, swap, "b^*"]
\end{tikzcd}
\end{equation*}

Let $\theta \in \Z_{<0}$ and let $\lambda \in \C$.  Using the moment map formula from the previous subsection, we obtain that $\inv{\mu}(\lambda) = \{(V_1, V_1', f_a,f_{a^*}, f_{b}, f_{b^*})| [f_a, f_{a^*}] - f_{b^*}f_{b} = \lambda \Id \}$.  Corollary \ref{nakajimastabcor} implies the $\chi_\theta$-semistable points are those that satisfy the additional condition that there is no proper subrepresentation $W \subset \{(V_1, f_a,f_{a^*}\}$ such that $W_1 \subset \Im f_{b^*}$.  Furthermore, the choice of $\theta$ means that $\chi_\theta$-semistable points are all $\chi_\theta$-stable.  

Setting $f_a = B_1, f_{a^*} = B_2, i = - f_{b^*}, j = f_{b}$ and $(\alpha, \alpha') = (n, r)$, we see that the quiver variety $\fM_{\theta, 0}(\alpha, \alpha')$ is exactly the variety $X$ considered in Theorem \ref{p2moduli}.  Therefore, $\fM_{\theta, 0}(\alpha, \alpha')$ is isomorphic to the moduli space of torsion free-sheaves on $\P^2$ trivial on the line $\l = \{[0:\lambda_1: \lambda_2] \in \P^2 \}$ together with a trivialization.

Let us consider a special case of this quiver variety when $\alpha' = 1$.  The stability condition gives us that $f_{b^*}(1)$ is a cyclic vector for $f_a$ and $f_{a^*}$ (meaning that by acting on $f_{b^*}(1)$ with $f_a$ and $f_{a^*}$ we can generate all of $\C^{\alpha}$).  We will be using the following fact from linear algebra (see Lemma 12.7 of \cite{EG2002}).

\begin{lmm}
\label{linalglmm}
Let $A,B \in \Hom(\C^{\alpha}, \C^{\alpha})$ such that $[A,B]$ has rank 1.  There exists a basis such that $A,B$ in that basis are both upper-triangular.
\end{lmm}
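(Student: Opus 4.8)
The plan is to prove the slightly stronger statement that $A$ and $B$ admit a common \emph{complete flag} of invariant subspaces $0 = V_0 \subset V_1 \subset \cdots \subset V_{\alpha} = \C^{\alpha}$ with $\dim V_k = k$; choosing a basis adapted to such a flag puts both $A$ and $B$ in upper-triangular form simultaneously. I would argue by induction on $\alpha$, and the whole argument reduces to a single nontrivial input: that $A$ and $B$ possess a common eigenvector.

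Granting the existence of a common eigenvector $v$, the induction is routine. I would set $V_1 = \C v$, which is invariant under both $A$ and $B$, and pass to the quotient $\C^{\alpha}/V_1$. The induced operators $\bar A, \bar B$ satisfy $[\bar A, \bar B] = \overline{[A,B]}$, the map induced by $[A,B]$; since $[A,B]$ preserves $V_1$ (any line invariant under $A$ and $B$ is annihilated by the word $AB - BA$) and has rank $\le 1$, its image in the quotient still has dimension $\le 1$, so $[\bar A, \bar B]$ again has rank $\le 1$. By the inductive hypothesis $\bar A, \bar B$ share a complete flag, whose preimage under $\C^{\alpha} \to \C^{\alpha}/V_1$ is a complete flag in $\C^{\alpha}$ refining $V_1$ and invariant under both $A$ and $B$.

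The heart of the matter is therefore the common eigenvector, which I would also prove by induction on $\alpha$. Fix an eigenvalue $\lambda$ of $A$ and let $W = \ker(A - \lambda I) \neq 0$. The key identity is that for $w \in W$ one has $(A - \lambda I)Bw = [A,B]w$, so $(A-\lambda I)BW \subseteq \Im[A,B]$, a subspace of dimension $\le 1$. If $BW \subseteq W$, then $B|_W$ has an eigenvector, which is the desired common eigenvector. Otherwise there is $w \in W$ with $(A-\lambda I)Bw = [A,B]w \neq 0$, which forces $\Im[A,B] = \C\,[A,B]w \subseteq \Im(A - \lambda I)$. I would then check that $R := \Im(A - \lambda I)$ is invariant under both $A$ and $B$: it is $A$-invariant because $A$ commutes with $A - \lambda I$, and for $r = (A-\lambda I)x$ the relation $B(A - \lambda I) = (A - \lambda I)B - [A,B]$ gives $Br = (A - \lambda I)Bx - [A,B]x \in R$, using $\Im[A,B] \subseteq R$. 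Since we are in the case $BW \not\subseteq W$ we have $A \neq \lambda I$, so $R$ is a nonzero proper subspace; as $[A,B]|_R$ again has rank $\le 1$, the inductive hypothesis applied to $A|_R, B|_R$ produces a common eigenvector inside $R$.

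The main obstacle is isolating the invariant subspace $R = \Im(A-\lambda I)$ and verifying its $B$-invariance; once the inclusion $\Im[A,B] \subseteq \Im(A - \lambda I)$ is extracted from the rank-one hypothesis, the $B$-invariance computation and both inductions are straightforward. When $[A,B] = 0$ the statement reduces to the classical fact that commuting operators over $\C$ share an eigenvector, which is exactly the mechanism underlying the case $BW \subseteq W$.
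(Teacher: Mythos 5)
Your proof is correct and follows essentially the same route as the paper's: after translating $A$ by an eigenvalue, your dichotomy $BW \subseteq W$ versus $BW \not\subseteq W$ for $W = \ker(A-\lambda I)$ is exactly the paper's case split on whether $\ker A \subseteq \ker [A,B]$, and it produces the same common invariant subspace ($\ker(A-\lambda I)$ in the first case, $\Im(A-\lambda I)$ in the second), followed by the same induction via quotienting by the line spanned by a common eigenvector. Your writeup is somewhat more careful than the paper's—notably in tracking that the commutator only has rank $\le 1$ after restriction or passage to the quotient, which is what the induction actually requires—but the underlying argument is the same.
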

\begin{proof}
We may assume $A$ has a nontrivial kernel (otherwise replace $A$ with $A - \lambda \cdot \Id$, where $\lambda$ is an eigenvalue of $A$).  Let $v \in \ker A$. Since $BAv = ABv + [A,B]v$, if $\ker A \subset \ker [A,B]$, then $BAv = 0$. Therefore, we see $\ker A$ is invariant with respect to $B$.  If $\ker A \nsubseteq \ker [A,B]$, then there exists a nonzero $v' \in \ker A$ such that $ABv' = [A,B]v' \neq 0$.  Since $[A,B]$ has rank 1, this implies $\Im [A,B] \subset \Im A$.  Furthermore, for any $v \in \C^{\alpha}$ we have $BAv = ABv + [A,B]v$, so $\Im A$ is invariant with respect to $B$.

In either of the two cases, there exists a nonzero subspace $W \subset \C^{\alpha}$, which is invariant with respect to both $A$ and $B$.  Therefore, this subspace contains an eigenvector of both $A$ and $B$ simultaneously.  Taking the quotient of $\C^{\alpha}$ by the corresponding $1$-dimensional subspace allows us to obtain the desired basis by induction.
\end{proof}

Since $\lambda = 0$, the moment map equation gives us $[f_a, f_{a^*}] = f_{b^*}f_{b}$.  Let us assume the variables in $\C[x,y]$ are ordered so that powers of $x$ appear before powers of $y$ in each monomial (i.e. $x^{n_1} y^{n_2}$).   For any polynomial $p(x,y) \in \C[x,y]$ we have 
\[
\tr (p(f_a, f_{a^*})[f_a, f_{a^*}]) = \tr(p(f_a, f_{a^*})f_{b^*}f_{b}).
\]
Suppose, $f_b \neq 0$, so that $[f_a, f_{a^*}]$ has rank $1$.  By Lemma \ref{linalglmm},  $f_a$ and $f_{a^*}$ are upper-triangular matrices, while $[f_a, f_{a^*}]$ is a strictly upper triangular. Therefore, the trace in the left-hand side is equal to $0$.  The right-hand side is $\tr(p(f_a, f_{a^*})f_{b^*}f_{b})$ $= \tr(f_{b}p(f_a, f_{a^*})f_{b^*}) = 0$, so $f_{b}p(f_a, f_{a^*})f_{b^*} = 0$.  Since $f_{b^*}(1)$ is a cyclic vector, then this implies $f_{b}(v) = 0$ for any $v \in \C^{\alpha}$. Therefore, $f_b = 0$.  Thus, we can regard representations in $\inv{\mu}(0)^{ss}_{\chi_\theta}$ as finite-dimensional $\C[x,y]$-modules together with a cyclic vector.

Note that each triple $(f_a, f_{a^*}, f_{b^*})$ coming from an element of $\Rep(\overline{J^{fr}}, \alpha, 1)$ defines an ideal $I_{f_a,f_{a^*},f_{b^*}} := \{p \in \C[x,y]|p(f_a,f_{a^*})f_{b^*} = 0\} \subset \C[x,y]$.  

Consider the linear transformation $\C[x,y]/I_{f_a,f_{a^*},f_{b^*}} \rightarrow \C^{\alpha}$ defined by $p(x,y) \mapsto p(f_a,f_{a^*})f_{b^*}(1)$. We see that the ideal $I_{f_a,f_{a^*},f_{b^*}}$ has colength $\alpha$ in $\C[x,y]$ if and only if $f_{b^*}(1)$ is a cyclic vector.

Note that the ideal $I_{f_a,f_{a^*},f_{b^*}}$ is only determined up to $\GL(\alpha)$-action.  This means $(V_1, V_1', f_a,f_{a^*}, f_{b}, f_{b^*}) \mapsto I_{f_a,f_{a^*},f_{b^*}}$ gives a well-defined mapping between $\fM_{\theta, 0}(\alpha, 1)$ and set of ideals $I \subset \C[x,y]$ such that $\dim \C[x,y]/I \ = \alpha$.  The latter has a natural algebraic structure making it into the variety $\textrm{Hilb}^{\alpha}(\A^2)$ which parametrizes collections of $\alpha$ points in the plane (see e.g. \cite{Na1999}). 

Given a colength $\alpha$ ideal $I$ in $\textrm{Hilb}^{\alpha}(\A^2)$, there is a ring homomorphism $\C[x,y]\rightarrow \C[x,y]/I \simeq \C^{\alpha}$, which makes $\C^{\alpha}$ into a $\C[x,y]$-module. The image of $1$ in $\C^{\alpha}$ defines a cyclic vector for this module.  Therefore, we obtain an element of $\inv{\mu}(0)^{ss}_{\chi_\theta}$. Since identifying $\C[x,y]/I \simeq \C^{\alpha}$ requires a choice of basis, this element is determined up to $\GL(\alpha)$-action. This defines a mapping $\textrm{Hilb}^{\alpha}(\A^2) \rightarrow \fM_{\theta, 0}(\alpha, 1)$, which is the inverse of the one given above. Thus, there is a bijection between the sets of points of $\fM_{\theta, 0}(\alpha, 1)$ and of $\textrm{Hilb}^{\alpha}(\A^2)$. In fact, one can show they are isomorphic as varieties.

If $\theta = 0$, then the stability condition is trivial, so $\fM_{0, 0}(\alpha, 1) = \inv{\mu}(0)//\GL(\alpha, \C)$. Consequently, it suffices to describe the closed orbits of $\inv{\mu}(0) = \{(\C^{\alpha}, \C, f_a, f_{a^*}, f_b,$ $f_{b^*} )| [f_a, f_{a^*}] = f_{b^*}f_b\}$ under the action of $\GL(\alpha, \C)$ by change of basis.  

Let $V = (\C^{\alpha}, \C, f_a, f_{a^*}, f_b, f_{b^*})$ be such a representation, belonging to a closed orbit.  Let $W \subset \C^{\alpha}$ be the cyclic $\C[x,y]$-submodule generated by $f_{b^*}(1)$. By the discussion above, we see that $W \subset \ker f_b$, as well as $f_a(W) \subset W$, $f_{a^*}(W) \subset W$, $\Im f_{b^*} \subset W$.  Using a basis compatible with $W$, we can write: 
\[
f_a = \begin{pmatrix}A_{11} & A_{12}\\ 0 & A_{22}\end{pmatrix}, \quad f_{a^*} =  \begin{pmatrix}B_{11} & B_{12}\\ 0 & B_{22}\end{pmatrix}, \quad f_{b} =  \begin{pmatrix}0 & C\end{pmatrix}, \quad f_{b^*} =  \begin{pmatrix}D\\ 0\end{pmatrix}.
\]
Let $g_1(t) := \begin{pmatrix}\Id & 0\\ 0 & \inv{t} \cdot \Id \end{pmatrix}$ be a one-parameter subgroup of $\GL(\alpha, \C)$ expressed in the same basis (i.e. compatible with $W$). The action of $g_1(t) \subset \GL(\alpha, \C)$ on the above quiver representation is given by
\begin{align*}
&g_1(t)f_a \inv{g_1(t)} = \begin{pmatrix}A_{11} & t A_{12}\\ 0 & A_{22}\end{pmatrix}, \quad g_1(t) f_{a^*} \inv{g_1(t)} =  \begin{pmatrix}B_{11} & t B_{12}\\ 0 & B_{22}\end{pmatrix},\\ &
(f_{b})\inv{g_1(t)} =  \begin{pmatrix}0 & t C\end{pmatrix}, \quad g_1(t)(f_{b^*}) =  \begin{pmatrix}D\\ 0\end{pmatrix}.
\end{align*}
Since the orbit $\GL(\alpha, \C)\cdot V$ is closed, it must contain the limit point as $t \to 0$. Thus, it contains the representation given by
\[
f_a = \begin{pmatrix}A_{11} & 0\\ 0 & A_{22}\end{pmatrix}, \quad f_{a^*} =  \begin{pmatrix}B_{11} & 0\\ 0 & B_{22}\end{pmatrix}, \quad f_{b} =  0, \quad f_{b^*} =  \begin{pmatrix}D\\ 0\end{pmatrix}.
\]
Applying a similar argument to this representation using the one-parameter subgroup $g_2(t) = t \cdot \Id$, yields that $\GL(\alpha, \C)\cdot V$ must contain
\[
f_a = \begin{pmatrix}A_{11} & 0\\ 0 & A_{22}\end{pmatrix}, \quad f_{a^*} =  \begin{pmatrix}B_{11} & 0\\ 0 & B_{22}\end{pmatrix}, \quad f_{b} =  0, \quad f_{b^*} =  0.
\]
Since the framing is now $0$, we can use the arguments in Example \ref{doublejordanex} to see that $\fM_{0, 0}(\alpha, 1) = \A^\alpha \times \A^{\alpha}/S_n$.  

In fact, the natural morphism $\fM_{\theta, 0}(\alpha, 1) \rightarrow \fM_{0, 0}(\alpha, 1)$ can be described as the morphism $\textrm{Hilb}^\alpha(\A^2) \rightarrow \A^\alpha \times \A^{\alpha}/S_{\alpha} \cong (\A^2)^\alpha/S_{\alpha}$ that sends each colength $\alpha$ ideal in $\C[x,y]$ to the corresponding collection of $\alpha$ points in $(\A^2)^\alpha/S_{\alpha}$.  This is called the \emph{Hilbert-Chow morphism}, and it is a resolution of singularities for the variety  $(\A^2)^\alpha/S_{\alpha}$ (see Section 1.1 of \cite{Na1999} for details).

\subsection{Example: Quivers of type $A_n$}
\label{nakanex}
Now let us look at the quiver variety $\fM_{\theta, \lambda}(\alpha, \alpha')$ corresponding the double of the framed $A_n$ quiver.  Let $\theta \in \Z_{>0}^{n+1}$ and $\lambda = 0$.  Furthermore, let $\alpha'= (\alpha_0, 0, \cdots, 0)$ and $\alpha = (\alpha_1, \cdots, \alpha_n)$ be such that $\alpha_0 > \alpha_1 > \cdot > \alpha_n > 0$ holds.

\begin{equation*}
\begin{tikzcd}
\C^{\alpha_1} \arrow[r, shift left=1.5ex, "f_{a_1^*}"]  \arrow[d, shift right=2.0ex, swap, "f_{b_1}"] & \C^{\alpha_2} \arrow[r, shift left=1.5ex, "f_{a_2^*}"] \arrow[l, shift left=0.5ex, "f_{a_1}"] \arrow[d, shift right=1.0ex, ""] & \raisebox{+0.5ex}{$\cdots$} \arrow[r, shift left=1.5ex, "f_{a_{n-1}^*}"] \arrow[l, shift left=0.5ex, "f_{a_2}"] & \C^{\alpha_n} \arrow[l, shift left=0.5ex, "f_{a_{n-1}}"] \arrow[d, shift right=1.0ex, ""]\\
\C^{\alpha_0} \arrow[u, swap, "f_{b_1^*}"] & 0 \arrow[u, shift right=1.0ex, swap, ""] & \cdots & 0 \arrow[u, shift right=1.0ex, swap, ""]
\end{tikzcd}
\end{equation*}

  We would like to identify $\fM_{\theta, 0}(\alpha, \alpha')$ with the cotangent bundle of a flag variety.  To do so, consider the flag variety $\Fl := \Fl(\alpha_0, \cdots, \alpha_n, 0)$.  Recall that this variety is isomorphic to the quotient $\GL(\alpha_0, \C)/P$, where $P$ is the subgroup of block upper triangular matrices stabilizing the standard flag $0 \subset \C^{\alpha_n} \subset \cdots \subset \C^{\alpha_0}$.  By Lemma 1.4.9 in \cite{CG1997} we have that $T^*\Fl \simeq \GL(\alpha_0, \C) \times_{P} \fp^{\perp}$, where $\fp^{\perp}$ is the annihilator of the Lie algebra  $\fp$ of $P$ in $\fgl_{\alpha_0}^*(\C)$ ($P$ acts on $\GL(\alpha, \C)$ on the right and on $\fp^{\perp}$ by the coadjoint action).  

After identifying $\fgl_{\alpha_0}^*$ with $\fgl_{\alpha_0}$ using the trace pairing, we see that $\fp^{\perp}$ corresponds to the subspace $\fp_0 \subset \fgl_{\alpha_0}$ consisting of strictly block upper triangular matrices preserving the standard flag.  That is, if $f \in \fp_0$ we have $f(\C^{\alpha_i}) \subset \C^{\alpha_{i+1}}$ for $0 \le i \le n$ and $\alpha_{n+1} = 0$.  Given a flag $V_{\bullet} \in \Fl$, choose a representative of this flag $g \in \GL(\alpha_0, \C)$ (this is an element which sends the standard flag to $V_{\bullet}$).  Denote $\fp_{V_{\bullet}} := \textrm{Ad}(g)(\fp_0)$.  This is the subspace of strictly block upper triangular matrices that preserve $V_{\bullet}$.  Thus we get the following description of the cotangent bundle 
\[
T^*\Fl = \{(V_{\bullet}, f)| V_{\bullet} \in \Fl \textrm{ and } f \in \fp_{V_{\bullet}}\}. 
\]

Now consider the moment map $\mu: \Rep(\overline{A_n^{fr}}, \alpha, \alpha') \rightarrow \fg(\alpha)$ is given by 
\[
\mu(\overline{V^{fr}}) = (-f_{b_1^*}f_{b_1} + f_{a_1}f_{a_1^*}, f_{a_2}f_{a_2^*} - f_{a_1^*}f_{a_1} , \cdots, -f_{a_{n-1}^*}f_{a_{n-1}}),
\]
 for $\overline{V^{fr}} = (\{V_i\}, V_1', \{f_{a_i}\}, \{f_{a_i^*}\}, f_{b_1}, f_{b_1^*})$.  If $\overline{V^{fr}} \in \inv{\mu}(0)$, then the subspaces $(\ker f_{b_1}, \ker f_{a_1}, \cdots, \ker f_{a_{n-1}})$ define a subrepresentation $W$ of $(\{V_i\}, \{f_{a_i}\}, \{f_{a_i^*}\})$.  Indeed, $(-f_{b_1^*}f_{b_1} + f_{a_1}f_{a_1^*})(\ker f_{b_1}) = f_{a_1}f_{a_1^*}(\ker f_{b_1}) = 0$, so $f_{a_1^*}(\ker f_{b_1}) \subset \ker f_{a_1}$.  Proceeding in the same way we get that $f_{a_i^*}(\ker f_{a_{i-1}}) \subset \ker f_{a_i}$ for $2 \le i \le n-1$, making $W$ a subrepresentation.  By Corollary \ref{nakajimastabcor} an element of $\inv{\mu}(0)$ is semistable (and therefore stable) only if $W = 0$.  Consequently, $f_{b_1}, f_{a_1}, \cdots, f_{a_{n-1}}$ must all be injective.  Conversely, if these linear transformations are all injective, then any subrepresentation $W \subset (\{V_i\}, \{f_{a_i}\}, \{f_{a_i^*}\})$ such that $W_1 \in \ker f_{b_{1}}$ must be trivial, so $\overline{V^{fr}}$ is semistable (and therefore stable).  

Given $\overline{V^{fr}} \in \inv{\mu}(0)$ we can define a flag $V_{\bullet}$ in $\C^{\alpha_0}$ by  $V_i = \Im f_{b_1}f_{a_1} \cdots f_{a_{i}}$ and $f \in \fp_{V_{\bullet}}$ by $f = f_{b_1}f_{b_1^*}$.  It is not hard to see this extends to a mapping $\Psi: \fM_{\theta, 0}(\alpha, \alpha') \rightarrow T^*\Fl$.  This mapping has an inverse given by $(V_{\bullet}, f) \mapsto \overline{V^{fr}}$, where $f_{b_1}, f_{a_1}, \dots, f_{a_{n-1}}$
are given by the inclusions of the subspace in $V_{\bullet}$ and $f_{b_1^*} = f, f_{a_1^*} = f|_{V_{1}}, \dots, f_{a_{n-1}^*} = f|_{V_{n-1}}$.  It can be shown (see Theorem 7.3 in \cite{Na1994}) this bijection is an isomorphism of varieties.

Note that there is a different way to look at elements of $\fp_{V_{\bullet}}$.  Let $\hat{\nu} = (\hat{\nu}_i) = (\alpha_0-\alpha_1,  \dots, \alpha_{n-1} - \alpha_{n}, \alpha_n)$ and let $\nu_i = |\{\hat{\nu_l}| \hat{\nu}_l \ge i \}|$.  Note that the tuple $\nu = (\nu_i)$ is a partition of $\alpha_0$.  Therefore, it defines a conjugacy class $C_{\nu}$ of nilpotent matrices in $\fgl_{\alpha_0}(\C)$ (each part $\nu_i$ is the size of a Jordan block).  The following result (see e.g. Theorem 2.1 in \cite{CB2004} for proof) describes $\fp_{V_{\bullet}}$ for any flag $V_{\bullet} \in \Fl$. 

\begin{prp}
Let $\pi: T^*\Fl \rightarrow \fgl_{\alpha_0}(\C)$ be the projection onto the second component.  We have $\pi(T^*\Fl) = \overline{C_{\nu}}$.
\end{prp}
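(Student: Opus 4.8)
The plan is to recognize $\pi(T^*\Fl)$ as the saturation under $\GL(\alpha_0,\C)$ of the nilradical of the parabolic $\fp$, i.e.\ as the closure of a Richardson orbit, and then to pin down that orbit as $C_{\nu}$ by a dimension count together with one explicit element. First I would record that, in the description $T^*\Fl=\{(V_\bullet,f)\mid f\in\fp_{V_\bullet}\}$, the subspace $\fp_{V_\bullet}=\Ad(g)(\fp_0)$ where $g$ carries the standard flag to $V_\bullet$ and $\fp_0$ is the space of strictly block upper triangular matrices preserving the standard flag. Since $\GL(\alpha_0,\C)$ acts transitively on $\Fl$, this gives $\pi(T^*\Fl)=\GL(\alpha_0,\C)\cdot\fp_0$. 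Each such $f$ satisfies $f(V_i)\subset V_{i+1}$, hence $f^{n+1}=0$, so the image lies in the nilpotent cone.

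Next I would argue the image is a single orbit closure. Via the identification $T^*\Fl\simeq\GL(\alpha_0,\C)\times_{P}\fp_0$, the map $\pi$ factors through the closed immersion into $(\GL(\alpha_0,\C)/P)\times\fgl_{\alpha_0}(\C)$ followed by the projection to $\fgl_{\alpha_0}(\C)$; since $\Fl=\GL(\alpha_0,\C)/P$ is projective, $\pi$ is proper and its image is closed. The image is also irreducible (being the image of the irreducible variety $T^*\Fl$) and $\GL(\alpha_0,\C)$-stable. A closed, irreducible, $\GL(\alpha_0,\C)$-stable set of nilpotents is a finite union of nilpotent conjugacy classes and, being irreducible, equals the closure $\overline{\mathcal O}$ of the unique class $\mathcal O$ of maximal dimension it contains. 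Thus $\pi(T^*\Fl)=\overline{\mathcal O}$ for some nilpotent class $\mathcal O$.

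It then remains to identify $\mathcal O=C_{\nu}$, and here two inputs suffice. On one hand I would exhibit an explicit $f_0$ of Jordan type exactly $\nu$: after reordering the graded pieces so that $\hat\nu$ is decreasing (which changes $\fp_0$ only by conjugation and leaves the $\GL(\alpha_0,\C)$-saturation unchanged), take $f_0$ to send each graded piece onto the next by a surjection; a short computation gives $\rk(f_0^{k-1})=\sum_{l\ge k}\hat\nu^{\mathrm{sort}}_l$, so the conjugate of its block data is $\hat\nu^{\mathrm{sort}}$ and its type is $\nu=(\hat\nu^{\mathrm{sort}})^{t}$. This shows $C_{\nu}\subseteq\pi(T^*\Fl)=\overline{\mathcal O}$, whence $\overline{C_{\nu}}\subseteq\overline{\mathcal O}$. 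On the other hand I would compare dimensions: $\dim T^*\Fl=2\dim\Fl=2\sum_{i<j}\hat\nu_i\hat\nu_j=\alpha_0^2-\sum_i\hat\nu_i^2$, while the standard formula $\dim C_{\nu}=\alpha_0^2-\sum_i((\nu^{t})_i)^2$ with $\nu^{t}=\hat\nu^{\mathrm{sort}}$ gives exactly $\dim C_{\nu}=\alpha_0^2-\sum_i\hat\nu_i^2$. Hence $\dim C_{\nu}=\dim T^*\Fl\ge\dim\pi(T^*\Fl)=\dim\mathcal O\ge\dim C_{\nu}$, forcing $\dim\mathcal O=\dim C_{\nu}$; since $\overline{C_{\nu}}\subseteq\overline{\mathcal O}$ are irreducible closed sets of equal dimension, they coincide and $\pi(T^*\Fl)=\overline{C_{\nu}}$.

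The main obstacle is the identification of the Jordan type of the Richardson element $f_0$ with $\nu$: this is the only place where the combinatorics of the conjugate partition $\hat\nu^{t}$ genuinely enters, and it requires bookkeeping of the ranks of powers of a block-triangular nilpotent (equivalently, that the generic element of the nilradical attains Jordan type $\nu$). Everything else—properness, irreducibility, and the orbit-closure dimension formula—is standard, so once the type computation is in hand the dimension squeeze finishes the argument cleanly, and in particular avoids having to establish the sharp rank bound $\rk(f^k)\le\rk(N_{\nu}^{k})$ for \emph{all} $f\in\fp_0$ directly.
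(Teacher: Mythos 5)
Your overall architecture is sound, and since the paper itself only cites Theorem 2.1 of Crawley--Boevey rather than proving this proposition, your argument has to stand on its own: properness of $\pi$ (as the restriction to the closed subvariety $T^*\Fl \subset \Fl \times \fgl_{\alpha_0}(\C)$ of the projection off the projective factor), irreducibility, and $\GL(\alpha_0,\C)$-stability correctly give $\pi(T^*\Fl)=\overline{\mathcal O}$ for a single nilpotent class $\mathcal O$; the dimension formulas $\dim T^*\Fl=\alpha_0^2-\sum_i\hat\nu_i^2$ and $\dim C_{\nu}=\alpha_0^2-\sum_i(\nu^t_i)^2$ with $\nu^t=\hat\nu^{\mathrm{sort}}$ are right; and the squeeze is valid once $C_{\nu}\subseteq\pi(T^*\Fl)$ is known. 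The genuine gap is in your justification of that one containment. Your element $f_0$ lives in the nilradical of the parabolic whose block sizes are the \emph{sorted} $\hat\nu$, and you transport it into $\pi(T^*\Fl)$ by asserting that reordering the graded pieces ``changes $\fp_0$ only by conjugation.'' That assertion is false: conjugation preserves the dimension vector of the stabilized flag, so nilradicals attached to different orderings of the same block sizes are never conjugate unless the orderings coincide. Already in $\fgl_3(\C)$: for $\hat\nu=(2,1)$ the nilradical is $\{f: \Im f\subseteq \C^1,\ f(\C^1)=0\}$, whose elements all have image in one fixed line, while for $\hat\nu=(1,2)$ it is $\{f:\Im f\subseteq \C^2,\ f(\C^2)=0\}$, whose elements' images sweep out a fixed plane; no element of $\GL(3,\C)$ carries one subspace onto the other. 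The fact you actually need, that the $\GL(\alpha_0,\C)$-saturation of the nilradical is independent of the ordering of the blocks (equivalently, that associated parabolics share the same Richardson orbit), is true, but it is precisely the nontrivial combinatorial content of the proposition, so it cannot be waved through as a conjugation.

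Fortunately the gap is local and repairable: one can write down a type-$\nu$ element directly in the \emph{unsorted} $\fp_0$, after which your dimension squeeze finishes the proof unchanged. Choose a splitting $\C^{\alpha_0}=\bigoplus_{i=1}^{n+1}W_i$ with $\C^{\alpha_i}=\bigoplus_{j>i}W_j$, so that $\dim W_i=\hat\nu_i$, and pick a basis $v^i_1,\dots,v^i_{\hat\nu_i}$ of each $W_i$. For each $r\ge 1$ let $i_1<i_2<\cdots<i_s$ be the indices with $\hat\nu_{i}\ge r$; by the paper's definition of $\nu$ we have $s=\nu_r$. Define $f_0$ by $v^{i_1}_r\mapsto v^{i_2}_r\mapsto\cdots\mapsto v^{i_s}_r\mapsto 0$ for every $r$. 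Each basis vector of $W_i$ is sent to a basis vector of some $W_j$ with $j>i$ or to zero, so $f_0(W_i)\subseteq\bigoplus_{j>i}W_j=\C^{\alpha_i}$ and hence $f_0\in\fp_0$; and the displayed chains are exactly Jordan chains, so $f_0$ is a direct sum of Jordan blocks of sizes $\nu_1,\nu_2,\dots$, i.e.\ has Jordan type $\nu$. This gives $C_{\nu}\subseteq\GL(\alpha_0,\C)\cdot\fp_0=\pi(T^*\Fl)$ honestly, makes the sorted-order detour and its rank bookkeeping unnecessary, and the rest of your argument then goes through verbatim.
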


In the case that $\hat{\nu}$ is already a partition of $\alpha_0$, we can state a similar result due to Kraft and Procesi (see Section 3.3 in \cite{KP1979}), which gives a description of $\fM_{0, 0}(\alpha, \alpha')$.

\begin{thm}[Kraft and Procesi]
If the dimension vectors $\alpha$ and $\alpha'$ are such that $\alpha_0 - \alpha_1 \ge \cdots \ge \alpha_{n-1} - \alpha_n \ge \alpha_n$, then the affine GIT quotient $\fM_{0, 0}(\alpha, \alpha')$ is isomorphic to $\overline{C_{\nu}}$ and the quotient map is given by $f_{b_1}f_{b_1^*}$.
\end{thm}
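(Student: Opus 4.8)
The plan is to read the statement through the isomorphism $\Psi\colon \fM_{\theta,0}(\alpha,\alpha') \xrightarrow{\sim} T^*\Fl$ constructed in the preceding discussion (with $\Fl = \Fl(\alpha_0,\dots,\alpha_n,0)$) and the projection $\pi\colon T^*\Fl \to \fgl_{\alpha_0}(\C)$, and to reduce everything to the fact that, under the hypothesis of the theorem, $\pi$ is a \emph{resolution} of $\overline{C_\nu}$. First I would check that $f_{b_1}f_{b_1^*}$ is a well-defined invariant: under $g=(g_i)\in G(\alpha)$ one has $f_{b_1}\mapsto f_{b_1}\inv{g_1}$ and $f_{b_1^*}\mapsto g_1 f_{b_1^*}$, so the entries of $f_{b_1}f_{b_1^*}$ are $G(\alpha)$-invariant regular functions on $\inv{\mu}(0)$ and descend to a morphism $q\colon \fM_{0,0}(\alpha,\alpha') \to \fgl_{\alpha_0}(\C)$, which is the claimed quotient map. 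Writing $p\colon \fM_{\theta,0}(\alpha,\alpha') \to \fM_{0,0}(\alpha,\alpha')$ for the projective morphism of Theorem \ref{nakajimavarthm}, and recalling that $\Psi$ sends $\overline{V^{fr}}$ to $(V_\bullet,\, f_{b_1}f_{b_1^*})$ while $\pi$ reads off the second coordinate, both $q\circ p$ and $\pi\circ\Psi$ carry a representation to $f_{b_1}f_{b_1^*}$; since $\fM_{\theta,0}(\alpha,\alpha')$ is reduced this gives the commuting identity $q\circ p = \pi\circ\Psi$.

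The decisive input enters next. The hypothesis $\alpha_0-\alpha_1 \ge \cdots \ge \alpha_{n-1}-\alpha_n \ge \alpha_n$ says exactly that $\hat\nu$ is weakly decreasing, i.e.\ a genuine partition, so that the parabolic cutting out $\Fl$ has Levi block sizes equal to the (already sorted) parts of $\hat\nu$. In this situation $\pi\colon T^*\Fl \to \overline{C_\nu}$ is the classical Springer-type resolution: proper, surjective, and birational (an isomorphism over the dense orbit $C_\nu$), and moreover $\overline{C_\nu}$ is normal. Both statements are due to Kraft and Procesi (\cite{KP1979}; see also \cite{CB2004}), and this is precisely what fails in the preceding Proposition, where without the partition condition $\pi$ is only surjective onto $\overline{C_\nu}$. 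A proper birational morphism onto a normal affine variety satisfies $\pi_*\cO_{T^*\Fl} = \cO_{\overline{C_\nu}}$, so taking global sections makes $\pi^{\#}\colon \cO(\overline{C_\nu}) \to \Gamma(T^*\Fl,\cO)$ an isomorphism; composing with the isomorphism $\Psi^{\#}$ identifies $\Gamma(\fM_{\theta,0}(\alpha,\alpha'),\cO) \cong \cO(\overline{C_\nu})$.

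Finally I would run the identification on coordinate rings. From $q\circ p = \pi\circ\Psi$ we get $p^{\#}\circ q^{\#} = \Psi^{\#}\circ\pi^{\#}$, which is an isomorphism $\cO(\overline{C_\nu}) \to \Gamma(\fM_{\theta,0}(\alpha,\alpha'),\cO)$. On the other hand $\cO(\fM_{0,0}(\alpha,\alpha')) = \cO(\inv{\mu}(0))^{G(\alpha)}$ includes into $\Gamma(\fM_{\theta,0}(\alpha,\alpha'),\cO)$ via $p^{\#}$, and this map is injective because $p$ is dominant (the $\chi_\theta$-semistable locus is dense in the irreducible variety $\inv{\mu}(0)$). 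Since $p^{\#}$ is injective and $p^{\#}\circ q^{\#}$ is an isomorphism, $p^{\#}$ is surjective hence an isomorphism, and therefore $q^{\#}$ is an isomorphism as well. As both $\fM_{0,0}(\alpha,\alpha')$ and $\overline{C_\nu}$ are reduced affine varieties, $q$ is an isomorphism, and by construction the quotient map is $f_{b_1}f_{b_1^*}$, which is the assertion.

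The hard part will be the geometric content packaged into the second paragraph: verifying that the partition hypothesis is exactly what upgrades $\pi$ from surjective to \emph{birational} onto $\overline{C_\nu}$, together with the normality of $\overline{C_\nu}$ — these are the genuine Kraft–Procesi results and cannot be obtained formally. A secondary technical point is the dominance of $p$ (equivalently, density of the semistable locus in $\inv{\mu}(0)$), needed to conclude that the affine GIT quotient coincides with the affinization $\Spec\Gamma(T^*\Fl,\cO)$; once irreducibility of $\inv{\mu}(0)$ is granted this is immediate, and the remainder of the argument is the formal chase through the commutative triangle and its rings of functions.
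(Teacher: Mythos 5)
First, a point of comparison: the paper does not prove this theorem at all --- it quotes it from Section 3.3 of \cite{KP1979} --- so your proposal has to be judged on its own terms rather than against an internal argument.

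Your formal skeleton is fine: $f_{b_1}f_{b_1^*}$ is visibly $G(\alpha)$-invariant, the triangle $q \circ p = \pi \circ \Psi$ commutes, and the ring-theoretic chase (injectivity of $p^{\#}$ plus bijectivity of $p^{\#}\circ q^{\#}$ forces both $p^{\#}$ and $q^{\#}$ to be isomorphisms) is correct algebra. The problem is that you have located the content of the theorem in the wrong place. In type $A$ the two inputs you call decisive hold \emph{without} the hypothesis $\alpha_0-\alpha_1 \ge \cdots \ge \alpha_n$: normality of $\overline{C_{\nu}}$ is Kraft--Procesi's theorem for \emph{every} nilpotent class in $\fgl_{\alpha_0}(\C)$ (any class arises from sorted data), and $\pi: T^*\Fl \rightarrow \overline{C_{\nu}}$ is birational for \emph{every} strictly decreasing flag type, because the fiber over a point $x$ of the dense orbit is a single orbit of the centralizer $Z_{\GL_{\alpha_0}}(x)$, which is connected, so a finite fiber is a point. (The Proposition you cite asserts surjectivity for all flag types; it does not say, and it is not true, that birationality fails without the inequality.) Consequently your second paragraph gives $\Gamma(\fM_{\theta,0}(\alpha,\alpha'),\cO) \cong \cO(\overline{C_{\nu}})$ unconditionally --- yet by Remark \ref{shmelkinrmk} the conclusion of the theorem genuinely fails without the inequality, and it fails precisely because the projective morphism $p$ need not be surjective, i.e.\ need not be dominant, i.e.\ $p^{\#}$ need not be injective. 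So the step you dismiss as ``a secondary technical point ... immediate once irreducibility is granted'' is exactly where the hypothesis must do its work; it cannot be immediate, because it is false in general.

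And your justification of that step is a genuine gap: irreducibility of $\inv{\mu}(0)$ is not available as a black box --- it is the hard theorem of \cite{KP1979} (their variety $Z$ is shown, using the partition hypothesis, to be a reduced irreducible normal complete intersection), and nothing in your proposal substitutes for it. Worse, the logical order of the only cited source runs opposite to yours: Kraft and Procesi prove irreducibility of $\inv{\mu}(0)$ and the identification $\cO(\inv{\mu}(0))^{G(\alpha)} \cong \cO(\overline{C_{\nu}})$ directly, and \emph{deduce} normality of $\overline{C_{\nu}}$ from it; taking normality as an input to prove the quotient statement is circular unless you invoke an independent proof of normality. There is a smaller gap of the same origin: to define $q^{\#}: \cO(\overline{C_{\nu}}) \rightarrow \cO(\fM_{0,0}(\alpha,\alpha'))$ you need $q$ to land in $\overline{C_{\nu}}$ on \emph{all} of $\fM_{0,0}(\alpha,\alpha')$, not just on the image of $p$. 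Under the partition hypothesis this follows because the moment map relations force $\rk\bigl((f_{b_1}f_{b_1^*})^{i}\bigr) \le \alpha_i$ and these rank bounds cut out exactly $\overline{C_{\nu}}$; without it they cut out a larger set --- another place the hypothesis acts that the write-up passes over. To repair the proof you must show, under the stated inequalities, that every closed $G(\alpha)$-orbit in $\inv{\mu}(0)$ lies in the closure of the $\chi_{\theta}$-semistable locus (e.g.\ via irreducibility of $\inv{\mu}(0)$); that is the actual Kraft--Procesi argument, and it is the one thing your proposal does not contain.
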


If the conditions of the theorem hold, then the natural projective morphism $\fM_{\theta, 0}(\alpha, \alpha') \rightarrow \fM_{0, 0}(\alpha, \alpha')$ is given by $f_{b_1}f_{b_1^*}$, and it is just the projection $\pi: T^*\Fl \rightarrow \overline{C_{\nu}}$ onto the second component.  As in the Jordan quiver example, we get a well-known resolution of singularities.  It is called the \emph{Springer resolution}.

\begin{rmk}
\label{shmelkinrmk}
Note that if we only have $\alpha_0 > \alpha_1 > \cdots > \alpha_n$, rather than $\alpha_0 - \alpha_1 \ge \cdots \ge \alpha_{n-1} - \alpha_n \ge \alpha_n$, then $\fM_{0, 0}(\alpha, \alpha')$ need no longer be isomorphic to $\overline{C_{\nu}}$ and the morphism $\fM_{\theta, 0}(\alpha, \alpha') \rightarrow \fM_{0, 0}(\alpha, \alpha')$ need not be surjective.  While this morphism is still given by $f_{b_1}f_{b_1^*}$, the description of $\fM_{0, 0}(\alpha, \alpha')$ is more complicated.  See \cite{Shmel2012} for details.
\end{rmk}

\begin{rmk}
\label{eigenvaluesrmk}
Note that the above construction can be generalized to conjugacy classes with arbitrary eigenvalues. Indeed, let $\zeta  = (\zeta_i) \in \C^n$.  Let $\theta \in \Z_{>0}^{n+1}$ and $\lambda = (-\zeta_1, \zeta_1 - \zeta_2, \dots, \zeta_{n-1} - \zeta_n)$.  Let $\alpha$ and $\alpha'$ be such that $\alpha_0 > \alpha_1 > \cdots > \alpha_n$.  If $\lambda \cdot \alpha = 0$, then $\fM_{\theta, \lambda}(\alpha, \alpha')$ is nonempty and we can describe it as follows:
\[
\fM_{\theta, \lambda}(\alpha, \alpha') \cong T^{*, \zeta}\Fl := \{(V_{\bullet},f)|V_{\bullet}\in \Fl \textrm{ and } (f - \zeta_{i}\Id)(V_i) \subset V_{i+1} \}.
\]
This is an affine bundle modeled on the cotangent bundle $T^*\Fl$ called a \emph{twisted cotangent bundle}.  As before, we can identify the projection $\pi: T^{*, \zeta}\Fl \rightarrow \fgl_{\alpha_0}(\C)$ onto the second component with the closure of a conjugacy class $\overline{C_{\nu}}$ with eigenvalues in $\zeta$.  Assuming that $\alpha_0 - \alpha_1 \ge \cdots \ge \alpha_{n-1} - \alpha_n \ge \alpha_n$, we get an upgraded version of the Kraft and Procesi theorem (see Lemma 9.1 in \cite{CB2003b}), which says $\fM_{0, \lambda}(\alpha, \alpha') \cong \overline{C_{\nu}}$.  The natural morphism $\fM_{\theta, \lambda}(\alpha, \alpha') \rightarrow \fM_{0, \lambda}(\alpha, \alpha')$ is once again just the projection $\pi$.
\end{rmk}

Note that in the two examples above the natural projective morphism $\pi: \fM_{\theta, \lambda}(\alpha, \alpha') \rightarrow \fM_{0, \lambda}(\alpha, \alpha')$ was a resolution of singularities.  Nakajima proves this is always the case (see Theorem 4.1 in \cite{Na1994} or Proposition 3.24 in \cite{Na1998}) under the conditions that $(\theta, \lambda)$ is generic and the subset of elements in $\fM_{0, \lambda}(\alpha, \alpha')$ coming from representations with trivial stabilizer group is nonempty.  In fact, in this case, the morphism $\pi$ is a \emph{symplectic resolution}.

%\subsection{Example: Star-shaped quivers}

%Let us conclude this section by extending the results of the previous example to the case of multiple $A_n$ quivers attached together at the starting vertex.  The resulting quiver is called a \emph{star-shaped quiver}, and we will be using the double of its framed version pictured below.

%DOUBLE FRAMED STAR-SHAPED DIAGRAM

\section{Conclusion}

We would like to conclude by briefly discussing some continuations for the topics we touched upon earlier.

\subsection{Quiver realizations of moduli spaces of sheaves}

The Beilinson spectral sequence and the subsequent quiver constructions of moduli space of sheaves on $\P^1$ and $\P^2$ are part of a general framework used to describe the derived category $D^b(X)$ of coherent sheaves on an algebraic variety $X$.  The existence of objects $E_1, \dots , E_m$ forming a \emph{strong exceptional collection} allows (see e.g. Section 8.3 in \cite{Huy2006}) for an equivalence between $D^b(X)$ and the derived category of modules over an algebra that can be realized as a path algebra of some quiver, possibly with additional relations.  Choosing the correct rigidity condition (such as global generation in Section 4 or triviality at infinity in Section 5), leads to a correspondence between objects rather than complexes of objects.  This can identify moduli spaces of rigidified sheaves on $X$ with a moduli space of quiver representations (possibly with relations).  One can see this approach in \cite{Ma2017}, where moduli spaces of Gieseker stable coherent sheaves on $\P^1$, $\P^2$, and $\P^1 \times \P^1$ are identified with moduli spaces of stable representations of certain quivers.

\subsection{Quiver realizations of moduli spaces of connections}

Consider the following generalization of the flag variety associated to the $A_n$ quiver in Example \ref{framedanex} and Section \ref{nakanex}.  Let $E$ be a rank $\alpha_0$ vector bundle on a smooth projective curve $X$.  Let $D = x_1 + \cdots x_k \subset X$ be a (reduced) divisor and let $0  = E_{in_i+1} \subset E_{in_i} \subset \cdots \subset E_{i1} \subset E_0 = E_{x_i}$ be a collection of flags in the fibers of $E$ over $x_i$.  The data $\mathbf{E} = (E,E_{ij})$ is called a \emph{parabolic bundle} on $X$.  Note that if $i = 1$, $X = \P^1$, and $E$ is trivial (with a chosen trivialization), then we can view each parabolic bundle $(E, E_j)$ as a flag in $\Fl(\alpha_0, \cdots, \alpha_n, 0)$, where $\alpha_j = \dim E_j$.

Similarly, we can generalize the twisted cotangent bundles of Section \ref{nakanex} as follows.  Let $E$ and $D$ be as above.  A \emph{logarithmic connection} (also called a connection with \emph{regular singularities}) on $X$ is a $\C$-linear morphism $\nabla: E \rightarrow E \otimes \Omega^1_{X}(D)$ satisfying $\nabla(fs) = s \otimes df + f \otimes \nabla(s)$ for $s \in E$ and $f \in \cO_{X}$.  Note this is an algebraic connection with poles of order at most $1$ at the points $x_i$.  It follows we can define the residue endomorphism $\Res_{x_i} \nabla \in \End(E_{x_i})$ at each point $x_i$.  If $\mathbf{E} = (E, E_{ij})$ is a parabolic bundle on $\P^1$ associated to $D$, then a \emph{$\zeta$-parabolic connection} for a complex vector $\zeta = (\zeta_{ij})$ is a logarithmic connection such that $(\Res_{x_i} \nabla - \zeta_{ij}\Id) E_{ij} \subset E_{ij+1}$.  Once again, setting $i = 1$ and considering only trivial (and trivialized) vector bundles identifies pairs $(\mathbf{E}, \nabla)$ of parabolic bundle on $\P^1$ with $\zeta$-parabolic connection and the elements of $T^{*, \zeta}\Fl$ (see Example \ref{eigenvaluesrmk}).

It is possible to identify parabolic bundles on $\P^1$ with finite-dimensional representations of a quotient of a certain quiver path algebra called a \emph{squid} (see e.g. \cite{CB2004}).  This leads to descriptions for moduli of pairs $(\mathbf{E}, \nabla)$ on $\P^1$ as quiver varieties with additional relations (see \cite{ASo2016}).  Furthermore, we may consider connections with poles of order greater than $1$.  These are said to have \emph{irregular singularities}.  In this case, there are also quiver variety constructions in \cite{Bo2008}, \cite{HY2014}, \cite{Hi2017} for moduli spaces of pairs on $\P^1$ as long as the underlying vector bundle is trivial.

\subsection{Multiplicative quiver varieties}

Rather than looking at connections $\nabla$ on $X$, we can instead consider the local systems of horizontal sections of $\nabla$.  The \emph{Riemann-Hilbert correspondence} defines an equivalence between the category of vector bundles with logarithmic connections on $X$ with the category of local systems on $X$.  Since the latter category is equivalent to the category of representations of the fundamental group of the punctured Riemann surface $X - D$, it is possible to define an (analytic) isomorphism between the moduli space $M_{DR}$ of pairs $(E, \nabla)$ and the corresponding variety $M_B$ of representations of $\pi_1(X - D)$.  

As with $M_{DR}$, it is possible to realize $M_B$ for $X = \P^1$ in terms of a kind of quiver variety.  However, one must work with $\GL(\alpha_0, \C)$, rather than $\fg_{\alpha_O}(\C)$, so a multiplicative version of the moment map construction is necessary.  This is given by the \emph{group-valued moment map} defined in \cite{AMM1998}.  In the case of $M_B$, the moment map fiber may be described in terms of representations of a \emph{multiplicative preprojective algebra} (see \cite{CBS2006}), and taking the quotient results in a \emph{multiplicative quiver variety} studied in \cite{Ya2008}.

The Riemann-Hilbert correspondence generalizes to vector bundles with connections that have higher order poles.  In this case, the varieties parametrizing the corresponding monodromy representations together with the \emph{Stokes data} that encodes behavior around the irregular singularities have been studied extensively by Boalch (see e.g. \cite{Bo2015} and \cite{Bo2017}).

\subsection{Kac-Moody algebras}

The matrix associated to the symmetrized Euler form $(- , -)$ for a quiver $Q$ with no loops is a generalized Cartan matrix, which in turn can be used to define a \emph{Kac-Moody Lie algebra $\fg_Q$} (see \cite{Kac1990} for details).  There is a close relationship between $\fg_Q$ and the representations of $Q$.  For example, a famous theorem of Kac (\cite{Kac1980} and \cite{Kac1982}) tells us that an indecomposable representation of $Q$ with dimension vector $\alpha$ exists if and only if $\alpha$ is a root of $\fg_Q$.  There are geometric implications as well.  Crawley-Boevey showed in \cite{CB2001} that if $\alpha$ is a positive root satisfying certain additional numerical conditions, then the fibers $\inv{\mu}(\lambda)$ of the moment map $\mu: \Rep(\overline{Q}, \alpha) \rightarrow \fg$ are nonempty irreducible complete intersections with simple general element.  Thus, we can see that the geometry of quiver varieties can be related to the algebra $\fg_Q$.

A converse relationship may be observed in \cite{Na1994} and \cite{Na1998}. Nakajima showed that the \emph{modified enveloping algebra} $\tilde{U}(\fg_Q)$ of $\fg_Q$ has a representation in the Borel-Moore homology of the fiber $\inv{\pi}(0)$ of the natural morphism $\pi: \fM_{\theta, 0}(\alpha, \alpha') \rightarrow \fM_{0, 0}(\alpha, \alpha')$.  A further construction for representations of the (modified) \emph{quantized enveloping algebras} of a Kac-Moody algebra, using equivariant K-groups of quiver varieties in place of Borel-Moore homology, may be seen in \cite{Na2001}. 

\subsection{Moduli spaces of instantons}

One of Nakajima's original motivations in constructing quiver varieties was to study the solutions of the \emph{Yang-Mills equations}.  These equations arise in physics as a generalization of Maxwell's equations of electromagnetism, and certain minimizing solutions are called \emph{instantons}.  Mathematically, these may be understood as self-dual or anti-self dual connections on $G$-bundles on a 4-dimensional Riemannian manifold $M$ (see \cite{At1978} for details).  In \cite{ADHM1978} the moduli space of instantons on $M = \R^4$ (or its compactification $S^4 = \R^4 \cup \{\infty\}$) is described in terms of linear algebraic data.  After identifying $\R^4$ with $\C^2$, instantons on $S^4$ together with a framing at $\infty$ may be interpreted in terms of holomorphic vector bundles on $\P^2$ framed on the line at infinity $\ell_{\infty}$ (i.e. there is a bijection between the moduli spaces; see \cite{Don1984} for details).  The ADHM construction results in the variety $\fM_{\theta, 0}(\alpha, \alpha')$ corresponding to the Jordan quiver $J$ we have already seen in Sections 5 and 11.  Thus the moduli space of framed instantons on $S^4$ may be interpreted as a quiver variety.

From the perspective of complex geometry, quiver varieties are hyperk\"{a}hler quotients (see \cite{Na1994}).  Following this interpretation, the quiver varieties corresponding to affine ADE Dynkin graphs are a type of hyperk\"{a}hler manifold called an \emph{ALE space} (see e.g. \cite{Kro1989} or \cite{Na1994}).  Similarly to the ADHM description, the moduli spaces of framed instantons on an ALE space may be interpreted as a quiver variety (see Proposition 2.15 in \cite{Na1994}).  There have been multiple generalizations of the ADHM results, as well as many other quiver variety constructions for moduli spaces of instantons.  See, for example, \cite{Ne2017} for a modern perspective.

\bibliographystyle{alpha}
\bibliography{ds}
\end{document}